\newtheorem{theorem}{Theorem}[section]
\newtheorem{lemma}{Lemma}[section]
\newtheorem{proposition}{Proposition}[section]
\newtheorem{corollary}{Corollary}[section]
\theoremstyle{definition}
\newtheorem{remark}{Remark}[section]
\numberwithin{equation}{section}
\newcommand{\dd}{\mathrm{d}}
\newcommand{\p}{\partial}
\begin{document}
\title[
Supersonic contact discontinuity in two-dimensional finitely long nozzle]
{Steady supersonic combustion flows with a contact discontinuity in two-dimensional finitely long nozzles}

\author[J. Gao]{Junlei Gao}
\address{School of Mathematical Sciences, Zhejiang Normal University, Jinhua, 321004, China}
\email{gaojunlei123math@zjnu.edu.cn}

\author[F. Huang]{Feimin Huang}
\address{Academy of Mathematics and Systems Science, Chinese Academy of Sciences, Beijing 100190, China}
\email{fhuang@amt.ac.cn}

\author[J. Kuang]{Jie Kuang}
\address{Innovation Academy for Precision Measurement Science and
Technology, Chinese Academy of Sciences, Wuhan 430071, China}
\email{jkuang@apm.ac.cn}

\author[D. Wang]{Dehua Wang}
\address{Department of Mathematics,
University of Pittsburgh,
Pittsburgh, PA 15260, USA.}
\email{dwang@math.pitt.edu}

\author[W. Xiang]{Wei Xiang}
\address{Department of Mathematics
City University of Hong Kong
Kowloon, Hong Kong, China}
\email{weixiang@cityu.edu.hk}

\keywords{Combustion Euler flows, Supersonic contact discontinuity, Finitely long nozzle, Quasi-one-dimensional approximation, Global uniqueness.}
\subjclass[2010]{35B20, 35D30,35L04; 76J20, 76L99, 76N10}

\date{\today}

\begin{abstract}
{In this paper, we are concerned with the two-dimensional steady supersonic combustion flows with a contact discontinuity moving through a nozzle of finite length.
Mathematically, it can be formulated as a free boundary value problem governed by the two-dimensional steady combustion Euler equations with a contact discontinuity as the free boundary.
The main mathematical difficulties are that the contact discontinuity is a characteristic free boundary and the equations for all states are coupled with each other due to the combustion process.
We first employ the Lagrangian coordinate transformation to fix the free boundary.
Then by introducing the flow slope and Bernoulli's function, we further reduce the fixed boundary value problem into an initial boundary value problem for a first-order inhomogeneous hyperbolic system coupled with several ordinary differential equations.
A new iteration scheme is developed near the background states by employing the intrinsic structure of the equation for the mass fraction of unburned gas.
We show that there is a fixed point for the iteration by deriving some novel $C^{1,\alpha}$-estimates of the solutions and applying the Schaulder fixed point theorem, and then the uniqueness of the fixed point is proved by a contraction argument.
Based on the inverse Lagrangian coordinate transformation, we show that the original free boundary problem admits a unique contact discontinuity solution provided that the incoming flows and the nozzle walls are small $C^{1,\alpha}$-perturbations of the background states.
On the other hand, a quasi-one-dimensional approximate system is often used to simplify the two-dimensional steady supersonic combustion model (cf. \cite{menshov}). The error between these two systems is estimated. Finally, given a piecewise $C^{1,\alpha}$-solution containing a contact discontinuity with piecewise constant states on the entrance of the nozzle, we can show that the solution is the piecewise constant states with a straight contact discontinuity.}
\end{abstract}

\maketitle


\section{Introduction}\setcounter{equation}{0}
In this paper, we are concerned with the steady supersonic combustion flow going through a two-dimensional finitely long nozzle with a contact discontinuity. As a typical elementary nonlinear wave phenomenon in gas dynamics, a contact discontinuity is often generated in nozzles. For example, a vehicle flying with a supersonic/ hypersonic speed will have a contact discontinuity in the scramjet nozzle (see \cite{courant-friedrichs, li-xu-yu-lv-cheng}). A rigorous mathematical analysis of the steady contact discontinuity in nozzles plays an important role in aerodynamics.
Mathematically, the study of the contact discontinuity for steady compressible flows can be classified into three categories: subsonic-subsonic flow, supersonic-supersonic flow and subsonic-supersonic flow (\emph{i.e.}, transonic flow).
Much mathematical progress has been made on these topics recently.
For the subsonic contact discontinuity in a nozzle, Bae \cite{bae} and Bae-Park \cite{bae-park1,bae-park2} established
the stability of straight subsonic contact discontinuity in two-dimensional almost straight infinitely long nozzles, in two-dimensional straight semi-infinitely long nozzles, and for axial symmetric flow in three dimensional axial symmetric nozzles. 
Chen-Huang-Wang-Xiang \cite{chen-huang-wang-xiang} obtained the existence and uniqueness of the two-dimensional steady contact discontinuity solution in general nozzles without any smallness assumptions. For the supersonic flow in nozzles, it will generally blow up if the flow is not constant and the nozzle are long enough (see \cite{huang-kuang-wang-xiang1}). So 
special structures of the nozzle, such as expanding nozzle (see \cite{chen-qu, lai,xu-yin2}), are needed to make sure that singularities can not be generated.
Recently, we \cite{huang-kuang-wang-xiang1} established the stability of the steady supersonic contact discontinuity governed by the full Euler equations in a two-dimensional finitely long nozzle.
For the transonic contact discontinuity in a nozzle, we \cite{huang-kuang-wang-xiang2} established the stability of such steady structure in a two-dimensional finitely long nozzle. {See also \cite{weng-zhang-1, weng-zhang-2, weng-zhang-3} for more recent progresses on subsonic and supersonic contact discontinuity in two or
three-dimensional axial symmetric finitely long nozzles.}
Meanwhile, great progress has also been made on other important problems involving steady contact discontinuities recently. One can refer \cite{wang-yu} for local nonlinear stability of three-dimensional steady contact discontinuity, \cite{chen-xin-zang} for subsonic Euler flows past an airfoil, \cite{cheng-du,cheng-du-wang,cheng-du-wang-2,cheng-du-xiang,cheng-du-zhang,du-wang,shi-tang-xie} for compressible subsonic jet flow, \cite{chensx1,chensx2, chen-hu-fang, fang-wang-yuan} for Mach reflections, \cite{chen-kukreja-yuan,pei-xiang,qu-xiang,wang-yuan} for supersonic/transonic Euler flows past a convex cornered wedge, and \cite{chen-kukreja,chen-zhang-zhu,xiang-zhang-zhao} for supersonic contact discontinuity over a wall.
Note that the effects of mass-additions and heat addition (rejection), as well as frictions in \cite{gao-yuan, gao-liu-yuan, yuan-zhao} on the stability of some steady flow structures 
are to some extent analogous to the combustion Euler flows considered in this paper.

\subsection{Mathematical problems and main results}
In the Euclidean coordinates, the steady inviscid compressible combustion Euler equations in two dimensions are
\begin{eqnarray}\label{eq:1.1}
\begin{cases}
\partial_x(\rho u)+\partial_y(\rho v)=0, \\
\partial_{x}(\rho u^{2}+p)+\partial_{y}(\rho uv)=0, \\
\partial_{x}(\rho uv)+\partial_{y}(\rho v^{2}+p)=0, \\
\partial_{x}\big(\rho u(E+\frac{p}{\rho})\big)+\partial_{y}\big(\rho v(E+\frac{p}{\rho})\big)=0,\\
\partial_{x}(\rho uY)+\partial_{y}(\rho vY)=-\rho\phi(T)Y,
\end{cases}
\end{eqnarray}
where
$(u, v)$, $\rho$, $p$ and $T$ are the velocity, density, pressure and temperature, respectively,
$\phi$ is the reaction rate which is a function of $T$,
$Y$ denotes the mass fraction of unburned gas with $Y\in[0,1]$, and $E$ is the total energy of the following form
\begin{eqnarray}\label{eq:1.2}
E=\frac{u^2+v^2}{2}+e+\mathfrak{q}_0Y.
\end{eqnarray}
Here $\mathfrak{q}_0>0$ is the special binding energy of the unburned gas and $e$ is the internal energy. The thermodynamic relation for $p$, $\rho$, $T$ and the entropy $S$ (cf. \cite{courant-friedrichs}) is
\begin{eqnarray}\label{eq:1.3}
de=\frac{p}{\rho^2}d\rho+ TdS.
\end{eqnarray}
For the ideal polytropic gas, we have the constitutive relations from \eqref{eq:1.3}: 
\begin{eqnarray}\label{eq:1.4}
p=A(S)\rho^{\gamma},\quad\  e=\textsf{c}_{\nu}T,\quad\ T=\frac{p}{\mathcal{R}\rho},\quad\ \gamma=1+\frac{\mathcal{R}}{\textsf{c}_{\nu}},\quad\ A(S)=(\gamma-1)\textrm{e}^{\frac{S-S_0}{\textsf{c}_{\nu}}},
\end{eqnarray}
where $S_0$, $\textsf{c}_{\nu}$, and $\mathcal{R}$ are positive constants.
The sonic speed of the ideal polytropic gas is
\begin{eqnarray}\label{eq:1.5}
c=\sqrt{\frac{\gamma p}{\rho}},
\end{eqnarray}
and the Mach number is given by
\begin{eqnarray}\label{eq:1.6}
M=\frac{\sqrt{u^2+v^2}}{c}.
\end{eqnarray}

We call the flow  supersonic if $M>1$, subsonic if $M<1$, and sonic if $M=1$. For supersonic flow, system \eqref{eq:1.1} is a hyperbolic system of balance law.
For subsonic flow, it is of hyperbolic-elliptic composited type. And for transonic flow, it is of hyperbolic-elliptic composited and mixed type.

Set $U\triangleq(u,v,p,\rho, Y)^{\top}$. For solutions $U\in C^1$, we can further derive from \eqref{eq:1.1} and \eqref{eq:1.4} that
\begin{eqnarray}\label{eq:1.7}
 \partial_{x}(\rho u A(S))+\partial_{y}(\rho v A(S))=(\gamma-1)\mathfrak{q}_0 \rho^{2-\gamma}\phi(T)Y,\quad\ \partial_{x}(\rho u B)+\partial_{y}(\rho v B)=\mathfrak{q}_0 \rho\phi(T)Y,
\end{eqnarray}
where $B$ is the Bernoulli's function defined by
\begin{eqnarray}\label{eq:1.8}
B\triangleq\frac{u^2+v^2}{2}+\frac{\gamma p}{(\gamma-1)\rho}.
\end{eqnarray}

\begin{figure}[ht]
\begin{center}
\begin{tikzpicture}[scale=0.9]

\draw [line width=0.03cm](-3.0,0.4)to[out=10, in=-165](3.2,0.4);
\draw [line width=0.03cm][red][dashed](-3.0,-0.55)to[out=10, in=-165](3.21,-0.7);
\draw [line width=0.03cm](-3.0,-1.7)to[out=-15, in=170](3.2,-1.9);

\draw [thin][->](1.0,-0.3)to[out=--10, in=-165](2.0,-0.3);
\draw [thin][->](1.0,-1.3)to[out=-18, in=-185](2.0,-1.3);

\draw [line width=0.02cm][black](-3.0,-0.45)--(-3.0,0.4);
\draw [line width=0.02cm][blue](-3.0,-1.7)--(-3.0,-0.45);
\draw [thin](3.2,-1.9) --(3.2,0.4);

\draw [thin][->] (-3.1,0) --(-3.7,0.3);
\draw [thin][->] (-3.1,-1.0) --(-3.7,-1.2);
\draw [thin][->] (1.5,0.4) --(2.2,0.8);
\draw [thin][->] (2.8,-0.3) --(3.7,0.1);
\draw [thin][->] (1.5,-1.9) --(2.4,-2.1);

\node at (-3.3,-0.55) {$O$};

\node at (-0.2,-0.3) {$\mathcal{N}_{+}$};
\node at (-0.2,-1.2) {$\mathcal{N}_{-}$};
\node at (-4.0,0.3) {$\Gamma^{+}_{\textrm{in}}$};
\node at (-4.0,-1.2) {$\Gamma^{-}_{\textrm{in}}$};
\node at (2.5,0.8) {$\Gamma_{+}$};
\node at (4.05,0.1) {$\Gamma_{\textrm{cd}}$};
\node at (2.8,-2.2) {$\Gamma_{-}$};
\end{tikzpicture}
\end{center}
\caption{Supersonic Combustion Contact Discontinuity in a Finitely Long Nozzle}\label{fig1.1}
\end{figure}
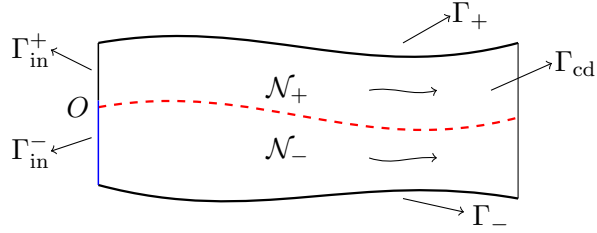

As shown in Fig. \ref{fig1.1}, we suppose that the steady supersonic combustion Euler flow is divided by a contact discontinuity into two layers and goes through a two-dimensional finitely long nozzle from left to right.
Let $g_{\pm}(x)$ be the given functions of $x\in[0,L]$ with $g_{+}>0$ and $g_{-}<0$. Denote by
\begin{eqnarray*}
\mathcal{N}\triangleq\big\{(x,y)\in \mathbb{R}^{2}|\ x\in(0,L),\ g_{-}(x)<y<g_{+}(x)\big\},
\end{eqnarray*}
the two-dimensional nozzle with its walls being described as
\begin{eqnarray*}
\Gamma_{+}\triangleq\big\{(x,y)\in \mathbb{R}^{2}|\ x\in(0,L),\ y=g_{+}(x)\big\},\quad \Gamma_{-}\triangleq\big\{(x,y)\in \mathbb{R}^{2}|\ x\in(0,L),\ y=g_{-}(x)\big\}.
\end{eqnarray*}

Suppose that the contact discontinuity in $\mathcal{N}$ is given by
\begin{eqnarray*}
\Gamma_{\textrm{cd}}\triangleq\big\{(x,y)\in \mathbb{R}^{2}|\  x\in(0,L),\ y=g_{\textrm{cd}}(x)\big\},
\end{eqnarray*}
where $g_{\textrm{cd}}(x)\in (g_{-}(x), g_{+}(x))$ for all $x\in[0,L]$ with $g_{\textrm{cd}}(0)=0$.
Then nozzle $\mathcal{N}$ is divided into two supersonic regions $\mathcal{N}_{\pm}$ as 
\begin{eqnarray*}
\mathcal{N}_{+}\triangleq\mathcal{N}\cap\big\{ g_{\rm cd}(x)<y<g_{+}(x)\big\},\quad\
\mathcal{N}_{-}\triangleq\mathcal{N}\cap\big\{g_{-}(x)<y<g_{\rm cd}(x)\big\}.
\end{eqnarray*}
The corresponding entrances of $\mathcal{N}_{\pm}$ are defined by
\begin{eqnarray*}
\Gamma^{+}_{\rm in}\triangleq\big\{(x,y)\in \mathbb{R}^{2}|\ x=0,\ 0<y<g_{+}(0) \big\},\quad\
\Gamma^{-}_{\rm in}\triangleq\big\{(x,y)\in \mathbb{R}^{2}|\ x=0,\ g_{-}(0)<y<0 \big\}.
\end{eqnarray*}

Denote by $U_{\pm}\triangleq(u_{\pm},v_{\pm}, p_{\pm},\rho_{\pm}, Y_{\pm})^{\top}$  the solutions to system \eqref{eq:1.1} in $\mathcal{N}_{\pm}$, respectively.
Then, along the walls $\Gamma_{\pm}$, $U_{\pm}$ satisfy
\begin{eqnarray}\label{eq:1.9}
(u_{\pm}, v_{\pm})\cdot \boldsymbol{\nu}_{\pm}=0, \qquad \mbox{on}\quad\ \Gamma_{\pm},
\end{eqnarray}
where $\boldsymbol{\nu}_{\pm}=\frac{(\mp g'_{\pm}, \pm1)}{\sqrt{1+g'^2_{\pm}}}$ are the unit outer normal vectors to the walls $\Gamma_{\pm}$.

On $\Gamma_{\textrm{cd}}$, it follows from the \emph{Rankine-Hugoniot} conditions that $U_{\pm}$ satisfy
\begin{eqnarray}\label{eq:1.10}
(u_{+}, v_{+})\cdot \boldsymbol{\nu}_{\textrm{cd}}=(u_{-}, v_{-})\cdot \boldsymbol{\nu}_{\textrm{cd}}=0,\qquad p_{+}=p_{-},
\quad \mbox{on} \quad \Gamma_{\textrm{cd}},
\end{eqnarray}
where $\boldsymbol{\nu}_{\textrm{cd}}=\frac{(-g'_{\textrm{cd}}(x),1)}{\sqrt{1+g'^2_{\textrm{cd}}(x)}}$ is {the} unit normal vector to $\Gamma_{\textrm{cd}}$.

At the entrance of the nozzle, we {impose} the conditions: 
\begin{eqnarray}\label{eq:1.11}
U_{\pm}=U^{\pm}_{\textrm{in}}(y)\triangleq\big(u^{\pm}_{\textrm{in}}, v^{\pm}_{\textrm{in}}, p^{\pm}_{\textrm{in}},\rho^{\pm}_{\textrm{in}}, Y^{\pm}_{\textrm{in}}\big)^{\top}(y),
\qquad \mbox{on}\quad \Gamma^{\pm}_{\textrm{in}},
\end{eqnarray}
where
\begin{eqnarray}\label{eq:1.12}
\sqrt{(u^{\pm}_{\textrm{in}})^2+(v^{\pm}_{\textrm{in}})^2}>c^{\pm}_{\textrm{in}}=\sqrt{\frac{\gamma p^{\pm}_{\textrm{in}}}{\rho^{\pm}_{\textrm{in}}}},\quad\ Y^{\pm}_{\textrm{in}}\in [0,1],  \qquad \mbox{on}\quad \Gamma^{\pm}_{\textrm{in}}.
\end{eqnarray}
Moreover, we also impose the following compatibility conditions for the upper and lower incoming supersonic flows at the corner points $O=(0,0)$ and $P_{\pm}=(0,g_{\pm}(0))$, respectively,
{\small \begin{eqnarray}\label{eq:1.13}
\begin{cases}
\Big(\frac{v^{+}_{\textrm{in}}}{u^{+}_{\textrm{in}}}\Big)(0)=\Big(\frac{v^{-}_{\textrm{in}}}{u^{-}_{\textrm{in}}}\Big)(0),\quad
p^{+}_{\textrm{in}}(0)=p^{-}_{\textrm{in}}(0), \\
\Big[\frac{(c^{+}_{\rm in})^2 v^{+}_{\rm in}}
{u^{+}_{\rm in}((u^{+}_{\rm in})^{2}-(c^{+}_{\rm in})^{2})}\big(\frac{v^{+}_{\rm in}}{u^{+}_{\rm in}}\big)'\Big](0)+\Big[\frac{(u^{+}_{\rm in})^2+(v^{+}_{\rm in})^2-(c^{+}_{\rm in})^2}
{\rho^{+}_{\rm in}(u^{+}_{\rm in})((u^{+}_{\rm in})^{2}-(c^{+}_{\rm in})^{2})}(p^{+}_{\rm in})'\Big](0)-\Big[\frac{(\gamma-1)\mathfrak{q}_{0}\phi(T^{+}_{\rm in})
v^{+}_{\rm in}}{(u^{+}_{\rm in})^2((u^{+}_{\rm in})^2-(c^{+}_{\rm in})^2)}Y^{+}_{\rm in}\Big](0) \\
\quad =\Big[\frac{(c^{-}_{\rm in})^2 v^{-}_{\rm in}}
{u^{-}_{\rm in}((u^{-}_{\rm in})^{2}-(c^{-}_{\rm in})^{2})}\big(\frac{v^{-}_{\rm in}}{u^{-}_{\rm in}}\big)'\Big](0)+\Big[\frac{(u^{-}_{\rm in})^2+(v^{-}_{\rm in})^2-(c^{-}_{\rm in})^2}
{\rho^{-}_{\rm in}(u^{-}_{\rm in})((u^{-}_{\rm in})^{2}-(c^{-}_{\rm in})^{2})}(p^{-}_{\rm in})'\Big](0)-\Big[\frac{(\gamma-1)\mathfrak{q}_{0}\phi(T^{-}_{\rm in})
v^{-}_{\rm in}}{(u^{-}_{\rm in})^2((u^{-}_{\rm in})^2-(c^{-}_{\rm in})^2)}Y^{-}_{\rm in}\Big](0), \\
\Big[\frac{(c^{+}_{\rm in})^2 (u^{+}_{\rm in})^3}
{u^{+}_{\rm in}((u^{+}_{\rm in})^{2}-(c^{+}_{\rm in})^{2})}\big(\frac{v^{+}_{\rm in}}{u^{+}_{\rm in}}\big)'\Big](0)+\Big[\frac{(c^{+}_{\rm in})^2 v^{+}_{\rm in}}
{u^{+}_{\rm in}((u^{+}_{\rm in})^{2}-(c^{+}_{\rm in})^{2})}(p^{+}_{\rm in})'\Big](0)-\Big[\frac{(\gamma-1)\mathfrak{q}_{0}\phi(T^{+}_{\rm in})
\rho^{+}_{\rm in}u^{+}_{\rm in}}{(u^{+}_{\rm in})^2-(c^{+}_{\rm in})^2}Y^{+}_{\rm in}\Big](0) \\
\quad =\Big[\frac{(c^{-}_{\rm in})^2 (u^{-}_{\rm in})^3}
{u^{-}_{\rm in}((u^{-}_{\rm in})^{2}-(c^{-}_{\rm in})^{2})}\big(\frac{v^{-}_{\rm in}}{u^{-}_{\rm in}}\big)'\Big](0)+\Big[\frac{(c^{-}_{\rm in})^2 v^{-}_{\rm in}}
{u^{-}_{\rm in}((u^{-}_{\rm in})^{2}-(c^{-}_{\rm in})^{2})}(p^{-}_{\rm in})'\Big](0)-\Big[\frac{(\gamma-1)\mathfrak{q}_{0}\phi(T^{-}_{\rm in})
\rho^{-}_{\rm in}u^{-}_{\rm in}}{(u^{-}_{\rm in})^2-(c^{-}_{\rm in})^2}Y^{-}_{\rm in}\Big](0),
\end{cases}
\end{eqnarray}
}
and
\begin{eqnarray}\label{eq:1.14}
\begin{cases}
g'_{\pm}(0)=\Big(\frac{v^{\pm}_{\textrm{in}}}{u^{\pm}_{\textrm{in}}}\Big)\big(g_{\pm}(0)\big),\\
g''_{\pm}(0)=-\Big[\frac{(c^{\pm}_{\rm in})^2 v^{\pm}_{\rm in}}
{u^{\pm}_{\rm in}((u^{\pm}_{\rm in})^{2}-(c^{\pm}_{\rm in})^{2})}\big(\frac{v^{\pm}_{\rm in}}{u^{\pm}_{\rm in}}\big)'\Big](g_{\pm}(0))-\Big[\frac{(u^{\pm}_{\rm in})^2+(v^{\pm}_{\rm in})^2-(c^{\pm}_{\rm in})^2}
{\rho^{\pm}_{\rm in}(u^{\pm}_{\rm in})^2((u^{\pm}_{\rm in})^{2}-(c^{\pm}_{\rm in})^{2})}(p^{\pm}_{\rm in})'\Big](g_{\pm}(0)) \\
\qquad\qquad+\Big[\frac{(\gamma-1)\mathfrak{q}_{0}\phi(T^{+}_{\rm in})
v^{+}_{\rm in}}{(u^{+}_{\rm in})^2((u^{+}_{\rm in})^2-(c^{+}_{\rm in})^2)}Y^{+}_{\rm in}\Big](g_{\pm}(0)).
\end{cases}
\end{eqnarray}

Now we can formulate the problem of a two-dimensional supersonic combustion contact discontinuity in nozzles into the following nonlinear free boundary value problem. 

\emph{$\mathbf{Problem\ 1.1.}$}\ For given datum $U^{\pm}_{\textrm{in}}$ and $g_{\pm}(x)$ satisfying \eqref{eq:1.12}, \eqref{eq:1.13} and \eqref{eq:1.14},
try to find  $U_{\pm}$ with a single contact discontinuity $\Gamma_{\textrm{cd}}$ in the nozzle
$\mathcal{N}$, which issues from the corner point $O$ at the entrance such that

\rm(i) $\mathcal{N}$ is divided by the contact discontinuity $\Gamma_{\textrm{cd}}$ into two supersonic domains $\mathcal{N}_{+}$ and $\mathcal{N}_{-}$;

\rm(ii) In the upper supersonic region $\mathcal{N}_{+}$, $U_{+}$ satisfies Euler equations \eqref{eq:1.1}, initial conditions \eqref{eq:1.11} at the entrance $\Gamma^{+}_{\textrm{in}}$ and boundary conditions \eqref{eq:1.9} on the upper nozzle wall $\Gamma_{+}$;

\rm(iii) In the lower supersonic region $\mathcal{N}_{-}$, $U_{-}$ satisfies Euler equations \eqref{eq:1.1}, initial conditions \eqref{eq:1.11} at the entrance $\Gamma^{-}_{\textrm{in}}$ and boundary conditions \eqref{eq:1.9} on the lower nozzle wall $\Gamma_{-}$;

\rm(iv) On $\Gamma_{\textrm{cd}}$, $U_{+}$ and $U_{-}$ satisfy conditions \eqref{eq:1.10}.

\smallskip
As a special solution to \emph{Problem\ 1.1}, we consider the case that a uniform supersonic incoming flow with a straight contact discontinuity at $y\equiv0$ goes through a two-dimensional finitely long straight nozzle with $g_{\pm}(x)\triangleq\underline{g}_{\pm}\equiv\pm1$ for $x\in[0,L]$ (see Fig.\ref{fig1.2} below), and
\begin{eqnarray}\label{eq:1.15}
U^{\pm}_{\textrm{in}}=\underline{U}^{\pm}_{\textrm{in}}\triangleq(\underline{u}^{\pm}, 0, \underline{p}^{\pm}, \underline{\rho}^{\pm},0)^{\top} \qquad \mbox{on}\qquad \underline{\Gamma}^{\pm}_{\textrm{in}},
\end{eqnarray}
where
\begin{eqnarray}\label{eq:1.16}
\underline{\rho}^{\pm}>0, \qquad \underline{u}^{\pm}>\underline{c}^{\pm}, \qquad \underline{p}^{+}=\underline{p}^{-} \qquad \mbox{for}\quad \underline{c}^{\pm}\triangleq\sqrt{\frac{\gamma\underline{p}^{\pm}}{\underline{\rho}^{\pm}}},
\end{eqnarray}
and 
\begin{eqnarray*}
\underline{\Gamma}^{+}_{\textrm{in}}\triangleq\big\{(x,y)\in\mathbb{R}^2|\ x=0,\ 0<y<1\big\},\quad\ \underline{\Gamma}^{-}_{\textrm{in}}\triangleq\big\{(x,y)\in\mathbb{R}^2|\ x=0,\ -1<y<0\big\}.
\end{eqnarray*}

\begin{figure}[ht]
\begin{center}
\begin{tikzpicture}[scale=1.0]
\draw [line width=0.04cm](-1.5,-1.5) --(3.2,-1.5);
\draw [line width=0.04cm](-1.5,0.4)--(3.2,0.4);
\draw [line width=0.04cm][red][dashed](-1.5,-0.45)--(3.2,-0.45);
\draw [thin](3.2,-1.5) --(3.2,0.4);

\draw [line width=0.02cm][black](-1.5,-0.45)--(-1.5,0.4);
\draw [line width=0.02cm][blue](-1.5,-1.5)--(-1.5,-0.45);

\draw [thin][->] (-1.6,0.0) --(-2.2,0.2);
\draw [thin][->] (-1.6,-0.9) --(-2.2,-1.0);

\draw [thin][->] (1.5,0.5) --(2.2,0.8);
\draw [thin][->] (2.8,-0.3) --(3.7,0.1);
\draw [thin][->] (1.5,-1.6) --(2.4,-1.9);

\draw [thin][->] (1.2,-0.1) --(2.2,-0.1);
\draw [thin][->] (1.2,-1.0) --(2.2,-1.0);

\node at (-1.8,-0.45) {$O$};

\node at (0.1,-0.1) {$\underline{\mathcal{N}}_{+}$};
\node at (0.1,-1.1) {$\underline{\mathcal{N}}_{-}$};
\node at (-2.5,0.3) {$\underline{\Gamma}^{+}_{\textrm{in}}$};
\node at (-2.5,-1.0) {$\underline{\Gamma}^{-}_{\textrm{in}}$};
\node at (2.6,0.8) {$\underline{\Gamma}_{+}$};
\node at (4.1,0.1) {$\underline{\Gamma}_{\textrm{cd}}$};
\node at (2.7,-1.9) {$\underline{\Gamma}_{-}$};
\end{tikzpicture}
\end{center}
\caption{Special Solution in a Finitely Long Straight Nozzle}\label{fig1.2}
\end{figure}
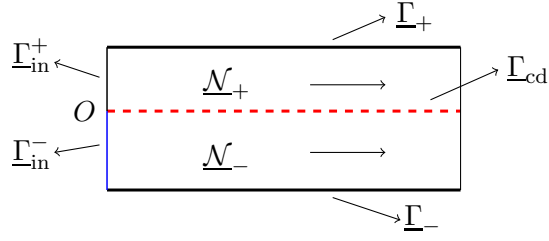
In this case, the two-dimensional finitely long straight nozzle can be described as
\begin{equation*}
\underline{\mathcal{N}}\triangleq\big\{(x,y)\in \mathbb{R}^{2}|\  0<x<L,\ -1<y<1 \big\},
\end{equation*}
with its boundaries being given by
\begin{eqnarray*}
\underline{\Gamma}_{+}\triangleq\big\{(x,y)\in \mathbb{R}^{2}|\ 0<x<L,\ y=1\big\},\quad\ \underline{\Gamma}_{-}\triangleq\big\{(x,y)\in \mathbb{R}^{2}|\ 0<x<L,\ y=-1\big\}.
\end{eqnarray*}
Let the straight contact discontinuity be
\begin{eqnarray}\label{eq:1.17}
\underline{\Gamma}_{\textrm{cd}}\triangleq\big\{(x,y)\in \mathbb{R}^{2}|\ 0<x<L,\ y=0 \big\},
\end{eqnarray}
which divides the nozzle $\underline{\mathcal{N}}$ into upper and lower supersonic regions as
\begin{equation*}
\underline{\mathcal{N}}_{+}\triangleq\underline{\mathcal{N}}\cap \{ 0<y<1 \}, \quad\  \underline{\mathcal{N}}_{-}\triangleq\underline{\mathcal{N}}\cap \{ -1<y<0 \}.
\end{equation*}
Finally, we define
\begin{eqnarray}\label{eq:1.18}
\underline{U}(x,y)\triangleq\left\{
\begin{array}{llll}
\underline{U}_{+}=\big(\underline{u}^{+}, 0,\underline{p}^{+},\underline{\rho}^{+}, 0\big)^{\top}, \qquad (x, y)\in \underline{\mathcal{N}}_{+},\\
\underline{U}_{-}=\big(\underline{u}^{-}, 0,\underline{p}^{-},\underline{\rho}^{-}, 0\big)^{\top}, \qquad (x, y)\in \underline{\mathcal{N}}_{-}.
\end{array}
\right.
\end{eqnarray}

Obviously, $\underline{U}_{\pm}$ satisfy equations \eqref{eq:1.1} and conditions \eqref{eq:1.9}-\eqref{eq:1.11}. 
So we call $(\underline{U},\underline{\Gamma}_{\textrm{cd}})$ a \emph{background solution} to \emph{Problem\ 1.1}. Obviously, we can extend $\underline{U}_{+}$ and $\underline{U}_{-}$ onto $(0,L)\times(-\infty,+\infty)$, respectively. Without loss of  the generality, we will still write $\underline{U}_{+}$ and $\underline{U}_{-}$ as the \emph{extended background solution}.
In this paper, we will consider a non-uniformly supersonic combustion  incoming flow with a contact discontinuity moving through a two-dimensional finitely long nozzle and show the stability of the \emph{background solution} $(\underline{U},\underline{\Gamma}_{\textrm{cd}})$ to give a positive answer to \emph{Problem 1.1}.
In order to complete it, let us introduce the standard H\"{o}lder space 
which we will work on throughout this paper.

For any function $\mathfrak{u}: \mathcal{N}\mapsto \mathbb{R}$, any integer $k\geq 0$, $\alpha\in (0,1]$, and any two points $\textbf{x}=(x,y)$ and $\mathbf{x}'=(x',y')$ in $\mathcal{N}$,
we define the standard semi-H\"{o}lder norm and H\"{o}lder norm as
\begin{eqnarray}\label{eq:1.19}
\begin{split}
&\| \mathfrak{u}\|_{k,0;\mathcal{N}}
 =\sum_{0\leq|{\color{black}\boldsymbol{\beta}}|\leq k}\sup_{\mathbf{x}\in \mathcal{N}}|D^{{\color{black}\boldsymbol{\beta}}}\mathfrak{u}(\mathbf{x})|,\quad
[\mathfrak{u}]_{k,\alpha;\mathcal{N}}
=\sum_{|{\color{black}\boldsymbol{\beta}}|=k}\sup_{{\mathbf{x}, \mathbf{x}'\in \mathcal{N} \atop \mathbf{x}\neq \mathbf{x}'}}\frac{|D^{{\color{black}\boldsymbol{\beta}}}\mathfrak{u}(\mathbf{x})-D^{{\color{black}\boldsymbol{\beta}}}\mathfrak{u}(\mathbf{x}')|}
 {|\mathbf{x}-\mathbf{x}'|^{\alpha}},
\end{split}
\end{eqnarray}
and
\begin{eqnarray}\label{eq:1.20}
\|\mathfrak{u}\|_{k,\alpha;\mathcal{N}}=\|\mathfrak{u}\|_{k,0;\mathcal{N}}+[\mathfrak{u}]_{k,\alpha,\mathcal{N}},
\end{eqnarray}
where ${\color{black}\boldsymbol{\beta}}=(\beta_{1}, \beta_{2})$ represents a multi-index for $\beta_{1}, \beta_2 \ge 0$ with $|{\color{black}\boldsymbol{\beta}}|=\beta_{1}+\beta_{2}$ and $D^{{\color{black}\boldsymbol{\beta}}}=\partial^{\beta_{1}}_{x}\partial^{\beta_{2}}_{y}$.
Then, we denote by $C^{k, \alpha}(\mathcal{N})$ the space of functions whose $\|\cdot\|_{k,\alpha;\mathcal{N}}$-norm is finite.  Furthermore, for the vector-value functions
$\boldsymbol{\mathfrak{u}}=(\mathfrak{u}_{1}, \mathfrak{u}_{2},\cdots, \mathfrak{u}_{n})$, define
\begin{eqnarray}\label{eq:1.21}
\|\boldsymbol{\mathfrak{u}}\|_{k,\alpha;\mathcal{N}}=\sum^{n}_{j=1}\|\mathfrak{u}_{j}\|_{k,\alpha;\mathcal{N}}.
\end{eqnarray}

Then, the first main result in this paper is on the existence and uniqueness of contact discontinuity solutions.
\begin{theorem}\label{thm:1.1}
Suppose that the reaction rate function $\phi(T)$ is $C^{1,1}$ with respect to $T$ for all $T>0$. Then, for any given $\alpha\in(0,1)$ and $L>0$, there exist positive constants $\epsilon_0$, $C_0$ and $C_{1}$ depending only on $\underline{U}$, $\alpha$ and $L$ such that if the incoming flow and boundaries satisfy
\begin{eqnarray}\label{eq:1.22}
\begin{split}
\sum_{k=\pm}\Big(\|U^{k}_{\rm in}-\underline{U}^{k}_{\rm{in}}\|_{1,\alpha; \Gamma^{k}_{\rm in}}+\|g_k-\underline{g}_{k}\|_{2,\alpha; \Gamma_{k}}\Big)<\epsilon,
\end{split}
\end{eqnarray}
for some $\epsilon\in (0,\epsilon_0)$, then \emph{Problem 1.1} admits a unique supersonic combustion contact discontinuity solution $(U_{\pm}, \Gamma_{\rm cd})$ with the following properties:

{\rm(1)}.\ Solution $U$ consists of two smooth supersonic flows $U_{-}\in C^{1,\alpha}(\mathcal{N}_{-})$ and $U_{+}\in C^{1,\alpha}(\mathcal{N}_{+})$
with a contact discontinuity $\Gamma_{\rm cd}$, and satisfies the following estimate
\begin{eqnarray}\label{eq:1.23}
\big\|U_{-}-\underline{U}^{-}_{\rm{in}}\big\|_{1,\alpha; \mathcal{N}_{-}}+\big\|U_{+}-\underline{U}^{+}_{\rm{in}}\big\|_{1,\alpha; \mathcal{N}_{+}}<C_{0}\epsilon.
\end{eqnarray}

{\rm(2)}.\ The contact discontinuity $y=g_{\rm cd}(x)$ is a $C^{2,\alpha}$-streamline satisfying $g_{\rm cd}(0)=0$ and
\begin{eqnarray}\label{eq:1.24}
\big\|g_{\rm cd}\big\|_{2,\alpha; [0,L)}<C_{1}\epsilon.
\end{eqnarray}
\end{theorem}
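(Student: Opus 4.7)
The plan is to reformulate \emph{Problem 1.1} as a fixed-boundary problem by a partial hodograph (Lagrangian) transformation, reduce the system to a hyperbolic $2\times 2$ block coupled to three transport equations, and solve the reduced problem by a Schauder fixed-point argument whose structure is dictated by the equation for the mass fraction $Y$. The final step is to invert the Lagrangian map and read off \eqref{eq:1.23}--\eqref{eq:1.24}.

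\textbf{Step 1 (Lagrangian straightening).} In each $\mathcal{N}_\pm$ introduce a stream function $\psi_\pm$ with $\partial_y\psi_\pm=\rho_\pm u_\pm$, $\partial_x\psi_\pm=-\rho_\pm v_\pm$, normalized so that $\psi_\pm=0$ on the unknown streamline $\Gamma_{\rm cd}$ (this is consistent since \eqref{eq:1.10} says $\Gamma_{\rm cd}$ is itself a streamline). In the Lagrangian variables $(x,\eta)=(x,\psi_\pm)$ the free boundary is pinned at $\eta=0$, the walls become the fixed lines $\eta=\pm m_\pm$ where $m_\pm$ are the mass fluxes determined by \eqref{eq:1.11}, and $y=y(x,\eta)$ is recovered by one integration. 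The continuity equation reduces to $\partial_\eta y=1/(\rho u\cos\theta)$ with $\theta=\arctan(v/u)$.

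\textbf{Step 2 (Reduced system).} Using \eqref{eq:1.7} and \eqref{eq:1.8}, recast the Euler block for $U_\pm$ in Lagrangian coordinates as a first-order inhomogeneous hyperbolic $2\times 2$ system for $(\theta,p)$ with both characteristic slopes $\pm c\sqrt{M^2-1}/(u^2-c^2)\cdot(\rho u)^{-1}$ strictly separated under \eqref{eq:1.16}, coupled to three transport equations along $x$ for Bernoulli's function $B$, the entropy quantity $A(S)$, and the mass fraction $Y$, with sources proportional to $\rho\phi(T)Y$ (from \eqref{eq:1.7} and \eqref{eq:1.1}$_5$). The boundary conditions \eqref{eq:1.9} become $\theta|_{\eta=\pm m_\pm}=g'_\pm(x)$ after composition with the Lagrangian map, and \eqref{eq:1.10} collapses to the single transmission condition $[p]_{\eta=0}=0$ since the kinematic part is automatic.

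\textbf{Step 3 (Iteration scheme driven by $Y$).} Observe that along each streamline $Y$ obeys $u\,\partial_x Y=-\phi(T)Y$, so in Lagrangian variables it is an ODE whose unique solution satisfies $Y\equiv O(\varepsilon)$ whenever the incoming data do. Fix $\alpha\in(0,1)$, $K$ to be chosen, and the closed convex set
\begin{equation*}
\mathcal{K}_\varepsilon=\Big\{(\theta,p,B,A,Y)_\pm:\ \|(\theta,p,B,A,Y)_\pm-\underline{(\theta,p,B,A,Y)}_\pm\|_{1,\alpha}\le K\varepsilon\Big\}.
\end{equation*}
For $(\tilde\theta,\tilde p,\tilde B,\tilde A,\tilde Y)\in\mathcal{K}_\varepsilon$, freeze the combustion source with tilded quantities, solve the source-frozen $2\times 2$ hyperbolic system for $(\theta,p)$ with data \eqref{eq:1.11} on $\eta=\pm m_\pm$ and the two-sided matching $[p]=0$ at $\eta=0$, then integrate the three decoupled ODEs for $(B,A,Y)$ along $x$ with initial data \eqref{eq:1.11}. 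This defines an operator $\mathcal T$ on $\mathcal K_\varepsilon$; because $Y$ is treated as a frozen driver in the hyperbolic step, the transmission condition at $\eta=0$ involves only pressure, so the two half-strips can be solved simultaneously by the usual characteristic construction.

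\textbf{Step 4 (Closed $C^{1,\alpha}$-estimates and Schauder).} Derive $C^{1,\alpha}$-bounds for the hyperbolic step by tracing along the two characteristic families from $x=0$, reflecting off the walls and across $\eta=0$ with loss only in the tangential component of $\theta$ (the pressure is continuous). Since the source term is $O(\varepsilon+\tilde Y)=O(\varepsilon)$ and $L$ is fixed, a Gronwall argument of the type developed in \cite{huang-kuang-wang-xiang1,huang-kuang-wang-xiang2}, together with the compatibility conditions \eqref{eq:1.13}--\eqref{eq:1.14} that prevent corner singularities, gives
\begin{equation*}
\|\mathcal T(\tilde U)-\underline U\|_{1,\alpha}\le C_0\varepsilon+C_0(K\varepsilon)^2.
\end{equation*}
Picking $K=2C_0$ and $\varepsilon_0$ so small that $C_0K^2\varepsilon_0<1$ yields $\mathcal T(\mathcal K_\varepsilon)\subset\mathcal K_\varepsilon$. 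Continuity of $\mathcal T$ in the weaker $C^{1,\alpha/2}$ topology plus Arzel\`a--Ascoli compactness of $\mathcal K_\varepsilon$ in that topology provide a fixed point by the Schauder theorem.

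\textbf{Step 5 (Uniqueness and inverse transformation).} For two fixed points subtract the equations; the differences satisfy the same hyperbolic/transport structure with $O(\varepsilon)$ coefficients in the inhomogeneities, so a second characteristic estimate shows the operator $\mathcal T$ is an $O(\varepsilon)$-contraction in a Banach space of differences, forcing uniqueness. Finally, since $\rho u>0$ uniformly, the Lagrangian map is a $C^{1,\alpha}$-diffeomorphism; defining $g_{\rm cd}(x):=y(x,0)$ and using $\partial_x y(x,0)=\tan\theta(x,0)$ gives \eqref{eq:1.24} with one extra order of regularity, while \eqref{eq:1.23} is the pull-back of the Lagrangian bound.

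\textbf{Main obstacle.} The central difficulty is closing the $C^{1,\alpha}$ estimate at the characteristic free boundary $\eta=0$: only $[p]=0$ is available, so jumps of $\theta$, $B$, $A$, $Y$ and their tangential derivatives can persist across the contact line, and these feed into the combustion source on both sides. If one tried to iterate on the full primitive vector the source would couple the two sides at every step and one could not decouple the transmission problem; this is precisely why the iteration must be organized so that $Y$ is first frozen (making the combustion source known on each side) and the hyperbolic block is solved with the single coupling $[p]=0$, and only afterwards is $Y$ updated streamline-by-streamline. Quantifying that this freezing does not destroy the $C^{1,\alpha}$ regularity of the source across $\eta=0$ is the technical core of the argument.
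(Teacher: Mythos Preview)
Your overall architecture matches the paper's: Lagrangian straightening, splitting into a $2\times2$ hyperbolic block for $(\omega,p)$ coupled to transport ODEs for $(B,S,Y)$, a Schauder fixed-point argument exploiting that every source carries a factor of $Y$, contraction for uniqueness, and inversion of the Lagrangian map. That part is fine.

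There is, however, a genuine gap in Step~2 and in your ``Main obstacle'' paragraph: the claim that the kinematic part of \eqref{eq:1.10} is automatic and that only the single transmission condition $[p]_{\eta=0}=0$ survives is false. In Lagrangian coordinates the line $\eta=0$ represents $\Gamma_{\rm cd}$ as seen from each side, but nothing forces the two physical curves $y_+(x,0)$ and $y_-(x,0)$ to coincide unless you impose $\theta_+(x,0)=\theta_-(x,0)$ (equivalently $\omega_+=\omega_-$). The paper carries \emph{both} transmission conditions $\omega_+=\omega_-$ and $p_+=p_-$ on $\tilde\Gamma_{\rm cd}$ (see \eqref{eq:2.32} and the linearized version in \eqref{eq:3.2}), and both are essential for well-posedness: at $\eta=0$ one characteristic family enters $\tilde{\mathcal N}_+$ and one enters $\tilde{\mathcal N}_-$, so two coupling relations are needed to determine the two incoming Riemann-type variables. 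With only $[p]=0$ your transmission problem in Step~3 is underdetermined and the characteristic construction cannot close.

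Once you restore the second condition, the analysis near $\eta=0$ is exactly Proposition~\ref{prop:3.4} of the paper: one rewrites the boundary relations as \eqref{eq:prop-3.4-5}, expressing the incoming variables $z^{+,*}_{+}$ and $z^{-,*}_{-}$ as bounded linear combinations of the outgoing ones $z^{-,*}_{+}$ and $z^{+,*}_{-}$, and then runs the Gronwall argument simultaneously in both half-strips. Your description of the iteration (freeze sources, solve hyperbolic block, update ODEs) is a workable variant of the paper's scheme, which instead solves $\delta Y^*$ first (its linearized source $f_{Y_\pm}$ is already a product $\delta\mathcal V_\pm\cdot\delta Y_\pm$), then $(\delta B^*,\delta S^*)$, and only then the hyperbolic pair; either ordering produces the same quadratic gain.
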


\begin{remark}\label{rem:1.1}
A typical example for $\phi(T)$ in Theorem \ref{thm:1.1} is of the Arrhenius type form (cf. \cite{chen-kuang-zhang, chen-wagner, williams, xiang-zhang-zhao}):
\begin{equation}\label{eq:1.25}
\phi(T)=T^{\vartheta}\textrm{e}^{-\frac{\mathcal{E}}{\mathcal{R}T}},
\end{equation}
where $\mathcal{E}$ is the action energy, and $\vartheta$ and $\mathcal{R}$ are positive constants.
\end{remark}

Based on Theorem \ref{thm:1.1}, let us consider the quasi-one-dimensional approximation in nozzle $\mathcal{N}$ as follows.
Denote by $\textrm{A}_{\pm}(x)\triangleq\pm[g_{\pm}(x)-g_{\rm{cd}}(x)]$ the distance between $\Gamma_{\rm cd}$ and $\Gamma_{+}$, and the distance between $\Gamma_{-}$ and $\Gamma_{\rm cd}$, respectively.
Then, we define the integral average of the states $V_{\pm}(x,y)=(u_{\pm}, p_{\pm}, \rho_{\pm}, Y_{\pm})^{\top}$ to \emph{Problem 1.1} with respect to $y$-direction as
\begin{eqnarray}\label{eq:1.26}
\bar{V}_{+}(x)=\frac{1}{\textrm{A}_{+}(x)}\int^{g_{+}(x)}_{g_{\rm cd}(x)}V_{+}(x,y)\dd y,\quad\ \bar{V}_{-}(x)=\frac{1}{\textrm{A}_{-}(x)}\int^{g_{\rm cd}(x)}_{g_{-}(x)}V_{-}(x,y)\dd y.
\end{eqnarray}

Formally, as shown in \cite{menshov}, the integral average of the two-dimensional steady supersonic combustion contact discontinuity flow $\bar{V}_{\pm}$ in regions $\mathcal{N}_{\pm}$
can be approximated by the following quasi-one-dimensional model:
\begin{eqnarray}\label{eq:1.27}
\begin{cases}
\partial_x\big(\rho_{\pm} u_{\pm} \textrm{A}_{\pm}(x)\big)=0, \\
\partial_{x}\big((\rho_{\pm} u^{2}_{\pm}+p_{\pm})\textrm{A}_{\pm}(x)\big)=\textrm{A}'_{\pm}(x)p_{\pm}, \\
\partial_{x}\Big(\big(\frac{1}{2}u^2_{\pm}+\frac{\gamma p_{\pm}}{(\gamma-1)\rho_{\pm}}\big)\rho_{\pm} u_{\pm}\textrm{A}_{\pm}(x)\Big)=\mathfrak{q}_0\textrm{A}_{\pm}(x)\rho_{\pm}\phi(T_{\pm})Y_{\pm},\\
\partial_{x}\big(\rho_{\pm} u_{\pm}Y_{\pm}\textrm{A}_{\pm}(x)\big)=-\textrm{A}_{\pm}(x)\rho_{\pm}\phi(T_{\pm})Y_{\pm}.
\end{cases}
\end{eqnarray}

So, a natural question is whether we can give a rigorous mathematical proof for the validation of the quasi-one-dimensional approximation. More precisely, we propose the following problem:

\smallskip
\emph{$\mathbf{Problem\ 1.2.}$}\ For given datum which is a small perturbation of a constant state, whether can we establish a rigorous mathematical validation
on the quasi-one-dimensional approximation
with a convergence rate with respect to the perturbation of the {data}?

%

\smallskip
A positive answer to \emph{Problem\ 1.2} is the following theorem.
\begin{theorem}\label{thm:1.2}
Suppose that $V^{\rm{A}}_{\pm}(x)=(u^{\rm{A}}_{\pm}, p^{\rm{A}}_{\pm}, \rho^{\rm{A}}_{\pm}, Y^{\rm{A}}_{\pm})^{\top}(x)$ are solutions of system \eqref{eq:1.27} with the initial data $\bar{V}^{\pm}_{\rm in}=(\bar{u}^{\pm}_{\rm in}, \bar{p}^{\pm}_{\rm in}, \bar{\rho}^{\pm}_{\rm in}, \bar{Y}^{\pm}_{\rm in})^{\top}$ defined by
\begin{eqnarray}\label{eq:1.28}
\bar{V}^{+}_{\rm in}=\frac{1}{\rm{A}_{+}(0)}\int^{g_{+}(0)}_{0}V^{+}_{\rm in}(y)\dd y,\qquad \bar{V}^{-}_{\rm in}=\frac{1}{\rm{A}_{-}(0)}\int^{0}_{g_{-}(0)}V^{-}_{\rm in}(y)\dd y.
\end{eqnarray}
Suppose that $\bar{V}_{\pm}(x)$ are the integral average of $V_{\pm}$ in $\mathcal{N}_{\pm}$ as defined in \eqref{eq:1.26}.
Then, there exist constants $\epsilon^{*}_0\in(0, \epsilon_{0})$ and $C_2>0$ depending only on $\underline{U}$, $L$ and $\alpha$ such that if the assumptions of Theorem \ref{thm:1.1} hold for $\epsilon\in(0,\epsilon^*_0)$, then
there holds the estimate that
\begin{eqnarray}\label{eq:1.29}
\big\|\bar{V}_{-}(x)-V^{\rm{A}}_{-}(x)\big\|_{1,\alpha; (0,L)}+\big\|\bar{V}_{+}(x)-V^{\rm{A}}_{+}(x)\big\|_{1,\alpha; (0,L)}<C_{2}\epsilon^2.
\end{eqnarray}
\end{theorem}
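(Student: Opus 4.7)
The strategy is to derive the \emph{exact} averaged system satisfied by $\bar V_\pm(x)$ by integrating the 2D combustion Euler system \eqref{eq:1.1} in the $y$-direction across the corresponding layer, and then to compare it with the quasi-one-dimensional system \eqref{eq:1.27}; the difference will consist entirely of commutator-type terms quadratic in the fluctuations around the mean. I focus on the upper region $\mathcal{N}_+$, the lower one being analogous. For each of the five equations in \eqref{eq:1.1} I would integrate in $y$ over $(g_{\rm cd}(x), g_+(x))$, apply the Leibniz rule to pull $\partial_x$ outside the integral, and use the slip condition \eqref{eq:1.9} on $\Gamma_+$ together with $(u,v)\cdot\boldsymbol{\nu}_{\rm cd}=0$ from \eqref{eq:1.10}, both of which read $v=ug'$ at the corresponding boundary. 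The transport fluxes then telescope exactly, while the pressure-continuity relation $p_+=p_-$ on $\Gamma_{\rm cd}$ is precisely what converts the boundary residual into the geometric source $\textrm{A}'_+(x)\,\bar p_+$ appearing on the right-hand side of the second equation of \eqref{eq:1.27}.

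After this identification, $\bar V_\pm$ is shown to satisfy the system \eqref{eq:1.27}, but with each nonlinear flux $F(V_\pm)$ replaced by its $y$-average $\overline{F(V_\pm)}_\pm$. I would then expand $F(V_\pm) = F(\bar V_\pm) + DF(\bar V_\pm)(V_\pm-\bar V_\pm) + R(x,y)$, where $R=O((V_\pm - \bar V_\pm)^2)$; since $\overline{V_\pm - \bar V_\pm}=0$ by definition, the linear term drops out under the average and only the quadratic remainder survives. Because the background $\underline V_\pm$ is independent of $y$, Theorem \ref{thm:1.1} gives $\|\partial_y V_\pm\|_{0,\alpha;\mathcal{N}_\pm} \leq C\epsilon$, which yields the key pointwise-and-H\"older fluctuation bound
\[
\|V_\pm - \bar V_\pm\|_{0,\alpha;\mathcal{N}_\pm} \leq C\epsilon.
\]
Applying the standard product estimate $\|fg\|_{0,\alpha}\leq C\|f\|_{0,\alpha}\|g\|_{0,\alpha}$ to each remainder and integrating in $y$ concludes that $\bar V_\pm$ satisfies \eqref{eq:1.27} exactly up to a forcing $\mathcal{E}_\pm(x)$ obeying $\|\mathcal{E}_\pm\|_{0,\alpha;(0,L)} \leq C\epsilon^2$.

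Finally, setting $W_\pm := \bar V_\pm - V^{\rm A}_\pm$, the choice \eqref{eq:1.28} gives $W_\pm(0)=0$, and subtracting \eqref{eq:1.27} for $V^{\rm A}_\pm$ yields a linear ODE system
\[
\frac{{\rm d}W_\pm}{{\rm d}x} = \mathcal{A}_\pm(x)\, W_\pm + \mathcal{E}_\pm(x),\qquad W_\pm(0)=0,
\]
whose matrix $\mathcal{A}_\pm$ is obtained by inverting the Jacobians of the one-dimensional fluxes (nondegenerate on the supersonic background by \eqref{eq:1.16}) and has $C^{0,\alpha}$ norm of order $O(1)$ with fluctuation $O(\epsilon)$. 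A Gronwall argument on $[0,L]$ delivers $\|W_\pm\|_{0;(0,L)} \leq C\epsilon^2$, and reading the ODE one more time upgrades this to $\|W_\pm\|_{1,\alpha;(0,L)} \leq C_2\epsilon^2$, which is \eqref{eq:1.29}. The hardest part will be the meticulous book-keeping in the first step: since $g_{\rm cd}$ is itself part of the unknown of the free boundary problem, with $g'_{\rm cd}$ of size only $O(\epsilon)$ and the integration limits varying nonlinearly with $x$, one must verify that \emph{every} non-quadratic boundary residual produced by the Leibniz rule cancels exactly through the slip and Rankine--Hugoniot conditions on $\Gamma_+$ and $\Gamma_{\rm cd}$, because any surviving $O(\epsilon)$ linear residual would destroy the sharp $\epsilon^2$ convergence rate.
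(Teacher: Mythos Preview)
Your strategy coincides with the paper's proof in Section~5: derive the exact averaged identities \eqref{eq:5.17} by integrating \eqref{eq:1.1} across each layer (the error terms $\mathbb{E}^\pm_k$ there are precisely your quadratic commutators $\overline{F(V)}-F(\bar V)$ written out explicitly), subtract the one-dimensional model, invert the supersonic flux Jacobians to obtain \eqref{eq:5.20}, and close by Gronwall together with the bound $\sum_k\|\mathbb{E}^\pm_k\|_{1,\alpha}\leq C\epsilon^2$ in \eqref{eq:5.32}. The paper works in integrated rather than differential form, which is why it tracks the $C^{1,\alpha}$ norm of the errors (your $\mathcal{E}_\pm$ is essentially $\partial_x\mathbb{E}^\pm_k$, so you will in fact need $\|\overline R\|_{1,\alpha}=O(\epsilon^2)$, not just $\|\overline R\|_{0,\alpha}$; this still follows from $\|V-\bar V\|_{1,\alpha}=O(\epsilon)$).

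One correction worth flagging: the pressure continuity $p_+=p_-$ on $\Gamma_{\rm cd}$ is \emph{not} what converts the boundary residual into the geometric source $\textrm{A}'_+\bar p_+$. After the Leibniz rule and the slip relations $v=ug'$ on both $\Gamma_+$ and $\Gamma_{\rm cd}$, the momentum boundary contribution is $p_+|_{g_+}g'_+ - p_+|_{g_{\rm cd}}g'_{\rm cd}$, and its discrepancy from $\bar p_+\textrm{A}'_+$ is $(p_+|_{g_+}-\bar p_+)g'_+ - (p_+|_{g_{\rm cd}}-\bar p_+)g'_{\rm cd}$, which is $O(\epsilon^2)$ simply because each factor is $O(\epsilon)$ --- no information from the lower layer is needed (cf.\ the last two integrals in the paper's $\mathbb{E}^+_2$).
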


Finally, we will consider the global uniqueness of supersonic combustion contact discontinuity flow $(U_{\pm},\Gamma_{\rm{cd}})$ to the \emph{Problem\ 1.1} without assuming that $U_{\pm}$ is a perturbation of the \emph{background state} $\underline{U}$. More precisely, we will study the following global uniqueness problem.

\smallskip
\emph{$\mathbf{Problem\ 1.3.}$}\ {\color{black}If all the given data are constant states, does the $C^{1,\alpha}$-piecewise supersonic combustion contact discontinuity solution $(U,\Gamma_{\rm{cd}})$ to the
\emph{Problem\ 1.1} coincide with the piecewise constant states with a straight contact discontinuity 
in a given two-dimensional finitely long flat nozzle $\mathcal{N}$?}

\smallskip
We give an affirmed answer to the \emph{Problem\ 1.3} by the following theorem.
\begin{theorem}\label{thm-global uniqueness}
Assume that the reaction rate function $\phi(T)$ is $C^{1,1}$ with respect to $T>0$ and $\alpha\in(0,1)$.
Let $(U, \Gamma_{\rm cd})$ be a $C^{1,\alpha}$-smooth supersonic contact discontinuity solution to the \emph{Problem 1.1} in $\underline{\mathcal{N}}$,
consisting of two $C^{1,\alpha}$-smooth supersonic flows $U_{+}=(u_{+}, v_{+}, p_{+},\rho_{+}, Y_{+})^{\top}$ in $\underline{\mathcal{N}}\cap\{y>g_{\rm cd}(x)\}$
and $U_{-}=(u_{-}, v_{-}, p_{-},\rho_{-}, Y_{-})^{\top}$ in $\underline{\mathcal{N}}\cap\{y<g_{\rm cd}(x)\}$
with the contact discontinuity $\Gamma_{\rm cd}=\{(x,y)\in \mathbb{R}^{2}| x\in(0,L),\ y=g_{\rm{cd}}(x)\}$ in between,
where $g_{\rm cd}(x)\in C^{2,\alpha}([0,L))$,
\begin{eqnarray}\label{eq-uniqueness-1}
\begin{split}
u_{+}>0, \quad u_{+}\neq c_{+},\quad \rho_{+}>0, \quad  \mbox{\rm{in}} \quad \underline{\mathcal{N}}\cap\{y>g_{\rm{cd}}(x)\}, \\
u_{-}>0, \quad u_{-}\neq c_{-},\quad  \rho_{-}>0, \quad  \mbox{\rm{in}} \quad \underline{\mathcal{N}}\cap\{y<g_{\rm{cd}}(x)\},
\end{split}
\end{eqnarray}
and
\begin{eqnarray}\label{eq-uniqueness-2}
U_{\pm}= \underline{U}^{\pm}_{\rm{in}},\quad  \mbox{\rm{on}}\quad \underline{\Gamma}^{\pm}_{\rm{in}},\quad\  \mbox{\rm{and}}\quad v_{\pm}=0,\quad \mbox{\rm{on}}\quad \underline{\Gamma}_{\pm}.
\end{eqnarray}
Then,
\begin{eqnarray}\label{eq-uniqueness-3}
U_{+}= \underline{U}_{+},\ \  \mbox{\rm{in}}\ \ \underline{\mathcal{N}}\cap\{y>g_{\rm{cd}}(x)\}, \quad U_{-}= \underline{U}_{-},\ \  \mbox{\rm{in}}\ \ \underline{\mathcal{N}}\cap\{y<g_{\rm{cd}}(x)\}, \ \ g_{\rm{cd}}(x)=0,
\end{eqnarray}
that is
\begin{eqnarray}\label{eq-uniqueness-4}
(U, \Gamma_{\rm cd})\equiv(\underline{U}, \underline{\Gamma}_{\rm cd}),\quad\ \mbox{\rm{in}}\quad \underline{\mathcal{N}},
\end{eqnarray}
where $\underline{U}^{\pm}_{\rm{in}}$, $\underline{U}_{\pm}$ and $\underline{U}$ are defined by \eqref{eq:1.15} and \eqref{eq:1.18}.
\end{theorem}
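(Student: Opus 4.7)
The plan is to peel off the combustion and thermodynamic structure of the solution layer by layer, reducing the problem in each region to the classical 2D steady isentropic irrotational supersonic Euler system, and then to propagate the constant entrance data throughout via the hyperbolic characteristic structure.

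First I would show that $Y_{\pm} \equiv 0$ in $\underline{\mathcal{N}}_{\pm}$. Combining the first and fifth equations of \eqref{eq:1.1} gives the linear transport ODE $u \partial_x Y + v \partial_y Y = -\phi(T) Y$ along streamlines; since $u_{\pm}>0$, every streamline in $\underline{\mathcal{N}}_{\pm}$ emanates from $\underline{\Gamma}_{\rm in}^{\pm}$, where $Y\equiv 0$ by \eqref{eq:1.15}, and Gronwall's inequality then forces $Y_{\pm} \equiv 0$. Plugging this into \eqref{eq:1.7}, the source terms vanish and the same streamline argument yields $B_{\pm} \equiv \underline{B}^{\pm}$ and $A(S_{\pm}) \equiv \underline{A}^{\pm}$. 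Crocco's identity $\nabla B = T \nabla S + (v_x - u_y)\mathbf{u}^{\perp}$ (with $\mathbf{u}^{\perp}=(v,-u)$) combined with $\nabla B_{\pm} = \nabla S_{\pm} = 0$ and $\mathbf{u}_{\pm} \neq 0$ then forces the vorticity to vanish: $v_{\pm, x} - u_{\pm, y} \equiv 0$ in each region.

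The residual system in each region is therefore the 2D steady isentropic irrotational supersonic Euler system for $(u_{\pm}, v_{\pm})$, strictly hyperbolic since continuity from the supersonic entrance together with the assumption $u_{\pm} \neq c_{\pm}$ yields $u_{\pm} > c_{\pm}$ throughout. I would then introduce the Lagrangian coordinates $(\xi, \eta) = (x, \Psi_{\pm}(x,y))$ via the stream function $\Psi_{\pm}$, mapping each region $\underline{\mathcal{N}}_{\pm}$ onto a rectangle $R_{\pm} = (0, L) \times I_{\pm}$ with $\Gamma_{\rm cd}$ becoming $\{\eta = 0\}$ and $\underline{\Gamma}_{\pm}$ becoming $\{\eta = \mathfrak{m}_{\pm}\}$. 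In these coordinates the reduced $2 \times 2$ system admits a diagonal Riemann form with invariants $\theta_{\pm} \pm F^{\pm}(q_{\pm})$, where $q_{\pm} = |\mathbf{u}_{\pm}|$, $\theta_{\pm} = \arctan(v_{\pm}/u_{\pm})$, and $F^{\pm}$ is the Prandtl--Meyer-type function attached to the constants $(\underline{A}^{\pm}, \underline{B}^{\pm})$. At $\xi = 0$ the Riemann invariants equal the constants $\pm F^{\pm}(\underline{u}^{\pm})$; at $\eta = \mathfrak{m}_{\pm}$ the wall conditions reduce to $\theta_{\pm} = 0$; and at $\eta = 0$ the contact-discontinuity conditions become $\theta_{+} = \theta_{-}$ together with $p_{+}(q_{+}) = p_{-}(q_{-})$, where $p_{\pm}$ is an explicit function of $q_{\pm}$ determined by the constant $B_{\pm}$ and $A(S_{\pm})$. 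All of these are satisfied by the uniform state, thanks to $\underline{p}^{+} = \underline{p}^{-}$ from \eqref{eq:1.16}.

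A characteristic-tracing argument, exploiting that every point in $R_{\pm}$ can be connected back to $\{\xi = 0\}$ through a finite sequence of reflections off walls and transmissions across the contact interface, then forces the Riemann invariants to equal their constant entrance values throughout, yielding $(u_{\pm}, v_{\pm}) \equiv (\underline{u}^{\pm}, 0)$, and hence $(p_{\pm}, \rho_{\pm}) \equiv (\underline{p}^{\pm}, \underline{\rho}^{\pm})$ by the Bernoulli and constant-entropy relations. The kinematic condition $v_{\pm} = u_{\pm} g_{\rm cd}'$ on $\Gamma_{\rm cd}$ then gives $g_{\rm cd}' \equiv 0$, and with $g_{\rm cd}(0) = 0$ we conclude $g_{\rm cd} \equiv 0$; the identities \eqref{eq-uniqueness-3}--\eqref{eq-uniqueness-4} follow. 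The main obstacle is precisely this last characteristic-tracing step: the solution is not assumed close to the background, so the characteristics can undergo arbitrarily many reflections, and the pressure-matching constraint on $\{\eta = 0\}$ couples the Riemann invariants in $R_{+}$ and $R_{-}$ in a nontrivial way. One must verify that this coupling is nondegenerate at the constant state in such a way that it exactly pairs the incoming and outgoing characteristics on each side, producing a well-posed transmission IBVP whose unique classical solution is the constant one.
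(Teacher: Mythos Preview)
Your overall architecture is sound and the first half (showing $Y_\pm\equiv0$, then $B_\pm,S_\pm$ constant along streamlines, then irrotationality via Crocco) matches the paper's reduction; the paper carries this out in Lagrangian coordinates (Proposition~\ref{prop-unique-5.1}) but the content is the same.

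Where you diverge is in the treatment of the remaining $2\times2$ system for $(\omega_\pm,p_\pm)$. You propose to diagonalize into genuine Prandtl--Meyer Riemann invariants $\theta\pm F(q)$, which are exactly conserved along the \emph{true} (solution-dependent) characteristics, and then to propagate the constant entrance values through a finite sequence of wall reflections and interface transmissions. The paper instead keeps the variables $z^{\pm,\flat}_\pm=\delta\omega^\flat_\pm\pm\Lambda_\pm\delta p^\flat_\pm$ and writes the characteristic form with the \emph{background} speeds $\underline{\lambda}^{\pm}_\pm$, which are constants; the nonlinear excess $(\lambda^{\pm}_\pm-\underline{\lambda}^{\pm}_\pm)\partial_\eta(\cdots)$ becomes a right-hand side that is pointwise bounded by $C(|z^{+,\flat}|+|z^{-,\flat}|)$ thanks to Lemma~\ref{lem:unique}. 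Because the characteristics are now straight lines, the sub-domain decomposition is the fixed background one, the wall and interface conditions are linear in $z$, and a plain Gronwall argument forces $z\equiv0$ (Proposition~\ref{prop-unique-5.2}). This completely sidesteps the obstacle you correctly flag: in the paper's scheme there is no nonlinear algebraic transmission system to invert at the contact discontinuity, and no need to control the geometry of solution-dependent characteristics. Your route can certainly be completed---the $2\times2$ transmission map at $\eta=0$ has nonvanishing Jacobian at the background state because $P'_\pm/F'_\pm<0$ on both sides, and a bootstrap on characteristic diamonds then shows the solution (hence the characteristics) coincide with the background one region at a time---but the paper's linearize-around-background-plus-Gronwall device is shorter and avoids that analysis entirely.
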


\subsection{Main difficulties and outline of the proof}
Mathematically, 
the \emph{Problem 1.1} can be formulated as an initial-boundary value problem for two-dimensional steady combustion Euler flows with a contact discontinuity as the free boundary and with nozzle walls as the fixed boundaries.
There are two main difficulties. One is that the free boundary is characteristic which leads to the loss of the normal velocity along it. So both states on the upper and lower nozzle regions can not be determined independently. The other one is the strong coupled phenomena for each states 
due to the combustion reaction procedure which is different from our previous works in \cite{huang-kuang-wang-xiang2, huang-kuang-wang-xiang1} for steady compressible Euler flows. 

To overcome the difficulties, we first notice that a contact discontinuity is a streamline, so it is convenient to employ the Lagrangian coordinate transformation to transfer the free boundary value problem into a fixed boundary value problem. Then, by employing the flow slope $\omega$ (see \eqref{eq:2.23} and \eqref{eq:2.25}) and pressure $p$ rather than Riemann invariants, we can further reformulate the fixed boundary value problem into an initial-boundary value problem governed by a first order inhomogeneous hyperbolic system for $(\omega,p)$ coupled with ODEs for $(B, S, Y)$ (denoted by \emph{\rm{(IBVP)}} for the simplicity) with discontinuous coefficients.
In order to solve the \emph{\rm{(IBVP)}} governed by the hyperbolic PDEs-ODEs coupled system, we will develop a new iteration scheme based on an observation that the source terms for the equations of $(\omega, p, B, S, Y)$ are all separated with respect to $Y$ of square order. We will proceed it by the following four steps:

$\emph{Step 1}$.\ We first linearize the nonlinear problem \emph{\rm{(IBVP)}} near the background states $\underline{U}$, which is denoted as $\emph{\rm{(IBVP)}}^{*}$, and construct an iteration set $\mathcal{X}_{\sigma}$ (see \eqref{eq:3.1} and \eqref{eq:3.2}).

$\emph{Step 2}$.\ For given states $\delta \mathcal{U}$, we consider the solution $\delta \mathcal{U}^{*}=(\delta \omega^{*},\delta p^{*}, \delta B^{*},\delta S^{*}, \delta Y^{*})^{\top}$ to $\emph{\rm{(IBVP)}}^{*}$  and establish some \emph{a priori} estimates for it in H\"{o}lder spaces.
\begin{itemize}
  \item Specifically, noticing that the source term of the linearized equation of $\delta Y^{*}$ in $\eqref{eq:3.2}$ depends on the product of $\delta \mathcal{U}$
and $\delta Y$ so that $\delta Y^{*}$ can be dealt with and solved first. Then we can get $(\delta B^{*},\delta S^{*})$ in each domain $\tilde{\mathcal{N}}_{\pm}$ too.
  \item Next, for $(\delta \omega^{*},\delta p^{*})$, one needs to further develop the characteristic methods to solve the initial-boundary value problem of a first order $2\times 2$ hyperbolic inhomogeneous system with discontinuous coefficients in $\tilde{\mathcal{N}}_{+}$ and $\tilde{\mathcal{N}}_{-}$ at the same time.
It involves the interactions and reflections among the characteristic curves, contact discontinuity and the nozzle walls.

More precisely, in deriving the $C^{1,\alpha}$-estimates for $(\delta \omega^{*},\delta p^{*})$, one needs to divide {\color{black}the} $\tilde{\mathcal{N}}_{\pm}$ into finite sub-domains first. And then due to the coupled effect, we introduce new variables $z^{k,*}_{\pm}$ (see \eqref{eq:prop-3.2-2}) to derive $C^0$-estimate for $(\delta \omega^{*},\delta p^{*})$, and $\mathcal{Z}^{k,*}_{\pm}$ (see \eqref{eq:prop-3.2-12}) to derive $C^{0,\alpha}$-estimate for $(\nabla\delta \omega^{*},\nabla\delta p^{*})$ in each sub-domains.
Based on them, we can bond the sub-domains into a whole domain to get the global H\"{o}lder estimates for $(\nabla\delta \omega^{*},\nabla\delta p^{*})$.
\end{itemize}

$\emph{Step 3}$.\ 
{\color{black}By employing} the \emph{a priori} estimates obtained above, we can define a continuous map $\mathcal{T}$ (see \eqref{eq:4.1}) mapping from $\mathcal{X}_{\sigma}$ to $\mathcal{X}_{\sigma}$ for $\sigma>0$ sufficiently small. Then, it follows from the \emph{Schauder fixed point theorem} that there is a fixed point which is also unique by a contraction argument. Hence \emph{\rm{(IBVP)}} admits a unique $C^{1,\alpha}$-solution $(\omega, p, B, S, Y)$.

$\emph{Step 4}$.\ By applying the relations between $(\omega, p, B, S, Y)$ and $(u,v, p,\rho,Y)$ together with the inverse Lagrangian coordinate transformation, we finally show the existence and uniqueness of the solutions to \emph{Problem 1.1} which indicates that the steady supersonic combustion contact discontinuity in two dimensional nozzle is stable under small perturbations.

For the quasi-one-dimensional approximation problem,  
the key points are to derive the equations for the average integral of the states $V=(u, p,\rho,Y)^{\top}$ from equations \eqref{eq:1.1} and
to show the well-posednes of the corresponding quasi-one-dimensional equations in $C^{1,\alpha}$ space.
Then, by considering the equations of the errors between the integral average and the solutions to the quasi-one-dimensional model, we can establish a rigorous mathematical validation as mentioned in \cite{menshov} for the quasi-one-dimensional approximation with a square order convergence rate with respect to the $C^{1,\alpha}$-perturbation of the data.

For the problem of global uniqueness, the main difficulty is that there is no small assumption on the perturbation of the background state so that the linearization method cannot be applied.
To overcome the difficulty, {\color{black}we make use of supersonic 
property of the solution and the eigenvalues for genuinely nonlinear characteristic fields of the equations having opposite signs at background state} to reduce the problem into a new initial-boundary value problem for a first order semi-linear non-degenerate hyperbolic system coupled with some ordinary differential equations with zero datum and nonlinear source terms. We then can apply the characteristic method and some ODE theories to derive that the solutions of the new problem are zero in the whole nozzle which shows the global uniqueness of supersonic combustion contact discontinuity flow in a finite long nozzle.

\subsection{Organization of the paper} The remainder of the paper is organized as follows.
In Section 2, we introduce the Lagrangian coordinate transformation and reformulate \emph{Problem 1.1} into an initial-boundary value problem governed  by a first order inhomogeneous system coupled with ordinary differential equations, which is denoted by \emph{\rm{(IBVP)}} for simplicity.
In Section 3, we develop a new iteration scheme for \emph{\rm{(IBVP)}} near the background states, and then establish some novel \emph{a priori} estimates and some theories of ordinary differential equations in H\"{o}lder space. Based on them and by applying the Schauder fixed point theorem and a contraction argument, we show in Section 4 that the iteration scheme admits a unique fixed point, which is the unique solution to \emph{\rm{(IBVP)}}.
In Section 5, we are devoted to verifying the quasi-one-dimensional approximation in nozzle $\mathcal{N}$ with a supersonic combustion contact discontinuity. 
The global uniqueness is proven in section 6.
Finally, in the appendix, we give some properties and estimates on the characteristic curves that are defined in section 3 and some properties on eigenvalues which are used in Section 6.

\section{Mathematical Formulation in Lagrangian Coordinates}\setcounter{equation}{0}

In this section, we will consider \emph{Problem 1.1} in the Lagrangian {\color{black}coordinates}. To this end, we first introduce a Lagrange {\color{black}coordinate} transformation for supersonic flow in a nozzle and reformulate it as \emph{Problem 2.1} in the new {\color{black}coordinates}. Next, by introducing some new quantities, we can further reformulate \emph{Problem 2.1} into an initial-boundary value problem for the first order quasi-linear hyperbolic inhomogeneous system coupled with ordinary differential equations.

\subsection{Lagrangian coordinate transformation and \emph{Problem 2.1}}\setcounter{equation}{0}
Since the contact discontinuity $\Gamma_{\rm cd}$ is a free stream line, it is convenient to work in the Lagrangian {\color{black}coordinates}. Let $\big(U, \Gamma_{\rm cd}\big)$ be a supersonic combustion contact discontinuity solution of \emph{Problem 1.1}, define the mass flux as follows:
\begin{eqnarray}\label{eq:2.2}
\begin{split}
\textrm{m}_{-}\triangleq\int^{0}_{g_{-}(0)}(\rho^{-}_{\rm in} u^{-}_{\rm in})(\tau)\dd\tau,  \quad\ \textrm{m}_{+}\triangleq\int^{g_{+}(0)}_{0}(\rho^{+}_{\rm in} u^{+}_{\rm in})(\tau)\dd\tau.
\end{split}
\end{eqnarray}
By the equation of the conservation of mass in \eqref{eq:1.1}, we have for all $x\in[0,L]$
\begin{eqnarray}\label{eq:2.1}
\begin{split}
\int^{g_{\rm cd}(x)}_{g_{-}(x)}(\rho_{-} u_{-})(x,\tau)d\tau\equiv\textrm{m}_{-}, \quad\ \int^{g_{+}(x)}_{g_{\rm cd}(x)}(\rho_{+} u_{+})(x,\tau)d\tau\equiv\textrm{m}_{+}.
\end{split}
\end{eqnarray}

Define the Lagrange {\color{black}coordinate} transformation:
\begin{eqnarray}\label{eq:2.3}
\begin{split}
\mathcal{L}:\ \xi=x, \quad \eta=\int^{y}_{g_{-}(x)}(\rho u)(x,\tau)\dd\tau-\textrm{m}_{-}.
\end{split}
\end{eqnarray}
By direct computation, one has
\begin{eqnarray}\label{eq:2.4}
\begin{split}
\frac{\partial \xi}{\partial x}=1,\quad \frac{\partial \xi}{\partial y}=0,\quad \frac{\partial \eta}{\partial x}=-\rho v,\quad \frac{\partial \eta}{\partial y}=\rho u.
\end{split}
\end{eqnarray}

For any $(x,y)\in \Gamma_{\pm}$, it follows from \eqref{eq:2.1}-\eqref{eq:2.3} that
\begin{eqnarray*}
\eta\big|_{(x,y)\in\Gamma_{-}}=-\textrm{m}_{-},\quad \eta\big|_{(x,y)\in\Gamma_{+}}=\int^{g_{\textrm{cd}}(x)}_{g_{-}(x)}(\rho_{-} u_{-})(x,\tau)\dd\tau+\int^{g_{+}(x)}_{g_{\textrm{cd}}(x)}(\rho_{+} u_{+})(x,\tau)\dd\tau-\textrm{m}_{-}=\textrm{m}_{+}.
\end{eqnarray*}
Therefore, the nozzle $\mathcal{N}$ in $(\xi,\eta)$-{\color{black}coordinates} becomes 
\begin{eqnarray*}
\tilde{\mathcal{N}}\triangleq\big\{(\xi,\eta)\in\mathbb{R}^2|\ \xi\in(0,L),\ -\textrm{m}_{-}<\eta<\textrm{m}_{+}\big\},
\end{eqnarray*}
where its upper and lower boundaries are given by
\begin{eqnarray*}
\tilde{\Gamma}_{+}\triangleq\big\{(\xi,\eta)\in\mathbb{R}^2|\ \xi\in(0,L),\ \eta=\textrm{m}_{+}\big\},\quad\
\tilde{\Gamma}_{-}\triangleq\big\{(\xi,\eta)\in\mathbb{R}^2|\ \xi\in(0,L),\ \eta=-\textrm{m}_{-}\big\}.
\end{eqnarray*}

Moreover, since
$\eta\big|_{(x,y)\in\Gamma_{\rm cd}}=\int^{g_{\textrm{cd}}(x)}_{g_{-}(x)}(\rho u)(x,\tau)\dd\tau-\textrm{m}_{-}=0$,  the contact discontinuity $\Gamma_{\rm cd}$ in $(\xi,\eta)$-{\color{black}coordinates} becomes
\begin{eqnarray*}
\tilde{\Gamma}_{\rm{cd}}\triangleq\big\{(\xi,\eta)\in \mathbb{R}^{2}|\ \xi\in(0,L),\ \eta=0 \big\},
\end{eqnarray*}
which divides $\tilde{\mathcal{N}}$ into two supersonic layers
\begin{eqnarray*}
\tilde{\mathcal{N}}_{+}\triangleq \tilde{\mathcal{N}}\cap \big\{0<\eta<\rm{m}_{+}\big\},\quad\ \tilde{\mathcal{N}}_{-}\triangleq \tilde{\mathcal{N}}\cap \big\{-\rm{m}_{-}<\eta<0\big\}.
\end{eqnarray*}
The entrances for $\tilde{\mathcal{N}}_{\pm}$ are 
\begin{eqnarray*}
\tilde{\Gamma}^{+}_{\rm{in}}\triangleq  \big\{(\xi,\eta)\in \mathbb{R}^{2}|\ \xi=0,\ 0<\eta<\rm{m}_{+}\big\},
\quad\ \tilde{\Gamma}^{-}_{\rm{in}}\triangleq \big\{(\xi,\eta)\in \mathbb{R}^{2}|\ \xi=0,\ -\rm{m}_{-}<\eta<0\big\}.
\end{eqnarray*}
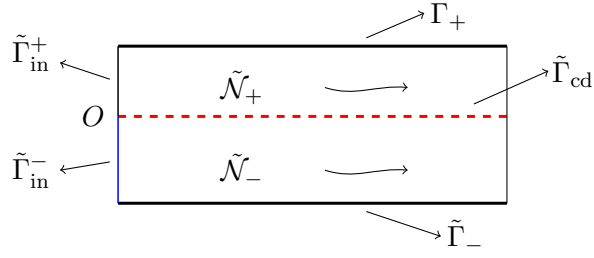
\begin{figure}[ht]
\begin{center}
\begin{tikzpicture}[scale=1.1]
\draw [thin][->] (-1.6,0)--(-2.2,0.2);
\draw [thin][->] (-1.6,-1.0) --(-2.2,-1.1);

\draw [line width=0.04cm](-1.5,-1.5) --(3.2,-1.5);
\draw [line width=0.04cm](-1.5,0.4)--(3.2,0.4);
\draw [line width=0.04cm][red][dashed](-1.5,-0.45)--(3.2,-0.45);
\draw [thin](3.2,-1.5) --(3.2,0.4);

\draw [thin][->](1.0,-0.1)to[out=-18, in=-185](2.0,-0.1);
\draw [thin][->](1.0,-1.1)to[out=-18, in=-185](2.0,-1.1);

\draw [line width=0.02cm][black](-1.5,-0.45)--(-1.5,0.4);
\draw [line width=0.02cm][blue](-1.5,-1.5)--(-1.5,-0.45);

\draw [thin][->] (1.5,0.5) --(2.2,0.8);
\draw [thin][->] (2.8,-0.3) --(3.7,0.1);
\draw [thin][->] (1.5,-1.6) --(2.4,-1.9);

\node at (-1.8,-0.45) {$O$};

\node at (0,-0.1) {$\tilde{\mathcal{N}}_{+}$};
\node at (0,-1.1) {$\tilde{\mathcal{N}}_{-}$};
\node at (-2.55,0.32) {$\tilde{\Gamma}^{+}_{\textrm{in}}$};
\node at (-2.55,-1.1) {$\tilde{\Gamma}^{-}_{\textrm{in}}$};
\node at (2.5,0.8) {$\tilde{\Gamma}_{+}$};
\node at (4.0,0.1) {$\tilde{\Gamma}_{\textrm{cd}}$};
\node at (2.7,-1.9) {$\tilde{\Gamma}_{-}$};
\end{tikzpicture}
\end{center}
\caption{Supersonic Combustion Contact Discontinuity in Lagrangian Coordinate}\label{fig2.1}
\end{figure}
%
In $(\xi,\eta)$-{\color{black}coordinates}, for notational simplicity, 
we still use the notation $U(\xi,\eta)=(u,v,p,\rho, Y)^{\top}(\xi,\eta)$ in $\tilde{\mathcal{N}}$ with $U(\xi,\eta)=U(x(\xi,\eta), y(\xi, \eta))$.
Similarly, let $U_{\pm}(\xi,\eta)=(u_{\pm},v_{\pm},p_{\pm},\rho_{\pm}, Y_{\pm})^{\top}(\xi,\eta)$ by $U_{\pm}(\xi,\eta)=U_{\pm}(x(\xi,\eta), y(\xi, \eta))$.
Then, by \eqref{eq:2.4}, equations \eqref{eq:1.1} in $(\xi,\eta)$-{\color{black}coordinates} become
\begin{eqnarray}\label{eq:2.5}
\begin{cases}
\partial_{\xi}\big(\frac{1}{\rho u}\big)-\partial_{\eta}\big(\frac{v}{u}\big)=0, \\
\partial_{\xi}\big(u+\frac{p}{\rho u}\big)-\partial_{\eta}\big(\frac{pv}{u}\big)=0, \\
\partial_{\xi}v+\partial_{\eta}p=0, \\
\partial_{\xi}B=\frac{\mathfrak{q}_0\phi(T)}{u}Y,\\
\partial_{\xi}Y=-\frac{\phi(T)}{u}Y,
\end{cases}
\end{eqnarray}
where \eqref{eq:1.4} and \eqref{eq:1.8} hold. 

Finally, it follows from conditions \eqref{eq:1.11}-\eqref{eq:1.12} that 
\begin{eqnarray}\label{eq:2.6}
U_{\pm}=U^{\pm}_{\textrm{in}}(\eta)\triangleq\big(u^{\pm}_{\textrm{in}}, v^{\pm}_{\textrm{in}}, p^{\pm}_{\textrm{in}},\rho^{\pm}_{\textrm{in}}, Y^{\pm}_{\textrm{in}}\big)^{\top}(\eta),
\qquad \mbox{on}\quad \tilde{\Gamma}^{\pm}_{\textrm{in}},
\end{eqnarray}
where $(u^{\pm}_{\rm{in}},v^{\pm}_{\rm{in}},p^{\pm}_{\rm{in}},\rho^{\pm}_{\rm{in}}, Y^{\pm}_{\rm in})(\eta)
=(u^{\pm}_{\rm{in}},v^{\pm}_{\rm{in}},p^{\pm}_{\rm{in}},\rho^{\pm}_{\rm{in}}, Y^{\pm}_{\rm in})(y(0,\eta))$ and 
\begin{eqnarray}\label{eq:2.7}
\sqrt{(u^{\pm}_{\textrm{in}})^2+(v^{\pm}_{\textrm{in}})^2}>c^{\pm}_{\textrm{in}}=\sqrt{\frac{\gamma p^{\pm}_{\textrm{in}}}{\rho^{\pm}_{\textrm{in}}}},\quad\ Y^{\pm}_{\textrm{in}}\in [0,1],  \qquad \mbox{on}\quad \tilde{\Gamma}^{\pm}_{\textrm{in}}.
\end{eqnarray}

On the nozzle walls $\tilde{\Gamma}_{\pm}$, it follows from \eqref{eq:1.9} that the flow satisfies
\begin{eqnarray}\label{eq:2.8}
v_{\pm}-g'_{\pm}(\xi)u_{\pm}=0,\qquad\ \mbox{on}\quad\ \tilde{\Gamma}_{\pm}.
\end{eqnarray}
On the contact discontinuity $\tilde{\Gamma}_{\rm cd}$, by \eqref{eq:1.10}, we have
\begin{eqnarray}\label{eq:2.9}
\frac{v_{+}}{u_{+}}=\frac{v_{-}}{u_{-}}=g'_{\rm{cd}}(\xi),\quad\  p_{+}=p_{-}, \quad\ \mbox{on}\quad\ \tilde{\Gamma}_{\rm cd}.
\end{eqnarray}

In addition, it follows from \eqref{eq:1.13} and \eqref{eq:1.14} that the incoming flows also satisfy the following compatibility conditions at the corner points $\tilde{O}=(0,0)$ and $\tilde{P}_{\pm}=(0,\pm\rm{m}_{\pm})$, respectively,
\begin{eqnarray}\label{eq:2.10}
\begin{cases}
\big(\frac{v^{+}_{\textrm{in}}}{u^{+}_{\textrm{in}}}\big)(0)=\big(\frac{v^{-}_{\textrm{in}}}{u^{-}_{\textrm{in}}}\big)(0),\quad p^{+}_{\rm in}(0)=p^{-}_{\rm in}(0),\\
\Big[\frac{\rho^{+}_{\rm in}(c^{+}_{\rm in})^2 v^{+}_{\rm in}}
{(u^{+}_{\rm in})^{2}-(c^{+}_{\rm in})^{2}}\big(\frac{v^{+}_{\textrm{in}}}{u^{+}_{\textrm{in}}}\big)'\Big](0)+\Big[\frac{(u^{+}_{\rm in})^2+(v^{+}_{\rm in})^2-(c^{+}_{\rm in})^2}
{u^{+}_{\rm in}((u^{+}_{\rm in})^{2}-(c^{+}_{\rm in})^{2})}(p^{+}_{\rm in})'\Big](0)-\Big[\frac{(\gamma-1)\mathfrak{q}_{0}\phi(T^{+}_{\rm in})
v^{+}_{\rm in}}{(u^{+}_{\rm in})^2((u^{+}_{\rm in})^2-(c^{+}_{\rm in})^2)}Y^{+}_{\rm in}\Big](0)\\
\quad=\Big[\frac{\rho^{-}_{\rm in}(c^{-}_{\rm in})^2 v^{-}_{\rm in}}
{(u^{-}_{\rm in})^{2}-(c^{-}_{\rm in})^{2}}\big(\frac{v^{-}_{\textrm{in}}}{u^{-}_{\textrm{in}}}\big)'\Big](0)+\frac{(u^{-}_{\rm in})^2+(v^{-}_{\rm in})^2-(c^{-}_{\rm in})^2}
{u^{-}_{\rm in}((u^{-}_{\rm in})^{2}-(c^{-}_{\rm in})^{2})}(p^{-}_{\rm in})'\Big](0)-\frac{(\gamma-1)\mathfrak{q}_{0}\phi(T^{-}_{\rm in})
v^{-}_{\rm in}}{(u^{-}_{\rm in})^2((u^{-}_{\rm in})^2-(c^{-}_{\rm in})^2)}Y^{-}_{\rm in}\Big](0),\\
\Big[\frac{(\rho^{+}_{\rm in})^2(c^{+}_{\rm in})^2 (u^{+}_{\rm in})^3}
{(u^{+}_{\rm in})^{2}-(c^{+}_{\rm in})^{2}}\big(\frac{v^{+}_{\textrm{in}}}{u^{+}_{\textrm{in}}}\big)'\Big](0)+\Big[\frac{\rho^{+}_{\rm in}(c^{+}_{\rm in})^2 v^{+}_{\rm in}}
{(u^{+}_{\rm in})^{2}-(c^{+}_{\rm in})^{2}}(p^{+}_{\rm in})'\Big](0)-\Big[\frac{(\gamma-1)\mathfrak{q}_{0}\phi(T^{+}_{\rm in})
\rho^{+}_{\rm in}u^{+}_{\rm in}}{(u^{+}_{\rm in})^2-(c^{+}_{\rm in})^2}Y^{+}_{\rm in}\Big](0)\\
\quad=\Big[\frac{(\rho^{-}_{\rm in})^2(c^{-}_{\rm in})^2 (u^{-}_{\rm in})^3}{(u^{-}_{\rm in})^{2}-(c^{-}_{\rm in})^{2}}\big(\frac{v^{-}_{\textrm{in}}}{u^{-}_{\textrm{in}}}\big)'\Big](0)+\Big[\frac{\rho^{-}_{\rm in}(c^{-}_{\rm in})^2 v^{-}_{\rm in}}
{(u^{-}_{\rm in})^{2}-(c^{-}_{\rm in})^{2}}(p^{-}_{\rm in})'\Big](0)-\Big[\frac{(\gamma-1)\mathfrak{q}_{0}\phi(T^{-}_{\rm in})
\rho^{-}_{\rm in}u^{-}_{\rm in}}{(u^{-}_{\rm in})^2-(c^{-}_{\rm in})^2}Y^{+}_{\rm in}\Big](0),
\end{cases}
\end{eqnarray}
and
\begin{eqnarray}\label{eq:2.11}
\begin{cases}
g'_{+}(0)=\big(\frac{v^{+}_{\textrm{in}}}{u^{+}_{\textrm{in}}}\big)(\textrm{m}_{+}),\quad g'_{-}(0)=\big(\frac{v^{-}_{\textrm{in}}}{u^{-}_{\textrm{in}}}\big)(-\textrm{m}_{-}), \\
g''_{+}(0)=-\Big[\frac{\rho^{+}_{\rm in}(c^{+}_{\rm in})^2 v^{+}_{\rm in}}
{(u^{+}_{\rm in})^{2}-(c^{+}_{\rm in})^{2}}\big(\frac{v^{+}_{\textrm{in}}}{u^{+}_{\textrm{in}}}\big)'\Big](\textrm{m}_{+})-\Big[\frac{(u^{+}_{\rm in})^2+(v^{+}_{\rm in})^2-(c^{+}_{\rm in})^2}
{u^{+}_{\rm in}((u^{+}_{\rm in})^{2}-(c^{+}_{\rm in})^{2})}(p^{+}_{\rm in})'\Big](\textrm{m}_{+})\\
\qquad\qquad+\Big[\frac{(\gamma-1)\mathfrak{q}_{0}\phi(T^{+}_{\rm in})
v^{+}_{\rm in}}{(u^{+}_{\rm in})^2((u^{+}_{\rm in})^2-(c^{+}_{\rm in})^2)}Y^{+}_{\rm in}\Big](\textrm{m}_{+}),\\
g''_{-}(0)=-\Big[\frac{\rho^{-}_{\rm in}(c^{-}_{\rm in})^2 v^{-}_{\rm in}}
{(u^{-}_{\rm in})^{2}-(c^{-}_{\rm in})^{2}}\big(\frac{v^{-}_{\textrm{in}}}{u^{-}_{\textrm{in}}}\big)'\Big](-\textrm{m}_{-})-\Big[\frac{(u^{-}_{\rm in})^2+(v^{-}_{\rm in})^2-(c^{-}_{\rm in})^2}
{u^{-}_{\rm in}((u^{-}_{\rm in})^{2}-(c^{-}_{\rm in})^{2})}(p^{-}_{\rm in})'\Big](-\textrm{m}_{-})\\
\qquad\qquad+\Big[\frac{(\gamma-1)\mathfrak{q}_{0}\phi(T^{-}_{\rm in})
v^{-}_{\rm in}}{(u^{-}_{\rm in})^2((u^{-}_{\rm in})^2-(c^{-}_{\rm in})^2)}Y^{-}_{\rm in}\Big](-\textrm{m}_{-}).
\end{cases}
\end{eqnarray}

Hence we can further reformulate \emph{Problem 1.1} in the Euler coordinate into the following problem in the Lagrangian coordinate.

\smallskip
\par \emph{$\mathbf{Problem\ 2.1.}$}\ For given initial datum $U^{\pm}_{\rm{in}}$ and boundary functions $g_{\pm}$ satisfying \eqref{eq:2.7}, \eqref{eq:2.10} and \eqref{eq:2.11}, to find a supersonic combustion solution $U$ with a contact discontinuity {\color{black}$\tilde{\Gamma}_{\mathrm{cd}}$} such that

$\rm{(\tilde{i})}$\ $\tilde{\mathcal{N}}$ is divided by $\tilde{\Gamma}_{\rm{cd}}$ into two supersonic regions $\tilde{\mathcal{N}}_{+}$ and $\tilde{\mathcal{N}}_{-}$;

$\rm{(\tilde{ii})}$\ In the upper supersonic region $\tilde{\mathcal{N}}_{+}$, $U_{+}$ satisfies the equations \eqref{eq:2.5}, the initial conditions \eqref{eq:2.6} on $\tilde{\Gamma}^{+}_{\rm in}$ and the boundary conditions \eqref{eq:2.8} on $\tilde{\Gamma}_{+}$;

$\rm{(\tilde{iii})}$\ In the lower supersonic region $\tilde{\mathcal{N}}_{-}$, $U_{-}$ satisfies the equations \eqref{eq:2.5}, the initial conditions \eqref{eq:2.6} on $\tilde{\Gamma}^{-}_{\rm in}$ and the boundary conditions \eqref{eq:2.8} on $\tilde{\Gamma}_{-}$;

$\rm{(\tilde{iv})}$\ On the contact discontinuity $\tilde{\Gamma}_{\rm{cd}}$, $U_{+}$ and $U_{-}$ satisfy \eqref{eq:2.9}.

\begin{theorem}\label{thm:2.1}
Assume that the reaction rate function $\phi(T)$ is $C^{1,1}$ with respect to $T$ for all $T>0$. Then, for any $\alpha\in(0,1)$, there exists a constant $\varepsilon_0>0$ depending only on $\underline{U}$, $\alpha$ and $L$ such that if
\begin{eqnarray}\label{eq:2.15}
\sum_{k=\pm}\Big(\big\|U^{k}_{\rm in}-\underline{U}_{k}\big\|_{1,\alpha; \tilde{\Gamma}^{k}_{\rm in}}+\big\|g_k-\underline{g}_{k}\big\|_{2,\alpha; \tilde{\Gamma}_{k}}\Big)<\varepsilon,
\end{eqnarray}
for some $\varepsilon\in(0,\varepsilon_0)$, \emph{Problem 2.1} admits a unique solution $U_{\pm}$ with the following estimate
\begin{eqnarray}\label{eq:2.16}
\big\|U_{-}-\underline{U}_{-}\big\|_{1,\alpha; \tilde{\mathcal{N}}_{-}}+\big\|U_{+}-\underline{U}_{+}\big\|_{1,\alpha; \tilde{\mathcal{N}}_{+}}<\tilde{C}_{0}\varepsilon,
\end{eqnarray}
where the constant $\tilde{C}_{0}>0$ depends only on $\underline{U}$, $\alpha$ and $L$.
\end{theorem}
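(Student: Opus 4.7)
The plan is to recast \emph{Problem 2.1} into an initial--boundary value problem \emph{(IBVP)} for a first-order inhomogeneous $2\times 2$ hyperbolic system in the flow slope $\omega=v/u$ and the pressure $p$, coupled with three transport--type ordinary differential equations along the streamlines for the Bernoulli function $B$, the entropy function $A(S)$, and the mass fraction $Y$. This reduction is natural in the Lagrangian frame since the streamlines coincide with the $\eta$-coordinate lines, so the equations for $(B,A(S),Y)$ in \eqref{eq:2.5} become genuine ODEs in $\xi$ with $\eta$ a parameter, while the conservation of mass, the Bernoulli relation \eqref{eq:1.8} and the pressure equation combine to yield a $2\times 2$ hyperbolic system in $(\omega,p)$ whose characteristic speeds in $(\xi,\eta)$ are $\pm c/(\rho u^2\sqrt{u^2+v^2-c^2})$ at leading order. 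In the new unknowns the free-boundary jump conditions \eqref{eq:2.9} become $[\omega]=[p]=0$ on $\tilde{\Gamma}_{\mathrm{cd}}=\{\eta=0\}$, and the wall conditions \eqref{eq:2.8} become $\omega=g'_{\pm}(\xi)$ on $\tilde{\Gamma}_{\pm}$.

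Next I would linearize \emph{(IBVP)} around the piecewise constant background $\underline{U}$ and construct an iteration set $\mathcal{X}_\sigma$ of $C^{1,\alpha}$ perturbations $\delta\mathcal{U}=(\delta\omega,\delta p,\delta B,\delta S,\delta Y)$ satisfying a smallness bound of size $\sigma$ and the appropriate matching/compatibility conditions at the corners, which are exactly \eqref{eq:2.10}--\eqref{eq:2.11} linearized. Given $\delta\mathcal{U}\in\mathcal{X}_\sigma$, one solves the linearized system $\emph{(IBVP)}^*$ in the order $\delta Y^*\mapsto(\delta B^*,\delta S^*)\mapsto(\delta\omega^*,\delta p^*)$. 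The key observation, which lets the iteration close, is the intrinsic quadratic structure: the source of the linearized $\delta Y^*$ equation depends on $\delta\mathcal{U}\cdot\delta Y$, so that $\delta Y^*$ can be solved first as a linear ODE along each streamline; the $\delta B^*$ and $\delta S^*$ equations then reduce to ODEs with right-hand sides of order $\delta Y^*$ and can be integrated in $\tilde{\mathcal{N}}_{\pm}$ separately. For the hyperbolic subsystem in $(\delta\omega^*,\delta p^*)$, I would use the characteristic method in each of $\tilde{\mathcal{N}}_{\pm}$, cut the two strips into finitely many sub-domains so that every boundary-to-boundary characteristic has uniformly controlled length, and propagate $C^0$ estimates along Riemann-invariant-like combinations $z^{k,*}_{\pm}$, followed by $C^{0,\alpha}$ estimates on the gradients via the differentiated combinations $\mathcal{Z}^{k,*}_{\pm}$. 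The coupling through $\tilde{\Gamma}_{\mathrm{cd}}$ enters only as a reflection rule $[\delta\omega^*]=[\delta p^*]=0$ which, together with the wall boundary condition $\delta\omega^*=\delta g'_{\pm}$, gives a well-posed linear boundary-value problem with a gain of $C^{1,\alpha}$ of size $O(\varepsilon+\sigma^2)$.

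With these a priori estimates in hand the mapping $\mathcal{T}:\delta\mathcal{U}\mapsto\delta\mathcal{U}^*$ sends $\mathcal{X}_\sigma$ to itself for $\sigma\sim\varepsilon$ and $\varepsilon_0$ small, and it is continuous in the $C^{1,\alpha/2}$ topology, hence compact on $C^{1,\alpha}$-bounded sets by Arzel\`a--Ascoli. The Schauder fixed-point theorem then yields a solution of \emph{(IBVP)}. For uniqueness I would compare two solutions and use the same structure: the difference satisfies a similar linear system with zero data and a source of order (difference)$\times\varepsilon$, so a Gronwall/bootstrap argument in $C^0$ on each sub-domain, propagated through the characteristics and the reflections off $\tilde{\Gamma}_{\mathrm{cd}}$ and $\tilde{\Gamma}_{\pm}$, gives a contraction and forces the difference to vanish. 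Transferring back from $(\omega,p,B,S,Y)$ to $U=(u,v,p,\rho,Y)$ via the relations \eqref{eq:1.4}, \eqref{eq:1.8} and $\omega=v/u$ is algebraic and preserves the $C^{1,\alpha}$-norm up to a constant depending only on $\underline{U}$, yielding \eqref{eq:2.16} with $\tilde{C}_0$ absorbing all constants.

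The main obstacle I anticipate is the $C^{1,\alpha}$ estimate for the hyperbolic pair $(\delta\omega^*,\delta p^*)$ across the characteristic free boundary $\tilde{\Gamma}_{\mathrm{cd}}$. Because the coefficients of the system jump across $\eta=0$ (the background sound speeds $\underline{c}_{\pm}$ differ on the two sides) and $\tilde{\Gamma}_{\mathrm{cd}}$ carries only the two scalar matching conditions $[\omega]=[p]=0$, one obtains no direct regularity for the tangential derivative of $(\delta\omega^*,\delta p^*)$ on the interface, and one must recover it by combining both incoming characteristics from the two sides together with the wall reflections. The bookkeeping of the characteristic reflections, the proof that the resulting iteration in the Riemann-type variables $z^{k,*}_{\pm}$ and $\mathcal{Z}^{k,*}_{\pm}$ converges on each finite sub-domain, and the verification that the constants do not blow up as the sub-domain length approaches $L$, will constitute the technical heart of the argument. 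The combustion source term $\mathfrak{q}_0\rho\phi(T)Y$ complicates the ODE block but only in a benign way, since $\phi(T)$ is $C^{1,1}$ and $Y$ enters linearly, so the coupling of PDE and ODE is lower order in the perturbation and is handled by the quadratic-in-$\delta\mathcal{U}$ structure noted above.
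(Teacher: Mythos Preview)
Your proposal is correct and follows essentially the same approach as the paper: the reduction to the \emph{(IBVP)} in $(\omega,p,B,S,Y)$, the linearization and iteration scheme $\mathcal{X}_\sigma$, the order of solving $\delta Y^*\to(\delta B^*,\delta S^*)\to(\delta\omega^*,\delta p^*)$ exploiting the quadratic structure in $Y$, the characteristic method with the auxiliary variables $z^{k,*}_{\pm}$ and $\mathcal{Z}^{k,*}_{\pm}$ on finitely many sub-domains, Schauder fixed point in $C^{1,\alpha/2}$ for existence, a $C^0$ contraction for uniqueness, and the algebraic inversion back to $U$ all match the paper's proof in Sections~3--4. The technical heart you anticipate---the $C^{1,\alpha}$ estimates for $(\delta\omega^*,\delta p^*)$ across $\tilde{\Gamma}_{\mathrm{cd}}$ via reflections---is indeed what occupies the bulk of the paper's Propositions~3.2--3.4.
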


\begin{corollary}\label{coro:2.1}
Once Theorem \ref{thm:2.1} is proved, Theorem \ref{thm:1.1} also holds.
\end{corollary}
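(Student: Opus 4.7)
The plan is to pass the conclusion of Theorem \ref{thm:2.1}, which lives in the Lagrangian coordinates $(\xi,\eta)$, back to the original Euclidean coordinates $(x,y)$ via the inverse of the transformation $\mathcal{L}$ defined in \eqref{eq:2.3}. Given the incoming data $U^{\pm}_{\rm in}$ and the wall profiles $g_{\pm}$ satisfying \eqref{eq:1.22}, I first observe that the compatibility conditions \eqref{eq:2.10}--\eqref{eq:2.11} on $\tilde{\Gamma}^{\pm}_{\rm in}$ are equivalent to the Euclidean compatibility conditions \eqref{eq:1.13}--\eqref{eq:1.14}, since $\mathcal{L}$ acts as the identity on $\{\xi=0\}$ up to the monotone reparametrisation $\eta=\int_{g_-(0)}^{y}(\rho^{\pm}_{\rm in}u^{\pm}_{\rm in})(\tau)\,\dd\tau-\rm{m}_{-}$, whose Jacobian $\rho^{\pm}_{\rm in}u^{\pm}_{\rm in}$ is bounded away from zero by \eqref{eq:1.12}. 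Moreover, on $\tilde{\Gamma}^{\pm}_{\rm in}$ the $C^{1,\alpha}$-perturbation norm of $U^{\pm}_{\rm in}$ transforms into an equivalent $C^{1,\alpha}$-norm with constants depending only on $\underline{U}$, so the assumption \eqref{eq:2.15} of Theorem \ref{thm:2.1} holds with $\varepsilon$ comparable to the $\epsilon$ of \eqref{eq:1.22}.

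Next, let $U_{\pm}(\xi,\eta)$ be the supersonic solution produced by Theorem \ref{thm:2.1} in $\tilde{\mathcal{N}}_{\pm}$. I would define the inverse Lagrangian map by the ODE
\begin{equation*}
\frac{\p y}{\p \eta}(\xi,\eta)=\frac{1}{(\rho u)(\xi,\eta)},\qquad y(\xi,-\rm{m}_{-})=g_{-}(\xi),
\end{equation*}
which is well-posed because estimate \eqref{eq:2.16} and the supersonic condition \eqref{eq:1.12} guarantee that $\rho u$ stays uniformly positive when $\varepsilon_0$ is small. Writing $x=\xi$, this defines a $C^{1,\alpha}$ diffeomorphism of $\tilde{\mathcal{N}}_{\pm}$ onto a pair of Eulerian domains $\mathcal{N}_{\pm}$ whose upper, lower, and interfacial boundaries are $\Gamma_{+}=\{y=g_{+}(x)\}$, $\Gamma_{-}=\{y=g_{-}(x)\}$, and $\Gamma_{\rm cd}=\{y=g_{\rm cd}(x)\}$ respectively, where $g_{\rm cd}(x):=y(x,0)$. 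Because $y$ is obtained by integrating $1/(\rho u)$ against the Lagrangian variable, one readily checks that $g_{\rm cd}\in C^{1,\alpha}$ and moreover that $\|y-y_{\underline{U}}\|_{1,\alpha}$ is controlled linearly by $\|U-\underline{U}\|_{1,\alpha;\tilde{\mathcal{N}}_{\pm}}$. Setting $U_{\pm}(x,y):=U_{\pm}(\xi(x,y),\eta(x,y))$ then yields the Eulerian solution, and the chain rule together with the uniform positivity of the Jacobian transfers the estimate \eqref{eq:2.16} into the desired bound \eqref{eq:1.23} with a constant $C_0$ depending only on $\underline{U}$, $\alpha$ and $L$.

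To upgrade the regularity of the contact discontinuity from $C^{1,\alpha}$ to $C^{2,\alpha}$ as claimed in \eqref{eq:1.24}, I would use the Rankine--Hugoniot condition \eqref{eq:1.10}, which on $\Gamma_{\rm cd}$ gives
\begin{equation*}
g'_{\rm cd}(x)=\frac{v_{\pm}(x,g_{\rm cd}(x))}{u_{\pm}(x,g_{\rm cd}(x))}.
\end{equation*}
Since $U_{\pm}\in C^{1,\alpha}(\mathcal{N}_{\pm})$ with traces on $\Gamma_{\rm cd}$ in $C^{1,\alpha}$, and since $g_{\rm cd}\in C^{1,\alpha}$ has already been established, the composition on the right-hand side lies in $C^{1,\alpha}$, so $g_{\rm cd}\in C^{2,\alpha}([0,L))$ with $\|g_{\rm cd}\|_{2,\alpha}\leq C_1\epsilon$ for some $C_1=C_1(\underline{U},\alpha,L)$. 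Uniqueness in Theorem \ref{thm:1.1} follows from uniqueness in Theorem \ref{thm:2.1} together with the invertibility of $\mathcal{L}$: any two Eulerian contact discontinuity solutions satisfying \eqref{eq:1.23} are pulled back to Lagrangian solutions satisfying \eqref{eq:2.16}, hence must coincide, and therefore their Eulerian images coincide as well.

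The main obstacle in executing this plan is bookkeeping the regularity of the coordinate change near the two ``corner triple points'' $O$, $P_{\pm}$, where the contact discontinuity and wall boundaries meet the entrance; if any factor of the Jacobian $\rho u$ lost regularity at these corners, the $C^{2,\alpha}$-estimate for $g_{\rm cd}$ would fail. This is precisely the role of the compatibility conditions \eqref{eq:2.10}--\eqref{eq:2.11}: they force the transformed data to match up to second order at the corners, so that the one-to-one $C^{1,\alpha}$ correspondence between the Lagrangian and Euclidean formulations extends up to the boundary. Otherwise the transfer of estimates is a straightforward consequence of the chain rule and the uniform ellipticity of the Jacobian.
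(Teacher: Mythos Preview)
Your proposal is correct and follows essentially the same route as the paper: verify that \eqref{eq:1.22} implies \eqref{eq:2.15}, apply Theorem~\ref{thm:2.1}, invert $\mathcal{L}$ via the explicit formula $y(\xi,\eta)=g_{-}(\xi)+\int_{-\mathrm{m}_{-}}^{\eta}(\rho u)^{-1}\,\dd\tau$ (your ODE is just the differential form of this), and then use $g'_{\rm cd}=v/u$ to bootstrap $g_{\rm cd}$ to $C^{2,\alpha}$. Your final paragraph overstates the difficulty at the corners: since Theorem~\ref{thm:2.1} already delivers $U_{\pm}\in C^{1,\alpha}$ up to the closure of $\tilde{\mathcal{N}}_{\pm}$, the Jacobian $1/(\rho u)$ is uniformly $C^{1,\alpha}$ and no additional corner analysis is needed at this stage---the compatibility conditions \eqref{eq:2.10}--\eqref{eq:2.11} are consumed in the proof of Theorem~\ref{thm:2.1} itself, not in the coordinate change back.
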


\begin{proof}
Since
\begin{equation*}
\begin{split}
&\sum_{k=\pm}\Big(\big\|U^{k}_{\rm in}-\underline{U}_{k}\big\|_{1,\alpha; \tilde{\Gamma}^{k}_{\rm in}}+\big\|g_k-\underline{g}_{k}\big\|_{2,\alpha; \tilde{\Gamma}_{k}}\Big)\\
&\qquad\quad <\tilde{C}_{1}\sum_{k=\pm}\Big(\big\|U^{k}_{\rm in}-\underline{U}_{k}\big\|_{1,\alpha; \Gamma^{k}_{\rm in}}+\big\|g_k-\underline{g}_{k}\big\|_{2,\alpha; \Gamma_{k}}\Big)\\
&\qquad\quad <\tilde{C}_{1}\epsilon<\tilde{C}_{1}\epsilon_0,
\end{split}
\end{equation*}
if we choose $\epsilon_0>0$ small such that $\tilde{C}_{1}\epsilon_0<\varepsilon$, then Theorem \ref{thm:2.1} holds and
\begin{eqnarray*}
\frac{\partial(x, y)}{\partial(\xi,\eta)}=
\begin{vmatrix}
1& 0\\[3pt]
\frac{v}{u}& \frac{1}{\rho u}
\end{vmatrix}
=\frac{1}{\rho u}>0,
\end{eqnarray*}
when $\varepsilon>0$ is sufficiently small. So the map $\mathcal{L}$ is $C^{1,\alpha}$-homeomorphous. Then, the inverse Lagrange {\color{black}coordinate} transformation of $\mathcal{L}$ exists and can be defined as
\begin{eqnarray*}
\mathcal{L}^{-1} : \
\begin{cases}
x = \xi, \\
y = \int^{\eta}_{-\rm{m}_{-}}\Big(\frac{1}{\rho u}\Big)(x,\tau)\dd\tau+g_{-}(x).
\end{cases}
\end{eqnarray*}
Set $U(x,y)\triangleq (U\circ\mathcal{L}^{-1})(x, y)$. Then estimate \eqref{eq:1.23} in Theorem \ref{thm:1.1} follows from \eqref{eq:2.16}.
Furthermore, the contact discontinuity function $g_{\rm cd}$ is given by
\begin{eqnarray*}
g_{\rm cd}(x)=\int^{0}_{-\rm{m}_{-}}\Big(\frac{1}{\rho u}\Big)(x,\tau)\dd\tau+g_{-}(x), \  x\in [0, L).
\end{eqnarray*}
Obviously,
\begin{eqnarray*}
g'_{\rm cd}(x)=\frac{v}{u}(x,g_{\rm cd}(x)), \  x\in [0, L).
\end{eqnarray*}
So $g_{\rm cd}\in C^{2,\alpha}([0, L))$ and satisfies estimate \eqref{eq:1.24}.
\end{proof}
Hence, we only need to prove Theorem \ref{thm:2.1}.

\subsection{Reformulation of the \emph{Problem\ 2.1} in $\tilde{\mathcal{N}}_{-}\cup\tilde{\mathcal{N}}_{+}$}
In order to complete the proof of Theorem \ref{thm:2.1}, one needs further to make some reductions on the Euler equations \eqref{eq:2.5} as well as its boundary conditions
on the upper and lower supersonic regions $\tilde{\mathcal{N}}_{\pm}$, respectively.

Denote $\tilde{V}_{\pm}\triangleq\big(u_{\pm}, v_{\pm}, p_{\pm}\big)^{\top}$. Then, for $C^1$-solutions in $\tilde{\mathcal{N}}_{\pm}$,
we can rewrite the first three equations of system \eqref{eq:2.5} as the following first order nonlinear inhomogeneous hyperbolic system,
\begin{eqnarray}\label{eq:2.17}
\mathcal{A}_{1}\partial_{\xi}\tilde{V}_{\pm}+\mathcal{A}_{2}\partial_{\eta}\tilde{V}_{\pm}=\mathcal{F},
\end{eqnarray}
where
\begin{eqnarray*}
\mathcal{A}_{1}=\left(
\begin{matrix}
\rho_{\pm}u_{\pm} & 0 & 1 \\
0 & \rho_{\pm}u_{\pm} & 0\\
1 & 0 &\frac{u_{\pm}}{\rho_{\pm}c^2_{\pm}}
\end{matrix}
\right),\quad
\mathcal{A}_{2}=\left(
\begin{matrix}
0&0& -\rho_{\pm}v_{\pm} \\
0& 0 &\rho_{\pm}u_{\pm} \\
-\rho_{\pm}v_{\pm} & \rho_{\pm}u_{\pm} &0
\end{matrix}
\right),
\end{eqnarray*}
and
\begin{equation*}
\mathcal{F}=\left(
\begin{matrix}
0& \\
0& \\
\frac{(\gamma-1)\phi(T_{\pm})Y_{\pm}}{c^2_{\pm}} &
\end{matrix}
\right).
\end{equation*}

The eigenvalues of system \eqref{eq:2.17} are 
\begin{eqnarray}\label{eq:2.18}
\begin{split}
&\lambda^{0}_{\pm}=0, \quad \lambda^{\pm}_{+}=\frac{\rho_{+} c^{2}_{+} u_{+}}
{u^{2}_{+}-c^{2}_{+}}\bigg(\frac{v_{+}}{u_{+}}
\pm \sqrt{\frac{u^{2}_{+}+v^{2}_{+}}{c^2_{+}}-1}\bigg),\\
&\qquad \ \lambda^{\pm}_{-}=\frac{\rho_{-} c^{2}_{-} u_{-}}
{u^{2}_{-}-c^{2}_{-}}\bigg(\frac{v_{-}}{u_{-}}
\pm \sqrt{\frac{u^{2}_{-}+v^{2}_{-}}{c^2_{-}}-1}\bigg),
\end{split}
\end{eqnarray}
and the corresponding left eigenvectors are
\begin{equation*}
\boldsymbol{\ell}^{0}_{\pm}=\big(u_{\pm}, v_{\pm}, 0\big),\quad\ \boldsymbol{\ell}^{\pm}_{+}=\Big(\frac{\lambda^{\pm}_{+}}{\rho_{+}}+v_{+}, -u_{+}, -\lambda_{+}^{\pm}u_{+}\Big),
\quad\
\boldsymbol{\ell}^{\pm}_{-}=\Big(\frac{\lambda^{\pm}_{-}}{\rho_{-}}+v_{-}, -u_{-}, -\lambda_{-}^{\pm}u_{-}\Big).
\end{equation*}

Multiplying $\boldsymbol{\ell}_{+}^{0}$ on both sides of system \eqref{eq:2.17} in $\tilde{\mathcal{N}}_{+}$ to get
\begin{eqnarray}\label{eq:2.19}
u_{+}\partial_{\xi}u_{+}+v_{+}\partial_{\xi}v_{+}+\frac{1}{\rho_{+}}\partial_{\xi}p_{+}=0.
\end{eqnarray}
On the other hand, we derive from \eqref{eq:1.4}, \eqref{eq:1.8}, and the fourth equation in \eqref{eq:2.5} that 
\begin{eqnarray}\label{eq:2.20}
u_{+}\partial_{\xi}u_{+}+v_{+}\partial_{\xi}v_{+}+\frac{1}{\rho_{+}}\partial_{\xi}p_{+}
-\frac{A'(S_{+})}{\gamma-1}\rho^{\gamma-1}_{+}\partial_{\xi}S_{+}
=\frac{\mathfrak{q}_0\phi(T_{{+}})}{u_{{+}}}Y_{{+}}.
\end{eqnarray}

Plugging \eqref{eq:2.19} into \eqref{eq:2.20} and by \eqref{eq:1.4}, 
we obtain
\begin{eqnarray}\label{eq:2.21}
\partial_{\xi}S_{+}=-\frac{\gamma \mathcal{R}\mathfrak{q}_{0}\phi(T_{+})Y_{+}}{c^2_{+}u_{+}}, \qquad\mbox{in}\quad \tilde{\mathcal{N}}_{+}.
\end{eqnarray}

Next, multiplying $\boldsymbol{\ell}^{\pm}_{+}$ on both sides of system \eqref{eq:2.17} in $\tilde{\mathcal{N}}_{+}$ to deduce
\begin{eqnarray}\label{eq:2.22}
\begin{split}
&v_{+}\big(\partial_{\xi}u_{+}+\lambda^{\pm}_{+}\partial_{\eta}u_{+}\big)
-u_{+}\big(\partial_{\xi}v_{+}+\lambda^{\pm}_{+}\partial_{\eta}v_{+}\big)\\
&\qquad\qquad \mp\frac{\sqrt{u^{2}_{+}+v^{2}_{+}-c^{2}_{+}}}{\rho_{+}c_{+}}\big(\partial_{\xi}p_{+}+\lambda^{\pm}_{+}\partial_{\eta}p_{+}\big)
=-\frac{(\gamma-1)\mathfrak{q}_{0}\phi(T_{+})\lambda^{\pm}_{+}Y_{+}}{\rho_{+}c^2_{+}}.
\end{split}
\end{eqnarray}

Set
\begin{eqnarray}\label{eq:2.23}
\omega_{+}\triangleq\frac{v_{+}}{u_{+}}, \quad\ \Lambda_{+}\triangleq\frac{\sqrt{u^{2}_{+}+v^{2}_{+}-c^{2}_{+}}}{\rho_{+}c_{+}u^2_{+}}.
\end{eqnarray}

Then, equations \eqref{eq:2.22} in $\tilde{\mathcal{N}}_{+}$ can be rewritten as
\begin{eqnarray}\label{eq:2.24}
\begin{split}
\partial_{\xi}\omega_{+}+\lambda^{\pm}_{+}\partial_{\eta}\omega_{+}\pm\Lambda_{+}\big(\partial_{\xi}p_{+}+\lambda^{\pm}_{+}\partial_{\eta}p_{+}\big)
=\frac{(\gamma-1)\mathfrak{q}_{0}\phi(T_{+})\lambda^{\pm}_{+}Y_{+}}{\rho_{+}c^2_{+}u^2_{+}},\qquad\mbox{in}\quad \tilde{\mathcal{N}}_{+}.
\end{split}
\end{eqnarray}

Similarly, define
\begin{eqnarray}\label{eq:2.25}
\omega_{-}\triangleq\frac{v_{-}}{u_{-}}, \quad\ \Lambda_{-}\triangleq\frac{\sqrt{u^{2}_{-}+v^{2}_{-}-c^{2}_{-}}}{\rho_{-}c_{-}u^2_{-}}.
\end{eqnarray}

By a similar argument, 
$\omega_{-}$, $p_{-}$ and $S_{-}$ in $\tilde{\mathcal{N}}_{-}$ satisfy
\begin{eqnarray}\label{eq:2.26}
\begin{split}
\partial_{\xi}\omega_{-}+\lambda^{\pm}_{-}\partial_{\eta}\omega_{-}\pm\Lambda_{-}\big(\partial_{\xi}p_{-}+\lambda^{\pm}_{-}\partial_{\eta}p_{-}\big)
=\frac{(\gamma-1)\mathfrak{q}_{0}\phi(T_{-})\lambda^{\pm}_{-}Y_{-}}{\rho_{-}c^2_{-}u^2_{-}},\qquad\mbox{in}\quad \tilde{\mathcal{N}}_{-},
\end{split}
\end{eqnarray}
and
\begin{eqnarray}\label{eq:2.27}
\partial_{\xi}S_{-}=-\frac{\gamma \mathcal{R}\mathfrak{q}_{0}\phi(T_{-})Y_{-}}{c^2_{-}u_{-}}, \qquad\mbox{in}\quad \tilde{\mathcal{N}}_{-}.
\end{eqnarray}

Finally, we introduce the initial and boundary conditions for $(\omega_{\pm}, p_{\pm}, S_{\pm}, Y_{\pm})$. 
By \eqref{eq:2.6}, we know that
\begin{eqnarray}\label{eq:2.28}
(\omega_{\pm}, p_{\pm}, B_{\pm}, S_{\pm}, Y_{\pm})=(\omega^{\pm}_{\rm in}, p^{\pm}_{\rm in}, B^{\pm}_{\rm in}, S^{\pm}_{\rm in}, Y^{\pm}_{\rm in})(\eta),\qquad \mbox{on}\quad \tilde{\Gamma}^{\pm}_{\rm in},
\end{eqnarray}
where $B^{\pm}_{\rm in}$, $\omega^{\pm}_{\rm in}$ and $S^{\pm}_{\rm in}$ are given by
\begin{eqnarray}\label{eq:2.29}
\begin{split}
B^{\pm}_{\rm in}(\eta)\triangleq\frac{(u^{\pm}_{\rm in})^2+(v^{\pm}_{\rm in})^2}{2}
+\frac{\gamma p^{\pm}_{\rm in}}{(\gamma-1)\rho^{\pm}_{\rm in}},
\end{split}
\end{eqnarray}
and
\begin{eqnarray}\label{eq:2.30}
\omega^{\pm}_{\rm in}(\eta)\triangleq\big(\frac{v^{\pm}_{\rm in}}{u^{\pm}_{\rm in}}\big)(\eta),\quad\ S^{\pm}_{\rm in}(\eta)\triangleq A^{-1}\big(\frac{p^{\pm}_{\rm in}}{(\rho^{\pm}_{\rm in})^{\gamma}}\big)(\eta).
\end{eqnarray}

On the nozzle walls and contact discontinuity, it follows from \eqref{eq:2.8} and \eqref{eq:2.9} that 
\begin{eqnarray}\label{eq:2.31}
\omega_{+}=g'_{+}(\xi),\quad\mbox{on}\quad \tilde{\Gamma}_{+},\qquad \omega_{-}=g'_{-}(\xi),\quad\mbox{on}\quad \tilde{\Gamma}_{-},
\end{eqnarray}
and 
\begin{eqnarray}\label{eq:2.32}
\omega_{+}=\omega_{-}=g'_{\rm cd}(\xi),\qquad p_{+}=p_{-}, \qquad\mbox{on}\quad \tilde{\Gamma}_{\rm cd}.
\end{eqnarray}


Therefore, \emph{Problem 2.1} can be reformulated as the following initial-boundary value problem. 
\smallskip
\par \emph{$\mathbf{Problem\ 2.2.}$}\ For given functions $\big(\omega^{\pm}_{\rm in}, p^{\pm}_{\rm in}, B^{\pm}_{\rm in}, S^{\pm}_{\rm in}, Y^{\pm}_{\rm in}\big)(\eta)$ 
and 
$g_{\pm}$ which satisfy \eqref{eq:2.10} and \eqref{eq:2.11}, to find a piecewise smooth solution $(\omega_{\pm}, p_{\pm}, B_{\pm}, S_{\pm}, Y_{\pm})$ with a contact discontinuity $\tilde{\Gamma}_{\rm cd}$ to the following initial-boundary value problem:
\begin{eqnarray}\label{eq:2.33}
(\mathbf{IBVP})\
\begin{cases}
\partial_{\xi}\omega_{+}+\lambda^{\pm}_{+}\partial_{\eta}\omega_{+}\pm\Lambda_{+}\big(\partial_{\xi}p_{+}+\lambda^{\pm}_{+}\partial_{\eta}p_{+}\big)
=\frac{(\gamma-1)\mathfrak{q}_{0}\phi(T_{+})\lambda^{\pm}_{+}Y_{+}}{\rho_{+}c^2_{+}u^2_{+}}, &\quad  \mbox{in} \quad \tilde{\mathcal{N}}_{+}, \\
\partial_{\xi}B_{+}=\frac{\mathfrak{q}_0\phi(T_{+})Y_{+}}{u_{+}},\quad \partial_{\xi}S_{+}=-\frac{\gamma \mathcal{R}\mathfrak{q}_{0}\phi(T_{+})Y_{+}}{c^2_{+}u_{+}},\quad\ \partial_{\xi}Y_{+}=-\frac{\phi(T_{+})Y_{+}}{u_{+}}, &\quad \mbox{in}\quad \tilde{\mathcal{N}}_{+},\\
\partial_{\xi}\omega_{-}+\lambda^{\pm}_{-}\partial_{\eta}\omega_{-}\pm\Lambda_{-}\big(\partial_{\xi}p_{-}+\lambda^{\pm}_{-}\partial_{\eta}p_{-}\big)
=\frac{(\gamma-1)\mathfrak{q}_{0}\phi(T_{-})\lambda^{\pm}_{-}Y_{-}}{\rho_{-}c^2_{-}u^2_{-}}, &\quad  \mbox{in} \quad \tilde{\mathcal{N}}_{-},\\
\partial_{\xi}B_{-}=\frac{\mathfrak{q}_0\phi(T_{-})Y_{-}}{u_{-}},\quad\partial_{\xi}S_{-}=-\frac{\gamma \mathcal{R}\mathfrak{q}_{0}\phi(T_{-})Y_{-}}{c^2_{-}u_{-}}, \quad\ \partial_{\xi}Y_{-}=-\frac{\phi(T_{-})Y_{-}}{u_{-}}, &\quad  \mbox{in} \quad \tilde{\mathcal{N}}_{-},\\
(\omega_{+}, p_{+}, B_{+}, S_{+}, Y_{+})=(\omega^{+}_{\rm in}, p^{+}_{\rm in}, B^{+}_{\rm in}, S^{+}_{\rm in}, Y^{+}_{\rm in})(\eta),  &\quad  \mbox{on} \quad \tilde{\Gamma}^{+}_{\rm in},\\
(\omega_{-}, p_{-}, B_{-}, S_{-}, Y_{-})=(\omega^{-}_{\rm in}, p^{-}_{\rm in}, B^{-}_{\rm in}, S^{-}_{\rm in}, Y^{-}_{\rm in})(\eta),  &\quad  \mbox{on} \quad \tilde{\Gamma}^{-}_{\rm in},\\
\omega_{+}=g'_{+}(\xi), &\quad \mbox{on} \quad \tilde{\Gamma}_{+},\\
\omega_{+}=\omega_{-},\quad\ p_{+}=p_{-},&\quad \mbox{on} \quad \tilde{\Gamma}_{\rm cd},\\
\omega_{-}=g'_{-}(\xi), &\quad \mbox{on} \quad \tilde{\Gamma}_{-}.
\end{cases}
\end{eqnarray}

In the following, we denote by $\mathcal{U}_{\pm}\triangleq(\omega_{\pm}, p_{\pm}, B_{\pm}, S_{\pm}, Y_{\pm})^{\top}$ a solution of the initial-boundary value problem $(\mathbf{IBVP})$ with the initial datum $\mathcal{U}^{\pm}_{\rm in}=(\omega^{\pm}_{\rm in}, p^{\pm}_{\rm in}, B^{\pm}_{\rm in}, S^{\pm}_{\rm in}, Y^{\pm}_{\rm in})^{\top}$.
Then, we have the following result.
\begin{theorem}\label{thm:2.2}
Suppose that the reaction rate function $\phi(T)$ is $C^{1,1}$ with respect to $T$ for all $T>0$. Then, there exists a constant $\varepsilon_1>0$ depending only on $\underline{U}$, $\alpha$ and $L$ such that if
\begin{eqnarray}\label{eq:2.34}
\sum_{k=\pm}\Big(\|\mathcal{U}^{\pm}_{\rm in}-\underline{\mathcal{U}}_{\pm}\|_{1,\alpha; \tilde{\Gamma}^{k}_{\rm in}}+\big\|g_k-\underline{g}_{k}\big\|_{2,\alpha; \tilde{\Gamma}_{k}}\Big)
<\varepsilon,
\end{eqnarray}
for some $\varepsilon\in(0,\varepsilon_1)$, \emph{Problem 2.2} admits a unique solution $\mathcal{U}_{\pm}$ satisfying
\begin{eqnarray}\label{eq:2.35}
\big\|\mathcal{U}_{-}-\underline{\mathcal{U}}_{-}\big\|_{1,\alpha; \tilde{\mathcal{N}}_{-}}+\big\|\mathcal{U}_{+}-\underline{\mathcal{U}}_{+}\big\|_{1,\alpha; \tilde{\mathcal{N}}_{+}}<\tilde{C}_{2}\varepsilon,
\end{eqnarray}
for the constant $\tilde{C}_{2}>0$ depending only on $\underline{U}$, $\alpha$ and $L$. Here, $\underline{\mathcal{U}}_{\pm}\triangleq(0, \underline{p}^{\pm}, \underline{B}^{\pm}, \underline{S}^{\pm}, 0)^{\top}$ and
\begin{eqnarray}\label{eq:2.36}
\underline{B}^{\pm}\triangleq\frac{1}{2}(\underline{u}^{\pm})^{2}+\frac{\gamma A^{\frac{1}{\gamma}}(\underline{S}^{\pm})(\underline{p}^{\pm})^{1-\frac{1}{\gamma}}}{\gamma-1}.
\end{eqnarray}

\end{theorem}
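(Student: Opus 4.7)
I will follow the four-step scheme outlined after Theorem \ref{thm:1.2} in the introduction, organized as a Schauder fixed-point argument in a H\"older ball built from perturbations of the background state.

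\emph{Step 1 (Iteration set and linearization).} Write $\delta\mathcal{U}_\pm := \mathcal{U}_\pm - \underline{\mathcal{U}}_\pm$ and similarly $\delta\mathcal{U}^\pm_{\rm in}$, $\delta g_\pm := g_\pm - \underline{g}_\pm$. For $\sigma > 0$ to be chosen, define
\[
\mathcal{X}_\sigma := \Big\{\, \delta\mathcal{U} = (\delta\mathcal{U}_+,\delta\mathcal{U}_-) \in C^{1,\alpha}(\tilde{\mathcal{N}}_+) \times C^{1,\alpha}(\tilde{\mathcal{N}}_-)\ :\ \|\delta\mathcal{U}_\pm\|_{1,\alpha;\tilde{\mathcal{N}}_\pm} \le \sigma,\ \text{with matching corner data}\,\Big\}.
\]
Given $\delta\mathcal{U}\in\mathcal{X}_\sigma$, freeze the coefficients $\lambda^\pm_\pm,\Lambda_\pm,\phi(T_\pm),c_\pm,u_\pm,\rho_\pm$ in \eqref{eq:2.33} by evaluating them at $\underline{\mathcal{U}}_\pm+\delta\mathcal{U}_\pm$, yielding a linear problem $(\mathbf{IBVP})^*$ for an increment $\delta\mathcal{U}^*_\pm$.

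\emph{Step 2 (Solve $(\mathbf{IBVP})^*$ in sequence).} The key structural observation is that every source term in \eqref{eq:2.33} carries an explicit factor of $Y_\pm$, and since $\underline{Y}^\pm \equiv 0$, substituting $Y_\pm = \underline{Y}^\pm + \delta Y_\pm$ makes every source term quadratically small on $\mathcal{X}_\sigma$. Accordingly I first solve the ODE $\partial_\xi \delta Y^*_\pm = -(\phi(T_\pm)/u_\pm)\,\delta Y^*_\pm$ along the frozen streamlines (which are simply $\eta = \text{const}$ in Lagrangian coordinates) from the entrance data $\delta Y^\pm_{\rm in}$. An integrating factor gives $\delta Y^*_\pm$ explicitly with the bound $\|\delta Y^*_\pm\|_{1,\alpha;\tilde{\mathcal{N}}_\pm}\lesssim \|\delta Y^\pm_{\rm in}\|_{1,\alpha}$. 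Inserting this into the $B$- and $S$-equations, $\delta B^*_\pm,\delta S^*_\pm$ are obtained by direct $\xi$-integration, with a $C^{1,\alpha}$ bound controlled by $\varepsilon$ plus a term $O(\sigma\,\|\delta Y^*_\pm\|)$. It remains to solve the first-order $2\times 2$ system for $(\delta\omega^*_\pm,\delta p^*_\pm)$; diagonalizing with the approximate Riemann invariants
\[
z^{\pm,*}_\pm := \delta\omega^*_\pm \pm \underline{\Lambda}_\pm\,\delta p^*_\pm
\]
turns each equation into a transport equation along the characteristic family with slope $\lambda^\pm_\pm$, driven by the already-known $\delta Y^*_\pm$. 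The wall conditions on $\tilde{\Gamma}_\pm$ become reflection conditions expressing the incoming invariant in terms of the outgoing one plus $g'_\pm - \underline{g}'_\pm$; the contact conditions $\delta\omega^*_+ = \delta\omega^*_-$ and $\delta p^*_+ = \delta p^*_-$ on $\tilde{\Gamma}_{\rm cd}$ couple the two regions, expressing the two incoming invariants (one on each side) in terms of the two outgoing ones.

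\emph{Step 3 (A priori $C^{1,\alpha}$ estimates — main obstacle).} The central technical difficulty is that the hyperbolic system has coefficients discontinuous across $\tilde{\Gamma}_{\rm cd}$ and characteristics reflect repeatedly between $\tilde{\Gamma}_\pm$ and $\tilde{\Gamma}_{\rm cd}$. To handle this, I partition each $\tilde{\mathcal{N}}_\pm$ into finitely many subdomains $\mathcal{D}^{(k)}_\pm$ delimited by the background characteristics emanating from $\tilde O$, $\tilde P_\pm$, and from the points where the background characteristics meet the boundaries after successive reflections (since $L<\infty$, finitely many reflections suffice). On each $\mathcal{D}^{(k)}_\pm$, integrating the transport equations along characteristics and using the reflection/transmission relations expresses the invariants $z^{k,*}_\pm$ in terms of those on previous subdomains plus the entrance and boundary data. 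Inducting in $k$, this gives a $C^0$ bound $\|z^{k,*}_\pm\|_0 \lesssim \varepsilon + \sigma^2$, where the $\sigma^2$ arises from the quadratic-in-$Y$ sources. Differentiating the transport equations once and introducing auxiliary quantities $\mathcal{Z}^{k,*}_\pm$ for the $\eta$-derivatives (with $\xi$-derivatives recovered from the PDE), a parallel induction yields $C^{0,\alpha}$ bounds on $\nabla z^{k,*}_\pm$; the compatibility conditions \eqref{eq:2.10}--\eqref{eq:2.11} guarantee the $C^{1,\alpha}$ matching at the corners $\tilde O,\tilde P_\pm$ as trajectories cross the interfaces between subdomains. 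Patching the subdomain bounds produces a global estimate
\[
\|\delta\mathcal{U}^*_\pm\|_{1,\alpha;\tilde{\mathcal{N}}_\pm} \le \tilde C_\star\bigl(\varepsilon + \sigma^2\bigr).
\]

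\emph{Step 4 (Fixed point and uniqueness).} Define $\mathcal{T}:\mathcal{X}_\sigma\to C^{1,\alpha}$ by $\mathcal{T}(\delta\mathcal{U}) := \delta\mathcal{U}^*$. Choosing $\sigma = 2\tilde C_\star\varepsilon$ and then $\varepsilon_1$ so small that $\tilde C_\star(\varepsilon + \sigma^2) \le \sigma$, Step 3 shows $\mathcal{T}(\mathcal{X}_\sigma)\subset\mathcal{X}_\sigma$. Continuity of $\mathcal{T}$ in the weaker $C^{1,\alpha/2}$ topology combined with the compact embedding $C^{1,\alpha}\hookrightarrow C^{1,\alpha/2}$ (on the bounded domains $\tilde{\mathcal{N}}_\pm$) yields a fixed point by the Schauder theorem; this fixed point is a $C^{1,\alpha}$ solution of $(\mathbf{IBVP})$ satisfying \eqref{eq:2.35} with $\tilde C_2 := 2\tilde C_\star$. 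For uniqueness, if $\mathcal{U}_\pm^{(1)},\mathcal{U}_\pm^{(2)}$ are two such solutions, applying the same characteristic/transport estimate of Step 3 to their difference exploits once more the $Y$-factor in every source term to produce a contraction factor $O(\varepsilon)$, forcing the difference to vanish when $\varepsilon_1$ is reduced if necessary. By Corollary \ref{coro:2.1} this proves Theorem \ref{thm:1.1}.
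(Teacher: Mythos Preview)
Your proposal is correct and follows essentially the same approach as the paper: an iteration set $\mathcal{X}_\sigma$, the linearized problem $(\mathbf{IBVP})^*$, solving first the ODEs for $(\delta Y^*,\delta B^*,\delta S^*)$ and then the $2\times2$ hyperbolic system for $(\delta\omega^*,\delta p^*)$ by characteristics on finitely many subdomains with reflection/transmission at $\tilde\Gamma_\pm,\tilde\Gamma_{\rm cd}$, a Schauder fixed point via the compact embedding $C^{1,\alpha}\hookrightarrow C^{1,\alpha/2}$, and uniqueness by a contraction in a lower norm. The only substantive deviations are cosmetic: the paper linearizes the ODE coefficients at the \emph{background} state (putting the $\delta\mathcal{U}$-dependent remainder into quadratic source terms $f_{Y_\pm},f_{B_\pm},f_{S_\pm}$) rather than freezing them at $\underline{\mathcal{U}}_\pm+\delta\mathcal{U}_\pm$ as you do, and it uses $\Lambda_\pm$ (frozen) rather than $\underline{\Lambda}_\pm$ in defining the Riemann-type variables $z^{\pm,*}_\pm$; also, in the uniqueness step the $O(\varepsilon)$ contraction factor comes not only from the $Y$-factor but from the smallness of $\nabla\omega_{\pm,2},\nabla p_{\pm,2}$ in the coefficient-difference terms such as $(\lambda^\pm_{+,2}-\lambda^\pm_{+,1})\partial_\eta\omega_{+,2}$.
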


\section{Linearized Problem}\label{Sec3}

In this section, we develop an iteration scheme by linearizing the ({\bf IBVP}) near the background state $\underline{\mathcal{U}}_{\pm}$,
and then show the well-posedness and some \emph{a priori} estimates of the solutions of the linearized problem, which will be used for the study of the nonlinear problem later.

\subsection{Iteration scheme for $(\mathbf{IBVP})$}
For $\sigma\in(0,1)$, we define the iteration set
\begin{eqnarray}\label{eq:3.1}
\mathcal{X}_{\sigma}\triangleq\big\{(\delta\mathcal{U}_{+}, \delta\mathcal{U}_{-})\in C^{1,\alpha}(\tilde{\mathcal{N}}_{+})\times C^{1,\alpha}(\tilde{\mathcal{N}}_{-}) |\ \|\delta\mathcal{U}_{+}\|_{1,\alpha;\tilde{\mathcal{N}}_{+}}+\|\delta\mathcal{U}_{-}\|_{1,\alpha;\tilde{\mathcal{N}}_{-}} \leq \sigma \big\}.
\end{eqnarray}
Then, for any given $\delta\mathcal{U}_{\pm}\in \mathcal{X}_{\sigma}$ with $\delta\mathcal{U}_{\pm}=\big(\delta\omega_{\pm}, \delta p_{\pm}, \delta B_{\pm}, \delta S_{\pm}, \delta Y_{\pm}\big)^{\top}$, let $\mathcal{U}_{\pm}\triangleq\underline{\mathcal{U}}_{\pm}+\delta\mathcal{U}_{\pm}$ and $\delta\mathcal{V}_{\pm}\triangleq\big(\delta\omega_{\pm}, \delta p_{\pm}, \delta B_{\pm}, \delta S_{\pm}\big)^{\top}$.
We update $\delta\mathcal{U}_{\pm}$ by $\delta\mathcal{U}^{*}_{\pm}$, which solves the following linearized problem:
\begin{eqnarray}\label{eq:3.2}
(\mathbf{IBVP})^{*}\
\begin{cases}
\partial_{\xi}\delta\omega^{*}_{+}+\lambda^{\pm}_{+}\partial_{\eta}\delta\omega^{*}_{+}\pm\Lambda_{+}\big(\partial_{\xi}\delta p^{*}_{+}+\lambda^{\pm}_{+}\partial_{\eta}\delta p^{*}_{+}\big)\\
\qquad\qquad\qquad\qquad\quad =a^{\pm}_{+}(\underline{\mathcal{U}}_{+})\delta Y^{*}_{+}+f^{\pm}_{+}(\delta\mathcal{V}_{+},\delta Y_{+}), &\quad  \mbox{in} \quad \tilde{\mathcal{N}}_{+}, \\
\partial_{\xi}\delta B^{*}_{+}=a_{B_{{+}}}(\underline{\mathcal{U}}_{+})\delta Y^{*}_{+}+f_{B_{+}}(\delta\mathcal{V}_{+},\delta Y_{+}), &\quad \mbox{in}\quad \tilde{\mathcal{N}}_{+},\\
\partial_{\xi}\delta S^{*}_{+}=a_{S_{+}}(\underline{\mathcal{U}}_{+})\delta Y^{*}_{+}+f_{S_{+}}(\delta\mathcal{V}_{+},\delta Y_{+}), &\quad \mbox{in}\quad \tilde{\mathcal{N}}_{+},\\
\partial_{\xi}\delta Y^{*}_{+}=a_{Y_{+}}(\underline{\mathcal{U}}_{+})\delta Y^{*}_{+}+f_{Y_{+}}(\delta\mathcal{V}_{+},\delta Y_{+}), &\quad \mbox{in}\quad \tilde{\mathcal{N}}_{+},\\
\partial_{\xi}\delta\omega^{*}_{-}+\lambda^{\pm}_{-}\partial_{\eta}\delta \omega^{*}_{-}\pm\Lambda_{-}\big(\partial_{\xi}\delta p^{*}_{-}+\lambda^{\pm}_{-}\partial_{\eta}\delta p^{*}_{-}\big)\\
\qquad\qquad\qquad\qquad\quad =a^{\pm}_{-}(\underline{\mathcal{U}}_{-})\delta Y^{*}_{-}+f^{\pm}_{-}(\delta\mathcal{V}_{-},\delta Y_{-}), &\quad  \mbox{in} \quad \tilde{\mathcal{N}}_{-},\\
\partial_{\xi}\delta B^{*}_{-}=a_{B_{-}}(\underline{\mathcal{U}}_{-})\delta Y^{*}_{-}+f_{B_{-}}(\delta\mathcal{V}_{-},\delta Y_{-}), &\quad  \mbox{in} \quad \tilde{\mathcal{N}}_{-},\\
\partial_{\xi}\delta S^{*}_{-}=a_{S_{-}}(\underline{\mathcal{U}}_{-})\delta Y^{*}_{-}+f_{S_{-}}(\delta\mathcal{V}_{-},\delta Y_{-}), &\quad  \mbox{in} \quad \tilde{\mathcal{N}}_{-},\\
\partial_{\xi}\delta Y^{*}_{-}=a_{Y_{-}}(\underline{\mathcal{U}}_{-})\delta Y^{*}_{-}+f_{Y_{-}}(\delta\mathcal{V}_{-},\delta Y_{-}), &\quad \mbox{in}\quad \tilde{\mathcal{N}}_{-},\\
\delta \mathcal{U}^{*}_{+}=\delta \mathcal{U}^{+}_{\rm in},  &\quad  \mbox{on} \quad \tilde{\Gamma}^{+}_{\rm in},\\
\delta \mathcal{U}^{*}_{-}=\delta \mathcal{U}^{-}_{\rm in},  &\quad  \mbox{on} \quad \tilde{\Gamma}^{-}_{\rm in},\\
\delta\omega^{*}_{+}=g'_{+}(\xi), &\quad \mbox{on} \quad \tilde{\Gamma}_{+},\\
\delta\omega^{*}_{+}=\delta\omega^{*}_{-},\quad \delta p^{*}_{+}=\delta p^{*}_{-},&\quad \mbox{on} \quad \tilde{\Gamma}_{\rm cd},\\
\delta\omega^{*}_{-}=g'_{-}(\xi), &\quad \mbox{on} \quad \tilde{\Gamma}_{-},
\end{cases}
\end{eqnarray}
where $a^{\pm}_{\pm}$, $a_{B_{\pm}}$, $a_{S_{\pm}}$, $a_{Y_{\pm}}$, $f^{\pm}_{\pm}$, $f_{B_{\pm}}$, $f_{S_{\pm}}$, $f_{Y_{\pm}}$ are defined by
\begin{eqnarray}\label{eq:3.3}
\begin{cases}
a^{\pm}_{+}(\underline{\mathcal{U}}_{+})\triangleq\frac{(\gamma-1)\mathfrak{q}_{0}\phi(\underline{T}^{+})
\underline{\lambda}^{\pm}_{+}}{\underline{\rho}^{+}(\underline{c}^{+})^{2}(\underline{u}^{+})^{2}},\quad
a^{\pm}_{-}(\underline{\mathcal{U}}_{-})\triangleq\frac{(\gamma-1)\mathfrak{q}_{0}\phi(\underline{T}^{-})
\underline{\lambda}^{\pm}_{-}}{\underline{\rho}^{-}(\underline{c}^-)^{2}(\underline{u}^-)^{2}}, \\
a_{B_{\pm}}(\underline{\mathcal{U}}_{\pm})\triangleq\frac{\mathfrak{q}_{0}\phi(\underline{T}^{\pm})}{\underline{u}^{\pm}},\quad
a_{S_{\pm}}(\underline{\mathcal{U}}_{\pm})\triangleq-\frac{\gamma\mathcal{R}\mathfrak{q}_{0}\phi(\underline{T}^{\pm})}{(\underline{c}^{\pm})^2\underline{u}^{\pm}},\quad
a_{Y_{\pm}}(\underline{\mathcal{U}}_{\pm})\triangleq-\frac{\phi(\underline{T}^{\pm})}{\underline{u}^{\pm}},
\end{cases}
\end{eqnarray}
and
\begin{eqnarray}\label{eq:3.4-1}
\begin{cases}
f^{\pm}_{+}(\delta\mathcal{V}_{+},\delta Y_{+})\triangleq(\gamma-1)\mathfrak{q}_{0}\cdot
\bigg(\frac{\phi(T_{+})\lambda^{\pm}_{+}}{\rho_{+}(c_{+})^2(u_{+})^2}
-\frac{\phi(\underline{T}^{+})\underline{\lambda}^{\pm}_{+}}{\underline{\rho}^{+}(\underline{c}^{+})^2(\underline{u}^{+})^2}\bigg)\cdot\delta Y_{+},\\[5pt]
f^{\pm}_{-}(\delta\mathcal{V}_{-},\delta Y_{-})\triangleq(\gamma-1)\mathfrak{q}_{0}\cdot
\bigg(\frac{\phi(T_{-})\lambda^{\pm}_{-}}{\rho_{-}(c_{-})^2(u_{-})^2}
-\frac{\phi(\underline{T}^{-})\underline{\lambda}^{\pm}_{-}}{\underline{\rho}^{-}(\underline{c}^{-})^2(\underline{u}^{-})^2}\bigg)\cdot\delta Y_{-},\\[4pt]
f_{B_{\pm}}(\delta\mathcal{V}_{\pm}, \delta Y_{\pm})
\triangleq\mathfrak{q}_{0}\cdot\bigg(\frac{\phi(T_{\pm})}{u_{\pm}}
-\frac{\phi(\underline{T}^{\pm})}{\underline{u}^{\pm}}\bigg)\cdot\delta Y_{\pm},\\
f_{S_{\pm}}(\delta\mathcal{V}_{\pm}, \delta Y_{\pm})\triangleq\gamma\mathcal{R}\mathfrak{q}_{0}\cdot\bigg(\frac{\phi(\underline{T}^{\pm})}{(\underline{c}^{\pm})^2\underline{u}^{\pm}}
-\frac{\phi(T_{\pm})}{(c_{\pm})^2u_{\pm}}\bigg)\cdot\delta Y_{\pm},\\
f_{Y_{\pm}}(\delta\mathcal{V}_{\pm},\delta Y_{\pm})\triangleq\bigg(\frac{\phi(\underline{T}^{\pm})}{\underline{u}^{\pm}}-\frac{\phi(T_{\pm})}{u_{\pm}}\bigg)\cdot \delta Y_{\pm}.
\end{cases}
\end{eqnarray}

For the linearized problem $(\mathbf{IBVP})^{*}$, we have the following theorem.
\begin{theorem}\label{thm:3.1}
Suppose that the reaction rate function $\phi(T)$ is $C^{1,1}$ with respect to $T$. Then, for any given states $(\delta\mathcal{U}_{+},\delta\mathcal{U}_{-})\in\mathcal{X}_{\sigma}$ with $\sigma>0$ sufficiently small,
the linearized initial-boundary value problem $(\mathbf{IBVP})^{*}$ admits a unique solution $(\delta \mathcal{U}^{*}_{+},\delta\mathcal{U}^{*}_{-})\in C^{1,\alpha}(\tilde{\mathcal{N}}_{+})\times C^{1,\alpha}(\tilde{\mathcal{N}}_{-})$ satisfying the estimate
\begin{eqnarray}\label{eq:3.5}
\begin{split}
\sum_{k=\pm}\|\delta \mathcal{U}^{*}_{k}\|_{1,\alpha; \tilde{\mathcal{N}}_{k}}
\leq \tilde{C}_{3}\sum_{k=\pm}\Big(\|\delta \mathcal{U}^{k}_{\rm in}\|_{1,\alpha; \tilde{\Gamma}^{k}_{\rm in}}+\|g_k-\underline{g}_{k}\|_{2,\alpha; \tilde{\Gamma}_{k}}
+\|\delta \mathcal{V}_{k}\|_{1,\alpha; \tilde{\mathcal{N}}_{k}}\|\delta Y_{k}\|_{1,\alpha; \tilde{\mathcal{N}}_{k}}\Big),
\end{split}
\end{eqnarray}
where the constant $\tilde{C}_{3}>0$ depends only on the $\underline{\mathcal{U}}_{\pm}$ and $\alpha$, $L$.
\end{theorem}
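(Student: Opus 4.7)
The plan is to exploit the triangular structure of $(\mathbf{IBVP})^{*}$: the equations for $\delta Y^{*}_{\pm}$ are linear scalar ODEs along the streamlines $\eta=\mathrm{const}$, and $\delta B^{*}_{\pm},\delta S^{*}_{\pm}$ are driven only by $\delta Y^{*}_{\pm}$, while the genuine hyperbolic block $(\delta\omega^{*}_{\pm},\delta p^{*}_{\pm})$ is forced by $\delta Y^{*}_{\pm}$ and the nonlinear remainders $f^{\pm}_{\pm}$. Accordingly, I would first solve
\[
\partial_{\xi}\delta Y^{*}_{\pm}=a_{Y_{\pm}}\delta Y^{*}_{\pm}+f_{Y_{\pm}}(\delta\mathcal{V}_{\pm},\delta Y_{\pm})
\]
explicitly by Duhamel along $\eta=\mathrm{const}$ with data $\delta Y^{\pm}_{\rm in}(\eta)$. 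Because $f_{Y_{\pm}}$ is quadratically small in $\delta\mathcal{U}_{\pm}$, a direct $C^{1,\alpha}$ estimate yields $\|\delta Y^{*}_{\pm}\|_{1,\alpha}\lesssim \|\delta Y^{\pm}_{\rm in}\|_{1,\alpha}+\|\delta\mathcal{V}_{\pm}\|_{1,\alpha}\|\delta Y_{\pm}\|_{1,\alpha}$. The same Duhamel step for $\delta B^{*}_{\pm}$ and $\delta S^{*}_{\pm}$ then gives the analogous bound. Note that this step needs only the regularity of the coefficients $a_{\cdot}(\underline{\mathcal{U}})$, which are constant, and that $\phi\in C^{1,1}$.

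With $\delta Y^{*}_{\pm}$ in hand, the remaining problem for $(\delta\omega^{*}_{\pm},\delta p^{*}_{\pm})$ is a first-order linear $2\times 2$ hyperbolic system in each of $\tilde{\mathcal{N}}_{\pm}$ with frozen coefficients (the eigenvalues $\underline{\lambda}^{\pm}_{\pm}$ and weights $\underline{\Lambda}_{\pm}$), coupled through the two conditions $\delta\omega^{*}_{+}=\delta\omega^{*}_{-}$, $\delta p^{*}_{+}=\delta p^{*}_{-}$ on $\tilde{\Gamma}_{\rm cd}$, the Dirichlet conditions $\delta\omega^{*}_{\pm}=g'_{\pm}$ on the walls, and the initial data on $\tilde{\Gamma}^{\pm}_{\rm in}$. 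I would diagonalize by introducing the background Riemann-type invariants
\[
z^{\pm}_{k}\triangleq \delta\omega^{*}_{k}\pm\underline{\Lambda}_{k}\,\delta p^{*}_{k},\qquad k=+,-,
\]
so that each $z^{\pm}_{k}$ is transported along $d\eta/d\xi=\underline{\lambda}^{\pm}_{k}$ with a right-hand side linear in $\delta Y^{*}_{k}$ and $f^{\pm}_{k}$. Since $\underline{\lambda}^{+}_{\pm}$ and $\underline{\lambda}^{-}_{\pm}$ all have definite sign for supersonic flow (with the two families having opposite signs), the characteristic method fits the initial/wall/contact geometry: characteristics enter from $\tilde{\Gamma}^{\pm}_{\rm in}$, are reflected by $\tilde{\Gamma}_{\pm}$ via the wall condition, and either exit at $\xi=L$ or cross to $\tilde{\Gamma}_{\rm cd}$ where the $2\times 2$ matching system determines the outgoing invariants from the two incoming ones; the determinant of this matching system is nonzero precisely because of the contact-discontinuity jump structure recorded by $\underline{\Lambda}_{+}+\underline{\Lambda}_{-}\neq 0$. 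Partitioning $\tilde{\mathcal{N}}$ into the finitely many subdomains cut by the characteristics issuing from the corners $\tilde{O},\tilde{P}_{\pm}$ and their successive reflections (finiteness is guaranteed by $L<\infty$), a $C^{0}$ bound on $z^{\pm}_{k}$ is obtained by propagating data subdomain by subdomain and summing the geometric contributions of finitely many reflections.

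To upgrade to $C^{1,\alpha}$, I would differentiate the transport equations for $z^{\pm}_{k}$ and introduce the auxiliary variables $\mathcal{Z}^{\pm}_{k}$ (combinations of $\partial_{\xi}z^{\pm}_{k}$ and $\partial_{\eta}z^{\pm}_{k}$ compatible with the characteristic directions) as in the outline; these again satisfy transport equations, and the boundary/matching conditions, after one differentiation in $\xi$ along the boundaries, give linear relations between the incoming and outgoing values of $\mathcal{Z}^{\pm}_{k}$ with forcing controlled by $\|g_{\pm}-\underline{g}_{\pm}\|_{2,\alpha}$, $\|\delta\mathcal{U}^{\pm}_{\rm in}\|_{1,\alpha}$, and the product $\|\delta\mathcal{V}_{\pm}\|_{1,\alpha}\|\delta Y_{\pm}\|_{1,\alpha}$. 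A standard characteristic-method Hölder estimate in each subdomain, combined with matching at the interfaces between subdomains (where the compatibility conditions \eqref{eq:2.10}–\eqref{eq:2.11} ensure continuity of the first derivatives and therefore permit gluing without loss of Hölder regularity), yields the $C^{0,\alpha}$ bound on $(\nabla\delta\omega^{*}_{\pm},\nabla\delta p^{*}_{\pm})$. Summing over the finitely many subdomains and combining with the ODE estimates produces \eqref{eq:3.5}; uniqueness follows from the linearity of $(\mathbf{IBVP})^{*}$ by setting the data and sources to zero and running the same argument.

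The main obstacle I expect is the bookkeeping at the contact discontinuity. Each reflection or transmission mixes the two Riemann invariants on the two sides, and the compatibility conditions at $\tilde{O}$ must be used repeatedly to guarantee that the Hölder seminorms of the derivatives do not blow up at the triple junction where $\tilde{\Gamma}^{\pm}_{\rm in}$ meets $\tilde{\Gamma}_{\rm cd}$. Showing that the matching matrix is uniformly invertible, and that the geometric series of reflection contributions can be summed on a domain of fixed length $L$, is the analytically delicate step; everything else is bounded Duhamel/characteristic integration.
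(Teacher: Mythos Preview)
Your overall strategy is correct and matches the paper: solve the ODEs for $(\delta Y^{*}_{\pm},\delta B^{*}_{\pm},\delta S^{*}_{\pm})$ first, then handle the hyperbolic $(\delta\omega^{*}_{\pm},\delta p^{*}_{\pm})$ block by diagonalization, characteristic integration on finitely many subdomains cut by characteristics from the corners, matching at walls and at $\tilde{\Gamma}_{\rm cd}$, differentiating for the $C^{0,\alpha}$ derivative estimate, and finally gluing the subdomain estimates.

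There is, however, a genuine misreading that breaks your argument as written. The coefficients in $(\mathbf{IBVP})^{*}$ are \emph{not} frozen at the background: $\lambda^{\pm}_{\pm}$ and $\Lambda_{\pm}$ in \eqref{eq:3.2} are evaluated at $\mathcal{U}_{\pm}=\underline{\mathcal{U}}_{\pm}+\delta\mathcal{U}_{\pm}$, the given iterate, and hence are variable in $(\xi,\eta)$. If you diagonalize with the constant $\underline{\Lambda}_{\pm}$ and integrate along the straight lines $d\eta/d\xi=\underline{\lambda}^{\pm}_{\pm}$, the equations pick up remainder terms of the type $(\lambda^{\pm}_{\pm}-\underline{\lambda}^{\pm}_{\pm})\,\partial_{\eta}\delta\omega^{*}_{\pm}$ and $(\Lambda_{\pm}-\underline{\Lambda}_{\pm})\,(\partial_{\xi}\delta p^{*}_{\pm}+\lambda^{\pm}_{\pm}\partial_{\eta}\delta p^{*}_{\pm})$, i.e.\ $O(\sigma)$ times first derivatives of the \emph{unknown}. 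These cannot be absorbed in a $C^{0}$ Gronwall step without already knowing a $C^{1}$ bound, and after one differentiation the same circularity reappears at the next order. The paper avoids this by using the variable $\Lambda_{\pm}$ in the diagonalizing variables $z^{\pm,*}_{\pm}=\delta\omega^{*}_{\pm}\pm\Lambda_{\pm}\delta p^{*}_{\pm}$ and the curved characteristics of $\lambda^{\pm}_{\pm}$; the price is lower-order coupling terms $\tfrac{\partial^{\pm}_{\pm}\Lambda_{\pm}}{2\Lambda_{\pm}}(z^{+,*}_{\pm}-z^{-,*}_{\pm})$, but these involve only derivatives of the \emph{given} coefficients (bounded by $\|\delta\mathcal{U}_{\pm}\|_{1,\alpha}$) and the unknowns themselves, so a standard Gronwall closes. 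The same variable-coefficient choice is needed for the derivative variables $\mathcal{Z}^{\pm,*}_{\pm}=\partial_{\eta}\delta\omega^{*}_{\pm}\pm\Lambda_{\pm}\partial_{\eta}\delta p^{*}_{\pm}$. Once you make this correction, the rest of your outline, including the reflection/transmission bookkeeping at $\tilde{\Gamma}_{\rm cd}$ and the use of the compatibility conditions at the corners, goes through exactly as in the paper.
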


\subsection{\emph{A priori} estimates for solutions of $(\mathbf{IBVP})^{*}$}
Since the equations of $(\delta B^{*}_{\pm}, \delta S^{*}_{\pm}, \delta Y^{*}_{\pm})$ are independent on $\delta \omega^{*}_{\pm}$ and $\delta p^{*}_{\pm}$,
we first consider the solutions $(\delta B^{*}_{\pm}, \delta S^{*}_{\pm}, \delta Y^{*}_{\pm})$ to {\color{black}the} problem $(\mathbf{IBVP})^{*}$ by the following proposition.
\begin{proposition}\label{prop:3.1}
Assume the assumptions in Theorem \ref{thm:3.1} hold. For $(\delta\mathcal{U}_{+},\delta\mathcal{U}_{-})\in\mathcal{X}_{\sigma}$ with small $\sigma>0$, problem $(\mathbf{IBVP})^{*}$ admits a unique $C^{1,\alpha}$-solution $(\delta B^{*}_{\pm}, \delta S^{*}_{\pm}, \delta Y^{*}_{\pm})$ satisfying
\begin{eqnarray}\label{eq:prop-3.1-1}
\begin{split}
\|(\delta B^{*}_{k},\delta S^{*}_{k},\delta Y^{*}_{k})\|_{1,\alpha; \tilde{\mathcal{N}}_{k}}
&\leq \tilde{C}_{30}\Big(\|(\delta B^{k}_{\rm in}, \delta S^{k}_{\rm in},\delta Y^{k}_{\rm in})\|_{1,\alpha; \tilde{\Gamma}^{k}_{\rm in}}+\|\delta \mathcal{V}_{k}\|_{1,\alpha; \tilde{\mathcal{N}}_{k}}\|\delta Y_{k}\|_{1,\alpha; \tilde{\mathcal{N}}_{k}}\Big),
\end{split}
\end{eqnarray}
where $k=``\pm"$, and the constant $\tilde{C}_{30}>0$ depends only on $\underline{\mathcal{U}}_{\pm}$, $\alpha$ and $L$.
\end{proposition}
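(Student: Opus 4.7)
The plan is to exploit the fact that the equations in $(\mathbf{IBVP})^*$ for $(\delta B^*_\pm, \delta S^*_\pm, \delta Y^*_\pm)$ involve only the derivative $\partial_\xi$ and no transverse derivative in $\eta$, so for each fixed $\eta$ they form a coupled system of linear ODEs in $\xi$, with initial data $\delta\mathcal{U}^\pm_{\rm in}(\eta)$ prescribed at $\xi=0$. Moreover, since $a_{Y_\pm}(\underline{\mathcal{U}}_\pm)$ in \eqref{eq:3.3} is a constant depending only on the background state, the equation for $\delta Y^*_\pm$ decouples from the equations for $\delta B^*_\pm$ and $\delta S^*_\pm$ and can be solved explicitly by the integrating-factor method; once $\delta Y^*_\pm$ is in hand, $\delta B^*_\pm$ and $\delta S^*_\pm$ follow by direct integration in $\xi$ of their linear equations.

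Concretely, I would first write the explicit representation
\begin{equation*}
\delta Y^*_\pm(\xi,\eta) = e^{a_{Y_\pm}\xi}\,\delta Y^\pm_{\rm in}(\eta) + \int_0^\xi e^{a_{Y_\pm}(\xi-s)}\, f_{Y_\pm}\bigl(\delta\mathcal{V}_\pm(s,\eta),\, \delta Y_\pm(s,\eta)\bigr)\, \dd s.
\end{equation*}
The key structural observation, read off from \eqref{eq:3.4-1}, is that each of $f_{Y_\pm}$, $f_{B_\pm}$, $f_{S_\pm}$ factors as the product of $\delta Y_\pm$ with a smooth function of $\delta\mathcal{V}_\pm$ that vanishes at $\delta\mathcal{V}_\pm=0$. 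Since $\phi$ is $C^{1,1}$ and $\sigma$ is small enough to keep $T_\pm$ and $u_\pm$ in a compact neighborhood of $(\underline{T}^\pm,\underline{u}^\pm)\subset\{T>0,\,u>0\}$, this prefactor is $C^{1,\alpha}$ of size $O(\|\delta\mathcal{V}_\pm\|_{1,\alpha})$, so the standard Hölder product rule yields
\begin{equation*}
\bigl\|f_{Y_\pm}(\delta\mathcal{V}_\pm,\delta Y_\pm)\bigr\|_{1,\alpha;\tilde{\mathcal{N}}_\pm} \le C\,\|\delta\mathcal{V}_\pm\|_{1,\alpha;\tilde{\mathcal{N}}_\pm}\,\|\delta Y_\pm\|_{1,\alpha;\tilde{\mathcal{N}}_\pm},
\end{equation*}
and analogously for $f_{B_\pm}$ and $f_{S_\pm}$.

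Next I would read off the $C^{1,\alpha}$ estimate of $\delta Y^*_\pm$ directly from the representation formula: an $L^\infty$ bound is immediate from Gronwall's inequality on $[0,L]$; the $\partial_\xi$-derivative is given by the equation itself; the $\partial_\eta$-derivative is obtained by differentiating under the integral (legitimate since $a_{Y_\pm}$ is $(\xi,\eta)$-independent); and the $C^{\alpha}$-seminorms in both $\xi$ and $\eta$ are controlled via the boundedness of the map $g\mapsto \int_0^\xi g(s,\cdot)\,\dd s$ on $C^{0,\alpha}$. Collecting the initial-data contribution and the forcing contribution produces
\begin{equation*}
\|\delta Y^*_\pm\|_{1,\alpha;\tilde{\mathcal{N}}_\pm} \le \tilde{C}\Bigl(\|\delta Y^\pm_{\rm in}\|_{1,\alpha;\tilde{\Gamma}^\pm_{\rm in}} + \|\delta\mathcal{V}_\pm\|_{1,\alpha;\tilde{\mathcal{N}}_\pm}\|\delta Y_\pm\|_{1,\alpha;\tilde{\mathcal{N}}_\pm}\Bigr),
\end{equation*}
and feeding this back into the right-hand sides of the $\delta B^*_\pm$- and $\delta S^*_\pm$-equations and integrating in $\xi$ yields matching bounds, which assembled together give \eqref{eq:prop-3.1-1}.

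Uniqueness is automatic from linearity: the difference of two solutions satisfies the same ODE system with zero initial data and zero source, and Gronwall on $[0,L]$ forces it to vanish. The main point of genuine care is the Hölder-product estimate for the nonlinear source terms $f_{\cdot,\pm}$; this is routine but requires the smallness of $\sigma$ so that the compositions $\phi(T_\pm)/u_\pm$ and related rational expressions remain $C^{1,\alpha}$ with uniformly bounded derivatives. No new ideas beyond classical ODE theory and calculus in Hölder spaces are needed at this stage, since the genuine two-dimensional hyperbolic analysis only enters later, in the estimates for $(\delta\omega^*_\pm,\delta p^*_\pm)$.
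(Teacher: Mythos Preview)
Your proposal is correct and follows essentially the same approach as the paper: both treat the $(\delta B^*_\pm,\delta S^*_\pm,\delta Y^*_\pm)$ equations as linear ODEs in $\xi$ with $\eta$ as a parameter, solve the decoupled $\delta Y^*_\pm$ equation first, estimate the source terms via their product structure $f_{\cdot,\pm}=O(\delta\mathcal{V}_\pm)\cdot\delta Y_\pm$ (using $\phi\in C^{1,1}$ and smallness of $\sigma$), and then integrate the $\delta B^*_\pm,\delta S^*_\pm$ equations. The only difference is presentational: the paper invokes an external ODE-in-H\"older-spaces lemma (Lemma~B.1 of \cite{gao-liu-yuan}) in place of your explicit integrating-factor formula and direct Gronwall argument.
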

\begin{proof}
Since $\delta B^{*}_{k}, \delta S^{*}_{k}$ and $\delta Y^{*}_{k}$ in \eqref{eq:3.2} satisfy the initial value problem governed by linear ordinary differential equations, we can follow Lemma B.1 in \cite{gao-liu-yuan} to get the existence and uniqueness of $C^{1,\alpha}$-solution {\color{black}to it} with the following estimates
\begin{eqnarray*}
\begin{cases}
\|\delta Y^{*}_{k}\|_{1,\alpha; \tilde{\mathcal{N}}_{k}}
&\leq \mathcal{O}(1)\Big(\|\delta Y^{k}_{\rm in}\|_{1,\alpha; \tilde{\Gamma}^{k}_{\rm in}}+\|f_{Y_{k}}\|_{1,\alpha; \tilde{\mathcal{N}}_{k}}\Big),\\[5pt]
\|\delta B^{*}_{k}\|_{1,\alpha; \tilde{\mathcal{N}}_{k}} &\leq \mathcal{O}(1)\Big(\|\delta B^{k}_{\rm in}\|_{1,\alpha; \tilde{\Gamma}^{k}_{\rm in}}
+\|f_{B_{k}}\|_{1,\alpha; \tilde{\mathcal{N}}_{k}}+\|\delta Y^{*}_{k}\|_{1,\alpha; \tilde{\mathcal{N}}_{k}}\Big),\\[5pt]
\|\delta S^{*}_{k}\|_{1,\alpha; \tilde{\mathcal{N}}_{+}}&\leq \mathcal{O}(1)\Big(\|\delta S^{k}_{\rm in}\|_{1,\alpha; \tilde{\Gamma}^{k}_{\rm in}}
+\|f_{S_{k}}\|_{1,\alpha; \tilde{\mathcal{N}}_{k}}+\|\delta Y^{*}_{k}\|_{1,\alpha; \tilde{\mathcal{N}}_{k}}\Big),
\end{cases}
\end{eqnarray*}
where $k=``\pm"$, and the constants $\mathcal{O}(1)$ depend only on $\underline{\mathcal{U}}_{\pm}$ and $\alpha$, $L$.

Since $\phi\in C^{1,1}$, by \eqref{eq:3.4-1} and direct computation, we can get for $\sigma>0$ sufficiently small that
\begin{eqnarray*}
\|(f_{B_{k}}, f_{S_{k}},f_{Y_{k}})\|_{1,\alpha; \tilde{\mathcal{N}}_{k}}\leq \mathcal{O}(1)\|\delta \mathcal{V}_{k}\|_{1,\alpha; \tilde{\mathcal{N}}_{k}}\cdot\|\delta Y_{k}\|_{1,\alpha; \tilde{\mathcal{N}}_{k}},
\end{eqnarray*}
where the constant $\mathcal{O}(1)$ depends only on $\underline{\mathcal{U}}_{\pm}$ and $\alpha$, $L$. Combining all the four estimates above, we thus obtain estimate \eqref{eq:prop-3.1-1} and complete the proof of the proposition.
\end{proof}

Based on Proposition \ref{prop:3.1}, we will next consider the problem  $(\mathbf{IBVP})^{*}$ for $(\delta \omega^{*}_{\pm},\delta p^{*}_{\pm})$. To complete it, we will employ the characteristic methods by dividing the upper and lower regions into sub-domains (see Fig.\ref{fig3.4}) as follows.
Denote by $\eta=\Upsilon^{+,0}_{+}(\xi; \bar{\xi}^{2}_{+}, \rm{m}_{+})$ (or $\eta=\Upsilon^{-,0}_{-}(\xi; \bar{\xi}^{2}_{-}, \rm{m}_{-})$) the characteristic $\ell^{+,0}_{+}$ (or $\ell^{-,0}_{-}$) corresponding to $\lambda^{+}_{+}$ (or $\lambda^{-}_{-}$) starting from $\tilde{O}$ and intersecting the upper wall $\tilde{\Gamma}_{+}$ (or the lower wall $\tilde{\Gamma}_{-}$) at  point $(\bar{\xi}^{2}_{+}, \rm{m}_{+})$ (or $(\bar{\xi}^{2}_{-}, -\rm{m}_{-})$). Denote by $\eta=\Upsilon^{-,1}_{+}(\xi; \bar{\xi}^{2}_{\rm{cd}}, 0)$ (or $\eta=\Upsilon^{+,1}_{-}(\xi; \bar{\xi}^{2}_{\rm{cd}}, 0)$) the characteristic $\ell^{-,1}_{+}$ (or $\ell^{+,1}_{-}$) corresponding to $\lambda^{-}_{+}$ (or $\lambda^{+}_{-}$)
starting from $\tilde{P}_{+}$ (or $\tilde{P}_{-}$) and intersecting $\tilde{\Gamma}_{\rm{cd}}$ at point $(\bar{\xi}^{2}_{\rm{cd}}, 0)$. $\ell^{+,0}_{+}$ (or $\ell^{-,0}_{-}$) and $\ell^{-,1}_{+}$ (or $\ell^{+,1}_{-}$)
intersect at point $(\bar{\xi}^{1}_{+}, \bar{\eta}^{1}_{+})$ (or $(\bar{\xi}^{1}_{-}, \bar{\eta}^{1}_{-})$).
Let $\tilde{\mathcal{N}}^{\rm{I}}_{\pm}$ be the upper or lower triangle with $\tilde{\Gamma}^{\pm}_{\textrm{in}}$, $\ell^{\pm,0}_{\pm}$ and $\ell^{\mp,1}_{\pm}$ as its boundaries.
Let $\tilde{\mathcal{N}}^{\rm{II}}_{+}$ (or $\tilde{\mathcal{N}}^{\rm{II}}_{-}$) be the upper (or lower) triangle bounded by $\tilde{\Gamma}_{+}$ (or $\tilde{\Gamma}_{-}$ ), $\ell^{+,0}_{+}$ (or $\ell^{-,0}_{+}$) and $\ell^{-,1}_{+}$ (or $\ell^{+,1}_{-}$).
Let $\tilde{\mathcal{N}}^{\rm{III}}_{+}$ (or $\tilde{\mathcal{N}}^{\rm{III}}_{-}$) be the diamond bounded by $\ell^{+,0}_{+}$, $\ell^{-,1}_{+}$ and $\tilde{\Gamma}_{\rm{cd}}$ (or $\ell^{-,0}_{-}$, $\ell^{+,1}_{-}$ and $\tilde{\Gamma}_{\rm{cd}}$).
Let $\tilde{\mathcal{N}}^{\rm{IV}}_{+}$ (or $\tilde{\mathcal{N}}^{\rm{IV}}_{-}$) be the upper (or lower) diamond bounded by $\ell^{+,0}_{+}$ (or $\ell^{-,0}_{-}$), $\ell^{+,1}_{-}$ (or $\ell^{-,1}_{+}$), the characteristic issuing from $(\bar{\xi}^{2}_{\rm{cd}}, 0)$ and corresponding to $\lambda^{+}_{+}$ (or the characteristic issuing from $(\bar{\xi}^{2}_{\rm{cd}}, 0)$ and corresponding to $\lambda^{-}_{-}$), and the characteristic corresponding to $\lambda^{-}_{+}$  and starting from $(\bar{\xi}_{+}^{2},\rm{m}_{+})\in\tilde{\Gamma}_{+}$ (or the characteristic corresponding to $\lambda^{+}_{-}$ and starting from $(\bar{\xi}_{-}^{2},-\rm{m}_{-})\in\tilde{\Gamma}_{-}$).

\begin{figure}[ht]
\begin{center}
\begin{tikzpicture}[scale=1.2]

\draw [line width=0.04cm](-2.5,-2.0) --(3.5,-2.0);
\draw [line width=0.04cm](-2.5,0.8)--(3.5,0.8);
\draw [line width=0.04cm][red][dashed](-2.5,-0.5)--(3.5,-0.5);

\draw [thin](3.5,-2.0) --(3.5,0.8);
\draw [line width=0.02cm](-2.5,-2.0)--(-2.5,0.8);

\draw [line width=0.02cm](-2.5,-0.5)to[out=10, in=-120](-0.5,0.8);
\draw [line width=0.02cm][blue](-2.5,0.8)to[out=-20, in=120](0.9,-2.0);
\draw [line width=0.02cm](-2.5,-0.5)to[out=-20, in=120](-0.5,-2.0);
\draw [line width=0.02cm][blue](-2.5,-2.0)to[out=20, in=-120](0.7,0.8);

\draw [line width=0.02cm][red](-0.5,0.8)to[out=-20, in=130](2.5,-2.0);
\draw [line width=0.02cm](0.7,0.8)to[out=-20, in=120](2.9,-1.0);

\draw [line width=0.02cm][red](-0.5,-2.0)to[out=20, in=-120](2.1,0.8);
\draw [line width=0.02cm](0.9,-2.0) to[out=20, in=-120](2.9,0);

\node at (-2.0, 0.1) {$\tilde{\mathcal{N}}^{\rm{I}}_{+}$};
\node at (-1.1, 0.5) {$\tilde{\mathcal{N}}^{\rm{II}}_{+}$};
\node at (-1.0, -0.3) {$\tilde{\mathcal{N}}^{\rm{III}}_{+}$};
\node at (-0.3, 0.2) {$\tilde{\mathcal{N}}^{\rm{IV}}_{+}$};

\node at (-2.0, -1.2) {$\tilde{\mathcal{N}}^{\rm{I}}_{-}$};
\node at (-1.2, -1.7) {$\tilde{\mathcal{N}}^{\rm{II}}_{-}$};
\node at (-1.0, -0.9) {$\tilde{\mathcal{N}}^{\rm{III}}_{-}$};
\node at (-0.2, -1.3) {$\tilde{\mathcal{N}}^{\rm{IV}}_{-}$};

\node at (-2.5,0.8) {$\bullet$};
\node at (-2.5,-0.5) {$\bullet$};
\node at (-2.5,-2.0) {$\bullet$};
\node at (-0.5,0.8) {$\bullet$};
\node at (-1.05,0.105) {$\bullet$};
\node at (-0.22,-0.506) {$\bullet$};
\node at (-1.06,-1.29) {$\bullet$};
\node at (-0.5,-2.00) {$\bullet$};

\node at (3.9,0.8) {$\tilde{\Gamma}_{+}$};
\node at (3.9,-2.0) {$\tilde{\Gamma}_{-}$};
\node at (3.9,-0.5) {$\tilde{\Gamma}_{\textrm{cd}}$};
\node at (-2.8, 0.2) {$\tilde{\Gamma}^{+}_{\textrm{in}}$};
\node at (-2.8, -1.3) {$\tilde{\Gamma}^{-}_{\textrm{in}}$};
\node at (-2.8,-0.5) {$\tilde{O}$};
\node at (-2.8,0.8) {$\tilde{P}_+$};
\node at (-2.8,-2.0) {$\tilde{P}_-$};
\end{tikzpicture}
\end{center}
\caption{Linearized Problem $(\mathbf{IBVP})^{*}$ for $(\delta \omega^{*}_{\pm},\delta p^{*}_{\pm})$}\label{fig3.4}
\end{figure}
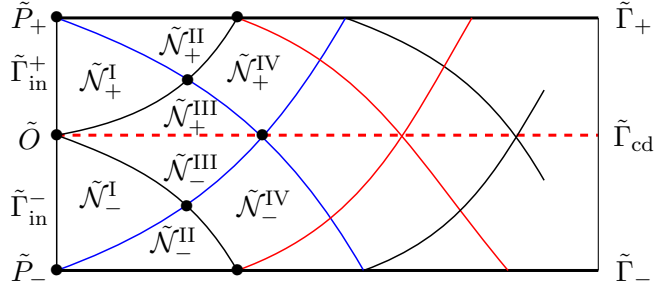

We first consider the problem $(\mathbf{IBVP})^{*}$ for $(\delta \omega^{*}_{\pm},\delta p^{*}_{\pm})$ in $\tilde{\mathcal{N}}^{\rm{I}}_{\pm}$. 
\begin{eqnarray}\label{eq:IBVP-I}
(\mathbf{IBVP})^{*}_{\rm{I}}\
\begin{cases}
\partial_{\xi}\delta\omega^{*}_{+}+\lambda^{\pm}_{+}\partial_{\eta}\delta\omega^{*}_{+}\pm\Lambda_{+}\big(\partial_{\xi}\delta p^{*}_{+}+\lambda^{\pm}_{+}\partial_{\eta}\delta p^{*}_{+}\big)\\
\qquad\qquad\qquad\qquad\qquad=a^{\pm}_{+}(\underline{\mathcal{U}}_{+})\delta Y^{*}_{+}+f^{\pm}_{+}(\delta\mathcal{V}_{+}, \delta Y_{+}), &\quad  \mbox{in} \quad \tilde{\mathcal{N}}^{\rm{I}}_{+}, \\
\partial_{\xi}\delta\omega^{*}_{-}+\lambda^{\pm}_{-}\partial_{\eta}\delta\omega^{*}_{-}\pm\Lambda_{-}\big(\partial_{\xi}\delta p^{*}_{-}+\lambda^{\pm}_{-}\partial_{\eta}\delta p^{*}_{-}\big)\\
\qquad\qquad\qquad\qquad\qquad=a^{\pm}_{-}(\underline{\mathcal{U}}_{-})\delta Y^{*}_{-}+f^{\pm}_{-}(\delta\mathcal{V}_{-}, \delta Y_{-}), &\quad  \mbox{in} \quad \tilde{\mathcal{N}}^{\rm{I}}_{-}, \\
(\delta\omega^{*}_{+},\delta p^{*}_{+})=(\delta\omega^{+}_{\rm in}, \delta p^{+}_{\rm in}), &\quad  \mbox{on} \quad \tilde{\Gamma}^{+}_{\rm in},\\
(\delta\omega^{*}_{-}, \delta p^{*}_{-})=(\delta\omega^{-}_{\rm in}, \delta p^{-}_{\rm in}), &\quad  \mbox{on} \quad \tilde{\Gamma}^{-}_{\rm in}.
\end{cases}
\end{eqnarray}

We have the following proposition for $(\mathbf{IBVP})^{*}_{\rm{I}}$.
\begin{proposition}\label{prop:3.2}
For $\sigma>0$ sufficiently small and given $(\delta\mathcal{U}_{+},\delta\mathcal{U}_{-})\in\mathcal{X}_{\sigma}$,
there exists a unique $C^{1,\alpha}$-solution $(\delta \omega^{*}_{\pm}, \delta p^{*}_{\pm})$ of the problem  $(\mathbf{IBVP})^{*}_{\rm{I}}$ with the estimate
\begin{align}\label{eq:prop-3.2-1}
\begin{split}
\|(\delta \omega^{*}_{k},\delta p^{*}_{k})\|_{1,\alpha; \tilde{\mathcal{N}}^{\rm{I}}_{k}}\leq  \tilde{C}_{31}\Big(\|(\delta \omega^{k}_{\rm in},\delta p^{k}_{\rm in})\|_{1,\alpha; \tilde{\Gamma}^{k}_{\rm in}}+\|\delta Y^{*}_{k}\|_{1,\alpha; \tilde{\mathcal{N}}^{\rm{I}}_{k}}+\sum_{j=\pm}\|f^{j}_{k}\|_{1,\alpha; \tilde{\mathcal{N}}^{\rm{I}}_{k}}\Big),
\end{split}
\end{align}
where $k=``\pm"$, and the constant $\tilde{C}_{31}>0$ depends only on $\underline{\mathcal{U}}_{\pm}$ and $\alpha$, $L$.
\end{proposition}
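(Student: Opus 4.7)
The plan is to treat the two regions $\tilde{\mathcal{N}}^{\rm I}_{+}$ and $\tilde{\mathcal{N}}^{\rm I}_{-}$ separately. In each, the system for $(\delta\omega^{*}_{k},\delta p^{*}_{k})$ is a $2\times 2$ first-order linear hyperbolic system with two characteristic families corresponding to the eigenvalues $\lambda^{\pm}_{k}$, which at the background state are nonzero constants of opposite signs (since $\underline{u}^{\pm}>\underline{c}^{\pm}$ and $\underline{v}^{\pm}=0$). Crucially, $\tilde{\mathcal{N}}^{\rm I}_{k}$ is precisely the determinate triangle bounded by $\tilde{\Gamma}^{k}_{\rm in}$ and the two extremal characteristics $\ell^{\pm,0}_{k}$, $\ell^{\mp,1}_{k}$, so in this region the solution depends only on the initial data on $\tilde{\Gamma}^{k}_{\rm in}$ together with the source terms; no wall or contact-discontinuity conditions enter. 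The coefficients $\lambda^{\pm}_{k}$, $\Lambda_{k}$ are $C^{1,\alpha}$-close to their background values because they are evaluated on $\mathcal{U}_{k}=\underline{\mathcal{U}}_{k}+\delta\mathcal{U}_{k}\in \mathcal{X}_{\sigma}$.

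The first step is to diagonalize the system. I would introduce Riemann-type combinations of the form
\[
z^{k,\pm}_{*}\triangleq \delta\omega^{*}_{k}\pm\Lambda_{k}\,\delta p^{*}_{k},
\]
possibly with lower-order corrections to absorb the $\eta$-derivatives of $\Lambda_{k}$. Each $z^{k,\pm}_{*}$ then satisfies a scalar inhomogeneous transport equation
\[
(\partial_{\xi}+\lambda^{\pm}_{k}\partial_{\eta})z^{k,\pm}_{*}=\mathcal{R}^{k,\pm}_{*},
\]
whose right-hand side is linear in $(z^{k,+}_{*},z^{k,-}_{*})$ and contains the given inhomogeneities $a^{\pm}_{k}(\underline{\mathcal{U}}_{k})\delta Y^{*}_{k}+f^{\pm}_{k}(\delta\mathcal{V}_{k},\delta Y_{k})$. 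Integrating each equation along its characteristic $\xi\mapsto \Upsilon^{\pm}_{k}(\xi;\xi_{0},\eta_{0})$ from its foot on $\tilde{\Gamma}^{k}_{\rm in}$ expresses $z^{k,\pm}_{*}(\xi,\eta)$ as a boundary value plus a Duhamel-type integral, and a Gronwall inequality on the interval $[0,\bar{\xi}^{1}_{k}]$ immediately yields a $C^{0}$-bound of the form \eqref{eq:prop-3.2-1}. Existence and uniqueness in $C^{0}$ follow from the same Picard-type contraction argument on $(z^{k,+}_{*},z^{k,-}_{*})$, made linear by the structure of the equations.

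The main obstacle is upgrading this to the full $C^{1,\alpha}$-estimate. For this I would differentiate the scalar transport equations tangentially and transversally, or equivalently integrate the representation formula after differentiation. The Hölder seminorm of $\partial z^{k,\pm}_{*}$ at two nearby points $(\xi,\eta)$ and $(\xi',\eta')$ reduces to comparing the characteristics issued from these two points back to $\tilde{\Gamma}^{k}_{\rm in}$, and then exploiting: (i) the $C^{1,\alpha}$-dependence of $\Upsilon^{\pm}_{k}(\cdot;\xi_{0},\eta_{0})$ on its initial point, which is the content of the characteristic estimates the authors defer to the appendix; (ii) the $C^{0,\alpha}$ regularity of the coefficients $\lambda^{\pm}_{k},\Lambda_{k}$ and of the source term $a^{\pm}_{k}\delta Y^{*}_{k}+f^{\pm}_{k}$. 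The delicate point is that the coefficients are merely $C^{0,\alpha}$ in their derivatives, so the Hölder quotients of $\partial z^{k,\pm}_{*}$ must be estimated carefully by splitting the integral along the two nearby characteristics and using the uniform lower bound $|\lambda^{+}_{k}-\lambda^{-}_{k}|\ge c>0$ near the background state to control the transversal difference of the two feet by the Euclidean distance of the target points.

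Once these $C^{1,\alpha}$ bounds close under the smallness assumption on $\sigma$, both the source of the form $a^{\pm}_{k}\delta Y^{*}_{k}+f^{\pm}_{k}$ (with $\|\delta Y^{*}_{k}\|_{1,\alpha}$ already under control by Proposition \ref{prop:3.1}) and the initial data $(\delta\omega^{k}_{\rm in},\delta p^{k}_{\rm in})$ contribute to the right-hand side of \eqref{eq:prop-3.2-1} with a constant depending only on $\underline{\mathcal{U}}_{\pm}$, $\alpha$ and $L$ (through the length of the characteristics in $\tilde{\mathcal{N}}^{\rm I}_{k}$, itself bounded by $L$). Uniqueness in $C^{1,\alpha}$ then follows from linearity by applying the $C^{0}$ estimate to the difference of two solutions with zero data, and existence is obtained either by Picard iteration or by noting that the explicit characteristic representation of $z^{k,\pm}_{*}$ produces a $C^{1,\alpha}$-function in view of the estimates just derived.
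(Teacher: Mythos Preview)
Your proposal is correct and follows essentially the same route as the paper: diagonalize via $z^{\pm,*}_{k}=\delta\omega^{*}_{k}\pm\Lambda_{k}\delta p^{*}_{k}$, integrate along the characteristics $\Upsilon^{\pm}_{k}$ back to $\tilde{\Gamma}^{k}_{\rm in}$, and close the $C^{0}$ bound by Gronwall; then differentiate in $\eta$ to obtain analogous transport equations for $\mathcal{Z}^{\pm,*}_{k}=\partial_{\eta}\delta\omega^{*}_{k}\pm\Lambda_{k}\partial_{\eta}\delta p^{*}_{k}$, recover the $\xi$-derivatives algebraically from the system, and obtain the $C^{0,\alpha}$ seminorm of the gradient by subtracting the characteristic integral formulas at two nearby points (split into $\xi$- and $\eta$-differences) and invoking the $C^{1,\alpha}$ dependence of $\Upsilon^{\pm}_{k}$ on its base point from the appendix. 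The only cosmetic difference is that the paper does not add lower-order corrections to absorb $\partial_{\eta}\Lambda_{k}$; instead the term $\frac{\partial^{\pm}_{k}\Lambda_{k}}{2\Lambda_{k}}(z^{+,*}_{k}-z^{-,*}_{k})$ is kept as a zero-order coupling and handled by the same Gronwall step.
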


\begin{proof}
For its existence and uniqueness, we can follow the argument done in \cite{li-yu} by employing the \emph{Picard iteration scheme} to establish it. So we omit it for the shortness.
The remaining task is to derive the \emph{a priori} estimates for $(\delta \omega^{*}_{\pm},\delta p^{*}_{\pm})$. Without loss of the generality, we only consider the estimates for $(\delta \omega^{*}_{+},\delta p^{*}_{+})$ in $\tilde{\mathcal{N}}^{\rm{I}}_{+}$, since $(\delta \omega^{*}_{-},\delta p^{*}_{-})$ in $\tilde{\mathcal{N}}^{\rm{I}}_{-}$ can be dealt with in the same way.
We divide the proof into three steps.

\emph{Step1.\ $C^0$-estimates for $(\delta \omega^{*}_{+},\delta p^{*}_{+})$ in $\tilde{\mathcal{N}}^{\rm{I}}_{+}$.} Let us introduce
\begin{eqnarray}\label{eq:prop-3.2-2}
z^{+,*}_{+}=\delta \omega^{*}_{+}+\Lambda_{+}\delta p^{*}_{+}, \quad z^{-,*}_{+}=\delta \omega^{*}_{+}-\Lambda_{+}\delta p^{*}_{+}.
\end{eqnarray}
Then $(\mathbf{IBVP})^{*}_{\rm{I}}$ 
becomes the following initial value problem for $(z^{+,*}_{+},z^{-,*}_{+})$:
\begin{align}\label{eq:prop-3.2-3}
\begin{cases}
\partial_{\xi}z^{+,*}_{+}+\lambda^{+}_{+}\partial_{\eta}z^{+,*}_{+}-\frac{\p^{+}_{+}\Lambda_{+}}{2\Lambda_{+}}(z^{+,*}_{+}-z^{-,*}_{+})
=a^{+}_{+}(\underline{\mathcal{U}}_{+})\delta Y^{*}_{+}+f^{+}_{+}(\delta\mathcal{V}_{+}, \delta Y_{+}),&\quad  \mbox{in} \quad \tilde{\mathcal{N}}^{\rm{I}}_{+}, \\
\partial_{\xi}z^{-,*}_{+}+\lambda^{-}_{+}\partial_{\eta}z^{-,*}_{+}+\frac{\p^{-}_{+}\Lambda_{+}}{2\Lambda_{+}}(z^{+,*}_{+}-z^{-,*}_{+})=a^{-}_{+}(\underline{\mathcal{U}}_{+})\delta Y^{*}_{+}+f^{-}_{+}(\delta\mathcal{V}_{+}, \delta Y_{+}),&\quad  \mbox{in} \quad \tilde{\mathcal{N}}^{\rm{I}}_{+}, \\
(z^{+,*}_{+}, z^{-,*}_{+})=(z^{+}_{+,\rm in},z^{-}_{+,\rm in}),  &\quad  \mbox{on} \quad \tilde{\Gamma}^{+}_{\rm in},
\end{cases}
\end{align}
where $\p^{\pm}_{+}\triangleq\p_{\xi}+\lambda^{\pm}_{+}\p_{\eta}$,
$z^{\pm}_{+,\rm in}\triangleq\delta \omega^{+}_{\rm in}\pm\Lambda_{+, \rm in}\delta p^{+}_{\rm in}$, and $\Lambda_{+, \rm in}\triangleq\frac{\sqrt{u^2_{+,\rm in}+v^2_{+,\rm in}-c^2_{+,\rm in}}}{\rho_{+,\rm in}c_{+,\rm in}u^2_{+,\rm in}}$.

\begin{figure}[ht]
\begin{center}
\begin{tikzpicture}[scale=1.1]


\draw [line width=0.02cm](-2.5,-2.0)--(-2.5,0.8);
\draw [line width=0.04cm](-2.5,-2.0)to[out=20, in=-120](0.2,-0.1);
\draw [line width=0.02cm][blue](-2.5,-1.3)to[out=10, in=-120](-0.5,-0.4);
\draw [line width=0.02cm][red](-2.5,0.3)to[out=-5, in=130](-0.5,-0.4);
\draw [line width=0.04cm](-2.5,0.8)to[out=-10, in=140](0.2,-0.1);

\node at (-2.5,0.8) {$\bullet$};
\node at (-2.5,-2.0) {$\bullet$};
\node at (-2.5,-1.3) {$\bullet$};
\node at (-2.5,0.3) {$\bullet$};
\node at (0.2,-0.1) {$\bullet$};
\node at (-0.5,-0.4) {$\bullet$};

\node at (-1.8, -0.5) {$\tilde{\mathcal{N}}^{\rm{I}}_{+}$};
\node at (-2.8, -0.6) {$\tilde{\Gamma}^{+}_{\textrm{in}}$};
\node at (-2.8,0.8) {$\tilde{P}_+$};
\node at (-2.8,-2.0) {$\tilde{O}$};
\end{tikzpicture}
\end{center}
\caption{Estimates for $(\delta \omega^{*}_{+},\delta p^{*}_{+})$ in $\tilde{\mathcal{N}}^{\rm{I}}_{+}$}\label{fig3.5}
\end{figure}
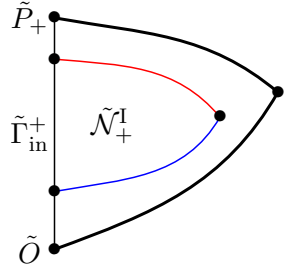

As shown in Fig. \ref{fig3.5}, for any point $(\bar{\xi},\bar{\eta})\in \tilde{\mathcal{N}}_{+}\cap\tilde{\mathcal{N}}^{\rm{I}}_{+}$, we define the characteristic curves for \eqref{eq:prop-3.2-3} passing through $(\bar{\xi},\bar{\eta})$ as
\begin{eqnarray}\label{eq:prop-3.2-4}
\begin{cases}
\frac{\mathrm{d} \Upsilon_{+}^{\pm}(\tau;\bar{\xi},\bar{\eta})}{\mathrm{d} \tau}=\lambda_{+}^{\pm}(\tau,\Upsilon_{+}^{\pm}(\tau;\bar{\xi},\bar{\eta})),\\
\Upsilon_{+}^{\pm}(\bar{\xi};\bar{\xi},\bar{\eta})=\bar{\eta}.
\end{cases}
\end{eqnarray}

Let $\gamma_{+}^{\pm}(\bar{\xi},\bar{\eta})\triangleq\Upsilon_{+}^{\pm}(0;\bar{\xi},\bar{\eta})$ be the $\eta$-component of the intersection point between the characteristics $\eta=\Upsilon_{+}^{\pm}(\tau;\bar{\xi},\bar{\eta})$ and the entrance $\tilde{\Gamma}^{+}_{\rm in}$.
Then, along $\eta=\Upsilon_{+}^{\pm}(\tau;\bar{\xi},\bar{\eta})$, we have
\begin{eqnarray}\label{eq:prop-3.2-5}
\begin{cases}
z^{+,*}_{+}(\bar{\xi},\bar{\eta})
=z^{+}_{+,\rm in}(\gamma^{+}_{+}(\bar{\xi},\bar{\eta}))
+\int_0^{\bar{\xi}}\frac{\p_{+}\Lambda_{+}}{2\Lambda_{+}}\big(z^{+,*}_{+}-z^{-,*}_{+}\big)(\tau,\Upsilon^{+}_{+}(\tau;\bar{\xi},\bar{\eta}))\mathrm{d} \tau\\
\qquad\qquad\qquad\qquad\qquad\qquad+\int_0^{\bar{\xi}}\big(a^{+}_{+}(\underline{\mathcal{U}}_{+})\delta Y^{*}_{+}
+f^{+}_{+}(\delta\mathcal{V}_{+}, \delta Y_{+})\big)(\tau,\Upsilon^{+}_{+}(\tau;\bar{\xi},\bar{\eta}))\mathrm{d} \tau,\\
z^{-,*}_{+}(\bar{\xi},\bar{\eta})
=z^{-}_{+,\rm in}(\gamma_{+}^{-}(\bar{\xi},\bar{\eta}))+\int_0^{\bar{\xi}}\frac{\p_{-}\Lambda_{+}}{2\Lambda_{+}}\big(z^{-,*}_{+}-z^{+,*}_{+}\big)
(\tau,\Upsilon^{-}_{+}(\tau;\bar{\xi},\bar{\eta}))\mathrm{d} \tau\\
\qquad\qquad\qquad\qquad\qquad\qquad+\int^{\bar{\xi}}_{0}\big(a^{-}_{+}(\underline{\mathcal{U}}_{+})\delta Y^{*}_{+}+f^{-}_{+}(\delta\mathcal{V}_{+}, \delta Y_{+})\big)(\tau,\Upsilon^{-}_{+}(\tau;\bar{\xi},\bar{\eta}))\mathrm{d} \tau.
\end{cases}
\end{eqnarray}

Let us introduce
\begin{eqnarray}\label{eq:prop-3.2-6}
R_{+}^{\rm{I}}(\kappa)=\big\{(\xi,\eta)| 0\leq\xi\leq\kappa,\ \Upsilon^{+,0}_{+}(\xi;\bar{\xi}^{2}_{+},\rm{m}_{+})\leq\eta\leq\Upsilon_{+}^{-,1}(\xi;\bar{\xi}^{2}_{\rm{cd}},0)\big\},
\ (0\leq\kappa\leq \bar{\xi}_{+}^{1}).
\end{eqnarray}

We define the norm
\begin{eqnarray}\label{eq:prop-3.2-7}
z_{\rm{I}^{+}}^{\pm,*}(\kappa)\triangleq\sup\limits_{(\xi,\eta)\in R_{+}^{\rm{I}}(\kappa)}|z_{+}^{\pm,*}(\xi,\eta)|.
\end{eqnarray}

Obviously, we have $\|z_{+}^{\pm,*}\|_{0,0; \tilde{\mathcal{N}}^{\rm{I}}_{+}}=z_{\rm{I}^{+}}^{\pm,*}(\bar{\xi}^{1}_{+}).$
So by \eqref{eq:prop-3.2-5} and \eqref{eq:prop-3.2-7}, we obtain   
\begin{align}\label{eq:prop-3.2-8}
\begin{split}
z_{\rm{I}^{+}}^{+,*}(\kappa)&\leq \|z_{+,\rm in}^{+}\|_{0,0; \tilde{\Gamma}^{+}_{\rm in}}+L\cdot\Big(|a^{+}_{+}(\underline{\mathcal{U}}_{+})|\cdot\|\delta Y^{*}_{+}\|_{0,0; \tilde{\mathcal{N}}^{I}_{+}}+\|f^{+}_{+}\|_{0,0; \tilde{\mathcal{N}}^{\rm{I}}_{+}}\Big)\\
&\quad\ +\Big\|\frac{\p_{+}\Lambda_{+}}{2\Lambda_{+}}\Big\|_{0,0; \tilde{\mathcal{N}}^{\rm{I}}_{+}}\cdot\int_0^{\kappa}\sum_{k=\pm}z_{\rm{I}^{+}}^{\pm,*}(\varsigma)\dd\varsigma\\
&\leq  C^{+}_{\rm{I}^{+}}\Big(\|z_{+,\rm in}^{+}\|_{0,0; \tilde{\Gamma}^{+}_{\rm in}}+\|\delta Y^{*}_{+}\|_{0,0; \tilde{\mathcal{N}}^{\rm{I}}_{+}}+\|f^{+}_{+}\|_{0,0; \tilde{\mathcal{N}}^{\rm{I}}_{+}}+\int_0^{\kappa}\sum_{k=\pm}z_{\rm{I}^{+}}^{\pm,*}(\varsigma)\dd\varsigma\Big),
\end{split}
\end{align}
and
\begin{align}\label{eq:prop-3.2-9}
\begin{split}
z_{\rm{I}^{+}}^{-,*}(\kappa)&\leq \|z_{+,\rm in}^{-}\|_{0,0; \tilde{\Gamma}^{+}_{\rm in}}+L\cdot\Big(|a^{-}_{+}(\underline{\mathcal{U}}_{+})|\cdot\|\delta Y^{*}_{+}\|_{0,0; \tilde{\mathcal{N}}^{\rm{I}}_{+}}+\|f^{-}_{+}\|_{0,0; \tilde{\mathcal{N}}^{\rm{I}}_{+}}\Big)\\
&\quad\ +\Big\|\frac{\p_{-}\Lambda_{+}}{2\Lambda_{+}}\Big\|_{0,0; \tilde{\mathcal{N}}^{\rm{I}}_{+}}\cdot\int_0^{\kappa}\sum_{k=\pm}z_{\rm{I}^{+}}^{\pm,*}(\varsigma)\dd\varsigma\\
&\leq  C^{-}_{\rm{I}^{+}}\Big(\|z_{+,\rm in}^{+}\|_{0,0; \tilde{\Gamma}^{+}_{\rm in}}+\|\delta Y^{*}_{+}\|_{0,0; \tilde{\mathcal{N}}^{\rm{I}}_{+}}+\|f^{-}_{+}\|_{0,0; \tilde{\mathcal{N}}^{\rm{I}}_{+}}+\int_0^{\kappa}\sum_{k=\pm}z_{\rm{I}^{+}}^{\pm,*}(\varsigma)\dd\varsigma\Big),
\end{split}
\end{align}
where the constants $C^{\pm}_{\rm{I}^{+}}>0$ depend only on $\underline{\mathcal{U}}_{+}$, $\alpha$ and $L$, and constant $\sigma>0$ is sufficiently small.
Summing \eqref{eq:prop-3.2-8} and \eqref{eq:prop-3.2-9}, and applying the Gronwall's inequality, we conclude
\begin{eqnarray*}
\begin{split}
&\sum_{k=\pm}z_{\rm{I}^{+}}^{k,*}(\kappa)\leq C_{\rm{I}^{+}}\Big( \sum_{k=\pm}\big(\|z_{+,\rm in}^{k}\|_{0,0; \tilde{\Gamma}^{+}_{\rm in}}+\|f^{k}_{+}\|_{0,0; \tilde{\mathcal{N}}^{\rm{I}}_{+}}\big)+\|\delta Y^{*}_{+}\|_{0,0; \tilde{\mathcal{N}}^{\rm{I}}_{+}}\Big).
\end{split}
\end{eqnarray*}
It implies by taking $\kappa=\bar{\xi}^{1}_{+}$ that
\begin{eqnarray}\label{eq:prop-3.2-10}
\begin{split}
\sum_{k=\pm}\|z_{+}^{k,*}\|_{0,0; \tilde{\mathcal{N}}^{\rm{I}}_{+}}\leq C_{\rm{I}^{+}}
\Big(\sum_{k=\pm}\big(\|z_{+,\rm in}^{k}\|_{0,0; \tilde{\Gamma}^{+}_{\rm in}}+\|f^{k}_{+}\|_{0,0; \tilde{\mathcal{N}}^{\rm{I}}_{+}}\big)
+\|\delta Y^{*}_{+}\|_{0,0; \tilde{\mathcal{N}}^{\rm{I}}_{+}}\Big),
\end{split}
\end{eqnarray}
where $C_{\rm{I}^{+}}=\max\{C^{+}_{\rm{I}^{+}},\,C^{-}_{\rm{I}^{+}}\}\Big(1+L\max\{C^{+}_{\rm{I}^{+}},\ C^{-}_{\rm{I}^{+}}\}\exp\big(\max\{C^{+}_{\rm{I}^{+}},\,C^{-}_{\rm{I}^{+}}\} L\big)\Big)$.

Finally, by using the relations
\begin{eqnarray}\label{eq:prop-3.2-2a}
\delta \omega^{*}_{+}=\frac{z^{-,*}_{+}+z^{+,*}_{+}}{2}, \quad \delta p^{*}_{+}=\frac{z^{+,*}_{+}-z^{-,*}_{+}}{2\Lambda_{+}},
\end{eqnarray}
together with {\color{black}the} estimate \eqref{eq:prop-3.2-10}, we can deduce that for $\sigma>0$ sufficiently small, there exists a constant $\tilde{C}_{\rm{I}^{+}}>0$ depending only on $\underline{\mathcal{U}}_{+}$, $\alpha$ and $L$ so that
\begin{eqnarray}\label{eq:prop-3.2-11}
\begin{split}
&\|(\delta \omega^{*}_{+},\delta p^{*}_{+})\|_{0,0; \tilde{\mathcal{N}}^{\rm{I}}_{+}}\leq \tilde{C}_{\rm{I}^{+}}\Big(\|(\delta \omega^{+}_{\rm in},\delta p^{+}_{\rm in})\|_{0,0; \tilde{\Gamma}^{+}_{\rm in}}+\|\delta Y^{*}_{+}\|_{0,0; \tilde{\mathcal{N}}^{\rm{I}}_{+}}+\sum_{k=\pm}\|f^{k}_{+}\|_{0,0; \tilde{\mathcal{N}}^{\rm{I}}_{+}}\Big).
\end{split}
\end{eqnarray}

\emph{Step 2.\ $C^0$-estimates for $(\nabla\delta \omega^{*}_{+}, \nabla\delta p^{*}_{+})$ in $\tilde{\mathcal{N}}^{\rm{I}}_{+}$.}
Set
\begin{eqnarray}\label{eq:prop-3.2-12}
\mathcal {Z}^{+,*}_{+}\triangleq\p_{\eta} \delta\omega^{*}_{+}+\Lambda_{+}\p_{\eta} \delta p^{*}_{+},\quad \mathcal {Z}^{-,*}_{+}\triangleq\p_{\eta} \delta\omega^{*}_{+}-\Lambda_{+}\p_{\eta} \delta p^{*}_{+}.
\end{eqnarray}
We can derive the equations for $(\mathcal {Z}^{+,*}_{+}, \mathcal {Z}^{-,*}_{+})$ by taking derivatives on $\eqref{eq:IBVP-I}_1$ along the $\eta$-direction.
\begin{eqnarray}\label{eq:prop-3.2-13}
\begin{cases}
\p_{\xi}\mathcal {Z}_{+}^{+,*}+\lambda^{+}_{+}\p_{\eta}\mathcal {Z}_{+}^{+,*}=\frac{\p_{+}\Lambda_{+}-2\Lambda_{+}\p_{\eta}\lambda^{+}_{+}}
{2\Lambda_{+}}\mathcal {Z}_{+}^{+,*}-\frac{\p_{-}\Lambda_{+}}{2\Lambda_{+}}\mathcal {Z}_{+}^{-,*}\\
\qquad\qquad\qquad+a^{+}_{+}(\underline{\mathcal{U}}_{+})\p_{\eta}\delta Y^{*}_{+}-\frac{[a^{+}_{+}(\underline{\mathcal{U}}_{+})-a^{-}_{+}(\underline{\mathcal{U}}_{+})]\p_{\eta} \Lambda_{+}}{2\Lambda_{+}}\delta Y^{*}_{+}\\
\qquad\qquad\qquad+\p_{\eta}f^{+}_{+}(\delta\mathcal{V}_{+}, \delta Y_{+})-\frac{\p_{\eta} \Lambda_{+}}{2\Lambda_{+}}\big[f^{+}_{+}(\delta\mathcal{V}_{+}, \delta Y_{+})-f^{-}_{+}(\delta\mathcal{V}_{+}, \delta Y_{+})\big],\\
\p_{\xi}\mathcal {Z}_{+}^{-,*}+\lambda^{-}_{+}\p_{\eta}\mathcal {Z}_{+}^{-,*}=\frac{\p_{-}\Lambda_{+}-2\Lambda_{+}\p_{\eta}\lambda^{-}_{+}}{2\Lambda_{+}}\mathcal {Z}_{+}^{-,*}-\frac{\p_{+}\Lambda_{+}}{2\Lambda_{+}}\mathcal {Z}_{+}^{+,*}\\
\qquad\qquad\qquad+a^{-}_{+}(\underline{\mathcal{U}}_{+})\p_{\eta}\delta Y^{*}_{+}-\frac{[a^{-}_{+}(\underline{\mathcal{U}}_{+})-a^{+}_{+}(\underline{\mathcal{U}}_{+})]\p_{\eta} \Lambda_{+}}{2\Lambda_{+}}\delta Y^{*}_{+}\\
\qquad\qquad\qquad+\p_{\eta}f^{-}_{+}(\delta\mathcal{V}_{+}, \delta Y_{+})-\frac{\p_{\eta} \Lambda_{+}}{2\Lambda_{+}}
\big[f^{-}_{+}(\delta\mathcal{V}_{+}, \delta Y_{+})
-f^{+}_{+}(\delta\mathcal{V}_{+}, \delta Y_{+})\big],
\end{cases}
\end{eqnarray}
with the initial conditions
\begin{eqnarray}\label{eq:prop-3.2-13a}
(\mathcal {Z}_{+}^{+,*}, \mathcal{Z}_{+}^{-,*})=(\mathcal {Z}_{+,\rm in}^{+},\mathcal {Z}_{+,\rm in}^{-}), \quad  \mbox{on} \quad \tilde{\Gamma}^{+}_{\rm in},
\end{eqnarray}
where $\mathcal {Z}_{+,\rm in}^{\pm}\triangleq(\delta\omega^{+}_{\rm in})'\pm\Lambda_{+,\rm in}(\delta p^{+}_{\rm in})'$. 
Then, integrating \eqref{eq:prop-3.2-13} along the characteristics defined by \eqref{eq:prop-3.2-4},  with the intial conditions \eqref{eq:prop-3.2-13a}, we get
\begin{align}\label{eq:prop-3.2-14}
\begin{cases}
\mathcal {Z}_{+}^{+,*}(\bar{\xi},\bar{\eta})
=\mathcal {Z}_{+,\rm in}^{+}(\gamma^{+}_{+}(\bar{\xi},\bar{\eta}))+\int_0^{\bar{\xi}}\big(\frac{\p_{+}\Lambda_{+}-2\Lambda_{+}\p_{\eta}\lambda^{+}_{+}}{2\Lambda_{+}}\mathcal {Z}_{+}^{+,*}-\frac{\p_{-}\Lambda_{+}}{2\Lambda_{+}}\mathcal {Z}_{+}^{-,*}\big)(\tau,\Upsilon^{+}_{+}(\tau;\bar{\xi},\bar{\eta}))\dd \tau\\
\qquad\qquad\qquad+\int^{\bar{\xi}}_{0}\big(a^{+}_{+}(\underline{\mathcal{U}}_{+})\p_{\eta}\delta Y^{*}_{+}-\frac{[a^{+}_{+}(\underline{\mathcal{U}}_{+})-a^{-}_{+}(\underline{\mathcal{U}}_{+})]\p_{\eta} \Lambda_{+}}{2\Lambda_{+}}\delta Y^{*}_{+}\big)(\tau,\Upsilon^{+}_{+}(\tau;\bar{\xi},\bar{\eta}))\dd \tau\\
\qquad\qquad\qquad+\int^{\bar{\xi}}_{0}\big(\p_{\eta}f^{+}_{+}(\delta\mathcal{V}_{+}, \delta Y_{+})-\frac{\p_{\eta} \Lambda_{+}}{2\Lambda_{+}}\big[f^{+}_{+}(\delta\mathcal{V}_{+}, \delta Y_{+})-f^{-}_{+}(\delta\mathcal{V}_{+}, \delta Y_{+})\big]\big)(\tau,\Upsilon^{+}_{+}(\tau;\bar{\xi},\bar{\eta}))\dd \tau,\\
\mathcal {Z}_{+}^{-,*}(\bar{\xi},\bar{\eta})
=\mathcal {Z}_{+,\rm in}^{-}(\gamma^{-}_{+}(\bar{\xi},\bar{\eta}))+\int_0^{\bar{\xi}}\big(\frac{\p_{-}\Lambda_{+}-2\Lambda_{+}\p_{\eta}\lambda^{-}_{+}}{2\Lambda_{+}}\mathcal {Z}_{+}^{-,*}-\frac{\p_{+}\Lambda_{+}}{2\Lambda_{+}}\mathcal {Z}_{+}^{+,*}\big)(\tau,\Upsilon^{+}_{+}(\tau;\bar{\xi},\bar{\eta}))\dd \tau\\
\qquad\qquad\qquad +\int_0^{\bar{\xi}}\big(a^{-}_{+}(\underline{\mathcal{U}}_{+})\p_{\eta}\delta Y^{*}_{+}-\frac{[a^{-}_{+}(\underline{\mathcal{U}}_{+})-a^{+}_{+}(\underline{\mathcal{U}}_{+})]\p_{\eta} \Lambda_{+}}{2\Lambda_{+}}\delta Y^{*}_{+} \big)(\tau,\Upsilon^{-}_{+}(\tau;\bar{\xi},\bar{\eta}))\dd \tau\\
\qquad\qquad\qquad +\int_0^{\bar{\xi}}\big(\p_{\eta}f^{-}_{+}(\delta\mathcal{V}_{+}, \delta Y_{+})-\frac{\p_{\eta} \Lambda_{+}}{2\Lambda_{+}}\big[f^{-}_{+}(\delta\mathcal{V}_{+}, \delta Y_{+})
-f^{+}_{+}(\delta\mathcal{V}_{+}, \delta Y_{+})\big]\big)(\tau,\Upsilon^{-}_{+}(\tau;\bar{\xi},\bar{\eta}))\dd \tau.
\end{cases}
\end{align}

Now we are going to estimate $\mathcal {Z}_{+}^{+,*}$ and $\mathcal {Z}_{+}^{+,*}$. Define 
\begin{eqnarray}\label{eq:prop-3.2-15}
\mathcal {Z}^{\pm,*}_{\rm{I}^{+}}(\kappa)\triangleq\sup\limits_{(\xi,\eta)\in R_{+}^{\rm{I}}(\kappa)}|\mathcal {Z}^{\pm,*}_{+}(\xi,\eta)|.
\end{eqnarray}
Obviously, we have
$\|\mathcal {Z}^{\pm,*}_{+}\|_{0,0; \tilde{\mathcal{N}}^{\rm{I}}_{+}}=\mathcal {Z}^{\pm,*}_{\rm{I}^{+}}(\bar{\xi}_+^1)$.
Then, mimic the argument in \emph{Step 1}, for $\sigma>0$ sufficiently small, we can deduce from \eqref{eq:prop-3.2-14} that
\begin{align*}
\begin{split}
\sum_{k=\pm}\mathcal {Z}_{\rm{I}^{+}}^{k,*}(\kappa)\leq \mathcal{O}(1)\Big(\sum_{k=\pm}\big(\|\mathcal {Z}_{+,\rm in}^{k}\|_{0,0; \tilde{\Gamma}^{+}_{\rm in}}+\|f^{k}_{+}\|_{1,0; \tilde{\mathcal{N}}^{\rm{I}}_+}\big)
+\|\delta Y^{*}_{+}\|_{1,0; \tilde{\mathcal{N}}^{\rm{I}}_{+}}+\int_0^{\kappa}\sum_{k=\pm}\mathcal {Z}_{\rm{I}^{+}}^{k,*}(\varsigma)\dd\varsigma\Big),
\end{split}
\end{align*}
where constant $\mathcal{O}(1)$ depends only on $\underline{\mathcal{U}}_{+}$ and $L$.

Hence it follows from the Gronwall's inequality that
\begin{eqnarray}\label{eq:prop-3.2-16}
\begin{split}
\sum_{k=\pm}\|\mathcal {Z}_{+}^{k,*}\|_{0,0; \tilde{\mathcal{N}}^{\rm{I}}_{+}}
\leq \mathcal{O}(1)\Big(\sum_{k=\pm}\big(\|\mathcal {Z}_{+,\rm in}^{k}\|_{0,0; \tilde{\Gamma}^{+}_{\rm in}}+\|f^{k}_{+}\|_{1,0; \tilde{\mathcal{N}}^{\rm{I}}_+}\big)
+\|\delta Y^{*}_{+}\|_{1,0; \tilde{\mathcal{N}}^{\rm{I}}_{+}}\Big).
\end{split}
\end{eqnarray}

On the other hand, by $\eqref{eq:IBVP-I}_{1}$ and \eqref{eq:prop-3.2-12}, we can get
\begin{eqnarray}\label{eq:prop-3.2-17}
\begin{cases}
\p_{\xi}\delta \omega^{*}_{+}=-\frac{\lambda_{+}^{+}}{2}\mathcal {Z}_{+}^{+,*}-\frac{\lambda_{+}^{-}}{2}\mathcal {Z}_{+}^{-,*}+\frac{a^{+}_{+}(\underline{\mathcal{U}}_{+})+a_{+}^{-}(\underline{\mathcal{U}}_{+})}{2}\delta Y^{*}_{+}+\frac{f^{+}_{+}(\delta\mathcal{U}_{+}, \underline{\mathcal{U}}_{+})+f^{-}_{+}(\delta\mathcal{U}_{+}, \underline{\mathcal{U}}_{+})}{2},\\
\p_{\xi} \delta p^{*}_{+}=-\frac{\lambda_{+}^{+}}{2\Lambda_{+}}\mathcal {Z}_{+}^{+,*}+\frac{\lambda_{+}^{-}}{2\Lambda_{+}}
\mathcal {Z}_{+}^{-,*}+\frac{a^{+}_{+}(\underline{\mathcal{U}}_{+})-a^{-}_{+}(\underline{\mathcal{U}}_{+})}{2\Lambda_{+}}\delta Y^{*}_{+}+\frac{f^{+}_{+}(\delta\mathcal{U}_{+}, \underline{\mathcal{U}}_{+})-f^{-}_{+}(\delta\mathcal{U}_{+}, \underline{\mathcal{U}}_{+})}{2\Lambda_{+}},
\end{cases}
\end{eqnarray}
and
\begin{eqnarray}\label{eq:prop-3.2-18}
\p_{\eta} \delta \omega^{*}_{+}=\frac{\mathcal {Z}_{+}^{+,*}+\mathcal {Z}_{+}^{-,*}}{2},\quad \p_{\eta} \delta p^{*}_{+}=\frac{\mathcal {Z}_{+}^{+,*}-\mathcal {Z}_{+}^{-,*}}{2\Lambda_{+}}.
\end{eqnarray}

Finally, combing \eqref{eq:prop-3.2-16}-\eqref{eq:prop-3.2-18} together, we arrive at
\begin{align}\label{eq:prop-3.2-20}
\begin{split}
&\|(\nabla\delta \omega^{*}_{+},\nabla\delta p^{*}_{+})\|_{0,0; \tilde{\mathcal{N}}^{\rm{I}}_{+}}\leq \tilde{C}^{*}_{\rm{I}^{+}}\Big(\|(\delta \omega^{+}_{\rm in},\delta p^{+}_{\rm in})\|_{1,0; \tilde{\Gamma}^{+}_{\rm in}}+\|\delta Y^{*}_{+}\|_{1,0; \tilde{\mathcal{N}}^{\rm{I}}_{+}}+\sum_{k=\pm}\|f^{k}_{+}\|_{1,0; \tilde{\mathcal{N}}^{\rm{I}}_{+}}\Big),
\end{split}
\end{align}
where the constant $\tilde{C}^{*}_{\rm{I}^{+}}>0$ depends only on $\underline{\mathcal{U}}_{+}$, $\alpha$ and $L$.

\emph{Step 3. $C^{\alpha}$-estimates for $(\nabla\delta \omega^{*}_{+},\nabla\delta p^{*}_{+})$ in $\tilde{\mathcal{N}}^{\rm{I}}_{+}$.}
Define
\begin{eqnarray*}
[\mathcal {Z}^{\pm,*}_{\rm{I}^{+}}](\kappa)\triangleq\sup\limits_{(\xi_1,\eta_1),(\xi_2,\eta_2)\in R_{+}^{\rm{I}}(\kappa)}\frac{|\mathcal {Z}^{\pm,*}_{+}(\xi_1,\eta_1)-\mathcal {Z}^{\pm,*}_{+}(\xi_2,\eta_2)|}{|(\xi_1,\eta_1)-(\xi_2,\eta_2)|^{\alpha}}.
\end{eqnarray*}
It is easy to see that
\begin{eqnarray}\label{eq:prop-3.2-22}
[\mathcal {Z}^{\pm,*}_{+}]_{0,\alpha,\tilde{\mathcal{N}}^{\rm{I}}_{+}}=[\mathcal {Z}^{\pm,*}_{\rm{I}^{+}}](\bar{\xi}^{1}_{+}).
\end{eqnarray}
Then, by the triangle inequality, we have
\begin{align}\label{eq:prop-3.2-21}
\begin{split}
[\mathcal {Z}^{\pm,*}_{\rm{I}^{+}}](\kappa)&\leq \sup\limits_{(\xi_1,\eta_1),(\xi_2,\eta_1)\in R_{+}^{\rm{I}}(\kappa)}\frac{|\mathcal {Z}^{\pm,*}_{+}(\xi_1,\eta_1)-\mathcal {Z}^{\pm,*}_{+}(\xi_2,\eta_1)|}{|\xi_1-\xi_2|^{\alpha}}\\
&\qquad +\sup\limits_{(\xi_2,\eta_1),(\xi_2,\eta_2)\in R_{+}^{\rm{I}}(\kappa)}\frac{|\mathcal {Z}^{\pm,*}_{+}(\xi_2,\eta_1)-\mathcal {Z}^{\pm,*}_{+}(\xi_2,\eta_2)|}{|\eta_1-\eta_2|^{\alpha}}\\
&\triangleq[\mathcal {Z}^{\pm,*}_{\rm{I}^{+}}]_{\xi}(\kappa)+[\mathcal {Z}^{\pm,*}_{\rm{I}^{+}}]_{\eta}(\kappa).
\end{split}
\end{align}

Therefore, in the following, we only need to estimate $[\mathcal {Z}^{\pm,*}_{\rm{I}^{+}}]_{\xi}(\kappa)$ and $[\mathcal {Z}^{\pm,*}_{\rm{I}^{+}}]_{\eta}(\kappa)$.
First, let us consider $[\mathcal {Z}^{\pm,*}_{\rm{I}^{+}}]_{\eta}(\kappa)$. Taking any two points $(\bar{\xi},\bar{\eta}_1),\ (\bar{\xi},\bar{\eta}_2)\in \tilde{\mathcal{N}}^{\rm{I}}_{+}$ with $\bar{\xi}$ being fixed and $\eta_1\neq \eta_2$, we apply \eqref{eq:prop-3.2-14} for $(\bar{\xi},\bar{\eta}_1)$ and $(\bar{\xi},\bar{\eta}_2)$ and subtract them to get
{
\begin{align}\label{eq:prop-3.2-23}
\begin{split}
&\big|\mathcal {Z}^{+,*}_{+}(\bar{\xi},\bar{\eta}_1)-\mathcal {Z}^{+,*}_{+}(\bar{\xi},\bar{\eta}_2)\big|\\
&\leq \big|\mathcal {Z}^{+}_{+,\rm in}(\gamma^{+}_{+}(\bar{\xi},\bar{\eta}_1))-\mathcal {Z}^{+}_{+,\rm in}(\gamma^{+}_{+}(\bar{\xi},\bar{\eta}_1))\big|\\
&+\int_0^{\bar{\xi}}\big|\big(\frac{\p_{+}\Lambda_{+}-2\Lambda_{+}\p_{\eta}\lambda^{+}_{+}}{2\Lambda_{+}}\mathcal {Z}_{+}^{+,*}\big)(\tau,\Upsilon^{+}_{+}(\tau;\bar{\xi},\bar{\eta}_1))-\big(\frac{\p_{+}\Lambda_{+}-2\Lambda_{+}\p_{\eta}\lambda^{+}_{+}}{2\Lambda_{+}}\mathcal {Z}_{+}^{+,*}\big)(\tau,\Upsilon^{+}_{+}(\tau;\bar{\xi},\bar{\eta}_2))\big|\dd\tau\\
&+\int_0^{\bar{\xi}}\big|\big(\frac{\p_{-}\Lambda_{+}}{2\Lambda_{+}}\mathcal {Z}_{+}^{-,*}\big)(\tau,\Upsilon^{+}_{+}(\tau;\bar{\xi},\bar{\eta}_1))-\big(\frac{\p_{-}\Lambda_{+}}{2\Lambda_{+}}\mathcal {Z}_{+}^{-,*}\big)(\tau,\Upsilon^{+}_{+}(\tau;\bar{\xi},\bar{\eta}_2))\big|\dd\tau\\
&+|a^{+}_{+}(\underline{\mathcal{U}}_{+})|\int_0^{\bar{\xi}}\big|\p_{\eta}\delta Y^{*}_{+}(\tau,\Upsilon^{+}_{+}(\tau;\bar{\xi},\bar{\eta}_1))
-\p_{\eta}\delta Y^{*}_{+}(\tau,\Upsilon^{+}_{+}(\tau;\bar{\xi},\bar{\eta}_2))\big|\dd \tau\\
&+\frac{|a^{+}_{+}(\underline{\mathcal{U}}_{+})|+|a^{-}_{+}(\underline{\mathcal{U}}_{+})|}{2}\int_0^{\bar{\xi}}
\big|\big(\frac{\p_{\eta}\Lambda_{+}}{\Lambda_{+}}Y_{+}^{*}\big)(\tau,\Upsilon^{+}_{+}(\tau;\bar{\xi},\bar{\eta}_1))
-\big(\frac{\p_{\eta}\Lambda_{+}}{\Lambda_{+}}Y_{+}^{*}\big)(\tau,\Upsilon^{+}_{+}(\tau;\bar{\xi},\bar{\eta}_2))\big|\dd\tau\\
&+\int_0^{\bar{\xi}}\big|\p_{\eta}f^{+}_{+}(\delta\mathcal{V}_{+}, \delta Y_{+})(\tau,\Upsilon^{+}_{+}(\tau;\bar{\xi},\bar{\eta}_1))
-\p_{\eta}f^{+}_{+}(\delta\mathcal{V}_{+}, \delta Y_{+})(\tau,\Upsilon^{+}_{+}(\tau;\bar{\xi},\bar{\eta}_2))\big|\dd \tau\\
&+\sum_{k=\pm}\int_0^{\bar{\xi}}\big|\big(\frac{\p_{\eta} \Lambda_{+}}{2\Lambda_{+}}f^{k}_{+}(\delta\mathcal{V}_{+}, \delta Y_{+})\big)(\tau,\Upsilon^{+}_{+}(\tau;\bar{\xi},\bar{\eta}_1))
-\big(\frac{\p_{\eta} \Lambda_{+}}{2\Lambda_{+}}f^{k}_{+}(\delta\mathcal{V}_{+}, \delta Y_{+})\big)(\tau,\Upsilon^{+}_{+}(\tau;\bar{\xi},\bar{\eta}_2))\big|\dd \tau\\
&\triangleq\sum_{k=1}^{7}\mathcal {J}^{+}_{{\rm I}^{+}_{k}}.
\end{split}
\end{align}
}
We now estimate the terms $\mathcal{J}^{+}_{\textrm{I}^{+}_{k}}\,( k=1, 2,\cdots,7)$ one by one. Denote by $\mathcal{O}(1)$ the constants depending only on $\underline{\mathcal{U}}_{+}$, $\alpha$ and $L$ in the following estimates. For $\mathcal {J}^{+}_{\rm{I}^{+}_{1}}$, by Lemma \ref{lem:A1}, we have
\begin{eqnarray*}
\begin{split}
\mathcal {J}^{+}_{\rm{I}^{+}_{1}}\leq [\mathcal {Z}^{+}_{+,\rm in}]_{0,\alpha; \tilde{\Gamma}^{+}_{\rm in}}|\gamma^{+}_{+}(\bar{\xi},\bar{\eta}_1)-\gamma^{+}_{+}(\bar{\xi},\bar{\eta}_2)|^{\alpha}
\leq \mathcal{O}(1)[\mathcal {Z}^{+}_{+,\rm in}]_{0,\alpha;\tilde{\Gamma}^{+}_{\rm in}}\cdot|\bar{\eta}_1-\bar{\eta}_2|^{\alpha}.
\end{split}
\end{eqnarray*}

For $\mathcal {J}^{+}_{\rm{I}^{+}_{2}}$, we choose $\sigma>0$ sufficiently small to have
\begin{eqnarray*}
\begin{split}
\mathcal {J}^{+}_{\rm{I}^{+}_{2}}&\leq\Big\|\frac{\p_{+}\Lambda_{+}-2\Lambda_{+}\p_{\eta}\lambda^{+}_{+}}{2\Lambda_{+}}\Big\|_{0,0; \tilde{\mathcal{N}}^{\rm{I}}_{+}}
\int_0^{\bar{\xi}}[\mathcal {Z}^{+,*}_{\rm{I}^{+}}]_{\eta}(\tau)|\Upsilon^{+}_{+}(\tau;\bar{\xi},\bar{\eta}_1)-\Upsilon^{+}_{+}(\tau;\bar{\xi},\bar{\eta}_2)|^{\alpha}\dd \tau\\
&\quad+\Big[\frac{\p_{+}\Lambda_{+}-2\Lambda_{+}\p_{\eta}\lambda^{+}_{+}}{2\Lambda_{+}}\Big]_{0,\alpha; \tilde{\mathcal{N}}^{\rm{I}}_{+}}\|\mathcal {Z}_{+}^{+,*}\|_{0,0; \tilde{\mathcal{N}}^{\rm{I}}_{+}}\int_0^{\bar{\xi}}|\Upsilon^{+}_{+}(\tau;\bar{\xi},\bar{\eta}_1)-\Upsilon^{+}_{+}(\tau;\bar{\xi},\bar{\eta}_2)|^{\alpha}\dd \tau\\
&\leq \mathcal {O}(1)\Big(\|\mathcal {Z}_{+}^{+,*}\|_{0,0; \tilde{\mathcal{N}}^{\rm{I}}_{+}}+\int_0^{\bar{\xi}}[\mathcal {Z}^{+,*}_{\rm{I}^{+}}]_{\eta}(\tau)\dd \tau\Big)\cdot|\bar{\eta}_1-\bar{\eta}_2|^{\alpha}.
\end{split}
\end{eqnarray*}

Similarly, for $\mathcal {J}^{+}_{\rm{I}^{+}_{3}}$, we have
\begin{eqnarray*}
\begin{split}
\mathcal {J}^{+}_{\rm{I}^{+}_{3}}\leq \mathcal {O}(1)\Big(\|\mathcal {Z}_{+}^{-,*}\|_{0,0; \tilde{\mathcal{N}}^{\rm{I}}_{+}}+\int_0^{\bar{\xi}}[\mathcal {Z}^{-,*}_{\rm{I}^{+}}]_{\eta}(\tau)\dd \tau\Big)\cdot|\bar{\eta}_1-\bar{\eta}_2|^{\alpha}.
\end{split}
\end{eqnarray*}

By Lemma \ref{lem:A1}, for $\mathcal {J}^{+}_{\rm{I}^{+}_{4}}$ and $\mathcal {J}^{+}_{\rm{I}^{+}_{5}}$, we have
\begin{eqnarray*}
\begin{split}
\mathcal {J}^{+}_{\rm{I}^{+}_{4}}&\leq|a^{+}_{+}(\underline{\mathcal{U}}_{+})|[\p_{\eta}\delta Y^{*}_{+}]_{0,\alpha; \tilde{\mathcal{N}}^{\rm{I}}_{+}}\int_0^{\bar{\xi}}|\Upsilon^{+}_{+}(\tau;\bar{\xi},\bar{\eta}_1)-\Upsilon^{+}_{+}(\tau;\bar{\xi},\bar{\eta}_2)|^{\alpha}\dd \tau\\
&\leq \mathcal {O}(1)[\nabla\delta Y^{*}_{+}]_{0,\alpha; \tilde{\mathcal{N}}^{\rm{I}}_{+}}\cdot|\bar{\eta}_1-\bar{\eta}_2|^{\alpha},
\end{split}
\end{eqnarray*}
and
\begin{eqnarray*}
\begin{split}
\mathcal {J}^{+}_{\rm{I}^{+}_{5}}&\leq\mathcal{O}(1) \Big\|\frac{\p_{\eta} \Lambda_{+}}{\Lambda_{+}}\Big\|_{0,0; \tilde{\mathcal{N}}^{\rm{I}}_{+}}\cdot\|\p_{\eta}\delta Y^{*}_{+}\|_{0,0; \tilde{\mathcal{N}}^{\rm{I}}_{+}}\int_0^{\bar{\xi}}|\Upsilon^{+}_{+}(\tau;\bar{\xi},\bar{\eta}_1)-\Upsilon^{+}_{+}(\tau;\bar{\xi},\bar{\eta}_2)|\dd \tau\\
&\quad+\mathcal{O}(1) \Big[\frac{\p_{\eta} \Lambda_{+}}{\Lambda_{+}}\Big]_{0,\alpha; \tilde{\mathcal{N}}^{\rm{I}}_{+}}\|\delta Y^{*}_{+}\|_{0,0; \tilde{\mathcal{N}}^{\rm{I}}_{+}}\int_0^{\bar{\xi}}|\Upsilon^{+}_{+}(\tau;\bar{\xi},\bar{\eta}_1)-\Upsilon^{+}_{+}(\tau;\bar{\xi},\bar{\eta}_2)|^{\alpha}\dd \tau\\
&\leq \mathcal {O}(1)\|\p_\eta\delta Y^{*}_{+}\|_{0,0; \tilde{\mathcal{N}}^{\rm{I}}_{+}}\cdot|\bar{\eta}_1-\bar{\eta}_2|
+\mathcal {O}(1)\|\delta Y^{*}_{+}\|_{0,0; \tilde{\mathcal{N}}^{\rm{I}}_{+}}\cdot|\bar{\eta}_1-\bar{\eta}_2|^{\alpha}\\
&\leq \mathcal {O}(1)\|\delta Y^{*}_{+}\|_{1,0; \tilde{\mathcal{N}}^{\rm{I}}_{+}}\cdot|\bar{\eta}_1-\bar{\eta}_2|^{\alpha}.
\end{split}
\end{eqnarray*}

By Lemma \ref{lem:A1}, we have for $\mathcal {J}^{+}_{\rm{I}^{+}_{6}}$ and $\mathcal {J}^{+}_{\rm{I}^{+}_{7}}$ that
\begin{eqnarray*}
\begin{split}
\mathcal {J}^{+}_{\rm{I}^{+}_{6}}&\leq\big[\p_{\eta}f^{+}_{+}\big]_{0,\alpha;\tilde{\mathcal{N}}^{\rm{I}}_{+}}
\int_0^{\bar{\xi}}|\Upsilon^{+}_{+}(\tau;\bar{\xi},\bar{\eta}_1)-\Upsilon^{+}_{+}(\tau;\bar{\xi},\bar{\eta}_2)|^{\alpha}\dd \tau\\
&\leq \mathcal{O}(1)\big[\nabla f^{+}_{+}\big]_{0,\alpha;\tilde{\mathcal{N}}^{\rm{I}}_{+}}\cdot|\bar{\eta}_1-\bar{\eta}_2|^{\alpha},
\end{split}
\end{eqnarray*}
and
\begin{eqnarray*}
\begin{split}
\mathcal {J}^{+}_{\rm{I}^{+}_{7}}&\leq\mathcal {O}(1)\Big(\sum_{k=\pm}\big(\|f^{k}_{+}\|_{0,0; \tilde{\mathcal{N}}^{\rm{I}}_{+}}+\|\p_{\eta}f^{k}_{+}\|_{0,0; \tilde{\mathcal{N}}^{\rm{I}}_{+}}\big)\Big)\cdot|\bar{\eta}_1-\bar{\eta}_2|^{\alpha}\\
&\leq \mathcal {O}(1)\Big(\sum_{k=\pm}\|f^{k}_{+}\|_{1,0; \tilde{\mathcal{N}}^{\rm{I}}_{+}}\Big)\cdot|\bar{\eta}_1-\bar{\eta}_2|^{\alpha},
\end{split}
\end{eqnarray*}
provided that $\sigma>0$ is sufficiently small.
%
%
%
Therefore, by letting $\bar{\xi}=\kappa$, it follows from \eqref{eq:prop-3.2-23} that 
\begin{align}\label{eq:prop-3.2-24}
[\mathcal {Z}^{+,*}_{\rm{I}^{+}}]_{\eta}(\kappa)\leq \mathcal{O}(1)\Big([\mathcal {Z}^{+}_{+,\rm in}]_{0,\alpha;\tilde{\Gamma}^{+}_{\rm in}}+\|\delta Y^{*}_{+}\|_{1,\alpha; \tilde{\mathcal{N}}^{\rm{I}}_{+}}+\sum_{k=\pm}\|f^{k}_{+}\|_{1,\alpha; \tilde{\mathcal{N}}^{I}_{+}}+\int_0^{\kappa}\sum_{k=\pm}[\mathcal {Z}^{k,*}_{\rm{I}^{+}}]_{\eta}(\tau)\dd\tau\Big).
\end{align}

In the same way, we can also get
\begin{align}\label{eq:prop-3.2-25}
\begin{split}
&[\mathcal {Z}^{-,*}_{\rm{I}^{+}}]_{\eta}(\kappa)\leq \mathcal {O}(1)\Big([\mathcal {Z}^{-}_{+,\rm in}]_{0,\alpha;\tilde{\Gamma}^{+}_{\rm in}}+\|\delta Y^{*}_{+}\|_{1,\alpha; \tilde{\mathcal{N}}^{\rm{I}}_{+}}
+\sum_{k=\pm}\|f^{k}_{+}\|_{1,\alpha; \tilde{\mathcal{N}}^{\rm{I}}_{+}}+\int_0^{\kappa}\sum_{k=\pm}[\mathcal {Z}^{k,*}_{\rm{I}^{+}}]_{\eta}(\tau)\dd\tau\Big).
\end{split}
\end{align}

Then, by summing \eqref{eq:prop-3.2-24} and \eqref{eq:prop-3.2-25}, and applying the Gronwall's inequality, we have
\begin{eqnarray}\label{eq:prop-3.2-26}
\begin{split}
&\sum_{k=\pm}[\mathcal {Z}^{k,*}_{\rm{I}^{+}}]_{\eta}(\kappa)\leq \mathcal{O}(1)\Big(\sum_{k=\pm}\big([\mathcal {Z}^{k}_{+,\rm in}]_{0,\alpha;\tilde{\Gamma}^{+}_{\rm in}}+\|f^{k}_{+}\|_{1,\alpha; \tilde{\mathcal{N}}^{\rm{I}}_{+}}\big)+\|\delta Y^{*}_{+}\|_{1,\alpha; \tilde{\mathcal{N}}^{\rm{I}}_{+}}\Big).
\end{split}
\end{eqnarray}

Next, we will estimate $[\mathcal {Z}^{\pm,*}_{\rm{I}^+}]_{\xi}(\kappa)$. Without loss of the generality, we choose any two points $(\bar{\xi}_1,\bar{\eta}),\ (\bar{\xi}_2,\bar{\eta})\in \tilde{\mathcal{N}}^{\rm{I}}_{+}$ with $\bar{\eta}$ being fixed and $\bar{\xi}_1> \bar{\xi}_2$.
It follows from \eqref{eq:prop-3.2-14} that
\begin{eqnarray}\label{eq:prop-3.2-27}
|\mathcal {Z}^{+,*}_{+}(\bar{\xi}_1,\bar{\eta})-\mathcal {Z}^{+,*}_{+}(\bar{\xi}_2,\bar{\eta})|\leq \tilde{\mathcal{J}}^{+}_{\rm{I}^{+}_{1}}+\tilde{\mathcal{J}}^{+}_{\rm{I}^{+}_{2}},
\end{eqnarray}
where 
\begin{eqnarray*}
\begin{split}
\tilde{\mathcal{J}}^{+}_{\rm{I}^{+}_{1}}&\triangleq\big|\mathcal {Z}^{+}_{+,\rm in}(\gamma^{+}_{+}(\bar{\xi}_1,\bar{\eta}))-\mathcal {Z}^{+}_{+,\rm in}(\gamma^{+}_{+}(\bar{\xi}_2,\bar{\eta}))\big|\\
&+\int_0^{\bar{\xi}_2}\big|\big(\frac{\p_{+}\Lambda_{+}-2\Lambda_{+}\p_{\eta}\lambda^{+}_{+}}{2\Lambda_{+}}\mathcal {Z}_{+}^{+,*}\big)(\tau,\Upsilon^{+}_{+}(\tau;\bar{\xi}_1,\bar{\eta}))-\big(\frac{\p_{+}\Lambda_{+}-2\Lambda_{+}\p_{\eta}\lambda^{+}_{+}}{2\Lambda_{+}}\mathcal {Z}_{+}^{+,*}\big)(\tau,\Upsilon^{+}_{+}(\tau;\bar{\xi}_2,\bar{\eta}))\big|\dd\tau\\
&+\int_0^{\bar{\xi}_2}\big|\big(\frac{\p_{-}\Lambda_{+}}{2\Lambda_{+}}\mathcal {Z}_{+}^{-,*}\big)(\tau,\Upsilon^{+}_{+}(\tau;\bar{\xi}_1,\bar{\eta}))-\big(\frac{\p_{-}\Lambda_{+}}{2\Lambda_{+}}\mathcal {Z}_{+}^{-,*}\big)(\tau,\Upsilon^{+}_{+}(\tau;\bar{\xi}_2,\bar{\eta}))\big|\dd\tau\\
&+|a^{+}_{+}(\underline{\mathcal{U}}_{+})|\int_0^{\bar{\xi}_2}\big|\p_{\eta}\delta Y^{*}_{+}(\tau,\Upsilon^{+}_{+}(\tau;\bar{\xi}_1,\bar{\eta}))
-\p_{\eta}\delta Y^{*}_{+}(\tau,\Upsilon^{+}_{+}(\tau;\bar{\xi}_2,\bar{\eta}))\big|\dd \tau\\
&+\frac{|a^{+}_{+}(\underline{\mathcal{U}}_{+})|+|a^{-}_{+}(\underline{\mathcal{U}}_{+})|}{2}\int_0^{\bar{\xi}_2}
\big|\big(\frac{\p_{\eta}\Lambda_{+}}{\Lambda_{+}}Y_{+}^{*}\big)(\tau,\Upsilon^{+}_{+}(\tau;\bar{\xi}_1,\bar{\eta}))
-\big(\frac{\p_{\eta}\Lambda_{+}}{\Lambda_{+}}Y_{+}^{*}\big)(\tau,\Upsilon^{+}_{+}(\tau;\bar{\xi}_2,\bar{\eta}))\big|\dd\tau\\
&+\int_0^{\bar{\xi}_2}\big|\p_{\eta}f^{+}_{+}(\delta\mathcal{V}_{+}, \delta Y_{+})(\tau,\Upsilon^{+}_{+}(\tau;\bar{\xi}_1,\bar{\eta}))
-\p_{\eta}f^{+}_{+}(\delta\mathcal{V}_{+}, \delta Y_{+})(\tau,\Upsilon^{+}_{+}(\tau;\bar{\xi}_2,\bar{\eta}))\big|\dd \tau\\
&+\sum_{k=\pm}\int_0^{\bar{\xi}_2}\big|\big(\frac{\p_{\eta} \Lambda_{+}}{2\Lambda_{+}}f^{k}_{+}(\delta\mathcal{V}_{+}, \delta Y_{+})\big)(\tau,\Upsilon^{+}_{+}(\tau;\bar{\xi}_1,\bar{\eta}))
-\big(\frac{\p_{\eta} \Lambda_{+}}{2\Lambda_{+}}f^{k}_{+}(\delta\mathcal{V}_{+}, \delta Y_{+})\big)(\tau,\Upsilon^{+}_{+}(\tau;\bar{\xi}_2,\bar{\eta}))\big|\dd \tau,
\end{split}
\end{eqnarray*}
and
\begin{eqnarray*}
\begin{split}
\tilde{\mathcal{J}}^{+}_{\rm{I}^{+}_{2}}&\triangleq\int_{\bar{\xi}_2}^{\bar{\xi}_1}\Big|\Big(\frac{\p_{+}\Lambda_{+}-2\Lambda_{+}\p_{\eta}\lambda^{+}_{+}}{2\Lambda_{+}}\mathcal {Z}_{+}^{+,*}-\frac{\p_{-}\Lambda_{+}}{2\Lambda_{+}}\mathcal {Z}_{+}^{-,*}\Big)(\tau,\Upsilon^{+}_{+}(\tau;\bar{\xi}_1,\bar{\eta}))\Big|\dd \tau\\
&+\int_{\bar{\xi}_2}^{\bar{\xi}_1}\Big|\Big(a^{-}_{+}(\underline{\mathcal{U}}_{+})\p_{\eta}\delta Y^{*}_{+}-\frac{[a^{-}_{+}(\underline{\mathcal{U}}_{+})-a^{+}_{+}(\underline{\mathcal{U}}_{+})]\p_{\eta} \Lambda_{+}}{2\Lambda_{+}}\delta Y^{*}_{+} \Big)(\tau,\Upsilon^{+}_{+}(\tau;\bar{\xi}_1,\bar{\eta}))\Big|\dd \tau\\
&+\int_{\bar{\xi}_2}^{\bar{\xi}_1}\Big|\Big(\p_{\eta}f^{-}_{+}(\delta\mathcal{V}_{+}, \delta Y_{+})-\frac{\p_{\eta} \Lambda_{+}}{2\Lambda_{+}}\big[f^{-}_{+}(\delta\mathcal{V}_{+}, \delta Y_{+})
-f^{+}_{+}(\delta\mathcal{V}_{+}, \delta Y_{+})\big]\Big)(\tau,\Upsilon^{+}_{+}(\tau;\bar{\xi}_1,\bar{\eta}))\Big| \dd \tau.
\end{split}
\end{eqnarray*}

Now, we can estimate $\tilde{\mathcal {J}}^{+}_{\rm{I}^{+}_{1}}$ and $\tilde{\mathcal {J}}^{+}_{\rm{I}^{+}_{2}}$ one by one. 
Following a similar argument as the one for the estimates in \eqref{eq:prop-3.2-23}, we can deduce that
\begin{align}\label{eq:prop-3.2-28}
\tilde{\mathcal {J}}^{+}_{\rm{I}^{+}_{1}}\leq \mathcal{O}(1)\Big([\mathcal {Z}^{+}_{+,\rm in}]_{0,\alpha;\tilde{\Gamma}^{+}_{\rm in}}+\|\delta Y^{*}_{+}\|_{1,\alpha; \tilde{\mathcal{N}}^{\rm{I}}_{+}}+\sum_{k=\pm}\|f^{k}_{+}\|_{1,\alpha; \tilde{\mathcal{N}}^{\rm{I}}_{+}}+\int_0^{\bar{\xi}_{2}}\sum_{k=\pm}[\mathcal {Z}^{k,*}_{\rm{I}^{+}}]_{\eta}(\tau)\dd\tau\Big)\cdot|\bar{\xi}_1-\bar{\xi}_2|^{\alpha},
\end{align}
and
\begin{align}\label{eq:prop-3.2-29}
\begin{split}
\tilde{\mathcal {J}}^{+}_{\rm{I}^{+}_{2}}&\leq\Big\{\big\|\frac{\p_{+}\Lambda_{+}-2\Lambda_{+}\p_{\eta}\lambda^{+}_{+}}{2\Lambda_{+}}\big\|_{0,0; \tilde{\mathcal{N}}^{\rm{I}}_{+}}\cdot\|\mathcal {Z}_{+}^{+,*}\|_{0,0; \tilde{\mathcal{N}}^{\rm{I}}_{+}}+\big\|\frac{\p_{-}\Lambda_{+}}{2\Lambda_{+}}\big\|_{0,0; \tilde{\mathcal{N}}^{\rm{I}}_{+}}\cdot\|\mathcal {Z}_{+}^{-,*}\|_{0,0; \tilde{\mathcal{N}}^{\rm{I}}_{+}}\\
\quad&+\big\|\frac{(a^{+}_{+}(\underline{\mathcal{U}}_{+})-a^{-}_{+}(\underline{\mathcal{U}}_{+}))\p_{\eta} \Lambda_{+}}{2\Lambda_{+}}\big\|_{0,0; \tilde{\mathcal{N}}^{I}_{+}}\cdot\|\delta Y^{*}_{+}\|_{0,0; \tilde{\mathcal{N}}^{\rm{I}}_{+}}+|a^{+}_{+}(\underline{\mathcal{U}}_{+})|\cdot\|\p_{\eta}\delta Y^{*}_{+}\|_{0,0; \tilde{\mathcal{N}}^{\rm{I}}_{+}}\\
\quad&+\big\|\frac{\p_{\eta} \Lambda_{+}}{2\Lambda_{+}}\big\|_{0,0; \tilde{\mathcal{N}}^{\rm{I}}_{+}}\cdot\big(\sum_{k=\pm}\|f^{k}_{+}\|_{0,0; \tilde{\mathcal{N}}^{\rm{I}}_{+}}\big)
+\|\p_{\eta}f^{+}_{+}\|_{0,0; \tilde{\mathcal{N}}^{\rm{I}}_{+}}\Big\}\cdot|\bar{\xi}_1-\bar{\xi}_2|\\
&\leq \mathcal {O}(1)\Big\{\sum_{k=\pm}\big(\|\mathcal {Z}_{+}^{k,*}\|_{0,0; \tilde{\mathcal{N}}^{\rm{I}}_{+}}+\|f^{k}_{+}\|_{0,0; \tilde{\mathcal{N}}^{\rm{I}}_{+}}\big)+\|\delta Y^{*}_{+}\|_{1,0; \tilde{\mathcal{N}}^{\rm{I}}_{+}}+\|\p_{\eta}f^{+}_{+}\|_{0,0; \tilde{\mathcal{N}}^{\rm{I}}_{+}}\Big\}\cdot|\bar{\xi}_1-\bar{\xi}_2|^{\alpha}.
\end{split}
\end{align}

Then 
by letting $\bar{\xi}_{2}=\kappa$, we have
\begin{eqnarray*}
\begin{split}
&[\mathcal {Z}^{+,*}_{\rm{I}^{+}}]_{\xi}(\kappa)\leq \mathcal{O}(1)\Big([\mathcal {Z}^{+}_{+,\rm in}]_{0,\alpha;\tilde{\Gamma}^{+}_{\rm in}}+\|\delta Y^{*}_{+}\|_{1,\alpha; \tilde{\mathcal{N}}^{\rm{I}}_{+}}+\sum_{k=\pm}\|f^{k}_{+}\|_{1,\alpha; \tilde{\mathcal{N}}^{\rm{I}}_{+}}+\int_0^{\kappa}\sum_{k=\pm}[\mathcal {Z}^{k,*}_{\rm{I}^{+}}]_{\eta}(\tau)\dd\tau\Big).
\end{split}
\end{eqnarray*}

By \eqref{eq:prop-3.2-26}, we further have
\begin{eqnarray}\label{eq:prop-3.2-30}
\begin{split}
&[\mathcal {Z}^{+,*}_{\rm{I}^{+}}]_{\xi}(\kappa)\leq \mathcal{O}(1)\Big(\sum_{k=\pm}\big([\mathcal {Z}^{k}_{+,\rm in}]_{0,\alpha;\tilde{\Gamma}^{+}_{\rm in}}+\|f^{k}_{+}\|_{1,\alpha; \tilde{\mathcal{N}}^{\rm{I}}_{+}}\big)+\|\delta Y^{*}_{+}\|_{1,\alpha; \tilde{\mathcal{N}}^{\rm{I}}_{+}}\Big).
\end{split}
\end{eqnarray}

Similarly, we also have
\begin{eqnarray}\label{eq:prop-3.2-31}
\begin{split}
[\mathcal {Z}^{-,*}_{\rm{I}^{+}}]_{\xi}(\kappa)\leq \mathcal{O}(1)\Big(\sum_{k=\pm}\big([\mathcal {Z}^{k}_{+,\rm in}]_{0,\alpha;\tilde{\Gamma}^{+}_{\rm in}}+\|f^{k}_{+}\|_{1,\alpha; \tilde{\mathcal{N}}^{\rm{I}}_{+}}\big)+\|\delta Y^{*}_{+}\|_{1,\alpha; \tilde{\mathcal{N}}^{\rm{I}}_{+}}\Big).
\end{split}
\end{eqnarray}

Combining the estimates \eqref{eq:prop-3.2-26},\eqref{eq:prop-3.2-30}-\eqref{eq:prop-3.2-31}, and by  \eqref{eq:prop-3.2-22} and \eqref{eq:prop-3.2-21}, we thus obtain
\begin{eqnarray}\label{eq:prop-3.2-32}
\begin{split}
&\sum_{k=\pm}[\mathcal {Z}^{k,*}_{+}]_{0,\alpha,\tilde{\mathcal{N}}^{\rm{I}}_{+}}\leq \mathcal{O}(1)\Big(\sum_{k=\pm}\big([\mathcal {Z}^{k}_{+,\rm in}]_{0,\alpha;\tilde{\Gamma}^{+}_{\rm in}}+\|f^{k}_{+}\|_{1,\alpha; \tilde{\mathcal{N}}^{\rm{I}}_{+}}\big)+\|\delta Y^{*}_{+}\|_{1,\alpha; \tilde{\mathcal{N}}^{\rm{I}}_{+}}\Big).
\end{split}
\end{eqnarray}


By \eqref{eq:prop-3.2-17}-\eqref{eq:prop-3.2-18}, 
there exists a constant $\tilde{C}^{**}_{\rm{I}^{+}}>0$ depending only on $\underline{\mathcal{U}}_{+}$, $\alpha$ and $L$ such that
\begin{align}\label{eq:prop-3.2-34}
\begin{split}
[(\nabla\delta \omega^{*}_{+},\nabla\delta p^{*}_{+})]_{0,\alpha; \tilde{\mathcal{N}}^{\rm{I}}_{+}}\leq \tilde{C}^{**}_{\rm{I}^{+}}
\Big(\|(\delta \omega^{+}_{\rm in},\delta p^{+}_{\rm in})\|_{0,\alpha; \tilde{\Gamma}^{+}_{\rm in}}+\|\delta Y^{*}_{+}\|_{1,\alpha; \tilde{\mathcal{N}}^{\rm{I}}_{+}}
+\sum_{k=\pm}\|f^{k}_{+}\|_{1,\alpha; \tilde{\mathcal{N}}^{\rm{I}}_{+}}\Big).
\end{split}
\end{align}


Finally, combing the estimates \eqref{eq:prop-3.2-11}, \eqref{eq:prop-3.2-20} and \eqref{eq:prop-3.2-34}, we can get estimate \eqref{eq:prop-3.2-1} by choosing $\tilde{C_{31}}=\max\{\tilde{C}_{\rm{I}^{+}}, \tilde{C}^{*}_{\textrm{I}^{+}}, \tilde{C}^{**}_{\rm{I}^{+}}\}$. It completes the proof of proposition.
\end{proof}

Next, we will consider the problem  $(\mathbf{IBVP})^{*}$ for $(\delta\omega^{*}_{\pm}, \delta p^{*}_{\pm})$ in $\tilde{\mathcal{N}}^{\textrm{I}}_{\pm}\cup\tilde{\mathcal{N}}^{\textrm{II}}_{\pm}$
which is the following initial-boundary value problem.
\begin{eqnarray}\label{eq:IBVP-II}
(\mathbf{IBVP})^{*}_{\textrm{II}}\
\begin{cases}
\partial_{\xi}\delta\omega^{*}_{+}+\lambda^{\pm}_{+}\partial_{\eta}\delta\omega^{*}_{+}\pm\Lambda_{+}\big(\partial_{\xi}\delta p^{*}_{+}+\lambda^{\pm}_{+}\partial_{\eta}\delta p^{*}_{+}\big)\\
\qquad\qquad\qquad\qquad\quad=a^{\pm}_{+}(\underline{\mathcal{U}}_{+})\delta Y^{*}_{+}+f^{\pm}_{+}(\delta\mathcal{V}_{+}, \delta Y_{+}), &\quad  \mbox{in} \quad \tilde{\mathcal{N}}^{\textrm{I}}_{+}\cup\tilde{\mathcal{N}}^{\textrm{II}}_{+}, \\
\partial_{\xi}\delta\omega^{*}_{-}+\lambda^{\pm}_{-}\partial_{\eta}\delta\omega^{*}_{-}\pm\Lambda_{-}\big(\partial_{\xi}\delta p^{*}_{-}+\lambda^{\pm}_{-}\partial_{\eta}\delta p^{*}_{-}\big)\\
\qquad\qquad\qquad\qquad\quad=a^{\pm}_{-}(\underline{\mathcal{U}}_{-})\delta Y^{*}_{-}+f^{\pm}_{-}(\delta\mathcal{V}_{-}, \delta Y_{-}), &\quad \mbox{in}\quad \tilde{\mathcal{N}}^{\textrm{I}}_{-}\cup\tilde{\mathcal{N}}^{\textrm{II}}_{-},\\
(\delta\omega^{*}_{+},\delta p^{*}_{+})=(\delta\omega^{+}_{\rm in},\delta p^{+}_{\rm in}), &\quad  \mbox{on} \quad \tilde{\Gamma}^{+}_{\rm in},\\
(\delta\omega^{*}_{-},\delta p^{*}_{-})=(\delta\omega^{-}_{\rm in},\delta p^{-}_{\rm in}), &\quad  \mbox{on} \quad \tilde{\Gamma}^{-}_{\rm in},\\
\delta\omega^{*}_{\pm}=g'_{\pm}(\xi), &\quad \mbox{on} \quad \tilde{\Gamma}_{\pm}\cap \overline{\tilde{\mathcal{N}}^{\textrm{II}}_{\pm}}.
\end{cases}
\end{eqnarray}

We have the following proposition for $(\mathbf{IBVP})^{*}_{\textrm{II}}$.
\begin{proposition}\label{prop:3.3}
For $\sigma>0$ sufficiently small, if $(\delta\mathcal{U}_{+},\delta\mathcal{U}_{-})\in\mathcal{X}_{\sigma}$, the problem $(\mathbf{IBVP})^{*}_{\rm{II}}$ admits a unique $C^{1,\alpha}$-solution $(\delta \omega^{*}_{\pm}, \delta p^{*}_{\pm})$ satisfying
\begin{eqnarray}\label{eq:prop-3.3-1}
\begin{split}
\|(\delta \omega^{*}_{k},\delta p^{*}_{k})\|_{1,\alpha; \tilde{\mathcal{N}}^{\rm{I}}_{k}\cup\tilde{\mathcal{N}}^{\rm{II}}_{k}}
&\leq \tilde{C}_{32}\Big(\|(\delta \omega^{k}_{\rm {in}},\delta p^{k}_{\rm {in}})\|_{1,\alpha; \tilde{\Gamma}^{k}_{\rm {in}}}
+\big\|g_{k}-\underline{g}_{k}\big\|_{2,\alpha; \tilde{\Gamma}_{k}}\\
&\qquad\qquad+\|\delta Y^{*}_{k}\|_{1,\alpha; \tilde{\mathcal{N}}^{\rm{I}}_{k}\cup\tilde{\mathcal{N}}^{\rm{II}}_{k}}+\sum_{j=\pm}\|f^{j}_{k}\|_{1,\alpha; \tilde{\mathcal{N}}^{\rm{I}}_{k}\cup\tilde{\mathcal{N}}^{\rm{II}}_{k}}\Big),
\end{split}
\end{eqnarray}
where $k=``\pm"$, and the constant $\tilde{C}_{32}>0$ depends only on $\underline{\mathcal{U}}_{+}$, $\alpha$ and $L$.
\end{proposition}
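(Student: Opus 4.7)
The strategy is to extend the argument of Proposition \ref{prop:3.2} from $\tilde{\mathcal{N}}^{\rm{I}}_{\pm}$ into $\tilde{\mathcal{N}}^{\rm{II}}_{\pm}$ by incorporating reflection of one characteristic family off the nozzle walls $\tilde{\Gamma}_{\pm}$. By symmetry it suffices to treat the upper region $\tilde{\mathcal{N}}^{\rm{II}}_{+}$. Since $\lambda^{+}_{+}>0>\lambda^{-}_{+}$ near the background state, for any point $(\bar{\xi},\bar{\eta})\in \tilde{\mathcal{N}}^{\rm{II}}_{+}$ the backward $\lambda^{+}_{+}$-characteristic crosses $\ell^{-,1}_{+}$ to enter $\tilde{\mathcal{N}}^{\rm{I}}_{+}$, where $(\delta\omega^{*}_{+},\delta p^{*}_{+})$ has already been controlled by Proposition \ref{prop:3.2}, whereas the backward $\lambda^{-}_{+}$-characteristic reaches $\tilde{\Gamma}_{+}$. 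Re-using the Riemann-type variables $z^{\pm,*}_{+}$ of \eqref{eq:prop-3.2-2}, the wall condition $\delta\omega^{*}_{+}=g'_{+}(\xi)$ combined with \eqref{eq:prop-3.2-2a} delivers the linear reflection law
\begin{equation*}
z^{-,*}_{+}\big|_{\tilde{\Gamma}_{+}}=2g'_{+}(\xi)-z^{+,*}_{+}\big|_{\tilde{\Gamma}_{+}}.
\end{equation*}
Existence and uniqueness of a $C^{1,\alpha}$-solution follow from the Picard iteration scheme of \cite{li-yu}, exactly as in Proposition \ref{prop:3.2}, so the main content is the a priori estimate.

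For the $C^{0}$-bound, integrate the diagonal system \eqref{eq:prop-3.2-3} along the two characteristic families to obtain integral representations analogous to \eqref{eq:prop-3.2-5}: the $\lambda^{+}_{+}$-integral terminates on the interface $\ell^{-,1}_{+}$ with value controlled by Proposition \ref{prop:3.2}, while the $\lambda^{-}_{+}$-integral terminates on $\tilde{\Gamma}_{+}$ where the reflection law replaces $z^{-,*}_{+}$ by $2g'_{+}-z^{+,*}_{+}$. Introducing the sup norm over $R^{\rm{II}}_{+}(\kappa)\triangleq\tilde{\mathcal{N}}^{\rm{II}}_{+}\cap\{\xi\leq\kappa\}$ and applying Gronwall to the coupled integral inequalities yields
\begin{equation*}
\|(\delta\omega^{*}_{+},\delta p^{*}_{+})\|_{0,0;\tilde{\mathcal{N}}^{\rm{II}}_{+}}\leq \mathcal{O}(1)\Big(\|(\delta\omega^{+}_{\rm in},\delta p^{+}_{\rm in})\|_{0,0;\tilde{\Gamma}^{+}_{\rm in}}+\|g'_{+}\|_{0,0;\tilde{\Gamma}_{+}}+\|\delta Y^{*}_{+}\|_{0,0}+\sum_{k=\pm}\|f^{k}_{+}\|_{0,0}\Big),
\end{equation*}
which together with \eqref{eq:prop-3.2-1} gives the $C^{0}$ content of \eqref{eq:prop-3.3-1}. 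For the derivative bounds I would work with the system \eqref{eq:prop-3.2-13} for $\mathcal{Z}^{\pm,*}_{+}=\partial_{\eta}\delta\omega^{*}_{+}\pm\Lambda_{+}\partial_{\eta}\delta p^{*}_{+}$; differentiating the reflection law tangentially and substituting from the evolution equations to solve for the transversal derivatives produces a boundary relation
\begin{equation*}
\mathcal{Z}^{-,*}_{+}\big|_{\tilde{\Gamma}_{+}}=\alpha_{+}(\xi)\mathcal{Z}^{+,*}_{+}\big|_{\tilde{\Gamma}_{+}}+\beta_{+}(\xi),
\end{equation*}
where $\alpha_{+}$ depends on the background state and $\beta_{+}$ is built from $g'_{+}$, $g''_{+}$, $\delta Y^{*}_{+}$ and the source terms $f^{\pm}_{+}$, with $C^{0,\alpha}$-norm controlled by $\|g_{+}-\underline{g}_{+}\|_{2,\alpha;\tilde{\Gamma}_{+}}$, $\|\delta Y^{*}_{+}\|_{1,\alpha}$ and $\sum_{k=\pm}\|f^{k}_{+}\|_{1,\alpha}$. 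Repeating the triangle-inequality decomposition \eqref{eq:prop-3.2-21} and the integral/Gronwall scheme of Step~3 in the proof of Proposition \ref{prop:3.2}, with the boundary value at the wall foot replaced by the above reflection, then closes \eqref{eq:prop-3.3-1}.

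The chief technical difficulty is ensuring that the reflection at $\tilde{\Gamma}_{+}$ does not degrade the H\"older regularity of $\mathcal{Z}^{\pm,*}_{+}$ near the corner $\tilde{P}_{+}=(0,\textrm{m}_{+})$, where the wall meets the entrance. This is where the compatibility conditions \eqref{eq:2.11} become essential: they force the values of $\mathcal{Z}^{\pm}_{+,\rm in}$ at $\tilde{P}_{+}$ to be consistent with $\alpha_{+}(0)\mathcal{Z}^{+}_{+,\rm in}(\textrm{m}_{+})+\beta_{+}(0)$, preventing a jump in the first derivatives along the reflected characteristic issuing from $\tilde{P}_{+}$. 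Once this matching is verified, the $C^{2,\alpha}$-regularity of $g_{+}$ exactly supplies the $C^{1,\alpha}$-regularity of $\beta_{+}$ needed to carry the difference quotients in the $\eta$-direction through the wall reflection, and the Gronwall argument of Proposition \ref{prop:3.2} runs verbatim to produce the extra boundary contribution $\|g_{+}-\underline{g}_{+}\|_{2,\alpha;\tilde{\Gamma}_{+}}$ on the right-hand side of \eqref{eq:prop-3.3-1}.
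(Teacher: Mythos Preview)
Your proposal is correct and follows the same overall strategy as the paper: reduce to the diagonal variables $z^{\pm,*}_{+}$ and $\mathcal{Z}^{\pm,*}_{+}$, use the wall reflection law, and close via characteristic integrals and Gronwall. The reflection identities you write down, the role of the compatibility condition \eqref{eq:2.11} at the corner $\tilde P_{+}$, and the appearance of $\|g_{+}-\underline g_{+}\|_{2,\alpha}$ through $g''_{+}$ are all exactly as in the paper.

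There is one organizational difference worth noting. You propose to work on $R^{\rm II}_{+}(\kappa)=\tilde{\mathcal N}^{\rm II}_{+}\cap\{\xi\le\kappa\}$ and terminate the backward $\lambda^{+}_{+}$-characteristic on the interface $\ell^{-,1}_{+}$, importing the value there from Proposition~\ref{prop:3.2}. The paper instead takes $R^{\rm II}_{+}(\kappa)=\{0\le\xi\le\kappa,\ \Upsilon^{+,0}_{+}(\xi)\le\eta\le\textrm m_{+}\}$, which contains \emph{both} $\tilde{\mathcal N}^{\rm I}_{+}$ and $\tilde{\mathcal N}^{\rm II}_{+}$, and integrates the $\lambda^{+}_{+}$-characteristic all the way back to $\tilde\Gamma^{+}_{\rm in}$ (cf.\ \eqref{eq:prop-3.3-4}--\eqref{eq:prop-3.3-10}). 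Your variant is perfectly legitimate, but it leaves you with separate H\"older estimates on $\tilde{\mathcal N}^{\rm I}_{+}$ and $\tilde{\mathcal N}^{\rm II}_{+}$ that must then be glued across $\ell^{-,1}_{+}$ to obtain the stated norm on the union; the paper's choice of region yields the joint estimate directly and handles the mixed case (one point in each subdomain) by inserting the intermediate point on $\ell^{-,1}_{+}$ within a single Gronwall loop. One small correction: the reflection coefficient you call $\alpha_{+}$ is $-\lambda^{+}_{+}/\lambda^{-}_{+}$ evaluated at the full state $\mathcal U_{+}$, not just the background (see \eqref{eq:prop-3.3-17}); for small $\sigma$ this is bounded and bounded away from zero, which is all that is needed.
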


\begin{proof}
Since \eqref{eq:IBVP-II} is linear, we can also follow the methods in \cite{li-yu} to employ the \emph{Picard iteration scheme} to establish the existence and uniqueness of solutions of \eqref{eq:IBVP-II}.
So the remaining task is to derive the {\color{black}$C^{1,\alpha}$-\emph{priori}} estimates of $(\delta \omega^{*}_{\pm},\delta p^{*}_{\pm})$.
Without loss of the generality, we only estimate $(\delta \omega^{*}_{+},\delta p^{*}_{+})$ in $\tilde{\mathcal{N}}^{\rm{I}}_{+}\cup\tilde{\mathcal{N}}^{\rm{II}}_{+}$
since $(\delta \omega^{*}_{-},\delta p^{*}_{-})$ in $\tilde{\mathcal{N}}^{\rm{I}}_{-}\cup\tilde{\mathcal{N}}^{\rm{II}}_{-}$ can be dealt with in the same way.

\begin{figure}[ht]
\begin{center}
\begin{tikzpicture}[scale=1.1]

\draw [line width=0.04cm](-2.5,0.8)--(0.8,0.8);

\draw [line width=0.02cm](-2.5,-2.0)--(-2.5,0.8);
\draw [line width=0.04cm](-2.5,-2.0)to[out=20, in=-120](0.8,0.8);
\draw [line width=0.03cm][blue](-2.5,-1.2)to[out=10, in=-120](-0.2,0.2);
\draw [line width=0.03cm][blue](-2.5,-0.3)to[out=10, in=-110](-1.2,0.8);
\draw [line width=0.03cm][red](-1.2,0.8)to[out=-10, in=120](-0.2,0.2);
\draw [line width=0.01cm](-2.5,0.8)to[out=-20, in=110](-0.5,-0.9);

\node at (-1.8, -0.5) {$\tilde{\mathcal{N}}^{\rm{I}}_{+}$};
\node at (0.2, 0.4) {$\tilde{\mathcal{N}}^{\rm{II}}_{+}$};

\node at (0.8,0.8) {$\bullet$};
\node at (-2.5,0.8) {$\bullet$};
\node at (-2.5,-2.0) {$\bullet$};
\node at (-2.5,-1.2) {$\bullet$};
\node at (-2.5,-0.3) {$\bullet$};
\node at (-1.2,0.8) {$\bullet$};
\node at (-0.2,0.2) {$\bullet$};
\node at (-0.5,-0.9) {$\bullet$};

\node at (1.2,0.8) {$\tilde{\Gamma}_{+}$};
\node at (-2.8, -0.6) {$\tilde{\Gamma}^{+}_{\textrm{in}}$};
\node at (-2.8,0.8) {$\tilde{P}_+$};
\node at (-2.8,-2.0) {$\tilde{O}$};
\end{tikzpicture}
\end{center}
\caption{Estimates for $(\delta \omega^{*}_{+},\delta p^{*}_{+})$ in $\tilde{\mathcal{N}}^{\rm{I}}_{+}\cup\tilde{\mathcal{N}}^{\rm{II}}_{+}$}\label{fig3.6}
\end{figure}
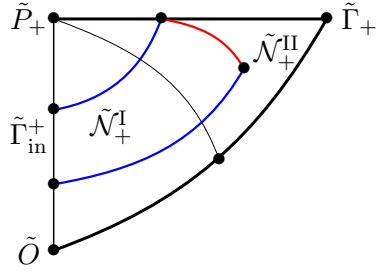

\rm{1}.\emph{\ Estimates on $\|(\delta \omega^{*}_{+},\delta p^{*}_{+})\|_{0,0;\tilde{\mathcal{N}}^{\rm{I}}_{+}\cup\tilde{\mathcal{N}}^{\rm{II}}_{+}}$.} Without loss of the generality, we only consider the stimates in $\tilde{\mathcal{N}}^{\rm{II}}_{+}$.
Recalling the definition of $z^{+,*}_{+}$ and $z^{-,*}_{+}$ in \eqref{eq:prop-3.2-2} and by $\eqref{eq:IBVP-II}_{3}-\eqref{eq:IBVP-II}_{5}$,
we know that $z^{+,*}_{+}$ and $z^{-,*}_{+}$ satisfy equations $\eqref{eq:prop-3.2-3}_{1}$-$\eqref{eq:prop-3.2-3}_{2}$ in 
$\tilde{\mathcal{N}}^{\rm{II}}_{k}$ with initial conditions $\eqref{eq:prop-3.2-3}_{3}$ and boundary condition
\begin{eqnarray}\label{eq:prop-3.3-2}
z^{+,*}_{+}+z^{-,*}_{+}=2g'_{+}(\xi), &\quad \mbox{on} \quad \tilde{\Gamma}_{+}\cap \overline{\tilde{\mathcal{N}}^{\rm{II}}_{+}}.
\end{eqnarray}

Then, as shown in Fig.\ref{fig3.6}, for any point $(\bar{\xi},\bar{\eta})\in \tilde{\mathcal{N}}^{\rm{II}}_{+}$, as in \eqref{eq:prop-3.2-4}, we can define a backward characteristic curve $\eta=\Upsilon^{+}_{+}(\xi;\bar{\xi},\bar{\eta})$ corresponding to $\lambda^{+}_{+}$, passing through $(\bar{\xi},\bar{\eta})$ and intersecting $\tilde{\Gamma}^{+}_{\rm{in}}$ at the point $(0,\gamma^{+}_{+}(\bar{\xi},\bar{\eta}))$.
Similarly, let $\eta=\Upsilon^{-, \rm{b}}_{+}(\xi;\bar{\xi},\bar{\eta})$ be a backward characteristic curve corresponding to $\lambda^{-}_{+}$, passing through $(\bar{\xi},\bar{\eta})$ and hitting $\tilde{\Gamma}_{+}\cap \overline{\tilde{\mathcal{N}}^{\rm{II}}_{+}}$ at the point $(\zeta^{-,\rm{b}}_{+}(\bar{\xi},\bar{\eta}),\textrm{m}_{+})$. Then, by the compatibility condition $\eqref{eq:2.11}_1$, along $\eta=\Upsilon^{+}_{+}(\xi;\bar{\xi},\bar{\eta})$, $z^{+,*}_{+}(\bar{\xi},\bar{\eta})$ satisfies $\eqref{eq:prop-3.2-5}_1$,
and along $\eta=\Upsilon^{-, \rm{b}}_{+}(\xi;\bar{\xi},\bar{\eta})$, $z^{-,*}_{+}$ satisfies
\begin{eqnarray}\label{eq:prop-3.3-3}
\begin{split}
z^{-,*}_{+}(\bar{\xi},\bar{\eta})&=z^{-,*}_{+}(\zeta^{-,\rm{b}}_{+}(\bar{\xi},\bar{\eta}),\textrm{m}_{+})
+\int_{\zeta^{-,\rm{b}}_{+}(\bar{\xi},\bar{\eta})}^{\bar{\xi}}\frac{\p_{-}\Lambda_{+}}{2\Lambda_{+}}\big(z^{-,*}_{+}-z^{+,*}_{+}\big)
(\tau,\Upsilon^{-,\rm{b}}_{+}(\tau;\bar{\xi},\bar{\eta}))\mathrm{d} \tau\\
&\quad\ +\int^{\bar{\xi}}_{\zeta^{-,\rm{b}}_{+}(\bar{\xi},\bar{\eta})}\big(a^{-}_{+}(\underline{\mathcal{U}}_{+})\delta Y^{*}_{+}+f^{-}_{+}(\delta\mathcal{V}_{+}, \delta Y_{+})\big)(\tau,\Upsilon^{-, \rm{b}}_{+}(\tau;\bar{\xi},\bar{\eta}))\mathrm{d} \tau.
\end{split}
\end{eqnarray}

Define
\begin{eqnarray}\label{eq:prop-3.3-4}
R^{\rm{II}}_{+}(\kappa)=\{(\xi,\eta)| 0\leq\xi\leq\kappa,\ \Upsilon^{+,0}_{+}(\xi;\bar{\xi}_{+}^{2},\textrm{m}_{+})\leq\eta\leq\textrm{m}_{+}\},\  \kappa\in [0,\bar{\xi}^{2}_{+}],
\end{eqnarray}
and
\begin{eqnarray}\label{eq:prop-3.3-5}
z^{\pm,*}_{\rm{II}^{+}}(\kappa)\triangleq\sup\limits_{(\xi,\eta)\in R^{\rm{II}}_{+}(\kappa)}|z^{\pm,*}_{+}(\xi,\eta)|.
\end{eqnarray}

Obviously, we have
\begin{eqnarray}\label{eq:prop-3.3-6}
\|z_{+}^{\pm,*}\|_{0,0; \tilde{\mathcal{N}}^{\rm{I}}_{+}\cup\tilde{\mathcal{N}}^{\rm{II}}_{+}}=z_{\rm{II}^{+}}^{\pm,*}(\bar{\xi}^{2}_{+})\quad\mbox{and}\quad z^{\pm,*}_{\rm{II}^{+}}(\kappa)\geq z^{\pm,*}_{\rm{I}^{+}}(\kappa),\quad \kappa\in [0,\bar{\xi}^{1}_{+}].
\end{eqnarray}

Following \emph{Step 1} in the proof of Proposition \ref{prop:3.2}, we can deduce from $\eqref{eq:prop-3.2-5}_1$ and \eqref{eq:prop-3.3-5} that
\begin{align}\label{eq:prop-3.3-7}
\begin{split}
z_{\rm{II}^{+}}^{+,*}(\kappa)\leq  \mathcal{O}(1)\Big(\|z_{+,\rm in}^{+}\|_{0,0; \tilde{\Gamma}^{+}_{\rm in}}+\|\delta Y^{*}_{+}\|_{0,0; \tilde{\mathcal{N}}^{\rm{I}}_{+}\cup\tilde{\mathcal{N}}^{\rm{II}}_{+}}+\|f^{+}_{+}\|_{0,0; \tilde{\mathcal{N}}^{\rm{I}}_{+}\cup\tilde{\mathcal{N}}^{\rm{II}}_{+}}+\int_0^{\kappa}\sum_{k=\pm}z_{\rm{II}^{+}}^{\pm,*}(\varsigma)\dd\varsigma\Big),
\end{split}
\end{align}
and
\begin{align}\label{eq:prop-3.3-8}
\begin{split}
|z^{-,*}_{+}(\bar{\xi},\bar{\eta})|
&\leq \mathcal{O}(1)\Big(\|\delta Y^{*}_{+}\|_{0,0; \tilde{\mathcal{N}}^{\rm{II}}_{+}}
+\|f^{-}_{+}\|_{0,0; \tilde{\mathcal{N}}^{\rm{II}}_{+}}+\int_{\zeta^{-,\rm{b}}_{+}(\bar{\xi},\bar{\eta})}^{\bar{\xi}}\sum_{k=\pm}z_{\rm{II}^{+}}^{k,*}(\varsigma)\dd\varsigma\Big)\\
&\quad + |z^{-,*}_{+}(\zeta^{-,\rm{b}}_{+}(\bar{\xi},\bar{\eta}),\textrm{m}_{+})|,
\end{split}
\end{align}
where the constants $\mathcal{O}(1)$ depend only on $\underline{\mathcal{U}}_{+}$.

According to the boundary condition \eqref{eq:prop-3.3-2}, we have
\begin{eqnarray}\label{eq:prop-3.3-9}
\begin{split}
z^{-,*}_{+}(\zeta^{-,\rm{b}}_{+}(\bar{\xi},\bar{\eta}),\textrm{m}_{+})=2g'_{+}(\zeta^{-,\rm{b}}_{+}(\bar{\xi},\bar{\eta}))-z^{+,*}_{+}(\zeta^{-,\rm{b}}_{+}(\bar{\xi},\bar{\eta}),\textrm{m}_{+}).
\end{split}
\end{eqnarray}

Then, by the compatibility condition $\eqref{eq:2.11}_1$, along the characteristic $\eta=\Upsilon^{+}_{+}(\xi;\zeta^{-,\rm{b}}_{+}(\bar{\xi},\bar{\eta}),\textrm{m}_{+})$,
\begin{eqnarray}\label{eq:prop-3.3-10}
\begin{split}
&z^{+,*}_{+}(\zeta^{-,\rm{b}}_{+}(\bar{\xi},\bar{\eta}),\textrm{m}_{+})\\
&=z^{+}_{+,\rm in}(\gamma^{+}_{+}(\zeta^{-,\rm{b}}_{+}(\bar{\xi},\bar{\eta}),\textrm{m}_{+}))
+\int_0^{\zeta^{-,\rm{b}}_{+}(\bar{\xi},\bar{\eta})}\frac{\p_{+}\Lambda_{+}}{2\Lambda_{+}}\big(z^{+,*}_{+}-z^{-,*}_{+}\big)(\tau,\Upsilon^{+}_{+}(\tau;\zeta^{-,\rm{b}}_{+}(\bar{\xi},\bar{\eta}),\textrm{m}_{+}))\mathrm{d} \tau\\
&\quad+\int_0^{\zeta^{-,\rm{b}}_{+}(\bar{\xi},\bar{\eta})}\big(a^{+}_{+}(\underline{\mathcal{U}}_{+})\delta Y^{*}_{+}+f^{+}_{+}(\delta\mathcal{V}_{+}, \delta Y_{+})\big)(\tau,\Upsilon^{+}_{+}(\tau;\zeta^{-,\rm{b}}_{+}(\bar{\xi},\bar{\eta}),\textrm{m}_{+}))\mathrm{d} \tau.
\end{split}
\end{eqnarray}

Thus, it follows from \eqref{eq:prop-3.3-9} and \eqref{eq:prop-3.3-10} that
\begin{eqnarray*}
\begin{split}
|z^{-,*}_{+}(\zeta^{-,\rm{b}}_{+}(\bar{\xi},\bar{\eta}),\textrm{m}_{+})|
&\leq \mathcal{O}(1)\Big(\|g_{+}-1\|_{1,0;\tilde{\Gamma}_{+}}+\|z_{+,\rm in}^{+}\|_{0,0; \tilde{\Gamma}^{+}_{\rm in}}+\|\delta Y^{*}_{+}\|_{0,0; \tilde{\mathcal{N}}^{\rm{I}}_{+}\cup\tilde{\mathcal{N}}^{\rm{II}}_{+}}\Big)\\
&\quad +\mathcal{O}(1)\Big(\|f^{+}_{+}\|_{0,0; \tilde{\mathcal{N}}^{\rm{I}}_{+}\cup\tilde{\mathcal{N}}^{\rm{II}}_{+}}+\int_{0}^{\zeta^{-,\rm{b}}_{+}(\bar{\xi},\bar{\eta})}\sum_{k=\pm}z^{k,*}_{\rm{II}^{+}}(\varsigma)\dd\varsigma\Big).
\end{split}
\end{eqnarray*}

Plugging it into \eqref{eq:prop-3.3-8} and by \eqref{eq:prop-3.3-5}, one can get
\begin{align}\label{eq:prop-3.3-11}
\begin{split}
z_{\rm{II}^{+}}^{-,*}(\kappa)&\leq \mathcal{O}(1)\Big(\|g_{+}-1\|_{1,0; \tilde{\Gamma}_{+}}+\|z_{+,\rm in}^{+}\|_{0,0; \tilde{\Gamma}^{+}_{\rm in}}+\|\delta Y^{*}_{+}\|_{0,0; \tilde{\mathcal{N}}^{\rm{I}}_{+}\cup\tilde{\mathcal{N}}^{\rm{II}}_{+}}\Big)\\
&\quad  +\mathcal{O}(1)\Big(\sum_{k=\pm}\|f^{k}_{+}\|_{0,0; \tilde{\mathcal{N}}^{\rm{I}}_{+}\cup\tilde{\mathcal{N}}^{\rm{II}}_{+}}
+\int_{0}^{\kappa}\sum_{k=\pm}z_{\rm{II}^{+}}^{k,*}(\varsigma)\dd\varsigma\Big),
\end{split}
\end{align}
where the constants $\mathcal{O}(1)$ depend only on $\underline{\mathcal{U}}_{+}$.

Adding up \eqref{eq:prop-3.3-7} and \eqref{eq:prop-3.3-11}, and by the Gronwall's inequality, we conclude for $\kappa=\bar{\xi}^{2}_{+}$ that
\begin{eqnarray*}
\begin{split}
&\sum_{k=\pm}\|z_{+}^{k,*}\|_{0,0; \tilde{\mathcal{N}}^{\rm{I}}_{+}\cup\tilde{\mathcal{N}}^{\rm{II}}_{+}}\\
&\leq\mathcal{O}(1)\Big(\|g_{+}-1\|_{1,0; \tilde{\Gamma}_{+}}+\|z_{+,\rm in}^{+}\|_{0,0; \tilde{\Gamma}^{+}_{\rm in}}
+\|\delta Y^{*}_{+}\|_{0,0; \tilde{\mathcal{N}}^{\rm{I}}_{+}\cup\tilde{\mathcal{N}}^{\rm{II}}_{+}}+\sum_{k=\pm}\|f^{k}_{+}\|_{0,0; \tilde{\mathcal{N}}^{\rm{I}}_{+}\cup\tilde{\mathcal{N}}^{\rm{II}}_{+}}\Big).
\end{split}
\end{eqnarray*}


By \eqref{eq:prop-3.2-2a}, we can choose a constant $\tilde{C}_{\rm{II}}>0$ depending only on $\underline{\mathcal{U}}_{+}$ such that
\begin{eqnarray}\label{eq:prop-3.3-12}
\begin{split}
\|(\delta \omega^{*}_{+}, \delta p^{*}_{+})\|_{0,0; \tilde{\mathcal{N}}^{\rm{I}}_{+}\cup\tilde{\mathcal{N}}^{\rm{II}}_{+}}
&\leq\tilde{C}_{\rm{II}}\Big(\|(\delta \omega^{+}_{\rm{in}}, \delta p^{+}_{\rm{in}})\|_{0,0; \tilde{\Gamma}^{+}_{\rm in}}+\|g_{+}-1\|_{1,0; \tilde{\Gamma}_{+}}\\
&\qquad\quad\ +\|\delta Y^{*}_{+}\|_{0,0; \tilde{\mathcal{N}}^{\rm{I}}_{+}\cup\tilde{\mathcal{N}}^{\rm{II}}_{+}}+\sum_{k=\pm}\|f^{k}_{+}\|_{0,0; \tilde{\mathcal{N}}^{\rm{I}}_{+}\cup\tilde{\mathcal{N}}^{\rm{II}}_{+}}\Big).
\end{split}
\end{eqnarray}

2.\ \emph{Estimates on $\|(\nabla\delta \omega^{*}_{+}, \nabla\delta p^{*}_{+})\|_{0,0;\tilde{\mathcal{N}}^{\rm{I}}_{+}\cup\tilde{\mathcal{N}}^{\rm{II}}_{+}}$.}
We also employ functions $\mathcal {Z}_{+}^{+,*}$ and $\mathcal {Z}_{+}^{-,*}$ as defined in \eqref{eq:prop-3.2-12}, which satisfy equations \eqref{eq:prop-3.2-13} in $\tilde{\mathcal{N}}^{\rm{I}}_{+}\cup\tilde{\mathcal{N}}^{\rm{II}}_{+}$ with the following initial and boundary conditions
\begin{eqnarray}\label{eq:prop-3.3-13}
\begin{cases}
(\mathcal {Z}_{+}^{+,*}, \mathcal {Z}_{+}^{-,*})=(\mathcal {Z}_{+}^{+,\rm in}, \mathcal {Z}_{+}^{-,\rm in}), &\quad  \mbox{on} \quad \tilde{\Gamma}^{+}_{\rm in},\\
\lambda_{+}^{+}\mathcal {Z}_{+}^{+,*}+\lambda_{+}^{-}\mathcal {Z}_{+}^{-,*}=-2g''_{+}(\xi)+\big(a^{+}_{+}(\underline{\mathcal{U}}_{+})+a_{+}^{-}(\underline{\mathcal{U}}_{+})\big)\delta Y^{*}_{+}\\
\qquad\qquad\qquad\qquad+f^{+}_{+}(\delta\mathcal{V}_{+}, \delta Y_{+})+f^{-}_{+}(\delta\mathcal{V}_{+}, \delta Y_{+}), &\quad \mbox{on} \quad \tilde{\Gamma}_{+}\cap \overline{\tilde{\mathcal{N}}^{\rm{II}}_{+}}.
\end{cases}
\end{eqnarray}
By the compatibility condition $\eqref{eq:2.11}_2$, along the characteristic curve $\eta=\Upsilon^{+}_{+}(\xi;\bar{\xi},\bar{\eta})$, $\mathcal {Z}_{+}^{+,*}(\bar{\xi},\bar{\eta})$ satisfies $\eqref{eq:prop-3.2-14}_{1}$,
and along the characteristic $\eta=\Upsilon^{-,\rm{b}}_{+}(\xi; \bar{\xi},\bar{\eta})$, $\mathcal {Z}_{+}^{-,*}(\bar{\xi},\bar{\eta})$ satisfies
\begin{align}\label{eq:prop-3.3-14}
\begin{split}
&\mathcal {Z}_{+}^{-,*}(\bar{\xi},\bar{\eta})\\
&=\mathcal{Z}_{+}^{-,*}(\zeta^{-,\rm{b}}_{+}(\bar{\xi},\bar{\eta}),\textrm{m}_{+})+\int_{\zeta^{-,\rm{b}}_{+}(\bar{\xi},\bar{\eta})}^{\bar{\xi}}\Big(\frac{\p_{-}\Lambda_{+}-2\Lambda_{+}\p_{\eta}\lambda^{-}_{+}}{2\Lambda_{+}}\mathcal {Z}_{+}^{-,*}-\frac{\p_{+}\Lambda_{+}}{2\Lambda_{+}}\mathcal {Z}_{+}^{+,*}\Big)(\tau,\Upsilon^{-,\rm{b}}_{+}(\tau;\bar{\xi},\bar{\eta}))\dd \tau\\
&\quad\ +\int_{\zeta^{-,\rm{b}}_{+}(\bar{\xi},\bar{\eta})}^{\bar{\xi}}\Big(a^{-}_{+}(\underline{\mathcal{U}}_{+})\p_{\eta}\delta Y^{*}_{+}-\frac{[a^{-}_{+}(\underline{\mathcal{U}}_{+})-a^{+}_{+}(\underline{\mathcal{U}}_{+})]\p_{\eta} \Lambda_{+}}{2\Lambda_{+}}\delta Y^{*}_{+} \Big)(\tau,\Upsilon^{-,\rm{b}}_{+}(\tau;\bar{\xi},\bar{\eta}))\dd \tau\\
&\quad\ +\int_{\zeta^{-,\rm{b}}_{+}(\bar{\xi},\bar{\eta})}^{\bar{\xi}}\Big(\p_{\eta}f^{-}_{+}(\delta\mathcal{V}_{+}, \delta Y_{+})-\frac{\p_{\eta} \Lambda_{+}}{2\Lambda_{+}}\big[f^{-}_{+}(\delta\mathcal{V}_{+}, \delta Y_{+})
-f^{+}_{+}(\delta\mathcal{V}_{+}, \delta Y_{+})\big]\Big)(\tau,\Upsilon^{-,\rm{b}}_{+}(\tau;\bar{\xi},\bar{\eta}))\dd \tau.
\end{split}
\end{align}

Now, we introduce the following norm to estimate $\|(\nabla\delta \omega^{*}_{+}, \nabla\delta
p^{*}_{+})\|_{0,0;\tilde{\mathcal{N}}^{\rm{I}}_{+}\cup\tilde{\mathcal{N}}^{\rm{II}}_{+}}$.
\begin{eqnarray}\label{3.57a}
\mathcal {Z}_{\rm{II}^{+}}^{\pm,*}(\kappa)\triangleq\sup\limits_{(\xi,\eta)\in R_{+}^{\rm{II}}(\kappa)}|\mathcal {Z}_{+}^{\pm,*}(\xi,\eta)|.
\end{eqnarray}

Obviously, we have
\begin{eqnarray}\label{eq:prop-3.3-15}
\|\mathcal {Z}_{+}^{\pm,*}\|_{0,0; \tilde{\mathcal{N}}^{\rm{I}}_{+}\cup\tilde{\mathcal{N}}^{\rm{II}}_{+}}=\mathcal {Z}_{\rm{II}^{+}}^{\pm,*}(\bar{\xi}^{2}_{+})\quad\text{and}\quad\mathcal{Z}^{\pm,*}_{\rm{II}^{+}}(\kappa)\geq \mathcal {Z}^{\pm,*}_{\rm{I}^{+}}(\kappa),\quad \kappa\in [0,\bar{\xi}^{1}_{+}].
\end{eqnarray}

Therefore, for sufficiently small $\sigma>0$, by \eqref{3.57a}, we can mimic \emph{Step 2} in the proof of Proposition \ref{prop:3.2} to derive from $\eqref{eq:prop-3.2-14}_1$ that
\begin{align}\label{eq:prop-3.3-16}
\begin{split}
&\mathcal {Z}_{\rm{II}^{+}}^{+,*}(\kappa)\\
&\leq \mathcal{O}(1)\Big(\|\mathcal {Z}_{+,\rm in}^{+}\|_{0,0; \tilde{\Gamma}^{+}_{\rm in}}+\sum_{k=\pm}\|f^{k}_{+}\|_{1,0; \tilde{\mathcal{N}}^{\rm{I}}_{+}\cup\tilde{\mathcal{N}}^{\rm{II}}_{+}}+\|\delta Y^{*}_{+}\|_{1,0; \tilde{\mathcal{N}}^{\rm{I}}_{+}\cup\tilde{\mathcal{N}}^{\rm{II}}_{+}}+\int_0^{\kappa}\sum_{k=\pm}\mathcal {Z}_{\rm{II}^{+}}^{k,*}(\varsigma)\dd\varsigma\Big),
\end{split}
\end{align}
where the constant $\mathcal{O}(1)$ depends only on $\underline{\mathcal{U}}_{+}$ and $L$.

For $\mathcal {Z}_{+}^{-,*}(\bar{\xi},\bar{\eta})$, in light of \eqref{eq:prop-3.3-14}, one needs to estimate $|\mathcal {Z}_{+}^{-,*}(\zeta^{-,\rm{b}}_{+}(\bar{\xi},\bar{\eta}),\textrm{m}_{+})|$ first. 
By the boundary condition $\eqref{eq:prop-3.3-13}_{2}$ on $\tilde{\Gamma}_{+}\cap \overline{\tilde{\mathcal{N}}^{\rm{II}}_{+}}$,
\begin{eqnarray}\label{eq:prop-3.3-17}
\begin{split}
\mathcal{Z}_{+}^{-,*}(\zeta^{-,\rm{b}}_{+}(\bar{\xi},\bar{\eta}),\textrm{m}_{+})
&=-\frac{2\lambda_{+}^{+}}{\lambda_{+}^{-}}g''_{+}(\zeta^{-,\rm{b}}_{+}(\bar{\xi},\bar{\eta}))-\frac{\lambda_{+}^{+}}{\lambda_{+}^{-}}\mathcal {Z}_{+}^{+,*}(\zeta^{-,\rm{b}}_{+}(\bar{\xi},\bar{\eta}),\textrm{m}_{+})\\
&\quad\ +\big[a^{+}_{+}(\underline{\mathcal{U}}_{+})+a_{+}^{-}(\underline{\mathcal{U}}_{+})\big]\frac{\lambda_{+}^{+}}{\lambda_{+}^{-}}
\delta Y^{*}_{+}(\zeta^{-,\rm{b}}_{+}(\bar{\xi},\bar{\eta}),\textrm{m}_{+})\\
&\quad\ +\frac{\lambda_{+}^{+}}{\lambda_{+}^{-}}\Big(f^{+}_{+}(\delta\mathcal{V}_{+}, \delta Y_{+})+f^{-}_{+}(\delta\mathcal{V}_{+}, \delta Y_{+})\Big)(\zeta^{-,\rm{b}}_{+}(\bar{\xi},\bar{\eta}),\textrm{m}_{+}).
\end{split}
\end{eqnarray}

For $\mathcal{Z}_{+}^{+,*}(\zeta^{-,\rm{b}}_{+}(\bar{\xi},\bar{\eta}),\textrm{m}_{+})$ in \eqref{eq:prop-3.3-17}, by the compatibility condition $\eqref{eq:2.11}_2$, we can follow the ways in \emph{Step 2} of the proof of Proposition \ref{prop:3.2} to integrate along the characteristic curve $\eta=\Upsilon^{+}_{+}(\xi;\zeta^{-,\rm{b}}_{+}(\bar{\xi},\bar{\eta}),\textrm{m}_{+})$
which 
intersects $\tilde{\Gamma}^{+}_{\rm{in}}$ at $(0,\gamma^{+}_{+}(\zeta^{-,\rm{b}}_{+}(\bar{\xi},\bar{\eta}),\textrm{m}_{+}))$ to derive that
\begin{align}\label{eq:prop-3.3-18}
\begin{split}
&\mathcal {Z}_{+}^{+,*}(\zeta^{-,\rm{b}}_{+}(\bar{\xi},\bar{\eta}),\textrm{m}_{+})\\
& =\mathcal {Z}_{+,\rm in}^{+}(\gamma^{+}_{+}(\zeta^{-,\rm{b}}_{+}(\bar{\xi},\bar{\eta}),\textrm{m}_{+}))\\
&\quad+\int_0^{\zeta^{-,\rm{b}}_{+}(\bar{\xi},\bar{\eta})}\Big(\frac{\p_{+}\Lambda_{+}-2\Lambda_{+}\p_{\eta}\lambda^{+}_{+}}{2\Lambda_{+}}\mathcal {Z}_{+}^{+,*}-\frac{\p_{-}\Lambda_{+}}{2\Lambda_{+}}\mathcal {Z}_{+}^{-,*}\Big)(\tau,\Upsilon^{+}_{+}(\tau;\zeta^{-,\rm{b}}_{+}(\bar{\xi},\bar{\eta}),\textrm{m}_{+}))\dd \tau\\
&\quad+\int^{\zeta^{-,\rm{b}}_{+}(\bar{\xi},\bar{\eta})}_{0}\Big(a^{+}_{+}(\underline{\mathcal{U}}_{+})\p_{\eta}\delta Y^{*}_{+}-\frac{[a^{+}_{+}(\underline{\mathcal{U}}_{+})-a^{-}_{+}(\underline{\mathcal{U}}_{+})]\p_{\eta} \Lambda_{+}}{2\Lambda_{+}}\delta Y^{*}_{+}\Big)(\tau,\Upsilon^{+}_{+}(\tau;\zeta^{-,\rm{b}}_{+}(\bar{\xi},\bar{\eta}),\textrm{m}_{+}))\dd \tau\\
&\quad+\int^{\zeta^{-,\rm{b}}_{+}(\bar{\xi},\bar{\eta})}_{0}\Big(\p_{\eta}f^{+}_{+}(\delta\mathcal{V}_{+}, \delta Y_{+})-\frac{\p_{\eta} \Lambda_{+}}{2\Lambda_{+}}\big[f^{+}_{+}(\delta\mathcal{V}_{+}, \delta Y_{+})-f^{-}_{+}(\delta\mathcal{V}_{+}, \delta Y_{+})\big]\Big)\\
&\qquad\qquad \times (\tau,\Upsilon^{+}_{+}(\tau;\zeta^{-,\rm{b}}_{+}(\bar{\xi},\bar{\eta}),\textrm{m}_{+}))\dd \tau.
\end{split}
\end{align}

Then, plugging \eqref{eq:prop-3.3-18} into \eqref{eq:prop-3.3-17} and letting $\sigma>0$ sufficiently small, we obtain
\begin{eqnarray}\label{eq:prop-3.3-19}
\begin{split}
|\mathcal {Z}_{+}^{-,*}(\zeta^{-,\rm{b}}_{+}(\bar{\xi},\bar{\eta}),\textrm{m}_{+})|
&\leq  \mathcal{O}(1)\Big(\|g''_{+}\|_{1,0;\tilde{\Gamma}_{+}}+\|\mathcal {Z}_{+,\rm in}^{+}\|_{0,0; \tilde{\Gamma}^{+}_{\rm in}}
+\|\delta Y^{*}_{+}\|_{1,0; \tilde{\mathcal{N}}^{\rm{I}}_{+}\cup\tilde{\mathcal{N}}^{\rm{II}}_{+}}\Big)\\
&\quad\ + \mathcal{O}(1)\Big(\sum_{k=\pm}\|f^{k}_{+}\|_{1,0; \tilde{\mathcal{N}}^{\rm{I}}_{+}\cup\tilde{\mathcal{N}}^{\rm{II}}_{+}}+\int^{\zeta^{-,\rm{b}}_{+}(\bar{\xi},\bar{\eta})}_{0}\sum_{k=\pm}\mathcal {Z}^{k,*}_{\rm{II}^{+}}(\varsigma)\dd\varsigma\Big),
\end{split}
\end{eqnarray}
where the constants $\mathcal{O}(1)$ depend only on $\underline{\mathcal{U}}_{+}$ and $L$.
Therefore, for sufficiently small $\sigma>0$, 
\begin{eqnarray*}
\begin{split}
|\mathcal {Z}_{+}^{-,*}(\bar{\xi},\bar{\eta})|
&\leq |\mathcal {Z}_{+}^{-,*}(\zeta^{-,\rm{b}}_{+}(\bar{\xi},\bar{\eta}),\textrm{m}_{+})|+\mathcal{O}(1)\sum_{k=\pm}\|f^{k}_{+}\|_{1,0; \tilde{\mathcal{N}}^{\rm{I}}_{+}\cup\tilde{\mathcal{N}}^{\rm{II}}_{+}}\\
&\quad\ +\mathcal{O}(1)\Big\{\|\delta Y^{*}_{+}\|_{1,0; \tilde{\mathcal{N}}^{\rm{I}}_{+}\cup\tilde{\mathcal{N}}^{\rm{II}}_{+}}+\int_{\zeta^{-,\rm{b}}_{+}(\bar{\xi},\bar{\eta})}^{\bar{\xi}}
\sum_{k=\pm}\mathcal {Z}_{\rm{II}^{+}}^{k,*}(\varsigma)\dd \varsigma\Big\}\\
&\leq \mathcal{O}(1)\Big(\|g''_{+}\|_{1,0;\tilde{\Gamma}_{+}}+\|\mathcal {Z}_{+,\rm in}^{+}\|_{0,0; \tilde{\Gamma}^{+}_{\rm in}}
+\|\delta Y^{*}_{+}\|_{1,0; \tilde{\mathcal{N}}^{\rm{I}}_{+}\cup\tilde{\mathcal{N}}^{\rm{II}}_{+}}\Big)\\
&\qquad\qquad\ + \mathcal{O}(1)\Big(\sum_{k=\pm}\|f^{k}_{+}\|_{1,0; \tilde{\mathcal{N}}^{\rm{I}}_{+}\cup\tilde{\mathcal{N}}^{\rm{II}}_{+}}+\int^{\bar{\xi}}_{0}\sum_{k=\pm}\mathcal {Z}^{k,*}_{\rm{II}^{+}}(\varsigma)\dd\varsigma\Big).
\end{split}
\end{eqnarray*}
So 
\begin{eqnarray}\label{eq:prop-3.3-20}
\begin{split}
\mathcal {Z}_{\rm{II}^{+}}^{-,*}(\kappa)
&\leq \mathcal{O}(1)\Big(\|\mathcal {Z}_{+,\rm in}^{+}\|_{0,0; \tilde{\Gamma}^{+}_{\rm in}}+\|g''_{+}\|_{1,0;\tilde{\Gamma}_{+}}
+\|\delta Y^{*}_{+}\|_{1,0; \tilde{\mathcal{N}}^{\rm{I}}_{+}\cup\tilde{\mathcal{N}}^{\rm{II}}_{+}}\\
&\qquad\qquad\ + \sum_{k=\pm}\|f^{k}_{+}\|_{1,0; \tilde{\mathcal{N}}^{\rm{I}}_{+}\cup\tilde{\mathcal{N}}^{\rm{II}}_{+}}+\int^{\kappa}_{0}\sum_{k=\pm}\mathcal {Z}^{k,*}_{\rm{II}^{+}}(\varsigma)\dd\varsigma\Big),
\end{split}
\end{eqnarray}
where the constant $\mathcal{O}(1)$ depends only on $\underline{\mathcal{U}}_{+}$ and $L$.
Combining \eqref{eq:prop-3.3-16} and \eqref{eq:prop-3.3-20} and applying the Gronwall's inequality together with \eqref{eq:prop-3.3-15}, we have 
\begin{align}\label{eq:prop-3.3-21}
\begin{split}
&\sum_{k=\pm}\|\mathcal {Z}_{+}^{\pm,*}\|_{0,0;\tilde{\mathcal{N}}^{\rm{I}}_{+}\cup\tilde{\mathcal{N}}^{\rm{II}}_{+}}\\
&\leq \mathcal{O}(1)\Big\{\sum_{k=\pm}\big(\|\mathcal {Z}_{+,\rm in}^{k}\|_{0,0; \tilde{\Gamma}^{+}_{\rm in}}+\|f^{k}_{+}\|_{1,0; \tilde{\mathcal{N}}^{\rm{I}}_{+}\cup\tilde{\mathcal{N}}^{\rm{II}}_{+}}\big)+\|g''_{+}\|_{1,0;\tilde{\Gamma}_{+}}
+\|\delta Y^{*}_{+}\|_{1,0; \tilde{\mathcal{N}}^{\rm{I}}_{+}\cup\tilde{\mathcal{N}}^{\rm{II}}_{+}}\Big\}.
\end{split}
\end{align}

Then 
by \eqref{eq:prop-3.2-18}, 
we finally obtain
\begin{eqnarray}\label{eq:prop-3.3-23}
\begin{split}
\|(\nabla\delta \omega^{*}_{+},\nabla\delta p^{*}_{+})\|_{0,0; \tilde{\mathcal{N}}^{\rm{I}}_{+}\cup\tilde{\mathcal{N}}^{\rm{II}}_{+}}
&\leq \tilde{C}^{*}_{\rm{II}^{+}}\Big(\|(\nabla\delta \omega^{+}_{\rm{in}},\nabla\delta p^{+}_{\rm{in}})\|_{0,0; \tilde{\Gamma}^{+}_{\rm in}}+\|g_{+}-1\|_{2,0; \tilde{\Gamma}_{+}}\\
&\qquad\quad\ +\|\delta Y^{*}_{+}\|_{1,0; \tilde{\mathcal{N}}^{\rm{I}}_{+}\cup\tilde{\mathcal{N}}^{\rm{II}}_{+}}+\sum_{k=\pm}\|f^{k}_{+}\|_{1,0; \tilde{\mathcal{N}}^{\rm{I}}_{+}\cup\tilde{\mathcal{N}}^{\rm{II}}_{+}}\Big),
\end{split}
\end{eqnarray}
where the constant $\tilde{C}^{*}_{\rm{II}^{+}}>0$ depends only on $\underline{\mathcal{U}}_{+}$ and $L$.

3.\ \emph{Estimates on $\|(\nabla\delta \omega^{*}_{+},\nabla\delta p^{*}_{+})\|_{0,\alpha;\tilde{\mathcal{N}}^{\rm{I}}_{+}\cup\tilde{\mathcal{N}}^{\rm{II}}_{+}}$.}
Introduce semi-H\"{o}lder norms for $\mathcal {Z}^{\pm,*}_{+}$ in $\tilde{\mathcal{N}}^{\rm{I}}_{+}\cup\tilde{\mathcal{N}}^{\rm{II}}_{+}$ as follows 
\begin{eqnarray}\label{eq:prop-3.3-24}
\begin{cases}
[\mathcal {Z}^{\pm,*}_{\rm{II}^{+}}](\kappa)=\sup\limits_{(\xi_1,\eta_1),(\xi_2,\eta_2)\in R_{+}^{\rm{II}}(\kappa)}\frac{|\mathcal {Z}^{\pm,*}_{+}(\xi_1,\eta_1)-\mathcal {Z}^{\pm,*}_{+}(\xi_2,\eta_2)|}{|(\xi_1,\eta_1)-(\xi_2,\eta_2)|^{\alpha}},\\
[\mathcal {Z}^{\pm,*}_{\rm{II}^{+}}]_{\xi}(\kappa)=\sup\limits_{(\xi_1,\eta),(\xi_2,\eta)\in R_{+}^{\rm{II}}(\kappa)}\frac{|\mathcal {Z}^{\pm,*}_{+}(\xi_1,\eta)-\mathcal {Z}^{\pm,*}_{+}(\xi_2,\eta)|}{|\xi_1-\xi_2|^{\alpha}},\\
[\mathcal {Z}^{\pm,*}_{\rm{II}^{+}}]_{\eta}(\kappa)=\sup\limits_{(\xi,\eta_1),(\xi,\eta_2)\in R_{+}^{\rm{II}}(\kappa)}\frac{|\mathcal {Z}^{\pm,*}_{+}(\xi,\eta_1)-\mathcal {Z}^{\pm,*}_{+}(\xi,\eta_2)|}{|\eta_1-\eta_2|^{\alpha}}.
\end{cases}
\end{eqnarray}

Then, we have
\begin{eqnarray}\label{eq:prop-3.3-25}
\begin{split}
[\mathcal {Z}^{\pm,*}_{\rm{II}^{+}}](\kappa) \leq [\mathcal {Z}^{\pm,*}_{\rm{II}^{+}}]_{\xi}(\kappa)+[\mathcal {Z}^{\pm,*}_{\rm{II}^{+}}]_{\eta}(\kappa),
\quad [\mathcal {Z}^{\pm,*}_{\rm{II}^{+}}]_{0,\alpha; \tilde{\mathcal{N}}^{\rm{I}}_{+}\cup\tilde{\mathcal{N}}^{\rm{II}}_{+}}=[\mathcal {Z}^{\pm,*}_{\rm{II}^{+}}](\bar{\xi}^{2}_{+}),
\end{split}
\end{eqnarray}
and for any $\kappa \in[0,\bar{\xi}_+^1]$,
\begin{eqnarray}\label{eq:prop-3.3-26}
\begin{split}
[\mathcal {Z}^{\pm,*}_{\rm{II}^{+}}](\kappa) \geq [\mathcal {Z}^{\pm,*}_{\rm{I}^{+}}](\kappa),\quad
[\mathcal {Z}^{\pm,*}_{\rm{II}^{+}}]_{\xi}(\kappa) \geq [\mathcal {Z}^{\pm,*}_{\rm{I}^{+}}]_{\xi}(\kappa),\quad
[\mathcal {Z}^{\pm,*}_{\rm{II}^{+}}]_{\eta}(\kappa) \geq [\mathcal {Z}^{\pm,*}_{\rm{I}^{+}}]_{\eta}(\kappa).
\end{split}
\end{eqnarray}

Now let us estimate $[\mathcal {Z}^{\pm,*}_{\rm{II}^{+}}]_{\eta}(\kappa)$. For any $(\bar{\xi},\bar{\eta}_1),\ (\bar{\xi},\bar{\eta}_2)\in\tilde{\mathcal{N}}^{\rm{I}}_{+}\cup\tilde{\mathcal{N}}^{\rm{II}}_{+}$ with $\bar{\eta}_1\neq \bar{\eta}_2$ and $\bar{\xi}$ being fixed, there are three possible cases. 

\emph{Case 1. $(\bar{\xi}, \bar{\eta}_1), (\bar{\xi},\bar{\eta}_2)\in \tilde{\mathcal{N}}^{\rm {I}}_{+}$}. By \eqref{eq:prop-3.2-24}-\eqref{eq:prop-3.2-25}
and \eqref{eq:prop-3.3-26}, we have
\begin{eqnarray}\label{eq:prop-3.3-27}
\begin{split}
&\sum_{k=\pm}\frac{|\mathcal {Z}^{k,*}_{+}(\bar{\xi}, \bar{\eta}_1)-\mathcal {Z}^{k,*}_{+}(\bar{\xi}, \bar{\eta}_2)|}{|\bar{\eta}_1-\bar{\eta}_2|^{\alpha}}\\
&\leq \mathcal{O}(1)\Big(\sum_{k=\pm}\big(\|\mathcal {Z}^{k}_{+,\rm in}\|_{0,\alpha;\tilde{\Gamma}^{+}_{\rm in}}+\|f^{k}_{+}\|_{1,\alpha; \tilde{\mathcal{N}}^{\rm{I}}_{+}\cup\tilde{\mathcal{N}}^{\rm{II}}_{+}}\big)
+\|\delta Y^{*}_{+}\|_{1,\alpha; \tilde{\mathcal{N}}^{\rm{I}}_{+}\cup\tilde{\mathcal{N}}^{\rm{II}}_{+}}+\int_0^{\bar{\xi}}\sum_{k=\pm}[\mathcal {Z}^{k,*}_{\rm{II}^{+}}]_{\eta}(\varsigma)\dd \varsigma\Big),
\end{split}
\end{eqnarray}
where the constant $\mathcal {O}(1)$ depends only on $\underline{\mathcal{U}}_{+}$, $\alpha$ and $L$.

\emph{Case 2. $(\bar{\xi},\bar{\eta}_1), (\bar{\xi},\bar{\eta}_2)\in \tilde{\mathcal{N}}^{\rm{II}}_{+}$.} 
By \eqref{eq:prop-3.3-21}, we follow the argument for $[\mathcal {Z}^{+,*}_{\rm{I}^{+}}](\kappa)$ in \emph{Step 3} of the proof of Proposition \ref{prop:3.2} to derive that
\begin{eqnarray}\label{eq:prop-3.3-28}
\begin{split}
&\frac{|\mathcal {Z}^{+,*}_{+}(\bar{\xi},\bar{\eta}_1)-\mathcal {Z}^{+,*}_{+}(\bar{\xi},\bar{\eta}_2)|}{|\bar{\eta}_1-\bar{\eta}_2|^{\alpha}}\\
&\leq \mathcal{O}(1)\Big(\sum_{k=\pm}\big(\|\mathcal {Z}^{k}_{+,\rm in}\|_{0,\alpha;\tilde{\Gamma}^{+}_{\rm in}}+\|f^{k}_{+}\|_{1,\alpha; \tilde{\mathcal{N}}^{\rm{I}}_{+}\cup\tilde{\mathcal{N}}^{\rm{II}}_{+}}\big)
+\|\delta Y^{*}_{+}\|_{1,\alpha; \tilde{\mathcal{N}}^{\rm{I}}_{+}\cup\tilde{\mathcal{N}}^{\rm{II}}_{+}}+\int_0^{\bar{\xi}}\sum_{k=\pm}[\mathcal {Z}^{k,*}_{\rm{II}^{+}}]_{\eta}(\varsigma)\dd \varsigma\Big),
\end{split}
\end{eqnarray}
where the constant $\mathcal {O}(1)$ depends only on $\underline{\mathcal{U}}_{+}$, $\alpha$ and $L$.

Next, for $\mathcal {Z}^{-,*}_{+}$, without loss of the generality, we assume $\zeta^{-,\rm{b}}_{+}(\bar{\xi},\bar{\eta}_1)<\zeta^{-,\rm{b}}_{+}(\bar{\xi},\bar{\eta}_2)$. Applying \eqref{eq:prop-3.3-14} for points $(\bar{\xi}, \bar{\eta}_1)$ and $(\bar{\xi},\bar{\eta}_2)$, respectively, and taking difference, we have
\begin{align}\label{eq:prop-3.3-29}
\begin{split}
\big|\mathcal {Z}^{-,*}_{+}(\bar{\xi},\bar{\eta}_1)-\mathcal {Z}^{-,*}_{+}(\bar{\xi},\bar{\eta}_2)\big|\leq\sum_{k=1}^{3}\mathcal {J}^{-}_{{\rm II}^{+}_{k}},
\end{split}
\end{align}
where
{
\begin{align*}
\begin{split}
\mathcal {J}^{-}_{\rm{II}^{+}_{1}}&\triangleq\big|\mathcal {Z}^{-,*}_{+}(\zeta^{-,\rm{b}}_{+}(\bar{\xi},\bar{\eta}_2),\textrm{m}_{+})-\mathcal {Z}^{+,*}_{+}(\zeta^{-,\rm{b}}_{+}(\bar{\xi},\bar{\eta}_1),\textrm{m}_{+})\big|,
\end{split}
\end{align*}
and
\begin{align*}
\begin{split}
&\mathcal {J}^{-}_{\rm{II}^{+}_{2}}
\triangleq
\int_{\zeta^{-,\rm{b}}_{+}(\bar{\xi},\bar{\eta}_2)}^{\bar{\xi}}\Big|\big(\frac{\p_{-}\Lambda_{+}-2\Lambda_{+}\p_{\eta}\lambda^{-}_{+}}{2\Lambda_{+}}\mathcal {Z}_{+}^{-,*}\big)(\tau,\Upsilon^{-,\textrm{b}}_{+}(\tau;\bar{\xi},\bar{\eta}_1))\\
&\qquad\qquad\qquad\qquad -\big(\frac{\p_{+}\Lambda_{+}-2\Lambda_{+}\p_{\eta}\lambda^{+}_{+}}{2\Lambda_{+}}\mathcal {Z}_{+}^{-,*}\big)(\tau,\Upsilon^{-,\textrm{b}}_{+}(\tau;\bar{\xi},\bar{\eta}_2))\Big|\dd\tau\\
&+\int_{\zeta^{-,\rm{b}}_{+}(\bar{\xi},\bar{\eta}_2)}^{\bar{\xi}}\Big|\big(\frac{\p_{-}\Lambda_{+}}{2\Lambda_{+}}\mathcal {Z}_{+}^{-,*}\big)(\tau,\Upsilon^{-,\textrm{b}}_{+}(\tau;\bar{\xi},\bar{\eta}_1))-\big(\frac{\p_{-}\Lambda_{+}}{2\Lambda_{+}}\mathcal {Z}_{+}^{-,*}\big)(\tau,\Upsilon^{-,,\textrm{b}}_{+}(\tau;\bar{\xi},\bar{\eta}_2))\Big|\dd\tau\\
&+|a^{-}_{+}(\underline{\mathcal{U}}_{+})|\int_{\zeta^{-,\rm{b}}_{+}(\bar{\xi},\bar{\eta}_1)}^{\bar{\xi}}\Big|\p_{\eta}\delta Y^{*}_{+}(\tau,\Upsilon^{-,\textrm{b}}_{+}(\tau;\bar{\xi},\bar{\eta}_1))
-\p_{\eta}\delta Y^{*}_{+}(\tau,\Upsilon^{-,\textrm{b}}_{+}(\tau;\bar{\xi},\bar{\eta}_2))\Big|\dd \tau\\
&+\frac{|a^{+}_{+}(\underline{\mathcal{U}}_{+})|+|a^{-}_{+}(\underline{\mathcal{U}}_{+})|}{2}\\
&\qquad \times\int_{\zeta^{-,\rm{b}}_{+}(\bar{\xi},\bar{\eta}_1)}^{\bar{\xi}}
\Big|\big(\frac{\p_{\eta}\Lambda_{+}}{\Lambda_{+}}Y_{+}^{*}\big)(\tau,\Upsilon^{+}_{+}(\tau;\bar{\xi},\bar{\eta}_1))
-\big(\frac{\p_{\eta}\Lambda_{+}}{\Lambda_{+}}Y_{+}^{*}\big)(\tau,\Upsilon^{-,\textrm{b}}_{+}(\tau;\bar{\xi},\bar{\eta}_2))\Big|\dd\tau\\
&+\int_{\zeta^{-,\rm{b}}_{+}(\bar{\xi},\bar{\eta}_2)}^{\bar{\xi}}\big|\p_{\eta}f^{-}_{+}(\delta\mathcal{V}_{+}, \delta Y_{+})(\tau,\Upsilon^{-,\textrm{b}}_{+}(\tau;\bar{\xi},\bar{\eta}_1))
-\p_{\eta}f^{-}_{+}(\delta\mathcal{V}_{+}, \delta Y_{+})(\tau,\Upsilon^{-,\textrm{b}}_{+}(\tau;\bar{\xi},\bar{\eta}_2))\big|\dd \tau\\
&+\sum_{k=\pm}\int_{\zeta^{-,\rm{b}}_{+}(\bar{\xi},\bar{\eta}_2)}^{\bar{\xi}}\Big|\big(\frac{\p_{\eta} \Lambda_{+}}{2\Lambda_{+}}f^{k}_{+}(\delta\mathcal{V}_{+}, \delta Y_{+})\big)(\tau,\Upsilon^{-,\textrm{b}}_{+}(\tau;\bar{\xi},\bar{\eta}_1))\\
&\qquad\qquad\qquad\qquad\qquad -\big(\frac{\p_{\eta} \Lambda_{+}}{2\Lambda_{+}}f^{k}_{+}(\delta\mathcal{V}_{+}, \delta Y_{+})\big)(\tau,\Upsilon^{-,\textrm{b}}_{+}(\tau;\bar{\xi},\bar{\eta}_2))\Big|\dd \tau,
\end{split}
\end{align*}
}
and
\begin{align*}
\begin{split}
&\mathcal{J}^{-}_{\rm{II}^{+}_{3}}\triangleq\int_{\zeta^{-,\rm{b}}_{+}(\bar{\xi},\bar{\eta}_1)}^{\zeta^{-,\rm{b}}_{+}(\bar{\xi},\bar{\eta}_2)}
\Big|\Big(\frac{\p_{-}\Lambda_{+}-2\Lambda_{+}\p_{\eta}\lambda^{+}_{+}}{2\Lambda_{+}}\mathcal {Z}_{+}^{-,*}-\frac{\p_{+}\Lambda_{+}}{2\Lambda_{+}}\mathcal {Z}_{+}^{+,*}\Big)(\tau,\Upsilon^{-,\textrm{b}}_{+}(\tau;\bar{\xi},\bar{\eta}_{2}))\Big|\dd \tau\\
&+\int_{\zeta^{-,\rm{b}}_{+}(\bar{\xi},\bar{\eta}_1)}^{\zeta^{-,\rm{b}}_{+}(\bar{\xi},\bar{\eta}_2)}\Big|\big(a^{-}_{+}(\underline{\mathcal{U}}_{+})\p_{\eta}\delta Y^{*}_{+}-\frac{[a^{-}_{+}(\underline{\mathcal{U}}_{+})-a^{+}_{+}(\underline{\mathcal{U}}_{+})]\p_{\eta} \Lambda_{+}}{2\Lambda_{+}}\delta Y^{*}_{+} \Big)(\tau,\Upsilon^{-,\textrm{b}}_{+}(\tau;\bar{\xi},\bar{\eta}_2))\Big|\dd \tau\\
&+\int_{\zeta^{-,\rm{b}}_{+}(\bar{\xi},\bar{\eta}_1)}^{\zeta^{-,\rm{b}}_{+}(\bar{\xi},\bar{\eta}_2)}\Big|\Big(\p_{\eta}f^{-}_{+}(\delta\mathcal{V}_{+}, \delta Y_{+})-\frac{\p_{\eta} \Lambda_{+}}{2\Lambda_{+}}\big[f^{-}_{+}(\delta\mathcal{V}_{+}, \delta Y_{+})
-f^{+}_{+}(\delta\mathcal{V}_{+}, \delta Y_{+})\big]\Big)(\tau,\Upsilon^{-,\textrm{b}}_{+}(\tau;\bar{\xi},\bar{\eta}_{2}))\Big| \dd \tau.
\end{split}
\end{align*}

By Lemma \ref{lem:A2} and following the argument in \emph{Step 3} of the proof of Proposition \ref{prop:3.2}, we have
\begin{eqnarray*}
\begin{split}
\mathcal{J}^{-}_{\rm{II}^{+}_{2}}&\leq\mathcal {O}(1)\Big\{\sum_{k=\pm}\|\mathcal {Z}_{+}^{\pm,*}\|_{0,\alpha; \tilde{\mathcal{N}}^{\rm{II}}_{+}}
+\|\p_{\eta}f^{-}_{+}\|_{0,\alpha; \tilde{\mathcal{N}}^{\rm{II}}_{+}}
+\|\p_{\eta}f^{+}_{+}\|_{0,0; \tilde{\mathcal{N}}^{\rm{II}}_{+}}\\
&\qquad\qquad\  +\|\delta Y^{*}_{+}\|_{1,\alpha; \tilde{\mathcal{N}}^{\rm{II}}_{+}}+\int_{\zeta^{-,\rm{b}}_{+}(\bar{\xi},\bar{\eta}_2)}^{\bar{\xi}}\sum_{k=\pm}[\mathcal {Z}^{k,*}_{\rm{II}^{+}}]_{\eta}(\varsigma)\dd \varsigma\Big\}\cdot \big|\bar{\eta}_{1}-\bar{\eta}_{2}\big|^{\alpha},
\end{split}
\end{eqnarray*}
and
\begin{align*}
\begin{split}
\mathcal{J}^{-}_{\rm{II}^{+}_{3}}&\leq\mathcal {O}(1)\Big\{\sum_{k=\pm}\|\mathcal {Z}_{+}^{\pm,*}\|_{0,0; \tilde{\mathcal{N}}^{\rm{II}}_{+}}
+\|f^{-}_{+}\|_{1,0; \tilde{\mathcal{N}}^{\rm{II}}_{+}}+\|f^{+}_{+}\|_{0,0; \tilde{\mathcal{N}}^{\rm{II}}_{+}}
+\|\delta Y^{*}_{+}\|_{1,0; \tilde{\mathcal{N}}^{\rm{II}}_{+}}\Big\}\\
&\qquad\quad \times \big|\zeta^{-,\rm{b}}_{+}(\bar{\xi},\bar{\eta}_1)-\zeta^{-,\rm{b}}_{+}(\bar{\xi},\bar{\eta}_2)\big|\\
&\leq\mathcal {O}(1)\Big\{\sum_{k=\pm}\|\mathcal {Z}_{+}^{\pm,*}\|_{0,0; \tilde{\mathcal{N}}^{\rm{II}}_{+}}+\|f^{-}_{+}\|_{1,0; \tilde{\mathcal{N}}^{\rm{II}}_{+}}+\|f^{+}_{+}\|_{0,0; \tilde{\mathcal{N}}^{\rm{II}}_{+}}
+\|\delta Y^{*}_{+}\|_{1,0; \tilde{\mathcal{N}}^{\rm{II}}_{+}}\Big\}\cdot\big|\bar{\eta}_{1}-\bar{\eta}_{2}\big|^{\alpha},
\end{split}
\end{align*}
where the constants $\mathcal {O}(1)$ depend only on $\underline{\mathcal{U}}_{+}$, $\alpha$ and $L$.

For $\mathcal {J}^{-}_{\rm{II}^{+}_{1}}$, by boundary condition \eqref{eq:prop-3.3-17}, we know that
\begin{eqnarray*}
\begin{split}
\mathcal {J}^{-}_{\rm{II}^{+}_{1}}&\leq 2\bigg|\Big(\frac{\lambda_{+}^{+}}{\lambda_{+}^{-}}g''_{+}\Big)(\zeta^{-,\rm{b}}_{+}(\bar{\xi},\bar{\eta}_1))
-\Big(\frac{\lambda_{+}^{+}}{\lambda_{+}^{-}}g''_{+}\Big)(\zeta^{-,\rm{b}}_{+}(\bar{\xi},\bar{\eta}_2))\bigg|\\
&\quad\ +\sum_{k=\pm}|a^{k}_{+}(\underline{\mathcal{U}}_{+})|\bigg|\Big(\frac{\lambda_{+}^{+}}{\lambda_{+}^{-}}\delta Y^{*}_{+}\Big)(\zeta^{-,\rm{b}}_{+}(\bar{\xi},\bar{\eta}_1), \textrm{m}_{+})
-\Big(\frac{\lambda_{+}^{+}}{\lambda_{+}^{-}}\delta Y^{*}_{+}\Big)(\zeta^{-,\rm{b}}_{+}(\bar{\xi},\bar{\eta}_2), \textrm{m}_{+})\bigg|\\
&\quad\ +\sum_{k=\pm}\bigg|\Big(\frac{\lambda_{+}^{+}}{\lambda_{+}^{-}}f^{k}_{+}(\delta\mathcal{V}_{+}, \delta Y_{+})\Big)(\zeta^{-,\rm{b}}_{+}(\bar{\xi},\bar{\eta}_1), \textrm{m}_{+})
-\Big(\frac{\lambda_{+}^{+}}{\lambda_{+}^{-}}f^{k}_{+}(\delta\mathcal{V}_{+}, \delta Y_{+})\Big)(\zeta^{-,\rm{b}}_{+}(\bar{\xi},\bar{\eta}_2), \textrm{m}_{+})\bigg|\\
&\quad\ +\bigg|\Big(\frac{\lambda_{+}^{+}}{\lambda_{+}^{-}}\mathcal{Z}^{+,*}_{+}\Big)(\zeta^{-,\rm{b}}_{+}(\bar{\xi},\bar{\eta}_1), \textrm{m}_{+})-\Big(\frac{\lambda_{+}^{+}}{\lambda_{+}^{-}}\mathcal{Z}^{+,*}_{+}\Big)(\zeta^{-,\rm{b}}_{+}(\bar{\xi},\bar{\eta}_2), \textrm{m}_{+})\bigg|\\
&\triangleq \sum^{4}_{\rm{i}=1}\mathcal {J}^{-}_{\rm{II}^{+}_{1,i}}.
\end{split}
\end{eqnarray*}

For sufficiently small $\sigma>0$, by Lemma \ref{lem:A2}, we have
\begin{eqnarray*}
\begin{split}
\mathcal{J}^{-}_{\rm{II}^{+}_{1,1}}
&\leq2\Big\|\p_{\xi}\big(\frac{\lambda_{+}^{+}}{\lambda_{+}^{-}}\big)\Big\|_{0,0;\tilde{\Gamma}_{+}}
\|\p_{\eta}\zeta^{-,\rm{b}}_{+}\|_{0,0;\tilde{\mathcal{N}}^{\rm{II}}_{+}}\|g''_{+}\|_{0,0;\tilde{\Gamma}_{+}}\cdot|\bar{\eta}_1-\bar{\eta}_{2}|\\
&\quad\ +2\Big\|\frac{\lambda_{+}^{+}}{\lambda_{+}^{-}}\Big\|_{0,0;\tilde{\Gamma}_{+}}\|\p_{\eta}\zeta^{-,\rm{b}}_{+}\|^{\alpha}_{0,0;\tilde{\mathcal{N}}^{\rm{II}}_{+}}\|g''_{+}\|_{0,\alpha;\tilde{\Gamma}_{+}}\cdot |\bar{\eta}_1-\bar{\eta}_{2}|^{\alpha}\\
&\leq \mathcal {O}(1)\|g''_{+}\|_{0,\alpha;\tilde{\Gamma}_{+}}\cdot|\bar{\eta}_1-\bar{\eta}_{2}|^{\alpha},
\end{split}
\end{eqnarray*}
and
\begin{eqnarray*}
\begin{split}
\mathcal{J}^{-}_{\rm{II}^{+}_{1,2}}+\mathcal{J}^{-}_{\rm{II}^{+}_{1,3}}
\leq \mathcal {O}(1)\Big(\|\nabla\delta Y^{*}_{+}\|_{0,0;\tilde{\mathcal{N}}^{\rm{II}}_{+}}
+\sum_{k=\pm}\|f^{k}_{+}\|_{1,0;\tilde{\mathcal{N}}^{\rm{II}}_{+}}\Big)\cdot|\bar{\eta}_1-\bar{\eta}_{2}|^{\alpha},
\end{split}
\end{eqnarray*}
where the constants $\mathcal {O}(1)$ depend only on $\underline{\mathcal{U}}_{+}$, $\alpha$ and $L$.

For the last term $\mathcal {J}^{-}_{\rm{II}^{+}_{1,4}}$, we employ \eqref{eq:prop-3.3-18} and \eqref{eq:prop-3.3-21} and follow the argument in \emph{Step 3} in the proof of Proposition \ref{prop:3.2} for $[\mathcal {Z}^{+,*}_{\rm{I}^{+}}](\kappa)$ to derive that
\begin{eqnarray*}
\begin{split}
\mathcal{J}^{-}_{\rm{II}^{+}_{1,4}}
&\leq2\Big\|\p_{\xi}\big(\frac{\lambda_{+}^{+}}{\lambda_{+}^{-}}\big)\Big\|_{0,0;\tilde{\Gamma}_{+}}
\|\p_{\eta}\zeta^{-,\rm{b}}_{+}\|_{0,0;\tilde{\mathcal{N}}^{\rm{II}}_{+}}\|\mathcal{Z}^{+,*}_{+}\|_{0,0;\tilde{\mathcal{N}}^{\rm{II}}_{+}}\cdot|\bar{\eta}_1-\bar{\eta}_{2}|\\
&\quad\ +2\Big\|\frac{\lambda_{+}^{+}}{\lambda_{+}^{-}}\Big\|_{0,0;\tilde{\Gamma}_{+}}
\big|\mathcal{Z}^{+,*}_{+}(\zeta^{-,\rm{b}}_{+}(\bar{\xi},\bar{\eta}_1), \textrm{m}_{+})-\mathcal{Z}^{+,*}_{+}(\zeta^{-,\rm{b}}_{+}(\bar{\xi},\bar{\eta}_2), \textrm{m}_{+})\big|\\
&\leq \mathcal{O}(1)\Big(\|\mathcal {Z}^{+}_{+,\rm in}\|_{0,\alpha;\tilde{\Gamma}^{+}_{\rm in}}+\sum_{k=\pm}\|\mathcal {Z}^{k,*}_{+}\|_{0,0;\tilde{\mathcal{N}}^{\rm{I}}_{+}\cup\tilde{\mathcal{N}}^{\rm{II}}_{+}}+\|\p_{\eta}f^{+}_{+}\|_{0,\alpha; \tilde{\mathcal{N}}^{\rm{I}}_{+}\cup\tilde{\mathcal{N}}^{\rm{II}}_{+}}+\|\p_{\eta}f^{-}_{+}\|_{0,0; \tilde{\mathcal{N}}^{\rm{I}}_{+}\cup\tilde{\mathcal{N}}^{\rm{II}}_{+}}\\
&\qquad\qquad\  +\|\delta Y^{*}_{+}\|_{1,\alpha; \tilde{\mathcal{N}}^{\rm{I}}_{+}\cup\tilde{\mathcal{N}}^{\rm{II}}_{+}}+\int_0^{\zeta^{-,\rm{b}}_{+}(\bar{\xi},\bar{\eta}_1)}\sum_{k=\pm}[\mathcal {Z}^{k,*}_{\rm{II}^{+}}]_{\eta}(\varsigma)\dd \varsigma\Big)\cdot |\bar{\eta}_1-\bar{\eta}_{2}|^{\alpha},
\end{split}
\end{eqnarray*}
where the constant $\mathcal {O}(1)$ depends only on $\underline{\mathcal{U}}_{+}$, $\alpha$ and $L$.

Combining the estimates on $\mathcal {J}^{-}_{\rm{II}^{+}_{1,\rm{i}}}$, $(\rm{i}=1,2, 3,4)$, we can deduce that
\begin{eqnarray*}
\begin{split}
\mathcal {J}^{-}_{\rm{II}^{+}_{1}}&\leq\mathcal{O}(1)\Big(\|\mathcal {Z}^{+}_{+,\rm in}\|_{0,\alpha;\tilde{\Gamma}^{+}_{\rm in}}+\|g''_{+}\|_{0,\alpha;\tilde{\Gamma}_{+}}+\|f^{+}_{+}\|_{1,\alpha; \tilde{\mathcal{N}}^{\rm{I}}_{+}\cup\tilde{\mathcal{N}}^{\rm{II}}_{+}}
+\|f^{-}_{+}\|_{1,0; \tilde{\mathcal{N}}^{\rm{I}}_{+}\cup\tilde{\mathcal{N}}^{\rm{II}}_{+}}\\
&\quad\ +\sum_{k=\pm}\|\mathcal {Z}^{k,*}_{+}\|_{0,0;\tilde{\mathcal{N}}^{\rm{I}}_{+}\cup\tilde{\mathcal{N}}^{\rm{II}}_{+}}+\|\delta Y^{*}_{+}\|_{1,\alpha; \tilde{\mathcal{N}}^{\rm{I}}_{+}\cup\tilde{\mathcal{N}}^{\rm{II}}_{+}}+\int_0^{\zeta^{-,\rm{b}}_{+}(\bar{\xi},\bar{\eta}_1)}\sum_{k=\pm}[\mathcal {Z}^{k,*}_{\rm{II}^{+}}]_{\eta}(\varsigma)\dd \varsigma\Big)
 \cdot |\bar{\eta}_1-\bar{\eta}_{2}|^{\alpha}.
\end{split}
\end{eqnarray*}

Hence substituting the estimates of $\mathcal {J}^{-}_{\rm{II}^{+}_{i}}$, $(\rm{i}=1,2,3,4)$
into \eqref{eq:prop-3.3-29}, we have
\begin{align}\label{eq:prop-3.3-30}
\begin{split}
&\frac{\big|\mathcal {Z}^{-,*}_{+}(\bar{\xi},\bar{\eta}_1)-\mathcal {Z}^{-,*}_{+}(\bar{\xi},\bar{\eta}_2)\big|}{|\bar{\eta}_1-\bar{\eta}_{2}|^{\alpha}}\\
\leq&\mathcal{O}(1)\Big(\|\mathcal {Z}^{+}_{+,\rm in}\|_{0,\alpha;\tilde{\Gamma}^{+}_{\rm in}}+\|g''_{+}\|_{0,\alpha;\tilde{\Gamma}_{+}}
+\|\delta Y^{*}_{+}\|_{1,\alpha; \tilde{\mathcal{N}}^{\rm{I}}_{+}\cup\tilde{\mathcal{N}}^{\rm{II}}_{+}}\\
&\qquad\quad +\sum_{k=\pm}\big(\|\mathcal {Z}^{k,*}_{+}\|_{0,0;\tilde{\mathcal{N}}^{\rm{I}}_{+}\cup\tilde{\mathcal{N}}^{\rm{II}}_{+}}+\|f^{k}_{+}\|_{1,\alpha; \tilde{\mathcal{N}}^{\rm{I}}_{+}\cup\tilde{\mathcal{N}}^{\rm{II}}_{+}}\big)+\int_0^{\bar{\xi}}\sum_{k=\pm}[\mathcal {Z}^{k,*}_{\rm{II}^{+}}]_{\eta}(\varsigma)\dd \varsigma\Big),
\end{split}
\end{align}
where the constant $\mathcal {O}(1)$ depends only on $\underline{\mathcal{U}}_{+}$, $\alpha$ and $L$.

\emph{Case 3.  $(\bar{\xi},\bar{\eta}_1)\in \tilde{\mathcal{N}}^{\rm{I}}_{+}$ and $(\bar{\xi},\bar{\eta}_2)\in \tilde{\mathcal{N}}^{\rm{II}}_{+}$}.
In this case, $\bar{\eta}_1\leq\Upsilon^{-}_{+}(\bar{\xi};\bar{\xi}^{1}_{+},\bar{\eta}^{1}_{+})\leq\bar{\eta}_2$
for $\bar{\xi} \in[0, \bar{\xi}^{1}_{+}]$. Thus, we can apply the estimates \eqref{eq:prop-3.3-27}, \eqref{eq:prop-3.3-28} and \eqref{eq:prop-3.3-30} to obtain
\begin{align}\label{eq:prop-3.3-31}
\begin{split}
&\sum_{k=\pm}\frac{|\mathcal {Z}^{k,*}_{+}(\bar{\xi},\bar{\eta}_1)-\mathcal {Z}^{k,*}_{+}(\bar{\xi},\bar{\eta}_2)|}{|\bar{\eta}_1-\bar{\eta}_2|^{\alpha}}\\
& \leq\sum_{k=\pm}\frac{|\mathcal {Z}^{k,*}_{+}(\bar{\xi},\bar{\eta}_1)
-\mathcal {Z}^{k,*}_{+}(\bar{\xi},\Upsilon^{-}_{+}(\bar{\xi};\bar{\xi}^{1}_{+},\bar{\eta}^{1}_{+}))|}{|\bar{\eta}_1-\Upsilon^{-}_{+}(\bar{\xi};\bar{\xi}^{1}_{+},\bar{\eta}^{1}_{+}))|^{\alpha}}
+\sum_{k=\pm}\frac{|\mathcal {Z}^{k,*}_{+}(\bar{\xi},\Upsilon^{-}_{+}(\bar{\xi};\bar{\xi}^{1}_{+},\bar{\eta}^{1}_{+}))-\mathcal {Z}^{k,*}_{+}(\bar{\xi},\bar{\eta}_2)|}{|\bar{\eta}_2-\Upsilon^{-}_{+}(\bar{\xi};\bar{\xi}^{1}_{+},\bar{\eta}^{1}_{+}))|^{\alpha}}\\
&\leq \mathcal{O}(1)\Big(\|\mathcal {Z}^{+}_{+,\rm in}\|_{0,\alpha;\tilde{\Gamma}^{+}_{\rm in}}+\|g''_{+}\|_{0,\alpha;\tilde{\Gamma}_{+}}
+\|\delta Y^{*}_{+}\|_{1,\alpha; \tilde{\mathcal{N}}^{\rm{I}}_{+}\cup\tilde{\mathcal{N}}^{\rm{II}}_{+}}\\
&\qquad\qquad\ +\sum_{k=\pm}\big(\|\mathcal {Z}^{k,*}_{+}\|_{0,0;\tilde{\mathcal{N}}^{\rm{I}}_{+}\cup\tilde{\mathcal{N}}^{\rm{II}}_{+}}+\|f^{k}_{+}\|_{1,\alpha; \tilde{\mathcal{N}}^{\rm{I}}_{+}\cup\tilde{\mathcal{N}}^{\rm{II}}_{+}}\big)+\int_0^{\bar{\xi}}\sum_{k=\pm}[\mathcal {Z}^{k,*}_{\rm{II}^{+}}]_{\eta}(\varsigma)\dd \varsigma\Big),
\end{split}
\end{align}
where the constant $\mathcal {O}(1)$ depends only on $\underline{\mathcal{U}}_{+}$, $\alpha$ and $L$.

With the estimates \eqref{eq:prop-3.3-21}, \eqref{eq:prop-3.3-27}, \eqref{eq:prop-3.3-28}, \eqref{eq:prop-3.3-30} and \eqref{eq:prop-3.3-31}, we can get by the Gronwall's inequality that
\begin{align*}
\begin{split}
&\sum_{k=\pm}[\mathcal {Z}^{k,*}_{\rm{II}^{+}}]_{\eta}(\kappa)\leq \mathcal{O}(1)\Big(\sum_{k=\pm}\big(\|\mathcal {Z}^{k}_{+,\rm in}\|_{0,\alpha;\tilde{\Gamma}^{+}_{\rm in}}+\|f^{k}_{+}\|_{1,\alpha; \tilde{\mathcal{N}}^{\rm{I}}_{+}\cup\tilde{\mathcal{N}}^{\rm{II}}_{+}}\big)
+\|g''_{+}\|_{0,\alpha;\tilde{\Gamma}_{+}}+\|\delta Y^{*}_{+}\|_{1,\alpha; \tilde{\mathcal{N}}^{\rm{I}}_{+}\cup\tilde{\mathcal{N}}^{\rm{II}}_{+}}\Big),
\end{split}
\end{align*}
where the constant $\mathcal {O}(1)$ depends only on $\underline{\mathcal{U}}_{+}$, $\alpha$ and $L$.

For the estimate of $[\mathcal {Z}^{\pm,*}_{\rm{II}^{+}}]_{\xi}(\kappa)$, we can also follow the argument in \emph{Step 3} in the proof of Proposition \ref{prop:3.2} and in the proof of $[\mathcal {Z}^{\pm,*}_{\rm{II}^{+}}]_{\eta}(\kappa)$ above, by taking any two points $(\bar{\xi}_1,\bar{\eta}),\ (\bar{\xi}_2,\bar{\eta})\in \tilde{\mathcal{N}}^{\rm{I}}_{+}\cup\tilde{\mathcal{N}}^{\bar{II}}_{+}$ with $\bar{\eta}$ being fixed and $\bar{\xi}_1\neq \bar{\xi}_2$, 
to derive that
\begin{eqnarray*}
\begin{split}
&\sum_{k=\pm}[\mathcal {Z}^{k,*}_{\rm{II}^{+}}]_{\xi}(\kappa)\leq \mathcal{O}(1)\Big(\sum_{k=\pm}\big(\|\mathcal {Z}^{k}_{+,\rm in}\|_{0,\alpha;\tilde{\Gamma}^{+}_{\rm in}}+\|f^{k}_{+}\|_{1,\alpha; \tilde{\mathcal{N}}^{\rm{I}}_{+}\cup\tilde{\mathcal{N}}^{\rm{II}}_{+}}\big)
+\|g''_{+}\|_{0,\alpha;\tilde{\Gamma}_{+}}+\|\delta Y^{*}_{+}\|_{1,\alpha; \tilde{\mathcal{N}}^{\rm{I}}_{+}\cup\tilde{\mathcal{N}}^{\rm{II}}_{+}}\Big),
\end{split}
\end{eqnarray*}
where the constant $\mathcal {O}(1)$ depends only on $\underline{\mathcal{U}}_{+}$, $\alpha$ and $L$.

Therefore, by \eqref{eq:prop-3.3-25} and the estimates of $[\mathcal {Z}^{\pm,*}_{\rm{II}^{+}}]_{\eta}(\kappa)$ and $[\mathcal {Z}^{\pm,*}_{\rm{II}^{+}}]_{\xi}(\kappa)$,
we let $\kappa=\bar{\xi}^{2}_{+}$ to have
\begin{eqnarray*}
\begin{split}
&\sum_{k=\pm}[\mathcal {Z}^{k,*}_{+}]_{0,\alpha;\tilde{\mathcal{N}}^{\rm{I}}_{+}\cup\tilde{\mathcal{N}}^{\rm{II}}_{+}}\\
&\leq\mathcal{O}(1)\Big(\sum_{k=\pm}\big(\|\mathcal {Z}^{k}_{+,\rm in}\|_{0,\alpha;\tilde{\Gamma}^{+}_{\rm in}}+\|f^{k}_{+}\|_{1,\alpha; \tilde{\mathcal{N}}^{\rm{I}}_{+}\cup\tilde{\mathcal{N}}^{\rm{II}}_{+}}\big)
+\|g''_{+}\|_{0,\alpha;\tilde{\Gamma}_{+}}+\|\delta Y^{*}_{+}\|_{1,\alpha; \tilde{\mathcal{N}}^{\rm{I}}_{+}\cup\tilde{\mathcal{N}}^{\rm{II}}_{+}}\Big),
\end{split}
\end{eqnarray*}
where the constant $\mathcal {O}(1)$ depends only on $\underline{\mathcal{U}}_{+}$, $\alpha$ and $L$.

Then, it follows from \eqref{eq:prop-3.2-12} that
\begin{eqnarray}\label{eq:prop-3.3-34}
\begin{split}
&[(\nabla\delta \omega^{*}_{+}, \nabla\delta p^{*}_{+})]_{0,\alpha,\tilde{\mathcal{N}}^{\rm{I}}_{+}\cup\tilde{\mathcal{N}}^{\rm{II}}_{+}}\\
&\leq \tilde{C}^{**}_{\rm{II}^{+}}\Big(\|\nabla(\delta\omega^{+}_{\rm in}, \delta p^{+}_{\rm in})\|_{0,\alpha;\tilde{\Gamma}^{+}_{\rm in}}+\|g''_{+}\|_{0,\alpha;\tilde{\Gamma}_{+}}
+\|\delta Y^{*}_{+}\|_{1,\alpha; \tilde{\mathcal{N}}^{\rm{I}}_{+}\cup\tilde{\mathcal{N}}^{\rm{II}}_{+}}
+\sum_{k=\pm}\|f^{k}_{+}\|_{1,\alpha; \tilde{\mathcal{N}}^{\rm{I}}_{+}\cup\tilde{\mathcal{N}}^{\rm{II}}_{+}}\Big),
\end{split}
\end{eqnarray}
where the constant $\tilde{C}^{**}_{\rm{II}^{+}}>0$ depends only on $\underline{\mathcal{U}}_{+}$, $\alpha$ and $L$.

Finally, combining the estimates \eqref{eq:prop-3.3-12},\eqref{eq:prop-3.3-23} and \eqref{eq:prop-3.3-34}, and choosing $$\tilde{C}_{33}=\max\{\tilde{C}_{\rm{II}^{+}}, \tilde{C}^{*}_{\rm{II}^{+}}, \tilde{C}^{**}_{\rm{II}^{+}} \},$$ we can then establish the estimate \eqref{eq:prop-3.3-1}.
\end{proof}

At last, we will consider the problem $(\mathbf{IBVP})^{*}$ in $\tilde{\mathcal{N}}^{\rm{I}}_{\pm}\cup\tilde{\mathcal{N}}^{\rm{III}}_{\pm}$ 
with the contact discontinuity as an interface inside.
\begin{eqnarray}\label{eq:IBVP-III}
(\mathbf{IBVP})^{*}_{\rm{III}}\
\begin{cases}
\partial_{\xi}\delta\omega^{*}_{+}+\lambda^{\pm}_{+}\partial_{\eta}\delta\omega^{*}_{+}\pm\Lambda_{+}\big(\partial_{\xi}\delta p^{*}_{+}+\lambda^{\pm}_{+}\partial_{\eta}\delta p^{*}_{+}\big)\\
\qquad\qquad\qquad\qquad =a^{\pm}_{+}(\underline{\mathcal{U}}_{+})\delta Y^{*}_{+}+f^{\pm}_{+}(\delta\mathcal{V}_{+}, \delta Y_{+}), &\quad  \mbox{in} \quad \tilde{\mathcal{N}}^{\rm{I}}_{+}\cup\tilde{\mathcal{N}}^{\rm{III}}_{+}, \\
\partial_{\xi}\delta\omega^{*}_{-}+\lambda^{\pm}_{-}\partial_{\eta}\delta\omega^{*}_{-}\pm\Lambda_{-}\big(\partial_{\xi}\delta p^{*}_{-}+\lambda^{\pm}_{-}\partial_{\eta}\delta p^{*}_{-}\big)\\
\qquad\qquad\qquad\qquad =a^{\pm}_{-}(\underline{\mathcal{U}}_{-})\delta Y^{*}_{-}+f^{\pm}_{-}(\delta\mathcal{V}_{-}, \delta Y_{-}), &\quad \mbox{in}\quad \tilde{\mathcal{N}}^{\rm{I}}_{-}\cup\tilde{\mathcal{N}}^{\rm{III}}_{-},\\
(\delta\omega^{*}_{+}, \delta p^{*}_{+})=(\delta\omega^{+}_{\rm in},\delta p^{+}_{\rm in}), &\quad  \mbox{on} \quad \tilde{\Gamma}^{+}_{\rm in},\\
(\delta\omega^{*}_{-}, \delta p^{*}_{-})=(\delta\omega^{-}_{\rm in},\delta p^{-}_{\rm in}), &\quad  \mbox{on} \quad \tilde{\Gamma}^{-}_{\rm in},\\
\delta\omega^{*}_{+}=\delta\omega^{*}_{-},\quad \delta p^{*}_{+}=\delta p^{*}_{-}, &\quad \mbox{on} \quad \tilde{\Gamma}_{\rm cd}\cap\overline{\tilde{\mathcal{N}}^{\rm{III}}_{\pm}}.
\end{cases}
\end{eqnarray}
We have the following proposition. 
\begin{proposition}\label{prop:3.4}
For sufficiently small $\sigma>0$, if $(\delta\mathcal{U}_{+},\delta\mathcal{U}_{-})\in\mathcal{X}_{\sigma}$, the problem $(\mathbf{IBVP})^{*}_{\rm{III}}$
admits a unique $C^{1,\alpha}$-solution $(\delta \omega^{*}_{\pm}, \delta p^{*}_{\pm})$ satisfying
\begin{align}\label{eq:prop-3.4-1}
\begin{split}
&\sum_{k=\pm}\|(\delta \omega^{*}_{k}, \delta p^{*}_{k})\|_{1,\alpha; \tilde{\mathcal{N}}^{\rm{I}}_{k}\cup\tilde{\mathcal{N}}^{\rm{III}}_{k}}\\
&\leq  \tilde{C}_{34}\sum_{k=\pm}\Big(\|(\delta \omega^{k}_{\rm in},\delta p^{k}_{\rm in})\|_{1,\alpha; \tilde{\Gamma}^{k}_{\rm in}}
+\|\delta Y^{*}_{k}\|_{1,\alpha; \tilde{\mathcal{N}}^{\rm{I}}_{k}\cup\tilde{\mathcal{N}}^{\rm{III}}_{k}}
 +\sum_{j=\pm}\|f^{j}_{k}\|_{1,\alpha; \tilde{\mathcal{N}}^{\rm{I}}_{k}\cup\tilde{\mathcal{N}}^{\rm{III}}_{k}}\Big),
\end{split}
\end{align}
where the constant $\tilde{C}_{34}>0$ depends only on $\underline{\mathcal{U}}_{+}$, $\alpha$ and $L$.
\end{proposition}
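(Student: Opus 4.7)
The existence and uniqueness of a $C^{1,\alpha}$-solution will again be obtained by the Picard iteration scheme in the spirit of \cite{li-yu}, adapted to handle the interior interface $\tilde{\Gamma}_{\rm cd}$ via the two coupling conditions $\delta\omega^*_+=\delta\omega^*_-$ and $\delta p^*_+=\delta p^*_-$. Hence the heart of the argument is the derivation of the \emph{a priori} $C^{1,\alpha}$-estimate \eqref{eq:prop-3.4-1}, which I will obtain in three steps mirroring the proofs of Propositions \ref{prop:3.2} and \ref{prop:3.3}. The crucial new feature is that the characteristic method must be applied \emph{simultaneously} in $\tilde{\mathcal{N}}^{\rm III}_+$ and $\tilde{\mathcal{N}}^{\rm III}_-$, because the boundary values of the outgoing Riemann-type invariants at $\tilde{\Gamma}_{\rm cd}$ are not prescribed but are instead coupled to the incoming invariants from the opposite side through the two matching conditions.

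In \textbf{Step 1} I will derive the $C^0$-estimate. Work with $z^{\pm,*}_\pm=\delta\omega^*_\pm\pm\Lambda_\pm \delta p^*_\pm$ as in \eqref{eq:prop-3.2-2}. For a point in $\tilde{\mathcal{N}}^{\rm III}_+$, the backward $\lambda^-_+$-characteristic can be continued through $\tilde{\mathcal{N}}^{\rm I}_+$ all the way back to $\tilde{\Gamma}^+_{\rm in}$, while the backward $\lambda^+_+$-characteristic terminates on $\tilde{\Gamma}_{\rm cd}$ at some point $(\zeta^{+,{\rm c}}_+(\bar\xi,\bar\eta),0)$; a symmetric picture holds in $\tilde{\mathcal{N}}^{\rm III}_-$. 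Using the matching conditions on $\tilde{\Gamma}_{\rm cd}$ together with the definitions of $z^{\pm,*}_\pm$, I can solve the resulting linear $2\times 2$ algebraic system for the two unknowns $z^{+,*}_+|_{\tilde{\Gamma}_{\rm cd}}$ and $z^{-,*}_-|_{\tilde{\Gamma}_{\rm cd}}$ in terms of the two incoming quantities $z^{-,*}_+|_{\tilde{\Gamma}_{\rm cd}}$ and $z^{+,*}_-|_{\tilde{\Gamma}_{\rm cd}}$; the solvability is guaranteed by $\Lambda_++\Lambda_->0$. Plugging these expressions into the integrated characteristic formulae in $\tilde{\mathcal{N}}^{\rm III}_\pm$ yields a closed system of four integral inequalities for $\sup_{R^{\rm III}_\pm(\kappa)}|z^{\pm,*}_\pm|$, in which the contribution from the incoming side has already been controlled in Proposition \ref{prop:3.2}. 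Summing the four estimates and applying the Gronwall inequality produces the $C^0$-bound, from which $\|(\delta\omega^*_\pm,\delta p^*_\pm)\|_{0,0;\tilde{\mathcal{N}}^{\rm I}_\pm\cup\tilde{\mathcal{N}}^{\rm III}_\pm}$ follows via \eqref{eq:prop-3.2-2a}.

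In \textbf{Step 2} I will estimate $\|(\nabla\delta\omega^*_\pm,\nabla\delta p^*_\pm)\|_{0,0}$ by working with $\mathcal Z^{\pm,*}_\pm=\p_\eta\delta\omega^*_\pm\pm\Lambda_\pm\p_\eta\delta p^*_\pm$ as in \eqref{eq:prop-3.2-12}; these satisfy the transport system \eqref{eq:prop-3.2-13} together with its analogue in $\tilde{\mathcal{N}}^{\rm III}_-$. The boundary conditions on $\tilde{\Gamma}_{\rm cd}$ for $\mathcal Z^{\pm,*}_\pm$ are obtained by differentiating $\delta\omega^*_+=\delta\omega^*_-$ and $\delta p^*_+=\delta p^*_-$ in the tangential variable $\xi$ and then using the PDEs $\eqref{eq:IBVP-III}_1$--$\eqref{eq:IBVP-III}_2$ to convert $\xi$-derivatives into $\eta$-derivatives; this again yields a linear $2\times 2$ algebraic system for $\mathcal Z^{+,*}_+|_{\tilde{\Gamma}_{\rm cd}}$ and $\mathcal Z^{-,*}_-|_{\tilde{\Gamma}_{\rm cd}}$ in terms of the incoming $\mathcal Z^{-,*}_+$, $\mathcal Z^{+,*}_-$ and the already-estimated zeroth-order quantities (including $\delta Y^*_\pm$). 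The same characteristic/Gronwall argument as in Step 1 then yields the desired $C^0$-bound on $(\mathcal Z^{\pm,*}_+,\mathcal Z^{\pm,*}_-)$; the remaining components $\p_\xi\delta\omega^*_\pm$ and $\p_\xi\delta p^*_\pm$ are recovered from \eqref{eq:prop-3.2-17}. In \textbf{Step 3} the H\"older estimate $[\nabla(\delta\omega^*_\pm,\delta p^*_\pm)]_{0,\alpha}$ is produced along the lines of Step 3 of Proposition \ref{prop:3.3}: splitting $[\mathcal Z^{\pm,*}]_{0,\alpha}$ into $[\,\cdot\,]_\xi$ and $[\,\cdot\,]_\eta$, comparing integrals along nearby characteristics via Lemmas \ref{lem:A1}--\ref{lem:A2}, and treating the three cases according to whether the two comparison points lie in $\tilde{\mathcal N}^{\rm I}_\pm$, in $\tilde{\mathcal N}^{\rm III}_\pm$, or are separated by $\ell^{\pm,0}_\pm$.

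The principal obstacle, distinct from Propositions \ref{prop:3.2}--\ref{prop:3.3}, is the bookkeeping of the H\"older estimate across $\tilde{\Gamma}_{\rm cd}$ in Step 3: when the foot of a backward $\lambda^+_+$-characteristic from $\tilde{\mathcal N}^{\rm III}_+$ lands on $\tilde{\Gamma}_{\rm cd}$, the value of $z^{+,*}_+$ there is not a free boundary datum but is an algebraic combination of $z^{-,*}_+$ (traced back through $\tilde{\mathcal N}^{\rm I}_+$ to $\tilde{\Gamma}^+_{\rm in}$) and $z^{+,*}_-$ (traced back through $\tilde{\mathcal N}^{\rm I}_-$ to $\tilde{\Gamma}^-_{\rm in}$); consequently, when one compares $\mathcal Z^{+,*}_+$ at two nearby points, the difference quotient on $\tilde{\Gamma}_{\rm cd}$ must be estimated by invoking the H\"older regularity of these incoming quantities \emph{on both sides simultaneously}, together with the smoothness of the coefficients $\Lambda_\pm/(\Lambda_++\Lambda_-)$. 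This will require proving a lemma analogous to the one producing \eqref{eq:prop-3.3-19}--\eqref{eq:prop-3.3-20}, but now with two coupled incoming pieces. Once this comparison estimate on $\tilde{\Gamma}_{\rm cd}$ is in hand, the remaining steps are entirely parallel to Proposition \ref{prop:3.3}; summing the four Gronwall-type inequalities and using $\Lambda_++\Lambda_->0$ as the structural smallness condition then yields the combined estimate \eqref{eq:prop-3.4-1} with $\tilde{C}_{34}$ depending only on $\underline{\mathcal U}_\pm$, $\alpha$ and $L$.
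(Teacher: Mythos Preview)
Your proposal is correct and follows essentially the same approach as the paper: reduce to the $z^{\pm,*}_\pm$ and $\mathcal Z^{\pm,*}_\pm$ variables, solve the $2\times2$ algebraic system on $\tilde\Gamma_{\rm cd}$ for the outgoing invariants in terms of the incoming ones (the paper records this explicitly as \eqref{eq:prop-3.4-5} and \eqref{eq:prop-3.4-28a}, \eqref{eq:prop-3.4-32}), then close by a coupled Gronwall argument over $R^{\rm III}_+(\kappa)\cup R^{\rm III}_-(\kappa)$ at each of the $C^0$, $C^1$, and $C^{1,\alpha}$ levels. Two small points: the characteristic regularity you need in Step~3 is Lemma~\ref{lem:A3} (for $\Upsilon^{\pm,{\rm cd}}_\pm$ and $\zeta^{\pm,{\rm cd}}_\pm$), not Lemmas~\ref{lem:A1}--\ref{lem:A2}; and $\Lambda_++\Lambda_->0$ is a nondegeneracy condition ensuring invertibility of the interface system, not a smallness condition.
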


\begin{proof}
Since the contact discontinuity is a free interface, we need to solve $(\mathbf{IBVP})^{*}_{\rm{III}}$ in
$\tilde{\mathcal{N}}^{\rm{I}}_{+}\cup\tilde{\mathcal{N}}^{\rm{III}}_{+}$ and $\tilde{\mathcal{N}}^{\rm{I}}_{-}\cup\tilde{\mathcal{N}}^{\rm{III}}_{-}$ at same time.
First, we can follow \cite{li-yu} to employ the \emph{Picard iteration scheme} to show the existence and uniqueness of \eqref{eq:prop-3.4-1}. We omit it for the shortness and only establish estimate \eqref{eq:prop-3.4-1} below.

First, we consider the $C^0$-estimate of $(\delta \omega^{*}_{\pm},\delta p^{*}_{\pm})$ in $\tilde{\mathcal{N}}^{\rm{I}}_{\pm}\cup\tilde{\mathcal{N}}^{\rm{III}}_{\pm}$.
Consider the first two equations in $\eqref{eq:prop-3.2-3}$ for $z^{\pm,*}_{+}$ in $\tilde{\mathcal{N}}^{\rm{I}}_{+}\cup\tilde{\mathcal{N}}^{\rm{III}}_{+}$.
In $\tilde{\mathcal{N}}^{\rm{I}}_{-}\cup\tilde{\mathcal{N}}^{\rm{III}}_{-}$, we introduce
\begin{eqnarray}\label{eq:prop-3.4-2}
z^{+,*}_{-}=\delta \omega^{*}_{+}+\Lambda_{-}\delta p^{*}_{+}, \quad z^{-,*}_{-}=\delta \omega^{*}_{+}-\Lambda_{-}\delta p^{*}_{+}.
\end{eqnarray}
Then, $z^{\pm,*}_{-}$ satisfy the following equations
\begin{eqnarray}\label{eq:prop-3.4-3}
\begin{cases}
\partial_{\xi}z^{+,*}_{-}+\lambda^{+}_{-}\partial_{\eta}z^{+,*}_{-}-\frac{\partial^{+}_{-}\Lambda_{-}}{2\Lambda_{-}}\big(z^{+,*}_{-}-z^{-,*}_{-}\big)\\
\qquad\qquad\qquad\quad =a^{+}_{-}(\underline{\mathcal{U}}_{-})\delta Y^{*}_{-}+f^{+}_{-}(\delta\mathcal{V}_{-}, \delta Y_{-}),&\quad  \mbox{in}
\quad \tilde{\mathcal{N}}^{\rm{I}}_{-}\cup\tilde{\mathcal{N}}^{\rm{III}}_{-}, \\
\partial_{\xi}z^{-,*}_{-}+\lambda^{-}_{-}\partial_{\eta}z^{-,*}_{-}+\frac{\partial^{-}_{-}\Lambda_{-}}{2\Lambda_{-}}\big(z^{+,*}_{-}-z^{-,*}_{-}\big)\\
\qquad\qquad\qquad\quad =a^{-}_{-}(\underline{\mathcal{U}}_{-})\delta Y^{*}_{-}+f^{-}_{-}(\delta\mathcal{V}_{-}, \delta Y_{-}),&\quad  \mbox{in} \quad \tilde{\mathcal{N}}^{\rm{I}}_{-}\cup\tilde{\mathcal{N}}^{\rm{III}}_{-},
\end{cases}
\end{eqnarray}
where $\p^{\pm}_{-}\triangleq \p_{\xi}+\lambda^{\pm}_{-}\p_{\eta}$. In addition, $z^{\pm,*}_{+}$ and $z^{\pm,*}_{-}$ satisfy the following initial-boundary conditions
\begin{eqnarray}\label{eq:prop-3.4-4}
\begin{cases}
(z^{+,*}_{+},z^{-,*}_{+})=(z^{+}_{+,\rm in}, z^{-}_{+,\rm in}),  &\quad  \mbox{on} \quad \tilde{\Gamma}^{+}_{\rm in}, \\
(z^{+,*}_{-}, z^{-,*}_{-})=(z^{+}_{-,\rm in}, z^{-}_{-,\rm in}),  &\quad  \mbox{on} \quad \tilde{\Gamma}^{-}_{\rm in}, \\
z^{+,*}_{+}+z^{-,*}_{+}=z^{+,*}_{-}+z^{-,*}_{-},\quad\frac{z^{+,*}_{+}-z^{-,*}_{+}}{\Lambda_{+}}=\frac{z^{+,*}_{-}-z^{-,*}_{+}}{\Lambda_{-}}, &\quad \mbox{on} \quad \tilde{\Gamma}_{\rm{cd}}\cap \overline{\tilde{\mathcal{N}}^{\rm{III}}_{+}}.
\end{cases}
\end{eqnarray}

We further reduce the boundary condition on $\tilde{\Gamma}_{\rm{cd}}\cap \overline{\tilde{\mathcal{N}}^{\rm{III}}_{+}}$ as
\begin{eqnarray}\label{eq:prop-3.4-5}
z^{+,*}_{+}=\frac{\Lambda_{-}-\Lambda_{+}}{\Lambda_{+}+\Lambda_{-}}z^{-,*}_{+}+\frac{2\Lambda_{+}}{\Lambda_{+}+\Lambda_{+}}z^{+,*}_{-},\quad z^{-,*}_{-}=\frac{2\Lambda_{-}}{\Lambda_{+}+\Lambda_{+}}z^{-,*}_{+}+\frac{\Lambda_{+}-\Lambda_{-}}{\Lambda_{+}+\Lambda_{-}}z^{+,*}_{-}.
\end{eqnarray}

As shown in Fig.\ref{fig3.7}, for any $(\bar{\xi}, \bar{\eta})\in \tilde{\mathcal{N}}^{\rm{III}}_{+}$, we denote by $\eta=\Upsilon^{+}_{+}(\xi;\bar{\xi},\bar{\eta})$ with \eqref{eq:prop-3.2-4}, the backward characteristic curve corresponding to $\lambda^{+}_{+}$ and passing through $(\bar{\xi},\bar{\eta})$.
Let it intersects $\tilde{\Gamma}^{+}_{\rm{in}}$ at the point $(0,\gamma^{-}_{+}(\bar{\xi},\bar{\eta}))$.
Denote by $\eta=\Upsilon^{+,\rm{cd}}_{+}(\tau;\bar{\xi}, \bar{\eta})$ the backward characteristic curve, corresponding to $\lambda^{+}_{+}$, passing through $(\bar{\xi}, \bar{\eta})$ and intersecting $\tilde{\Gamma}_{\rm{cd}}\cap \overline{\tilde{\mathcal{N}}^{\rm{III}}_{+}}$ at $(\zeta^{+,\rm{cd}}_{+}(\bar{\xi},\bar{\eta}),0)$.
Moreover, for any $(\bar{\xi}, \bar{\eta})\in \tilde{\mathcal{N}}^{\rm{III}}_{-}$, let $\eta=\Upsilon^{+}_{-}(\xi;\bar{\xi},\bar{\eta})$ be the backward characteristic curve corresponding to $\lambda^{+}_{-}$,
passing through $(\bar{\xi},\bar{\eta})$ and intersecting $\tilde{\Gamma}^{-}_{\rm{in}}$ at $(0,\gamma^{+}_{-}(\bar{\xi},\bar{\eta}))$.
Let $\eta=\Upsilon^{-,\rm{cd}}_{-}(\xi;\bar{\xi}, \bar{\eta})$ be the backward characteristic curve corresponding to $\lambda^{-}_{-}$,
passing through $(\bar{\xi}, \bar{\eta})$ and intersecting $\tilde{\Gamma}_{\rm{cd}}\cap \overline{\tilde{\mathcal{N}}^{\rm{III}}_{+}}$ at $(\zeta^{-,\rm{cd}}_{-}(\bar{\xi},\bar{\eta}),0)$.

\begin{figure}[ht]
\begin{center}
\begin{tikzpicture}[scale=1.0]
\draw [line width=0.02cm](-2.5,0.8)--(-2.5,3.6);
\draw [line width=0.04cm](-2.5,3.6)to[out=-20, in=120](0.8,0.8);
\draw [line width=0.01cm](-2.5,0.8)to[out=20, in=-110](-0.8,2.7);

\draw [line width=0.03cm][red](-1.2,0.8)to[out=10, in=-120](-0.2,1.4);
\draw [line width=0.03cm][blue](-2.5,2.8)to[out=-10, in=120](-0.2,1.4);
\draw [line width=0.03cm][green](-2.5,1.9)to[out=-10, in=110](-1.2,0.8);

\draw [line width=0.04cm][dashed][red](-2.5,0.8)--(1.5,0.8);
\draw [line width=0.02cm](-2.5,-2.0)--(-2.5,0.8);
\draw [line width=0.04cm](-2.5,-2.0)to[out=20, in=-120](0.8,0.8);
\draw [line width=0.03cm][green](-2.5,-1.2)to[out=10, in=-120](-0.2,0.2);
\draw [line width=0.03cm][blue](-2.5,-0.3)to[out=10, in=-110](-1.2,0.8);
\draw [line width=0.03cm][red](-1.2,0.8)to[out=-10, in=120](-0.2,0.2);
\draw [line width=0.01cm](-2.5,0.8)to[out=-20, in=110](-0.5,-0.9);

\node at (-1.8, 2.1) {$\tilde{\mathcal{N}}^{\rm{I}}_{+}$};
\node at (-0.8, 1.4) {$\tilde{\mathcal{N}}^{\rm{III}}_{+}$};

\node at (-1.8, -0.5) {$\tilde{\mathcal{N}}^{\rm{I}}_{-}$};
\node at (0.2, 0.4) {$\tilde{\mathcal{N}}^{\rm{III}}_{-}$};

\node at (-2.5,3.6) {$\bullet$};
\node at (-0.8,2.7) {$\bullet$};
\node at (-0.2,1.4) {$\bullet$};
\node at (-2.5,2.8) {$\bullet$};
\node at (-2.5,1.9) {$\bullet$};

\node at (0.8,0.8) {$\bullet$};
\node at (-2.5,0.8) {$\bullet$};
\node at (-2.5,-2.0) {$\bullet$};
\node at (-2.5,-1.2) {$\bullet$};
\node at (-2.5,-0.3) {$\bullet$};
\node at (-1.2,0.8) {$\bullet$};
\node at (-0.2,0.2) {$\bullet$};
\node at (-0.5,-0.9) {$\bullet$};

\node at (1.9, 0.8) {$\tilde{\Gamma}_{\textrm{cd}}$};
\node at (-2.9, 2.4) {$\tilde{\Gamma}^{+}_{\textrm{in}}$};
\node at (-2.9, -0.6) {$\tilde{\Gamma}^{-}_{\textrm{in}}$};
\node at (-2.9,3.6) {$\tilde{P}_+$};
\node at (-2.9,0.8) {$\tilde{O}$};
\node at (-2.9,-2.0) {$\tilde{P}_-$};
\end{tikzpicture}
\end{center}
\caption{Estimates for $(\delta \omega^{*}_{\pm},\delta p^{*}_{\pm})$ in $\tilde{\mathcal{N}}^{\rm{I}}_{\pm}\cup\tilde{\mathcal{N}}^{\rm{III}}_{\pm}$}\label{fig3.7}
\end{figure}

Then integrating $\eqref{eq:prop-3.2-3}_1$ and $\eqref{eq:prop-3.2-3}_2$ along the characteristic curves $\eta=\Upsilon^{+}_{+}(\xi;\bar{\xi},\bar{\eta})$ and $\eta=\Upsilon^{+,\rm{cd}}_{+}(\xi;\bar{\xi}, \bar{\eta})$, respectively, we obtain
\begin{eqnarray}\label{eq:prop-3.4-6}
\begin{cases}
z^{+,*}_{+}(\bar{\xi},\bar{\eta})
=z^{+,*}_{+}(\zeta^{+,\rm{cd}}_{+}(\bar{\xi},\bar{\eta}),0)
+\int_{\zeta^{+,\rm{cd}}_{+}(\bar{\xi},\bar{\eta})}^{\bar{\xi}}\frac{\p^{+}_{+}\Lambda_{+}}{2\Lambda_{+}}\big(z^{+,*}_{+}-z^{-,*}_{+}\big)(\tau,\Upsilon^{+,\rm{cd}}_{+}(\tau;\bar{\xi},\bar{\eta}))\mathrm{d} \tau\\
\qquad\qquad\qquad\qquad\qquad+\int_{\zeta^{+,\rm{cd}}_{+}(\bar{\xi},\bar{\eta})}^{\bar{\xi}}\big(a^{+}_{+}(\underline{\mathcal{U}}_{+})\delta Y^{*}_{+}+f^{+}_{+}(\delta\mathcal{V}_{+}, \delta Y_{+})\big)(\tau,\Upsilon^{+,\rm{cd}}_{+}(\tau;\bar{\xi},\bar{\eta}))\dd \tau,\\
z^{-,*}_{+}(\bar{\xi},\bar{\eta})
=z^{-}_{+,\rm in}(\gamma_{+}^{-}(\bar{\xi},\bar{\eta}))+\int_0^{\bar{\xi}}\frac{\p^{-}_{+}\Lambda_{+}}{2\Lambda_{+}}\big(z^{-,*}_{+}-z^{+,*}_{+}\big)
(\tau,\Upsilon^{-}_{+}(\tau;\bar{\xi},\bar{\eta}))\mathrm{d} \tau\\
\qquad\qquad\qquad\qquad\qquad\qquad+\int^{\bar{\xi}}_{0}\big(a^{-}_{+}(\underline{\mathcal{U}}_{+})\delta Y^{*}_{+}+f^{-}_{+}(\delta\mathcal{V}_{+}, \delta Y_{+})\big)(\tau,\Upsilon^{-}_{+}(\tau;\bar{\xi},\bar{\eta}))\mathrm{d} \tau.
\end{cases}
\end{eqnarray}
Integrating $\eqref{eq:prop-3.4-3}_1$ and $\eqref{eq:prop-3.4-3}_2$ along the characteristic curves $\eta=\Upsilon^{+}_{-}(\xi;\bar{\xi},\bar{\eta})$ and $\eta=\Upsilon^{-,\rm{cd}}_{-}(\xi;\bar{\xi}, \bar{\eta})$, respectively, together with the compatibility condition $\eqref{eq:2.10}_1$, we have
\begin{eqnarray}\label{eq:prop-3.4-7}
\begin{cases}
z^{+,*}_{-}(\bar{\xi},\bar{\eta})
=z^{-}_{+,\rm in}(\gamma_{-}^{+}(\bar{\xi},\bar{\eta}))+\int_0^{\bar{\xi}}\frac{\p^{+}_{-}\Lambda_{-}}{2\Lambda_{-}}\big(z^{+,*}_{-}-z^{-,*}_{-}\big)
(\tau,\Upsilon^{+}_{-}(\tau;\bar{\xi},\bar{\eta}))\mathrm{d} \tau\\
\qquad\qquad\qquad\qquad\qquad\qquad+\int^{\bar{\xi}}_{0}\big(a^{+}_{-}(\underline{\mathcal{U}}_{+})\delta Y^{*}_{-}+f^{+}_{-}(\delta\mathcal{V}_{-}, \delta Y_{-})\big)(\tau,\Upsilon^{+}_{-}(\tau;\bar{\xi},\bar{\eta}))\mathrm{d} \tau,\\
z^{-,*}_{-}(\bar{\xi},\bar{\eta})
=z^{+,*}_{-}(\zeta^{-,\rm{cd}}_{-}(\bar{\xi},\bar{\eta}),0)
+\int_{\zeta^{-,\rm{cd}}_{-}(\bar{\xi},\bar{\eta})}^{\bar{\xi}}\frac{\p^{-}_{-}\Lambda_{-}}{2\Lambda_{-}}\big(z^{+,*}_{-}-z^{-,*}_{-}\big)(\tau,\Upsilon^{-,\rm{cd}}_{-}(\tau;\bar{\xi},\bar{\eta}))\mathrm{d} \tau\\
\qquad\qquad\qquad\qquad\qquad+\int_{\zeta^{-,\rm{cd}}_{-}(\bar{\xi},\bar{\eta})}^{\bar{\xi}}\big(a^{-}_{-}(\underline{\mathcal{U}}_{-})\delta Y^{*}_{-}+f^{-}_{-}(\delta\mathcal{V}_{-}, \delta Y_{-})\big)(\tau,\Upsilon^{-,\rm{cd}}_{-}(\tau;\bar{\xi},\bar{\eta}))\dd \tau.
\end{cases}
\end{eqnarray}

To estimate $z^{\pm,*}_{+}(\bar{\xi},\bar{\eta})$ and  $z^{\pm,*}_{-}(\bar{\xi},\bar{\eta})$, let us define
\begin{eqnarray*}
R^{\rm{III}}_{+}(\kappa)=\{(\xi,\eta)| 0\leq\xi\leq\kappa,\ 0\leq\eta\leq\Upsilon^{-,1}_{+}(\xi; \bar{\xi}^{2}_{\rm{cd}},0)\}, \ \kappa\in[0,\bar{\xi}^{2}_{\rm{cd}}],
\end{eqnarray*}
and
\begin{eqnarray*}
R^{\rm{III}}_{-}(\kappa)=\{(\xi,\eta)|0\leq\xi\leq\kappa,\ \Upsilon^{+,1}_{-}(\xi;\bar{\xi}^{2}_{\rm{cd}},0)\leq\eta\leq0\},\ \kappa\in[0, \bar{\xi}^{2}_{\rm{cd}}].
\end{eqnarray*}

Denote
\begin{eqnarray}\label{eq:prop-3.4-8}
z^{\pm,*}_{\rm{III}^{+}}(\kappa)\triangleq\sup\limits_{(\xi,\eta)\in R^{\rm{III}}_{+}(\kappa)}|z^{\pm,*}_{+}(\xi,\eta)|,
\quad z^{\pm,*}_{\rm{III}^{-}}(\kappa)\triangleq\sup\limits_{(\xi,\eta)\in R^{\rm{III}}_{-}(\kappa)}|z^{\pm,*}_{-}(\xi,\eta)|.
\end{eqnarray}

Obviously, we have
\begin{eqnarray}\label{eq:prop-3.4-9}
\|z_{+}^{\pm,*}\|_{0,0; \tilde{\mathcal{N}}^{\rm{I}}_{+}\cup\tilde{\mathcal{N}}^{\rm{III}}_{+}}=z_{\rm{III}^{+}}^{\pm,*}(\bar{\xi}^{2}_{\rm{cd}}),\quad
\|z_{-}^{\pm,*}\|_{0,0; \tilde{\mathcal{N}}^{\rm{I}}_{-}\cup\tilde{\mathcal{N}}^{\rm{III}}_{-}}=z_{\rm{III}^{-}}^{\pm,*}(\bar{\xi}^{2}_{\rm{cd}}).
\end{eqnarray}

By \eqref{eq:prop-3.4-6} and direct computation,  for sufficiently small $\sigma>0$, we have
\begin{eqnarray}\label{eq:prop-3.4-10}
\begin{split}
|z^{+,*}_{+}(\bar{\xi},\bar{\eta})|&\leq \mathcal{O}(1)\Big(\|\delta Y^{*}_{+}\|_{0,0; \tilde{\mathcal{N}}^{\rm{III}}_{+}}+\|f^{+}_{+}\|_{0,0; \tilde{\mathcal{N}}^{\rm{III}}_{+}}+\int_{\zeta^{+,\rm{cd}}_{+}(\bar{\xi},\bar{\eta})}^{\bar{\xi}}\sum_{k=\pm}z^{\pm,*}_{\rm{III}^{+}}(\varsigma)\dd\varsigma\Big)\\
&\qquad\qquad\quad +|z^{+,*}_{+}(\zeta^{+,\rm{cd}}_{+}(\bar{\xi},\bar{\eta}),0)|,
\end{split}
\end{eqnarray}
and
\begin{eqnarray}\label{eq:prop-3.4-11}
\begin{split}
|z^{-,*}_{+}(\bar{\xi},\bar{\eta})|\leq \|z^{-}_{+,\rm in}\|_{0,0; \tilde{\Gamma}^{+}_{\rm in}}+\mathcal{O}(1)\Big(\|\delta Y^{*}_{+}\|_{0,0; \tilde{\mathcal{N}}^{\rm{III}}_{+}}+\|f^{-}_{+}\|_{0,0; \tilde{\mathcal{N}}^{\rm{III}}_{+}}
+\int_{0}^{\bar{\xi}}\sum_{k=\pm}z^{k,*}_{\rm{III}^{+}}(\varsigma)\dd\varsigma\Big).
\end{split}
\end{eqnarray}

Thus, in order to estimate $|z^{+,*}_{+}(\bar{\xi},\bar{\eta})|$, one needs to estimate $z^{+,*}_{+}(\zeta^{+,\rm{cd}}_{+}(\bar{\xi},\bar{\eta}),0)$.
By \eqref{eq:prop-3.4-5},
\begin{eqnarray}\label{eq:prop-3.4-12}
\begin{split}
z^{+,*}_{+}(\zeta^{+,\rm{cd}}_{+}(\bar{\xi},\bar{\eta}),0)=\Big(\frac{\Lambda_{-}-\Lambda_{+}}{\Lambda_{+}+\Lambda_{-}}z^{-,*}_{+}\Big)(\zeta^{+,\rm{cd}}_{+}(\bar{\xi},\bar{\eta}),0)
+\Big(\frac{2\Lambda_{+}}{\Lambda_{+}+\Lambda_{-}}z^{+,*}_{-}\Big)(\zeta^{+,\rm{cd}}_{+}(\bar{\xi},\bar{\eta}),0).
\end{split}
\end{eqnarray}

For $z^{-,*}_{+}(\zeta^{+,\rm{cd}}_{+}(\bar{\xi},\bar{\eta}),0)$ and $z^{+,*}_{-}(\zeta^{+,\rm{cd}}_{+}(\bar{\xi},\bar{\eta}),0)$, by \eqref{eq:prop-3.4-6} and the compatibility condition $\eqref{eq:2.10}_1$,
\begin{eqnarray}\label{eq:prop-3.4-13}
\begin{split}
&z^{-,*}_{+}(\zeta^{+,\rm{cd}}_{+}(\bar{\xi},\bar{\eta}),0)\\
&=z^{-}_{+,\rm in}(\gamma_{+}^{-}(\zeta^{+,\rm{cd}}_{+}(\bar{\xi},\bar{\eta}),0))
+\int_0^{\zeta^{+,\rm{cd}}_{+}(\bar{\xi},\bar{\eta})}
\frac{\p^{-}_{+}\Lambda_{+}}{2\Lambda_{+}}\big(z^{-,*}_{+}-z^{+,*}_{+}\big)
(\tau,\Upsilon^{-}_{+}(\tau;\zeta^{+,\rm{cd}}_{+}(\bar{\xi},\bar{\eta}),0))\dd \tau\\
&\quad\ +\int^{\zeta^{+,\rm{cd}}_{+}(\bar{\xi},\bar{\eta})}_{0}\big(a^{-}_{+}(\underline{\mathcal{U}}_{+})\delta Y^{*}_{+}+f^{-}_{+}(\delta\mathcal{V}_{+}, \delta Y_{+})\big)(\tau,\Upsilon^{-}_{+}(\tau;\zeta^{+,\rm{cd}}_{+}(\bar{\xi},\bar{\eta}),0))\dd \tau,
\end{split}
\end{eqnarray}
and by \eqref{eq:prop-3.4-7},
\begin{eqnarray}\label{eq:prop-3.4-14}
\begin{split}
&z^{+,*}_{-}(\zeta^{+,\rm{cd}}_{+}(\bar{\xi},\bar{\eta}),0)\\
&=z^{+}_{-,\rm in}(\gamma_{-}^{+}(\zeta^{+,\rm{cd}}_{+}(\bar{\xi},\bar{\eta}),0))
+\int_0^{\zeta^{+,\rm{cd}}_{+}(\bar{\xi},\bar{\eta})}
\frac{\p^{+}_{-}\Lambda_{+}}{2\Lambda_{+}}\big(z^{+,*}_{-}-z^{-,*}_{-}\big)
(\tau,\Upsilon^{+}_{-}(\tau;\zeta^{+,\rm{cd}}_{+}(\bar{\xi},\bar{\eta}),0))\dd \tau\\
&\quad\ +\int^{\zeta^{+,\rm{cd}}_{+}(\bar{\xi},\bar{\eta})}_{0}\big(a^{+}_{-}(\underline{\mathcal{U}}_{-})\delta Y^{*}_{-}+f^{+}_{-}(\delta\mathcal{V}_{-}, \delta Y_{-})\big)(\tau,\Upsilon^{+}_{-}(\tau;\zeta^{+,\rm{cd}}_{+}(\bar{\xi},\bar{\eta}),0))\dd \tau.
\end{split}
\end{eqnarray}

Therefore, for sufficiently small $\sigma>0$, it follows from \eqref{eq:prop-3.4-12}-\eqref{eq:prop-3.4-14} that
\begin{eqnarray}\label{eq:prop-3.4-15}
\begin{split}
|z^{+,*}_{+}(\zeta^{+,\rm{cd}}_{+}(\bar{\xi},\bar{\eta}),0)|&\leq\mathcal{O}(1)\Big\{\|z^{-}_{+,\rm in}\|_{0,0; \tilde{\Gamma}^{+}_{\rm in}}+\|z^{+}_{-,\rm in}\|_{0,0; \tilde{\Gamma}^{-}_{\rm in}}
+\|f^{-}_{+}\|_{0,0; \tilde{\mathcal{N}}^{\rm{I}}_{+}\cup\tilde{\mathcal{N}}^{\rm{III}}_{+}}+\|f^{+}_{-}\|_{0,0; \tilde{\mathcal{N}}^{\rm{I}}_{-}\cup\tilde{\mathcal{N}}^{\rm{III}}_{-}}\\
&\qquad\quad\ +\sum_{\rm{j}=\pm}\|\delta Y^{*}_{\rm{j}}\|_{0,0; \tilde{\mathcal{N}}^{\rm{I}}_{\rm{j}}\cup\tilde{\mathcal{N}}^{\rm{III}}_{\rm{j}}}
+\int_{0}^{\zeta^{+,\rm{cd}}_{+}(\bar{\xi},\bar{\eta})}\sum_{k=\pm}\big(z^{k,*}_{\rm{III}^{+}}(\varsigma)+z^{k,*}_{\rm{III}^{-}}(\varsigma)\big)\dd\varsigma\Big\}.
\end{split}
\end{eqnarray}

Plugging \eqref{eq:prop-3.4-15} into \eqref{eq:prop-3.4-10}, and by \eqref{eq:prop-3.4-8} and \eqref{eq:prop-3.4-11}, one can derive
\begin{eqnarray}\label{eq:prop-3.4-16}
\begin{split}
\sum_{k=\pm}z^{k,*}_{\rm{III}^{+}}(\kappa)&\leq \mathcal{O}(1)\Big\{\|z_{+,\rm in}^{-}\|_{0,0; \tilde{\Gamma}^{+}_{\rm in}}+\|z_{-,\rm in}^{+}\|_{0,0; \tilde{\Gamma}^{-}_{\rm in}}+\sum_{k=\pm}\|f^{k}_{+}\|_{0,0; \tilde{\mathcal{N}}^{\rm{I}}_{+}\cup\tilde{\mathcal{N}}^{\rm{III}}_{+}}
+\|f^{+}_{-}\|_{0,0; \tilde{\mathcal{N}}^{\rm{I}}_{-}\cup\tilde{\mathcal{N}}^{\rm{III}}_{-}}\\
&\quad\quad+\|\delta Y^{*}_{+}\|_{0,0; \tilde{\mathcal{N}}^{\rm{I}}_{+}\cup\tilde{\mathcal{N}}^{\rm{III}}_{+}}+\|\delta Y^{*}_{-}\|_{0,0; \tilde{\mathcal{N}}^{\rm{I}}_{-}\cup\tilde{\mathcal{N}}^{\rm{III}}_{-}}
+\int^{\bar{\xi}}_{0}\sum_{k=\pm}\big(z^{k,*}_{\rm{III}^{+}}(\varsigma)+z^{k,*}_{\rm{III}^{-}}(\varsigma)\big)\dd\varsigma\Big\}.
\end{split}
\end{eqnarray}

Meanwhile, by boundary condition \eqref{eq:prop-3.4-5} for $z^{-,*}_{-}$ and following the argument above for the estimate \eqref{eq:prop-3.4-16}, we can also deduce that
\begin{eqnarray}\label{eq:prop-3.4-17}
\begin{split}
\sum_{k=\pm}z^{k,*}_{\rm{III}^{-}}(\kappa)&\leq \mathcal{O}(1)\Big\{\|z_{-,\rm in}^{+}\|_{0,0; \tilde{\Gamma}^{-}_{\rm in}}+\|z_{+,\rm in}^{-}\|_{0,0; \tilde{\Gamma}^{+}_{\rm in}}
+\sum_{k=\pm}\|f^{k}_{-}\|_{0,0; \tilde{\mathcal{N}}^{\rm{I}}_{-}\cup\tilde{\mathcal{N}}^{\rm{III}}_{-}}
+\|f^{-}_{+}\|_{0,0; \tilde{\mathcal{N}}^{\rm{I}}_{+}\cup\tilde{\mathcal{N}}^{\rm{III}}_{+}}\\
&\quad\quad+\|\delta Y^{*}_{+}\|_{0,0; \tilde{\mathcal{N}}^{\rm{I}}_{+}\cup\tilde{\mathcal{N}}^{\rm{III}}_{+}}+\|\delta Y^{*}_{-}\|_{0,0; \tilde{\mathcal{N}}^{\rm{I}}_{-}\cup\tilde{\mathcal{N}}^{\rm{III}}_{-}}
+\int^{\bar{\xi}}_{0}\sum_{k=\pm}\big(z^{k,*}_{\rm{III}^{+}}(\varsigma)+z^{k,*}_{\rm{III}^{-}}(\varsigma)\big)\dd\varsigma\Big\}.
\end{split}
\end{eqnarray}

Adding \eqref{eq:prop-3.4-16} and $\eqref{eq:prop-3.4-17}$, and applying the Gronwall's inequality for $\kappa=\bar{\xi}^{3}_{\rm{cd}}$, we conclude
\begin{eqnarray*}
\begin{split}
&\sum_{k=\pm}\Big(\|z^{k,*}_{+}\|_{0,0; \tilde{\mathcal{N}}^{\rm{III}}_k}+\|z^{k,*}_{-}\|_{0,0; \tilde{\mathcal{N}}^{\rm{III}}_k}\Big)\\
&\quad\ \leq  \mathcal{O}(1)\sum_{j=\pm}\Big(\sum_{k}\|z_{j,\rm in}^{k}\|_{0,0; \tilde{\Gamma}^{j}_{\rm in}}+\|z_{j,\rm in}^{k}\|_{0,0; \tilde{\Gamma}^{j}_{\rm in}}
+\sum_{k=\pm}\|f^{k}_{j}\|_{0,0; \tilde{\mathcal{N}}^{\rm{I}}_{j}\cup\tilde{\mathcal{N}}^{\rm{III}}_{j}}
+\|\delta Y^{*}_{j}\|_{0,0; \tilde{\mathcal{N}}^{\rm{I}}_{j}\cup\tilde{\mathcal{N}}^{\rm{III}}_{j}}\Big).
\end{split}
\end{eqnarray*}
By \eqref{eq:prop-3.2-2} and \eqref{eq:prop-3.4-2}, it further implies that
\begin{eqnarray}\label{eq:prop-3.4-18}
\begin{split}
&\sum_{j=\pm}\big\|(\delta \omega^{*}_{j},\delta p^{*}_{j})\big\|_{0,0; \tilde{\mathcal{N}}^{\rm{I}}_{j}\cup\tilde{\mathcal{N}}^{\rm{III}}_{j}}\\
&\quad\leq  \tilde{C}_{\rm{III}}\sum_{j=\pm}\Big(\big\|(\delta \omega^{j}_{\rm{in}},\delta p^{j}_{\rm{in}})\big\|_{0,0; \tilde{\Gamma}^{{j}}_{\rm in}}
+\|\delta Y^{*}_{j}\|_{0,0; \tilde{\mathcal{N}}^{\rm{I}}_{j}\cup\tilde{\mathcal{N}}^{\rm{III}}_{j}}+\sum_{k=\pm}\|f^{k}_{j}\|_{0,0; \tilde{\mathcal{N}}^{\rm{I}}_{j}\cup\tilde{\mathcal{N}}^{\rm{III}}_{j}}\Big),
\end{split}
\end{eqnarray}
 where the constant $\tilde{C}_{\rm{III}}>0$ depends only on $\underline{\mathcal{U}}$, $\alpha$ and $L$.

Next, we will establish the $C^0$-estimates of $(\nabla\delta \omega^{*}_{\pm}, \nabla\delta p^{*}_{\pm})$ in $\tilde{\mathcal{N}}^{\rm{I}}_{\pm}\cup\tilde{\mathcal{N}}^{\rm{III}}_{\pm}$. In $\tilde{\mathcal{N}}^{\rm{I}}_{+}\cup\tilde{\mathcal{N}}^{\rm{III}}_{+}$,
we still use the notations $\mathcal {Z}_{+}^{\pm,*}$ introduced in \eqref{eq:prop-3.2-12} as the new qualities which satisfy equations \eqref{eq:prop-3.2-13}. While in $\tilde{\mathcal{N}}^{\rm{I}}_{-}\cup\tilde{\mathcal{N}}^{\rm{III}}_{-}$,
we introduce
\begin{eqnarray}\label{eq:prop-3.4-19}
\mathcal {Z}_{-}^{+,*}\triangleq\p_{\eta} \delta\omega^{*}_{-}+\Lambda_{-}\p_{\eta} \delta p^{*}_{-},
\quad \mathcal {Z}_{-}^{-,*}\triangleq\p_{\eta} \delta\omega^{*}_{-}-\Lambda_{-}\p_{\eta} \delta p^{*}_{-}.
\end{eqnarray}

By taking derivatives on $\eqref{eq:IBVP-I}_{2}$ with respect to $\eta$ and by \eqref{eq:prop-3.4-19}, $\mathcal {Z}_{-}^{\pm,*}$ in $\tilde{\mathcal{N}}^{\rm{I}}_{-}\cup\tilde{\mathcal{N}}^{\rm{III}}_{-}$ satisfy
\begin{eqnarray}\label{eq:prop-3.4-20}
\begin{cases}
\p_{\xi}\mathcal {Z}_{-}^{+,*}+\lambda^{+}_{-}\p_{\eta}\mathcal {Z}_{-}^{+,*}
=\frac{\p^{+}_{-}\Lambda_{-}-2\Lambda_{-}\p_{\eta}\lambda^{+}_{-}}{2\Lambda_{-}}\mathcal {Z}_{-}^{+,*}-\frac{\p^{-}_{-}\Lambda_{-}}{2\Lambda_{-}}\mathcal {Z}_{-}^{-,*}\\
\qquad\qquad\qquad+a^{+}_{-}(\underline{\mathcal{U}}_{-})\p_{\eta}\delta Y^{*}_{-}-\frac{[a^{+}_{-}(\underline{\mathcal{U}}_{-})-a^{-}_{-}(\underline{\mathcal{U}}_{-})]\p_{\eta} \Lambda_{-}}{2\Lambda_{-}}\delta Y^{*}_{-}\\
\qquad\qquad\qquad+\p_{\eta}f^{+}_{-}(\delta\mathcal{V}_{-}, \delta Y_{-})-\frac{\p_{\eta} \Lambda_{-}}{2\Lambda_{-}}\big[f^{+}_{-}(\delta\mathcal{V}_{-}, \delta Y_{-})-f^{-}_{-}(\delta\mathcal{V}_{-}, \delta Y_{-})\big], \\
\p_{\xi}\mathcal {Z}_{-}^{-,*}+\lambda^{-}_{+}\p_{\eta}\mathcal {Z}_{-}^{-,*}=\frac{\p_{-}\Lambda_{-}-2\Lambda_{-}\p_{\eta}\lambda^{-}_{-}}{2\Lambda_{-}}\mathcal {Z}_{-}^{-,*}-\frac{\p^{+}_{-}\Lambda_{-}}{2\Lambda_{-}}\mathcal {Z}_{-}^{+,*}\\
\qquad\qquad\qquad+a^{-}_{-}(\underline{\mathcal{U}}_{-})\p_{\eta}\delta Y^{*}_{-}-\frac{[a^{+}_{-}(\underline{\mathcal{U}}_{-})-a^{-}_{-}(\underline{\mathcal{U}}_{-})]\p_{\eta} \Lambda_{-}}{2\Lambda_{-}}\delta Y^{*}_{-}\\
\qquad\qquad\qquad+\p_{\eta}f^{-}_{-}(\delta\mathcal{V}_{-}, \delta Y_{-})
+\frac{\p_{\eta} \Lambda_{-}}{2\Lambda_{-}}\big[f^{+}_{-}(\delta\mathcal{V}_{-}, \delta Y_{-})
-f^{-}_{-}(\delta\mathcal{V}_{-}, \delta Y_{-})\big],
\end{cases}
\end{eqnarray}
with the following initial-boundary conditions
\begin{align}\label{eq:prop-3.4-21}
\begin{cases}
(\mathcal {Z}_{+}^{+,*},\mathcal {Z}_{+}^{-,*})=(\mathcal {Z}_{+, \rm in}^{+},\mathcal {Z}_{+, \rm in}^{-}), &  \mbox{on}\  \tilde{\Gamma}^{+}_{\rm in},\\
(\mathcal {Z}_{-}^{+,*},\mathcal {Z}_{-}^{-,*})=(\mathcal {Z}_{-, \rm in}^{+},\mathcal {Z}_{-, \rm in}^{-}), &   \mbox{on}\  \tilde{\Gamma}^{-}_{\rm in},\\
\lambda_{+}^{+}\mathcal {Z}_{+}^{+,*}+\lambda_{+}^{-}\mathcal {Z}_{+}^{-,*}-\big[a^{+}_{+}(\underline{\mathcal{U}}_{+})+a_{+}^{-}(\underline{\mathcal{U}}_{+})\big]\delta Y^{*}_{+}-\big[f^{+}_{+}(\delta\mathcal{V}_{+}, \delta Y_{+})+f^{-}_{+}(\delta\mathcal{V}_{+}, \delta Y_{+})\big]\\
=\lambda_{-}^{+}\mathcal {Z}_{-}^{+,*}+\lambda_{-}^{-}\mathcal {Z}_{-}^{-,*}
-\big[a^{+}_{-}(\underline{\mathcal{U}}_{-})+a_{-}^{-}(\underline{\mathcal{U}}_{-})\big]\delta Y^{*}_{-}-\big[f^{+}_{-}(\delta\mathcal{V}_{-}, \delta Y_{-})+f^{-}_{-}(\delta\mathcal{V}_{-}, \delta Y_{-})\big], & \mbox{on}\
 \tilde{\Gamma}_{\rm{cd}}\cap \overline{\tilde{\mathcal{N}}^{\rm{III}}},\\
-\frac{\lambda_{+}^{+}}{\Lambda_{+}}\mathcal {Z}_{+}^{+,*}+\frac{\lambda_{+}^{-}}{\Lambda_{+}}\mathcal {Z}_{+}^{-,*}+\frac{a^{+}_{+}(\underline{\mathcal{U}}_{+})-a^{-}_{+}(\underline{\mathcal{U}}_{+})}{\Lambda_{+}}\delta Y^{*}_{+}
+\frac{f^{+}_{+}(\delta\mathcal{V}_{+}, \delta Y_{+})-f^{-}_{+}(\delta\mathcal{V}_{+}, \delta Y_{+})}{\Lambda_{+}}\\
=-\frac{\lambda_{-}^{+}}{\Lambda_{-}}\mathcal {Z}_{-}^{+,*}+\frac{\lambda_{-}^{-}}{\Lambda_{-}}\mathcal {Z}_{-}^{-,*}
+\frac{a^{+}_{-}(\underline{\mathcal{U}}_{-})-a^{-}_{-}(\underline{\mathcal{U}}_{-})}{\Lambda_{-}}\delta Y^{*}_{-}+\frac{f^{+}_{-}(\delta\mathcal{V}_{-}, \delta Y_{-})-f^{-}_{-}(\delta\mathcal{V}_{-}, \delta Y_{-})}{\Lambda_{-}}, & \mbox{on}\ \tilde{\Gamma}_{\rm{cd}}\cap \overline{\tilde{\mathcal{N}}^{\rm{III}}},
\end{cases}
\end{align}
where $\mathcal {Z}_{-,\rm in}^{\pm}\triangleq(\delta\omega^{-}_{\rm in})'\pm\Lambda_{-,\rm in}(\delta p^{-}_{\rm in})'$ and $\Lambda_{-,\rm in}\triangleq\frac{\sqrt{u^2_{-,\rm in}+v^2_{-,\rm in}-c^2_{-,\rm in}}}{\rho_{-,\rm in}c_{-,\rm in}u^2_{-,\rm in}}$.
Then, by the compatibility condition $\eqref{eq:2.10}_2$, we integrate $\eqref{eq:prop-3.2-13}_1$ for $\mathcal {Z}_{+}^{+,*}$ along the characteristic $\eta=\Upsilon^{+,\rm{cd}}_{+}(\xi;\bar{\xi},\bar{\eta})$ from $\zeta^{+,\rm{cd}}_{+}(\bar{\xi},\bar{\eta})$ to $\bar{\xi}$, and integrate $\eqref{eq:prop-3.2-13}_2$ for $\mathcal {Z}_{+}^{-,*}$ along the characteristic $\eta=\Upsilon^{-}_{+}(\xi;\bar{\xi},\bar{\eta})$ from $0$ to $\bar{\xi}$, to obtain
\begin{align}\label{eq:prop-3.4-22}
\begin{cases}
\mathcal {Z}_{+}^{+,*}(\bar{\xi},\bar{\eta})
=\mathcal{Z}_{+}^{+,*}(\zeta^{+,\rm{cd}}_{+}(\bar{\xi},\bar{\eta}),0)
+\int_{\zeta^{+,\rm{cd}}_{+}(\bar{\xi},\bar{\eta})}^{\bar{\xi}}\big(\frac{\p_{+}\Lambda_{+}-2\Lambda_{+}\p_{\eta}\lambda^{+}_{+}}{2\Lambda_{+}}\mathcal {Z}_{+}^{+,*}-\frac{\p_{-}\Lambda_{+}}{2\Lambda_{+}}\mathcal {Z}_{+}^{-,*}\big)(\tau,\Upsilon^{+,\rm{cd}}_{+}(\tau;\bar{\xi},\bar{\eta}))\dd \tau\\
\qquad\qquad +\int_{\zeta^{+,\rm{cd}}_{+}(\bar{\xi},\bar{\eta})}^{\bar{\xi}}\big(a^{+}_{+}(\underline{\mathcal{U}}_{+})\p_{\eta}\delta Y^{*}_{+}-\frac{[a^{+}_{+}(\underline{\mathcal{U}}_{+})-a^{-}_{+}(\underline{\mathcal{U}}_{+})]\p_{\eta} \Lambda_{+}}{2\Lambda_{+}}\delta Y^{*}_{+}\big)(\tau,\Upsilon^{+,\rm{cd}}_{+}(\tau;\bar{\xi},\bar{\eta}))\dd \tau\\
\qquad\qquad +\int_{\zeta^{+,\rm{cd}}_{+}(\bar{\xi},\bar{\eta})}^{\bar{\xi}}
\big(\p_{\eta}f^{+}_{+}(\delta\mathcal{V}_{+}, \delta Y_{+})-\frac{\p_{\eta} \Lambda_{+}}{2\Lambda_{+}}\big[f^{+}_{+}(\delta\mathcal{V}_{+}, \underline{\mathcal{U}}_{+})-f^{-}_{+}(\delta\mathcal{V}_{+}, \delta Y_{+})\big]\big)(\tau,\Upsilon^{+,\rm{cd}}_{+}(\tau;\bar{\xi},\bar{\eta}))\dd \tau,\\
\mathcal {Z}_{+}^{-,*}(\bar{\xi},\bar{\eta})
=\mathcal {Z}_{+,\rm in}^{-}(\gamma^{-}_{+}(\bar{\xi},\bar{\eta}))+\int_0^{\bar{\xi}}\big(\frac{\p_{-}\Lambda_{+}-2\Lambda_{+}\p_{\eta}\lambda^{-}_{+}}{2\Lambda_{+}}\mathcal {Z}_{+}^{-,*}-\frac{\p_{+}\Lambda_{+}}{2\Lambda_{+}}\mathcal {Z}_{+}^{+,*}\big)(\tau,\Upsilon^{+}_{+}(\tau;\bar{\xi},\bar{\eta}))\dd \tau\\
\qquad\qquad  +\int_0^{\bar{\xi}}\big(a^{-}_{+}(\underline{\mathcal{U}}_{+})\p_{\eta}\delta Y^{*}_{+}-\frac{[a^{-}_{+}(\underline{\mathcal{U}}_{+})-a^{+}_{+}(\underline{\mathcal{U}}_{+})]\p_{\eta} \Lambda_{+}}{2\Lambda_{+}}\delta Y^{*}_{+} \big)(\tau,\Upsilon^{-}_{+}(\tau;\bar{\xi},\bar{\eta}))\dd \tau\\
\qquad\qquad +\int_0^{\bar{\xi}}\big(\p_{\eta}f^{-}_{+}(\delta\mathcal{V}_{+}, \delta Y_{+})-\frac{\p_{\eta} \Lambda_{+}}{2\Lambda_{+}}\big[f^{-}_{+}(\delta\mathcal{V}_{+}, \delta Y_{+})
-f^{+}_{+}(\delta\mathcal{V}_{+}, \delta Y_{+})\big]\big)(\tau,\Upsilon^{-}_{+}(\tau;\bar{\xi},\bar{\eta}))\dd \tau.
\end{cases}
\end{align}
Similarly, integrating $\eqref{eq:prop-3.4-20}_1$ for $\mathcal{Z}_{-}^{-,*}$ along the characteristic $\eta=\Upsilon^{-,\rm{cd}}_{-}(\xi;\bar{\xi},\bar{\eta})$ from $\zeta^{-,\rm{cd}}_{-}(\bar{\xi},\bar{\eta})$ to $\bar{\xi}$ and integrating $\eqref{eq:prop-3.4-20}_2$ for $\mathcal{Z}_{-}^{+,*}$ along the characteristic $\eta=\Upsilon^{+}_{-}(\xi;\bar{\xi},\bar{\eta})$ from $0$ to $\bar{\xi}$, and by the compatibility condition $\eqref{eq:2.10}_3$, we have 
\begin{align}\label{eq:prop-3.4-23}
\begin{cases}
\mathcal {Z}_{-}^{+,*}(\bar{\xi},\bar{\eta})=\mathcal {Z}_{-,\rm{in}}^{+}(\gamma^{+}_{-}(\bar{\xi},\bar{\eta}))
+\int_{0}^{\bar{\xi}}\big(\frac{\p_{-}\Lambda_{-}-2\Lambda_{-}\p_{\eta}\lambda^{-}_{-}}{2\Lambda_{-}}\mathcal {Z}_{-}^{-,*}-\frac{\p^{+}_{-}\Lambda_{-}}{2\Lambda_{-}}\mathcal {Z}_{-}^{+,*}\big)(\tau,\Upsilon^{+}_{-}(\tau;\bar{\xi},\bar{\eta}))\dd\tau \\
\qquad\qquad +\int_{0}^{\bar{\xi}}\big(a^{-}_{-}(\underline{\mathcal{U}}_{-})\p_{\eta}\delta Y^{*}_{-}-\frac{[a^{+}_{-}(\underline{\mathcal{U}}_{-})-a^{-}_{-}(\underline{\mathcal{U}}_{-})]\p_{\eta} \Lambda_{-}}{2\Lambda_{-}}\delta Y^{*}_{-}\big)(\tau,\Upsilon^{+}_{-}(\tau;\bar{\xi},\bar{\eta}))\dd \tau\\
\qquad\qquad +\int_{0}^{\bar{\xi}}\big(\p_{\eta}f^{-}_{-}(\delta\mathcal{V}_{-}, \delta Y_{-})
+\frac{\p_{\eta} \Lambda_{-}}{2\Lambda_{-}}\big[f^{+}_{-}(\delta\mathcal{V}_{-}, \delta Y_{-})
-f^{-}_{-}(\delta\mathcal{V}_{-}, \delta Y_{-})\big]\big)(\tau,\Upsilon^{+}_{-}(\tau;\bar{\xi},\bar{\eta}))\dd \tau,\\
\mathcal{Z}_{-}^{-,*}(\bar{\xi},\bar{\eta})=\mathcal{Z}_{-}^{-,*}(\zeta^{-,\rm{cd}}_{-}(\bar{\xi},\bar{\eta}),0)
+\int_{\zeta^{-,\rm{cd}}_{-}(\bar{\xi},\bar{\eta})}^{\bar{\xi}}\big(\frac{\p^{+}_{-}\Lambda_{-}-2\Lambda_{-}\p_{\eta}\lambda^{+}_{-}}{2\Lambda_{-}}\mathcal {Z}_{-}^{+,*}-\frac{\p^{-}_{-}\Lambda_{-}}{2\Lambda_{-}}\mathcal {Z}_{-}^{-,*}\big)(\tau, \Upsilon^{-,\rm{cd}}_{-}(\tau;\bar{\xi},\bar{\eta}))\dd\tau \\
\qquad\qquad +\int_{\zeta^{-,\rm{cd}}_{-}(\bar{\xi},\bar{\eta})}^{\bar{\xi}}\big(a^{+}_{-}(\underline{\mathcal{U}}_{-})\p_{\eta}\delta Y^{*}_{-}-\frac{[a^{+}_{-}(\underline{\mathcal{U}}_{-})-a^{-}_{-}(\underline{\mathcal{U}}_{-})]\p_{\eta} \Lambda_{-}}{2\Lambda_{-}}\delta Y^{*}_{-}\big)(\tau, \Upsilon^{-,\rm{cd}}_{-}(\tau;\bar{\xi},\bar{\eta}))\dd\tau \\
\qquad\qquad +\int_{\zeta^{-,\rm{cd}}_{-}(\bar{\xi},\bar{\eta})}^{\bar{\xi}}\big(\p_{\eta}f^{+}_{-}(\delta\mathcal{V}_{-}, \delta Y_{-})-\frac{\p_{\eta} \Lambda_{-}}{2\Lambda_{-}}\big[f^{+}_{-}(\delta\mathcal{V}_{-}, \delta Y_{-})-f^{-}_{-}(\delta\mathcal{V}_{-}, \delta Y_{-})\big]\big)(\tau,\Upsilon^{-,\rm{cd}}_{-}(\tau;\bar{\xi},\bar{\eta}))\dd \tau.
\end{cases}
\end{align}

In order to estimate $\mathcal {Z}_{+}^{\pm,*}$ and $\mathcal {Z}_{-}^{\pm,*}$, we introduce the following norms:
\begin{eqnarray*}
\mathcal {Z}_{\rm{III}^{+}}^{\pm,*}(\kappa)\triangleq\sup\limits_{(\xi,\eta)\in R_{+}^{\rm{III}}(\kappa)}|\mathcal {Z}_{+}^{\pm,*}(\xi,\eta)|,\quad
\mathcal {Z}_{\rm{III}^{-}}^{\pm,*}(\kappa)\triangleq\sup\limits_{(\xi,\eta)\in R_{-}^{\rm{III}}(\kappa)}|\mathcal {Z}_{-}^{\pm,*}(\xi,\eta)|.
\end{eqnarray*}

Obviously,
\begin{eqnarray}\label{eq:prop-3.4-24}
\|\mathcal {Z}_{+}^{\pm,*}\|_{0,0; \tilde{\mathcal{N}}^{\rm{I}}_{+}\cup\tilde{\mathcal{N}}^{\rm{III}}_{+}}=\mathcal {Z}_{\rm{III}^{+}}^{\pm,*}(\bar{\xi}^{2}_{\rm {cd}})\quad\text{and}\quad\mathcal{Z}^{\pm,*}_{\rm{III}^{+}}(\kappa)\geq \mathcal {Z}^{\pm,*}_{\rm{I}^{+}}(\kappa),\quad \kappa\in [0,\bar{\xi}^{1}_{+}],
\end{eqnarray}
and
\begin{eqnarray}\label{eq:prop-3.4-25}
\|\mathcal {Z}_{-}^{\pm,*}\|_{0,0; \tilde{\mathcal{N}}^{\rm{I}}_{-}\cup\tilde{\mathcal{N}}^{\rm{III}}_{-}}=\mathcal {Z}_{\rm{III}^{-}}^{\pm,*}(\bar{\xi}^{2}_{\rm {cd}})\quad\text{and}\quad\mathcal{Z}^{\pm,*}_{\rm{III}^{-}}(\kappa)\geq \mathcal {Z}^{\pm,*}_{\rm{I}^{-}}(\kappa),\quad \kappa\in [0,\bar{\xi}^{1}_{-}].
\end{eqnarray}

Then, similar as the argument to estimate $z^{-,*}_{+}$, we obtain for $\mathcal {Z}_{+}^{-,*}$ that
\begin{eqnarray}\label{eq:prop-3.4-26}
\begin{split}
\mathcal {Z}_{\rm{III}^{+}}^{-,*}(\kappa)&\leq \|\mathcal {Z}_{+,\rm in}^{-}\|_{0,0; \tilde{\Gamma}^{+}_{\rm in}}
+\mathcal{O}(1)\Big\{\sum_{k=\pm}\|f^{k}_{+}\|_{0,0; \tilde{\mathcal{N}}^{\rm{I}}_{+}\cup\tilde{\mathcal{N}}^{\rm{III}}_{+}}+\|\p_{\eta}f^{+}_{+}\|_{0,0; \tilde{\mathcal{N}}^{\rm{I}}_{+}\cup\tilde{\mathcal{N}}^{\rm{III}}_{+}}\\
&\qquad\qquad\qquad\quad +\|\delta Y^{*}_{+}\|_{1,0; \tilde{\mathcal{N}}^{\rm{I}}_{+}\cup\tilde{\mathcal{N}}^{\rm{III}}_{+}}+\int_0^{\kappa}\sum_{k=\pm}\mathcal {Z}_{\rm{III}^{+}}^{k,*}(\varsigma)\dd\varsigma\Big\},
\end{split}
\end{eqnarray}
and
\begin{eqnarray}\label{eq:prop-3.4-27}
\begin{split}
|\mathcal{Z}_{+}^{+,*}(\bar{\xi},\bar{\eta})|&\leq |\mathcal{Z}_{+}^{+,*}(\zeta^{+,\rm{cd}}_{+}(\bar{\xi},\bar{\eta}),0)|
+\mathcal{O}(1)\Big\{\sum_{k=\pm}\|f^{k}_{+}\|_{0,0; \tilde{\mathcal{N}}^{\rm{I}}_{+}\cup\tilde{\mathcal{N}}^{\rm{III}}_{+}}+\|\p_{\eta}f^{-}_{+}\|_{0,0; \tilde{\mathcal{N}}^{\rm{I}}_{+}\cup\tilde{\mathcal{N}}^{\rm{III}}_{+}}\\
&\qquad\qquad\ +\|\delta Y^{*}_{+}\|_{1,0; \tilde{\mathcal{N}}^{\rm{I}}_{+}\cup\tilde{\mathcal{N}}^{\rm{II}}_{+}}+\int_{\zeta^{+,\rm{cd}}_{+}(\bar{\xi},\bar{\eta})}^{\bar{\xi}}
\sum_{k=\pm}\mathcal {Z}_{\rm{III}^{+}}^{k,*}(\varsigma)\dd \varsigma\Big\}.
\end{split}
\end{eqnarray}

By the boundary conditions \eqref{eq:prop-3.4-21} on $\tilde{\Gamma}_{\rm{cd}}\cap \overline{\tilde{\mathcal{N}}^{\rm{III}}_{+}}$, we have 
\begin{eqnarray}\label{eq:prop-3.4-28a}
\begin{split}
&\mathcal {Z}_{+}^{+,*}(\zeta^{+,\rm{cd}}_{+}(\bar{\xi},\bar{\eta}),0)\\
&=-\Big(\frac{(\Lambda_{+}-\Lambda_{-})\lambda_{+}^{-}}{(\Lambda_{+}+\Lambda_{-})\lambda_{+}^{+}}\mathcal {Z}^{-,*}_{+}+\frac{2\Lambda_{+}\lambda_{+}^{-}}{(\Lambda_{+}+\Lambda_{-})\lambda_{+}^{+}}\mathcal {Z}^{+,*}_{-}\Big)(\zeta^{+,\rm{cd}}_{+}(\bar{\xi},\bar{\eta}),0)\\
&\quad+\Big(\frac{(a _{+}^{+}+a _{+}^{-})\Lambda_{+}+(a _{+}^{+}-a _{+}^{-})\Lambda_{-}}{(\Lambda_{+}+\Lambda_{-})\lambda_{+}^{+}}\delta Y^{*}_{+}-\frac{2a_{-}^{-}\Lambda_{+}}{(\Lambda_{+}+\Lambda_{-})\lambda_{+}^{+}}\delta Y^{*}_{-}\Big)(\zeta^{+,\rm{cd}}_{+}(\bar{\xi},\bar{\eta}),0)\\
&\quad +\Big(\frac{\Lambda_{+}-\Lambda_{-}}{(\Lambda_{+}+\Lambda_{-})\lambda_{+}^{+}}f^{+}_{+}(\delta\mathcal{V}_{+}, \delta Y_{+})+\frac{1}{\lambda_{+}^{+}}f^{-}_{+}(\delta\mathcal{V}_{+}, \delta Y_{+})\Big)(\zeta^{+,\rm{cd}}_{+}(\bar{\xi},\bar{\eta}),0)\\
&\quad -\Big(\frac{2\Lambda_{+}}{(\Lambda_{+}+\Lambda_{-})\lambda_{+}^{+}}f^{-}_{-}(\delta\mathcal{V}_{-}, \delta Y_{-})\Big)(\zeta^{+,\rm{cd}}_{+}(\bar{\xi},\bar{\eta}),0).
\end{split}
\end{eqnarray}

Note that $\mathcal{Z}^{-,*}_{+}(\zeta^{+,\rm{cd}}_{+}(\bar{\xi},\bar{\eta}),0)$ satisfies \eqref{eq:prop-3.4-22} at $(\zeta^{+,\rm{cd}}_{+}(\bar{\xi},\bar{\eta}),0)$, so
\begin{eqnarray}\label{eq:prop-3.4-29}
\begin{split}
|\mathcal{Z}^{-,*}_{+}(\zeta^{+,\rm{cd}}_{+}(\bar{\xi},\bar{\eta}),0)|&\leq \|\mathcal {Z}_{+,\rm in}^{-}\|_{0,0; \tilde{\Gamma}^{+}_{\rm in}}
+\mathcal{O}(1)\Big\{\sum_{k=\pm}\|f^{k}_{+}\|_{0,0; \tilde{\mathcal{N}}^{\rm{I}}_{+}\cup\tilde{\mathcal{N}}^{\rm{III}}_{+}}+\|\p_{\eta}f^{+}_{+}\|_{0,0; \tilde{\mathcal{N}}^{\rm{I}}_{+}\cup\tilde{\mathcal{N}}^{\rm{III}}_{+}}\\
&\qquad\qquad\quad +\|\delta Y^{*}_{+}\|_{1,0; \tilde{\mathcal{N}}^{\rm{I}}_{+}\cup\tilde{\mathcal{N}}^{\rm{III}}_{+}}
+\int_0^{\zeta^{+,\rm{cd}}_{+}(\bar{\xi},\bar{\eta})}\sum_{k=\pm}\mathcal {Z}_{\rm{III}^{+}}^{k,*}(\varsigma)\dd\varsigma\Big\}.
\end{split}
\end{eqnarray}

Note that $\mathcal{Z}^{+,*}_{-}(\zeta^{+,\rm{cd}}_{+}(\bar{\xi},\bar{\eta}),0)$ satisfies $\eqref{eq:prop-3.4-23}_1$ at $(\zeta^{+,\rm{cd}}_{+}(\bar{\xi},\bar{\eta}),0)$, so
for sufficiently small $\sigma>0$, 
\begin{eqnarray}\label{eq:prop-3.4-30}
\begin{split}
|\mathcal{Z}^{+,*}_{-}(\zeta^{+,\rm{cd}}_{+}(\bar{\xi},\bar{\eta}),0)|&\leq \|\mathcal {Z}_{-,\rm in}^{+}\|_{0,0; \tilde{\Gamma}^{-}_{\rm in}}
+\mathcal{O}(1)\Big\{\sum_{k=\pm}\|f^{k}_{-}\|_{0,0; \tilde{\mathcal{N}}^{\rm{I}}_{-}\cup\tilde{\mathcal{N}}^{\rm{III}}_{-}}+\|\p_{\eta}f^{-}_{-}\|_{0,0; \tilde{\mathcal{N}}^{\rm{I}}_{-}\cup\tilde{\mathcal{N}}^{\rm{III}}_{-}}\\
&\qquad\qquad\quad +\|\delta Y^{*}_{-}\|_{1,0; \tilde{\mathcal{N}}^{\rm{I}}_{-}\cup\tilde{\mathcal{N}}^{\rm{III}}_{-}}
+\int_0^{\zeta^{+,\rm{cd}}_{+}(\bar{\xi},\bar{\eta})}\sum_{k=\pm}\mathcal {Z}_{\rm{III}^{-}}^{k,*}(\varsigma)\dd\varsigma\Big\}.
\end{split}
\end{eqnarray}

Plugging \eqref{eq:prop-3.4-29}-\eqref{eq:prop-3.4-30} into \eqref{eq:prop-3.4-28a}, and let  $\sigma>0$ sufficiently small, we can deduce that
\begin{eqnarray*}
\begin{split}
&|\mathcal {Z}_{+}^{+,*}(\zeta^{+,\rm{cd}}_{+}(\bar{\xi},\bar{\eta}),0)|\\
&\leq\mathcal{O}(1)\Big\{ \|\mathcal {Z}_{+,\rm in}^{-}\|_{0,0; \tilde{\Gamma}^{+}_{\rm in}}+\|\mathcal {Z}_{-,\rm in}^{+}\|_{0,0; \tilde{\Gamma}^{-}_{\rm in}}
+\sum_{k=\pm}\big(\|f^{k}_{+}\|_{1,0; \tilde{\mathcal{N}}^{\rm{I}}_{+}\cup\tilde{\mathcal{N}}^{\rm{III}}_{+}}+\|f^{k}_{-}\|_{1,0; \tilde{\mathcal{N}}^{\rm{I}}_{-}\cup\tilde{\mathcal{N}}^{\rm{III}}_{-}}\big)\\
&\quad +\|\delta Y^{*}_{+}\|_{1,0; \tilde{\mathcal{N}}^{\rm{I}}_{+}\cup\tilde{\mathcal{N}}^{\rm{III}}_{+}}+\|\delta Y^{*}_{-}\|_{1,0; \tilde{\mathcal{N}}^{\rm{I}}_{-}\cup\tilde{\mathcal{N}}^{\rm{III}}_{-}}
+\int_0^{\zeta^{+,\rm{cd}}_{+}(\bar{\xi},\bar{\eta})}\sum_{k=\pm}\big(\mathcal {Z}_{\rm{III}^{+}}^{k,*}(\varsigma)+
\mathcal{Z}_{\rm{III}^{-}}^{k,*}(\varsigma)\big)\dd\varsigma \Big\},
\end{split}
\end{eqnarray*}
which together with \eqref{eq:prop-3.4-26}-\eqref{eq:prop-3.4-27} yields that
\begin{align}\label{eq:prop-3.4-31}
\begin{split}
&\sum_{k=\pm}\mathcal {Z}_{\textrm{III}^{+}}^{k,*}(\kappa)\\
&\leq \mathcal{O}(1)\Big\{\sum_{k=\pm}\|\mathcal {Z}_{+,\rm in}^{\pm}\|_{0,0; \tilde{\Gamma}^{+}_{\rm in}}
+ \|\mathcal {Z}_{-,\rm in}^{+}\|_{0,0; \tilde{\Gamma}^{-}_{\rm in}}
+\sum_{k=\pm}\big(\|f^{k}_{+}\|_{1,0; \tilde{\mathcal{N}}^{\rm{I}}_{+}\cup\tilde{\mathcal{N}}^{\rm{III}}_{+}}+\|f^{k}_{-}\|_{1,0; \tilde{\mathcal{N}}^{\rm{I}}_{-}\cup\tilde{\mathcal{N}}^{\rm{III}}_{-}}\big)\\
&\quad +\|\delta Y^{*}_{+}\|_{1,0; \tilde{\mathcal{N}}^{\rm{I}}_{+}\cup\tilde{\mathcal{N}}^{\rm{III}}_{+}}+\|\delta Y^{*}_{-}\|_{1,0; \tilde{\mathcal{N}}^{\rm{I}}_{-}\cup\tilde{\mathcal{N}}^{\rm{III}}_{-}}
+\int_0^{\bar{\xi}}\sum_{k=\pm}\big(\mathcal {Z}_{\rm{III}^{+}}^{k,*}(\varsigma)+
\mathcal{Z}_{\rm{III}^{-}}^{k,*}(\varsigma)\big)\dd\varsigma \Big\}.
\end{split}
\end{align}

Following the same argument and by using the following identity derived from boundary condition \eqref{eq:prop-3.4-21} on $\tilde{\Gamma}_{\rm{cd}}\cap \overline{\tilde{\mathcal{N}}^{\rm{III}}_{+}}$ that
\begin{eqnarray}\label{eq:prop-3.4-32}
\begin{split}
\mathcal {Z}^{-,*}_{-}&=\frac{2\Lambda_{-}\lambda_{+}^{-}}{(\Lambda_{+}+\Lambda_{-})\lambda_{-}^{-}}\mathcal {Z}^{+,*}_{-}+\frac{(\Lambda_{+}-\Lambda_{-})\lambda_{-}^{+}}{(\Lambda_{+}+\Lambda_{-})\lambda_{-}^{-}}\mathcal {Z}^{+,*}_{-}\\
&\quad -\frac{2a_{+}^{-}\Lambda_{-}}{(\Lambda_{+}+\Lambda_{-})\lambda_{-}^{-}}\delta Y^{*}_{+}-\frac{(a_{-}^{+}-a_{-}^{-})\Lambda_{+}-(a_{-}^{+}-a_{-}^{-})\Lambda_{-}}{(\Lambda_{+}+\Lambda_{-})\lambda_{-}^{-}}\delta Y^{*}_{-}\\
&\quad-\frac{2\Lambda_{-}}{(\Lambda_{+}+\Lambda_{-})\lambda_{-}^{-}}f^{-}_{+}(\delta\mathcal{V}_{+}, \delta Y_{+})-\frac{\Lambda_{+}-\Lambda_{-}}{(\Lambda_{+}+\Lambda_{-})\lambda_{-}^{-}}f^{+}_{-}(\delta\mathcal{V}_{-}, \delta Y_{-})+\frac{1}{\lambda_{-}^{-}}f^{-}_{-}(\delta\mathcal{V}_{-}, \delta Y_{-}),
\end{split}
\end{eqnarray}
one can also derive for $\sigma>0$ sufficiently small that
\begin{align}\label{eq:prop-3.4-33}
\begin{split}
&\sum_{k=\pm}\mathcal {Z}_{\textrm{III}^{-}}^{k,*}(\kappa)\\
&\leq \mathcal{O}(1)\Big\{\sum_{k=\pm}\|\mathcal {Z}_{-,\rm in}^{k}\|_{0,0; \tilde{\Gamma}^{-}_{\rm in}}
+ \|\mathcal {Z}_{+,\rm in}^{-}\|_{0,0; \tilde{\Gamma}^{+}_{\rm in}}
+\sum_{k=\pm}\big(\|f^{k}_{+}\|_{1,0; \tilde{\mathcal{N}}^{\rm{I}}_{+}\cup\tilde{\mathcal{N}}^{\rm{III}}_{+}}+\|f^{k}_{-}\|_{1,0; \tilde{\mathcal{N}}^{\rm{I}}_{-}\cup\tilde{\mathcal{N}}^{\rm{III}}_{-}}\big)\\
&\quad +\|\delta Y^{*}_{+}\|_{1,0; \tilde{\mathcal{N}}^{\rm{I}}_{+}\cup\tilde{\mathcal{N}}^{\rm{III}}_{+}}+\|\delta Y^{*}_{-}\|_{1,0; \tilde{\mathcal{N}}^{\rm{I}}_{-}\cup\tilde{\mathcal{N}}^{\rm{III}}_{-}}
+\int_0^{\bar{\xi}}\sum_{k=\pm}\big(\mathcal {Z}_{\rm{III}^{+}}^{k,*}(\varsigma)+
\mathcal{Z}_{\rm{III}^{-}}^{k,*}(\varsigma)\big)\dd\varsigma \Big\}.
\end{split}
\end{align}

Based on \eqref{eq:prop-3.4-31} and \eqref{eq:prop-3.4-33}, we apply the Gronwall's inequality and let $\kappa=\bar{\xi}^{2}_{\rm{cd}}$ to get
\begin{eqnarray}\label{eq:prop-3.4-33a}
\begin{split}
\sum_{j,k=\pm}\|\mathcal {Z}_{j}^{k,*}\|_{0,0; \tilde{\mathcal{N}}^{\rm{III}}_{j}}
 \leq O(1)\sum_{j,k=\pm}\Big(\|\mathcal {Z}_{j,\rm in}^{k}\|_{0,0; \tilde{\Gamma}^{+}_{\rm in}}
 +\|f^{k}_{j}\|_{1,0; \tilde{\mathcal{N}}^{\rm{I}}_{j}\cup\tilde{\mathcal{N}}^{\rm{III}}_{j}}
+\|\delta Y^{*}_{j}\|_{1,0; \tilde{\mathcal{N}}^{\rm{I}}_{j}\cup\tilde{\mathcal{N}}^{\rm{III}}_{j}}\Big).
\end{split}
\end{eqnarray}

Combing it with the relations
\begin{eqnarray}\label{eq:prop-3.4-34}
\p_{\eta} \delta \omega^{*}_{-}=\frac{\mathcal {Z}_{-}^{+,*}+\mathcal {Z}_{-}^{-,*}}{2},\quad \p_{\eta} \delta p^{*}_{-}=\frac{\mathcal {Z}_{-}^{+,*}-\mathcal {Z}_{-}^{-,*}}{2\Lambda_{-}},
\end{eqnarray}
and \eqref{eq:prop-3.2-17}-\eqref{eq:prop-3.2-18} and $\eqref{eq:IBVP-III}_2$, we can further conclude that
\begin{align}\label{eq:prop-3.4-35}
\begin{split}
&\sum_{j=\pm}\|(\nabla\delta\omega^{*}_{j},\nabla\delta p^{*}_{j})\|_{0,0; \tilde{\mathcal{N}}^{\rm{III}}_j}\\
&\leq \tilde{C}^{*}_{\rm{III}}\sum_{j=\pm}\Big(\|(\delta\omega^{j}_{\rm in},\delta p^{j}_{\rm in})\|_{1,0; \tilde{\Gamma}^{j}_{\rm in}}
+\|\delta Y^{*}_{j}\|_{1,0; \tilde{\mathcal{N}}^{\rm{I}}_{j}\cup\tilde{\mathcal{N}}^{\rm{III}}_{j}}+\sum_{k=\pm}\|f^{k}_{j}\|_{1,0; \tilde{\mathcal{N}}^{\rm{I}}_{j}\cup\tilde{\mathcal{N}}^{\rm{III}}_{j}}\Big),
\end{split}
\end{align}
where the constant $\tilde{C}^*_{\rm{III}}>0$ depends only on $\underline{\mathcal{U}}$ and $L$.

Finally, we will show the $C^{\alpha}$-estimates of $(\nabla\delta \omega^{*}_{\pm},\nabla\delta p^{*}_{\pm})$ in $\tilde{\mathcal{N}}^{\rm{I}}_{\pm}\cup\tilde{\mathcal{N}}^{\rm{III}}_{\pm}$. Introduce 
\begin{eqnarray}\label{eq:prop-3.4-36}
\begin{cases}
[\mathcal {Z}^{\pm,*}_{\textrm{III}^{k}}](\kappa)=\sup\limits_{(\xi_1,\eta_1),(\xi_2,\eta_2)\in R_{k}^{\rm{III}}(\kappa)}\frac{|\mathcal {Z}^{\pm,*}_{k}(\xi_1,\eta_1)-\mathcal {Z}^{\pm,*}_{k}(\xi_2,\eta_2)|}{|(\xi_1,\eta_1)-(\xi_2,\eta_2)|^{\alpha}},\\
[\mathcal {Z}^{\pm,*}_{\textrm{III}^{k}}]_{\xi}(\kappa)=\sup\limits_{(\xi_1,\eta),(\xi_2,\eta)\in R_{k}^{\rm{III}}(\kappa)}\frac{|\mathcal {Z}^{\pm,*}_{k}(\xi_1,\eta)-\mathcal {Z}^{\pm,*}_{k}(\xi_2,\eta)|}{|\xi_1-\xi_2|^{\alpha}},\\
[\mathcal {Z}^{\pm,*}_{\textrm{III}^{k}}]_{\eta}(\kappa)=\sup\limits_{(\xi,\eta_1),(\xi,\eta_2)\in R_{k}^{\rm{III}}(\kappa)}\frac{|\mathcal {Z}^{\pm,*}_{k}(\xi,\eta_1)-\mathcal {Z}^{\pm,*}_{k}(\xi,\eta_2)|}{|\eta_1-\eta_2|^{\alpha}},
\end{cases}
\end{eqnarray}
where $k=\pm$. Then, for $k=\pm$ {\color{black}and for any $\kappa \in[0,\bar{\xi}_+^1]$}, we have
\begin{eqnarray}\label{eq:prop-3.4-37}
\begin{split}
[\mathcal {Z}^{\pm,*}_{\textrm{III}^{k}}](\kappa) \leq [\mathcal {Z}^{\pm,*}_{\textrm{III}^{k}}]_{\xi}(\kappa)+[\mathcal {Z}^{\pm,*}_{\textrm{III}^{k}}]_{\eta}(\kappa),
\quad [\mathcal {Z}^{\pm,*}_{\textrm{III}^{k}}]_{0,\alpha; \tilde{\mathcal{N}}^{\textrm{I}}_{k}\cup\tilde{\mathcal{N}}^{\textrm{III}}_{k}}=[\mathcal {Z}^{\pm,*}_{\textrm{III}^{k}}](\bar{\xi}^{2}_{\rm{cd}}),
\end{split}
\end{eqnarray}
and
\begin{eqnarray}\label{eq:prop-3.4-38}
\begin{split}
[\mathcal {Z}^{\pm,*}_{\textrm{III}^{k}}](\kappa) \geq [\mathcal {Z}^{\pm,*}_{\textrm{I}^{k}}](\kappa),\quad
[\mathcal {Z}^{\pm,*}_{\textrm{III}^{k}}]_{\xi}(\kappa) \geq [\mathcal {Z}^{\pm,*}_{\textrm{I}^{k}}]_{\xi}(\kappa),\quad
[\mathcal {Z}^{\pm,*}_{\textrm{III}^{k}}]_{\eta}(\kappa) \geq [\mathcal {Z}^{\pm,*}_{\textrm{I}^{k}}]_{\eta}(\kappa).
\end{split}
\end{eqnarray}

According to \eqref{eq:prop-3.4-37}, we need to derive the estimates of $[\mathcal {Z}^{\pm,*}_{\rm{III}^{+}}]_{\xi}(\kappa)$ and $[\mathcal {Z}^{\pm,*}_{\rm{III}^{+}}]_{\eta}(\kappa)$. First for $[\mathcal {Z}^{\pm,*}_{\rm{III}^{+}}]_{\eta}(\kappa)$, without loss of the generality, we consider 
the case that $(\bar{\xi},\bar{\eta}_1),\ (\bar{\xi},\bar{\eta}_2)\in \tilde{\mathcal{N}}^{\rm{III}}_{+}$ with $\bar{\eta}_1\neq \bar{\eta}_2$ and $\bar{\xi}$ being fixed, and assume $\zeta^{+,\rm{cd}}_{+}(\bar{\xi},\bar{\eta}_1)<\zeta^{+, \rm{cd}}_{+}(\bar{\xi},\bar{\eta}_2)$.
Then, applying $\eqref{eq:prop-3.4-22}_{1}$ for $(\bar{\xi}, \bar{\eta}_1)$ and $(\bar{\xi},\bar{\eta}_2)$, respectively, and taking difference to get
\begin{align}\label{eq:prop-3.4-39}
\begin{split}
\big|\mathcal {Z}^{+,*}_{+}(\bar{\xi},\bar{\eta}_1)-\mathcal {Z}^{+,*}_{+}(\bar{\xi},\bar{\eta}_2)\big|\leq\sum_{k=1}^{3}\mathcal {J}^{+}_{\textrm{III}^{+}_{k}},
\end{split}
\end{align}
where
{
\[
\mathcal {J}^{+}_{\rm{III}^{+}_{1}}\triangleq\big|\mathcal {Z}^{+,*}_{+}(\zeta^{+,\rm{cd}}_{+}(\bar{\xi},\bar{\eta}_2),0)-\mathcal {Z}^{+,*}_{+}(\zeta^{+,\rm{cd}}_{+}(\bar{\xi},\bar{\eta}_1),0)\big|,
\]}
{
\begin{align*}
\begin{split}
&\mathcal {J}^{+}_{\rm{III}^{+}_{2}}
\triangleq
\int_{\zeta^{+,\rm{cd}}_{+}(\bar{\xi},\bar{\eta}_2)}^{\bar{\xi}}\Big|\big(\frac{\p^{+}_{+}\Lambda_{+}-2\Lambda_{+}\p_{\eta}\lambda^{+}_{+}}{2\Lambda_{+}}\mathcal {Z}_{+}^{+,*}\big)(\tau,\Upsilon^{+,\textrm{cd}}_{+}(\tau;\bar{\xi},\bar{\eta}_1))\\
&\quad -\big(\frac{\p^{+}_{+}\Lambda_{+}-2\Lambda_{+}\p_{\eta}\lambda^{+}_{+}}{2\Lambda_{+}}\mathcal {Z}_{+}^{+,*}\big)(\tau,\Upsilon^{+,\textrm{cd}}_{+}(\tau;\bar{\xi},\bar{\eta}_2))\Big|\dd\tau\\
&+\int_{\zeta^{+,\rm{cd}}_{+}(\bar{\xi},\bar{\eta}_2)}^{\bar{\xi}}\Big|\big(\frac{\p^-_{+}\Lambda_{+}}{2\Lambda_{+}}\mathcal {Z}_{+}^{-,*}\big)(\tau,\Upsilon^{+,\textrm{cd}}_{+}(\tau;\bar{\xi},\bar{\eta}_1))-\big(\frac{\p^-_{+}\Lambda_{+}}{2\Lambda_{+}}\mathcal {Z}_{+}^{-,*}\big)(\tau,\Upsilon^{+,\textrm{cd}}_{+}(\tau;\bar{\xi},\bar{\eta}_2))\Big|\dd\tau\\
&+|a^{+}_{+}(\underline{\mathcal{U}}_{+})|
\int_{\zeta^{+,\rm{cd}}_{+}(\bar{\xi},\bar{\eta}_1)}^{\bar{\xi}}\Big|\p_{\eta}\delta Y^{*}_{+}(\tau,\Upsilon^{+,\textrm{cd}}_{+}(\tau;\bar{\xi},\bar{\eta}_1))
-\p_{\eta}\delta Y^{*}_{+}(\tau,\Upsilon^{+,\textrm{cd}}_{+}(\tau;\bar{\xi},\bar{\eta}_2))\Big|\dd \tau\\
&+\frac{|a^{+}_{+}(\underline{\mathcal{U}}_{+})|+|a^{-}_{+}(\underline{\mathcal{U}}_{+})|}{2}\\
&\qquad \times\int_{\zeta^{+,\rm{cd}}_{+}(\bar{\xi},\bar{\eta}_1)}^{\bar{\xi}}
\Big|\big(\frac{\p_{\eta}\Lambda_{+}}{\Lambda_{+}}Y_{+}^{*}\big)(\tau,\Upsilon^{+,\rm{cd}}_{+}(\tau;\bar{\xi},\bar{\eta}_1))
-\big(\frac{\p_{\eta}\Lambda_{+}}{\Lambda_{+}}Y_{+}^{*}\big)(\tau,\Upsilon^{+,\textrm{cd}}_{+}(\tau;\bar{\xi},\bar{\eta}_2))\Big|\dd\tau\\
&+\int_{\zeta^{+,\rm{cd}}_{+}(\bar{\xi},\bar{\eta}_2)}^{\bar{\xi}}\big|\p_{\eta}f^{+}_{+}(\delta\mathcal{V}_{+}, \delta Y_{+})(\tau,\Upsilon^{+,\textrm{cd}}_{+}(\tau;\bar{\xi},\bar{\eta}_1))
-\p_{\eta}f^{+}_{+}(\delta\mathcal{V}_{+}, \delta Y_{+})(\tau,\Upsilon^{+,\textrm{cd}}_{+}(\tau;\bar{\xi},\bar{\eta}_2))\big|\dd \tau\\
&+\sum_{k=\pm}\int_{\zeta^{+,\rm{cd}}_{+}(\bar{\xi},\bar{\eta}_2)}^{\bar{\xi}}\Big|\big(\frac{\p_{\eta} \Lambda_{+}}{2\Lambda_{+}}f^{k}_{+}(\delta\mathcal{V}_{+}, \delta Y_{+})\big)(\tau,\Upsilon^{+,\textrm{cd}}_{+}(\tau;\bar{\xi},\bar{\eta}_1))\\
&\qquad\qquad\qquad\qquad\qquad -\big(\frac{\p_{\eta} \Lambda_{+}}{2\Lambda_{+}}f^{k}_{+}(\delta\mathcal{V}_{+}, \delta Y_{+})\big)(\tau,\Upsilon^{+,\textrm{cd}}_{+}(\tau;\bar{\xi},\bar{\eta}_2))\Big|\dd \tau,
\end{split}
\end{align*}
}
and
\begin{align*}
\begin{split}
&\mathcal{J}^{+}_{\rm{III}^{+}_{3}}
\triangleq\int_{\zeta^{+,\rm{cd}}_{+}(\bar{\xi},\bar{\eta}_1)}^{\zeta^{+,\rm{cd}}_{+}(\bar{\xi},\bar{\eta}_2)}
\Big|\Big(\frac{\p^{+}_{+}\Lambda_{+}-2\Lambda_{+}\p_{\eta}\lambda^{+}_{+}}{2\Lambda_{+}}\mathcal {Z}_{+}^{+,*}-\frac{\p^{-}_{+}\Lambda_{+}}{2\Lambda_{+}}\mathcal {Z}_{+}^{-,*}\Big)(\tau,\Upsilon^{+,\textrm{cd}}_{+}(\tau;\bar{\xi},\bar{\eta}_{2}))\Big|\dd \tau\\
&+\int_{\zeta^{+,\rm{cd}}_{+}(\bar{\xi},\bar{\eta}_1)}^{\zeta^{+,\rm{cd}}_{+}(\bar{\xi},\bar{\eta}_2)}\Big|\big(a^{-}_{+}(\underline{\mathcal{U}}_{+})\p_{\eta}\delta Y^{*}_{+}-\frac{[a^{-}_{+}(\underline{\mathcal{U}}_{+})-a^{+}_{+}(\underline{\mathcal{U}}_{+})]\p_{\eta} \Lambda_{+}}{2\Lambda_{+}}\delta Y^{*}_{+} \Big)(\tau,\Upsilon^{+,\textrm{cd}}_{+}(\tau;\bar{\xi},\bar{\eta}_2))\Big|\dd \tau\\
&+\int_{\zeta^{+,\rm{cd}}_{+}(\bar{\xi},\bar{\eta}_1)}^{\zeta^{+,\rm{cd}}_{+}(\bar{\xi},\bar{\eta}_2)}\Big|\Big(\p_{\eta}f^{+}_{+}(\delta\mathcal{V}_{+}, \delta Y_{+})-\frac{\p_{\eta} \Lambda_{+}}{2\Lambda_{+}}\big[f^{+}_{+}(\delta\mathcal{V}_{+}, \delta Y_{+})
-f^{-}_{+}(\delta\mathcal{V}_{+}, \delta Y_{+})\big]\Big)(\tau,\Upsilon^{+,\textrm{cd}}_{+}(\tau;\bar{\xi},\bar{\eta}_{2}))\Big| \dd \tau.
\end{split}
\end{align*}

For $\mathcal{J}^{+}_{\rm{III}^{+}_{2}}$ and $\mathcal{J}^{+}_{\rm{III}^{+}_{3}}$, we can follow the ways in \emph{Step 3} in the proof of Proposition \ref{prop:3.3} and by Lemma \ref{lem:A3} to deduce that
\begin{eqnarray*}
\begin{split}
\mathcal{J}^{+}_{\rm{III}^{+}_{2}}+\mathcal{J}^{+}_{\rm{III}^{+}_{3}}
&\leq\mathcal {O}(1)\Big\{\sum_{k=\pm}\|\mathcal {Z}_{+}^{\pm,*}\|_{0,\alpha; \tilde{\mathcal{N}}^{\rm{I}}_{+}\cup\tilde{\mathcal{N}}^{\rm{III}}_{+}}
+\|f^{+}_{+}\|_{1,\alpha; \tilde{\mathcal{N}}^{\rm{I}}_{+}\cup\tilde{\mathcal{N}}^{\rm{III}}_{+}}
+\|f^{-}_{+}\|_{1,0; \tilde{\mathcal{N}}^{\rm{I}}_{+}\cup\tilde{\mathcal{N}}^{\rm{III}}_{+}}\\
&\qquad\qquad\  +\|\delta Y^{*}_{+}\|_{1,\alpha; \tilde{\mathcal{N}}^{\rm{I}}_{+}\cup\tilde{\mathcal{N}}^{\rm{III}}_{+}}+\int_{\zeta^{+,\rm{cd}}_{+}(\bar{\xi},\bar{\eta}_2)}^{\bar{\xi}}\sum_{k=\pm}[\mathcal {Z}^{k,*}_{\rm{III}^{+}}]_{\eta}(\varsigma)\dd \varsigma\Big\}\cdot \big|\bar{\eta}_{1}-\bar{\eta}_{2}\big|^{\alpha},
\end{split}
\end{eqnarray*}
where the constant $\mathcal {O}(1)$ depends only on $\underline{\mathcal{U}}_{+}$, $\alpha$ and $L$.

For $\mathcal {J}^{+}_{\rm{III}^{+}_{1}}$, by \eqref{eq:prop-3.4-32} on $\tilde{\Gamma}_{\rm{cd}}\cap \overline{\tilde{\mathcal{N}}^{\rm{III}}_{+}}$, we can get
\begin{eqnarray*}\label{eq:prop-3.4-28}
\begin{split}
&\mathcal {J}^{+}_{\rm{III}^{+}_{1}}\\
&\leq
\bigg|\Big(\frac{(\Lambda_{+}-\Lambda_{-})\lambda_{+}^{-}}{(\Lambda_{+}+\Lambda_{-})\lambda_{+}^{+}}\mathcal {Z}^{-,*}_{+}\Big)(\zeta^{+,\rm{cd}}_{+}(\bar{\xi},\bar{\eta}_1),0)
-\Big(\frac{(\Lambda_{+}-\Lambda_{-})\lambda_{+}^{-}}{(\Lambda_{+}+\Lambda_{-})\lambda_{+}^{+}}\mathcal {Z}^{-,*}_{+}\Big)(\zeta^{+,\rm{cd}}_{+}(\bar{\xi},\bar{\eta}_2),0)\bigg|\\
&+\bigg|\Big(\frac{2\Lambda_{+}\lambda_{+}^{-}}{(\Lambda_{+}+\Lambda_{-})\lambda_{+}^{+}}\mathcal {Z}^{+,*}_{-}\Big)(\zeta^{+,\rm{cd}}_{+}(\bar{\xi},\bar{\eta}_1),0)-\Big(\frac{2\Lambda_{+}\lambda_{+}^{-}}{(\Lambda_{+}+\Lambda_{-})\lambda_{+}^{+}}\mathcal {Z}^{+,*}_{-}\Big)(\zeta^{+,\rm{cd}}_{+}(\bar{\xi},\bar{\eta}_2),0)\bigg|\\
&+\bigg|\Big(\frac{(a _{+}^{+}+a _{+}^{-})\Lambda_{+}+(a _{+}^{+}-a _{+}^{-})\Lambda_{-}}{(\Lambda_{+}+\Lambda_{-})\lambda_{+}^{+}}\delta Y^{*}_{+}\Big)(\zeta^{+,\rm{cd}}_{+}(\bar{\xi},\bar{\eta}_1),0)\\
&\qquad\ -\Big(\frac{(a _{+}^{+}+a _{+}^{-})\Lambda_{+}+(a _{+}^{+}-a _{+}^{-})\Lambda_{-}}{(\Lambda_{+}+\Lambda_{-})\lambda_{+}^{+}}\delta Y^{*}_{+}\Big)(\zeta^{+,\rm{cd}}_{+}(\bar{\xi},\bar{\eta}_2),0)\bigg|\\
&+\bigg|\Big(\frac{2a_{-}^{-}\Lambda_{+}}{(\Lambda_{+}+\Lambda_{-})\lambda_{+}^{+}}\delta Y^{*}_{-}\Big)(\zeta^{+,\rm{cd}}_{+}(\bar{\xi},\bar{\eta}_1),0)-\Big(\frac{2a_{-}^{-}\Lambda_{+}}{(\Lambda_{+}+\Lambda_{-})\lambda_{+}^{+}}\delta Y^{*}_{-}\Big)(\zeta^{+,\rm{cd}}_{+}(\bar{\xi},\bar{\eta}_2),0)\bigg|\\
&+\bigg|\Big(\frac{\Lambda_{+}-\Lambda_{-}}{(\Lambda_{+}+\Lambda_{-})\lambda_{+}^{+}}f^{+}_{+}(\delta\mathcal{V}_{+}, \delta Y_{+})\Big)(\zeta^{+,\rm{cd}}_{+}(\bar{\xi},\bar{\eta}_1),0)-\Big(\frac{\Lambda_{+}-\Lambda_{-}}{(\Lambda_{+}+\Lambda_{-})\lambda_{+}^{+}}
f^{+}_{+}(\delta\mathcal{V}_{+}, \delta Y_{+})\Big)(\zeta^{+,\rm{cd}}_{+}(\bar{\xi},\bar{\eta}_2),0)\bigg|\\
&+\bigg|\Big(\frac{1}{\lambda_{+}^{+}}f^{-}_{+}(\delta\mathcal{V}_{+}, \delta Y_{+})\Big)(\zeta^{+,\rm{cd}}_{+}(\bar{\xi},\bar{\eta}_1),0)-\Big(\frac{1}{\lambda_{+}^{+}}f^{-}_{+}(\delta\mathcal{V}_{+}, \delta Y_{+})\Big)(\zeta^{+,\rm{cd}}_{+}(\bar{\xi},\bar{\eta}_2),0)\bigg|\\
&+\bigg|\Big(\frac{2\Lambda_{+}}{(\Lambda_{+}+\Lambda_{-})\lambda_{+}^{+}}f^{-}_{-}(\delta\mathcal{V}_{-}, \delta Y_{-})\Big)(\zeta^{+,\rm{cd}}_{+}(\bar{\xi},\bar{\eta}_1),0)-\Big(\frac{2\Lambda_{+}}{(\Lambda_{+}+\Lambda_{-})\lambda_{+}^{+}}
f^{-}_{-}(\delta\mathcal{V}_{-}, \delta Y_{-})\Big)(\zeta^{+,\rm{cd}}_{+}(\bar{\xi},\bar{\eta}_2),0)\bigg|\\
\triangleq& \sum^{7}_{n=1}\mathcal {J}^{+}_{\textrm{III}^{+}_{1,n}}.
\end{split}
\end{eqnarray*}

We can follow the procedure in \emph{Step 3} of Proposition \ref{prop:3.2} and by Lemma \ref{lem:A3} to get that
\begin{align}\label{eq:prop-3.4-40}
\begin{split}
&\mathcal {J}^{+}_{\textrm{III}^{+}_{1,1}}+\mathcal {J}^{+}_{\textrm{III}^{+}_{1,2}}\\
&\leq \mathcal{O}(1)\bigg\{[\mathcal {Z}^{-}_{+,\rm in}]_{0,\alpha;\tilde{\Gamma}^{+}_{\rm in}}+[\mathcal {Z}^{+}_{-,\rm in}]_{0,\alpha;\tilde{\Gamma}^{-}_{\rm in}}
+\|\delta Y^{*}_{+}\|_{1,\alpha; \tilde{\mathcal{N}}^{\rm{I}}_{+}\cup\tilde{\mathcal{N}}^{\rm{III}}_{+}}+\|\delta Y^{*}_{-}\|_{1,\alpha; \tilde{\mathcal{N}}^{\rm{I}}_{-}\cup\tilde{\mathcal{N}}^{\rm{III}}_{-}}\\
&\quad\ +\sum_{k=\pm}\Big(\|f^{k}_{+}\|_{1,\alpha; \tilde{\mathcal{N}}^{I}_{+}\cup\tilde{\mathcal{N}}^{\rm{I}}_{+}}+\|f^{k}_{-}\|_{1,\alpha; \tilde{\mathcal{N}}^{\rm{I}}_{-}\cup\tilde{\mathcal{N}}^{\rm{III}}_{-}}\Big)\\
&\quad\ +\int_0^{\zeta^{+,\rm{cd}}_{+}(\bar{\xi},\bar{\eta}_1)}\sum_{k=\pm}\Big([\mathcal {Z}^{k,*}_{\rm{III}^{+}}]_{\eta}(\tau)+[\mathcal {Z}^{k,*}_{\rm{III}^{-}}]_{\eta}(\tau)\Big)\dd\tau\bigg\}\cdot |\bar{\eta}_1-\bar{\eta}_2|^{\alpha},
\end{split}
\end{align}
where the constant $\mathcal{O}(1)$ depends only on $\underline{\mathcal{U}}$, $\alpha$ and $L$.
By direct computation, we can also get 
\begin{eqnarray*}
\begin{split}
\sum^{7}_{n=3}\mathcal{J}^{+}_{\textrm{III}^{+}_{1,n}}
&\leq \mathcal {O}(1)\Big(\sum_{j=\pm}\|\delta Y^{*}_{j}\|_{1,0;\tilde{\mathcal{N}}^{\rm{I}}_{j}\cup\tilde{\mathcal{N}}^{\rm{III}}_{j}}
+\sum_{k=\pm}\|f^{k}_{+}\|_{1,0;\tilde{\mathcal{N}}^{\rm{I}}_{+}\cup\tilde{\mathcal{N}}^{\rm{II}}_{+}}+\|f^{-}_{-}\|_{1,0;\tilde{\mathcal{N}}^{\rm{I}}_{-}\cup\tilde{\mathcal{N}}^{\rm{III}}_{-}}\Big) \cdot |\bar{\eta}_1-\bar{\eta}_{2}|^{\alpha},
\end{split}
\end{eqnarray*}
where the constant $\mathcal {O}(1)$ also depends only on $\underline{\mathcal{U}}_{+}$, $\alpha$ and $L$.
Therefore,
\begin{eqnarray*}
\begin{split}
\mathcal {J}^{+}_{\rm{III}^{+}_{1}}&\leq\mathcal{O}(1)\bigg\{[\mathcal {Z}^{-}_{+,\rm in}]_{0,\alpha;\tilde{\Gamma}^{+}_{\rm in}}+[\mathcal {Z}^{+}_{-,\rm in}]_{0,\alpha;\tilde{\Gamma}^{-}_{\rm in}}+\sum_{j=\pm}\|\delta Y^{*}_{j}\|_{1,\alpha;\tilde{\mathcal{N}}^{\rm{I}}_{j}\cup\tilde{\mathcal{N}}^{\rm{III}}_{j}}
+\sum_{j,k=\pm}\|f^{k}_{+}\|_{1,\alpha;\tilde{\mathcal{N}}^{\rm{I}}_{j}\cup\tilde{\mathcal{N}}^{\rm{III}}_{j}}\\
&\qquad\qquad\quad +\int_0^{\zeta^{+,\rm{cd}}_{+}(\bar{\xi},\bar{\eta}_1)}\sum_{j,k=\pm}[\mathcal {Z}^{k,*}_{\textrm{III}^{j}}]_{\eta}(\varsigma)\dd \varsigma\bigg\}
 \cdot |\bar{\eta}_1-\bar{\eta}_{2}|^{\alpha}.
\end{split}
\end{eqnarray*}

Hence it follows from \eqref{eq:prop-3.4-39} that
\begin{eqnarray}\label{eq:prop-3.4-41}
\begin{split}
[\mathcal {Z}^{+,*}_{\rm{III}^{+}}]_{\eta}(\kappa)&\leq\mathcal{O}(1)\bigg\{\sum_{k=\pm}[\mathcal {Z}^{k}_{+,\rm in}]_{0,\alpha;\tilde{\Gamma}^{+}_{\rm in}}+[\mathcal {Z}^{+}_{-,\rm in}]_{0,\alpha;\tilde{\Gamma}^{-}_{\rm in}}+\sum_{j=\pm}\|\delta Y^{*}_{j}\|_{1,\alpha;\tilde{\mathcal{N}}^{\rm{I}}_{j}\cup\tilde{\mathcal{N}}^{\rm{III}}_{j}}\\
&\qquad\qquad\ +\sum_{j,k=\pm}\|f^{k}_{+}\|_{1,\alpha;\tilde{\mathcal{N}}^{\rm{I}}_{j}\cup\tilde{\mathcal{N}}^{\rm{III}}_{j}}
+\int_0^{\bar{\xi}}\sum_{j,k=\pm}[\mathcal {Z}^{k,*}_{\textrm{III}^{j}}]_{\eta}(\varsigma)\dd \varsigma\bigg\}.
\end{split}
\end{eqnarray}

For $\mathcal {Z}^{-,*}_{+}$, we can follow the ways to estimate $[\mathcal {Z}^{+,*}_{\rm{I}^{+}}](\kappa)$
in \emph{Step 3} of the proof of Proposition \ref{prop:3.2}, and apply the estimate \eqref{eq:prop-3.3-21} to derive that
\begin{align}\label{eq:prop-3.4-42}
\begin{split}
[\mathcal {Z}^{-,*}_{\rm{III}^{+}}]_{\eta}(\kappa)
&\leq \mathcal{O}(1)\Big\{\sum_{k=\pm}\big(\|\mathcal {Z}^{k}_{+,\rm in}\|_{0,\alpha;\tilde{\Gamma}^{+}_{\rm in}}+\|f^{k}_{+}\|_{1,\alpha; \tilde{\mathcal{N}}^{\rm{I}}_{+}\cup\tilde{\mathcal{N}}^{\rm{III}}_{+}}\big)\\
&\qquad\qquad\quad +\|\delta Y^{*}_{+}\|_{1,\alpha; \tilde{\mathcal{N}}^{\rm{I}}_{+}\cup\tilde{\mathcal{N}}^{\rm{II}}_{+}}
+\int_0^{\kappa}\sum_{k=\pm}[\mathcal {Z}^{k,*}_{\rm{III}^{+}}]_{\eta}(\varsigma)\dd \varsigma\Big\},
\end{split}
\end{align}
where the constant $\mathcal {O}(1)$ depends only on $\underline{\mathcal{U}}_{+}$, $\alpha$ and $L$.

Similarly, by boundary condition \eqref{eq:prop-3.4-32} and estimate \eqref{eq:prop-3.4-33a},
we also have
\begin{align}\label{eq:prop-3.4-43}
\begin{split}
[\mathcal {Z}^{+,*}_{\rm{III}^{-}}]_{\eta}(\kappa)
&\leq \mathcal{O}(1)\Big\{\sum_{k=\pm}\big(\|\mathcal {Z}^{k}_{-,\rm in}\|_{0,\alpha;\tilde{\Gamma}^{-}_{\rm in}}+\|f^{k}_{-}\|_{1,\alpha; \tilde{\mathcal{N}}^{\rm{I}}_{+}\cup\tilde{\mathcal{N}}^{\rm{III}}_{+}}\big)\\
&\qquad\qquad\ +\|\delta Y^{*}_{-}\|_{1,\alpha; \tilde{\mathcal{N}}^{\rm{I}}_{-}\cup\tilde{\mathcal{N}}^{\rm{III}}_{-}}
+\int_0^{\kappa}\sum_{k=\pm}[\mathcal {Z}^{k,*}_{\rm{III}^{-}}]_{\eta}(\varsigma)\dd \varsigma\Big\},
\end{split}
\end{align}
and
\begin{eqnarray}\label{eq:prop-3.4-44}
\begin{split}
[\mathcal {Z}^{-,*}_{\rm{III}^{-}}]_{\eta}(\kappa)&\leq\mathcal{O}(1)\bigg\{\sum_{k=\pm}[\mathcal {Z}^{k}_{-,\rm in}]_{0,\alpha;\tilde{\Gamma}^{-}_{\rm in}}+[\mathcal {Z}^{-}_{+,\rm in}]_{0,\alpha;\tilde{\Gamma}^{+}_{\rm in}}+\sum_{j=\pm}\|\delta Y^{*}_{j}\|_{1,\alpha;\tilde{\mathcal{N}}^{\rm{I}}_{j}\cup\tilde{\mathcal{N}}^{\rm{III}}_{j}}
\\
&\qquad\qquad\ +\sum_{j,k=\pm}\|f^{k}_{+}\|_{1,\alpha;\tilde{\mathcal{N}}^{\rm{I}}_{j}\cup\tilde{\mathcal{N}}^{\rm{III}}_{j}}+\int_0^{\bar{\xi}}\sum_{j,k=\pm}[\mathcal {Z}^{k,*}_{\textrm{III}^{j}}]_{\eta}(\varsigma)\dd \varsigma\bigg\}.
\end{split}
\end{eqnarray}
So combining the estimates \eqref{eq:prop-3.4-41}-\eqref{eq:prop-3.4-44} and applying the Gronwall's inequality, we arrive at
\begin{align}\label{eq:prop-3.4-45}
\begin{split}
&\sum_{j,k=\pm}[\mathcal {Z}^{k,*}_{\textrm{III}^{j}}]_{\eta}(\kappa)\\
&\leq \mathcal{O}(1)\sum_{j=\pm}\Big\{\sum_{k=\pm}[\mathcal {Z}^{k}_{j,\rm in}]_{0,\alpha;\tilde{\Gamma}^{-}_{\rm in}}+\|\delta Y^{*}_{j}\|_{1,\alpha;\tilde{\mathcal{N}}^{\rm{I}}_{j}\cup\tilde{\mathcal{N}}^{\rm{III}}_{j}}
+\sum_{k=\pm}\|f^{k}_{+}\|_{1,\alpha;\tilde{\mathcal{N}}^{\rm{I}}_{j}\cup\tilde{\mathcal{N}}^{\rm{III}}_{j}}\Big\}.
\end{split}
\end{align}

Next, in a similar way as done above, we can also get
\begin{align}\label{eq:prop-3.4-46}
\begin{split}
&\sum_{j,k=\pm}[\mathcal {Z}^{k,*}_{\textrm{III}^{j}}]_{\xi}(\kappa)\\
&\leq \mathcal{O}(1)\sum_{j=\pm}\Big\{\sum_{k=\pm}[\mathcal {Z}^{k}_{j,\rm in}]_{0,\alpha;\tilde{\Gamma}^{-}_{\rm in}}+\|\delta Y^{*}_{j}\|_{1,\alpha;\tilde{\mathcal{N}}^{\rm{I}}_{j}\cup\tilde{\mathcal{N}}^{\rm{III}}_{j}}
+\sum_{k=\pm}\|f^{k}_{+}\|_{1,\alpha;\tilde{\mathcal{N}}^{\rm{I}}_{j}\cup\tilde{\mathcal{N}}^{\rm{III}}_{j}}\Big\}.
\end{split}
\end{align}

So, by \eqref{eq:prop-3.4-37}, \eqref{eq:prop-3.4-45} and \eqref{eq:prop-3.4-46}, and letting $\kappa=\bar{\xi}^{2}_{\rm{cd}}$, it follows that
\begin{eqnarray*}
\begin{split}
\sum_{j,k=\pm}[\mathcal {Z}^{k,*}_{j}]_{0,\alpha;\tilde{\mathcal{N}}^{\rm{I}}_{j}\cup\tilde{\mathcal{N}}^{\rm{III}}_{j}}\leq \mathcal{O}(1)\sum_{j=\pm}\Big\{\sum_{k=\pm}[\mathcal {Z}^{k}_{j,\rm in}]_{0,\alpha;\tilde{\Gamma}^{-}_{\rm in}}+\|\delta Y^{*}_{j}\|_{1,\alpha;\tilde{\mathcal{N}}^{\rm{I}}_{j}\cup\tilde{\mathcal{N}}^{\rm{III}}_{j}}
+\sum_{k=\pm}\|f^{k}_{j}\|_{1,\alpha;\tilde{\mathcal{N}}^{\rm{I}}_{j}\cup\tilde{\mathcal{N}}^{\rm{III}}_{j}}\Big\},
\end{split}
\end{eqnarray*}
which further implies that
\begin{eqnarray}\label{eq:prop-3.4-47}
\begin{split}
&\sum_{j=\pm}[(\nabla \delta\omega^{*}_{j},\nabla \delta p^{*}_{j})]_{0,\alpha;\tilde{\mathcal{N}}^{\rm{I}}_{j}\cup\tilde{\mathcal{N}}^{\rm{III}}_{j}}\\
& \leq \tilde{C}^{**}_{\rm{III}}\sum_{j=\pm}\Big\{\|(\delta\omega^{j}_{\rm{in}},\delta p^{j}_{\rm{in}})\|_{1,\alpha;\tilde{\mathcal{N}}^{\rm{I}}_{j}\cup\tilde{\mathcal{N}}^{\rm{III}}_{j}}+\|\delta Y^{*}_{j}\|_{1,\alpha;\tilde{\mathcal{N}}^{\rm{I}}_{j}\cup\tilde{\mathcal{N}}^{\rm{III}}_{j}}
 +\sum_{k=\pm}\|f^{k}_{+}\|_{1,\alpha;\tilde{\mathcal{N}}^{\rm{I}}_{j}\cup\tilde{\mathcal{N}}^{\rm{III}}_{j}}\Big\},
\end{split}
\end{eqnarray}
where the constant $\tilde{C}^{**}_{\rm{III}}>0$ depends only on $\underline{\mathcal{U}}_{+}$, $\alpha$ and $L$.

Finally, set $\tilde{C}^{**}_{\rm{III}}=\max\{\tilde{C}_{\rm{I}}, \tilde{C}^{*}_{\rm{II}}, \tilde{C}^{**}_{\rm{III}}\}$.
Then, with the estimates \eqref{eq:prop-3.4-18}, \eqref{eq:prop-3.4-35} and \eqref{eq:prop-3.4-47}, we established the estimate \eqref{eq:prop-3.4-1}.
\end{proof}

\subsection{Proof of the Theorem \ref{thm:3.1}}
In this subsection, we are going to complete the proof of Theorem \ref{thm:3.1}. First of all, from Proposition \ref{prop:3.1}, we can get the existence and uniqueness of a
$C^{1,\alpha}$-solution $(\delta B^{*}_{\pm}, \delta S^{*}_{\pm},\delta Y^{*}_{\pm})$ in $\tilde{\mathcal{N}}_{\pm}$ with the estimates \eqref{eq:prop-3.1-1}.
Next, we can construct a solution $(\delta \omega^{*}_{\pm}, \delta p^{*}_{\pm})$ 
in $\tilde{\mathcal{N}}_{\pm}$ by an induction procedure as follows. 
First, with $(\delta B^{*}_{\pm}, \delta S^{*}_{\pm},\delta Y^{*}_{\pm})$ in hand, by Propositions \ref{prop:3.2}-\ref{prop:3.4}, we can obtain the existence and uniqueness of a solution $(\delta \omega^{*}_{\pm}, \delta p^{*}_{\pm})$ to $(\mathbf{IBVP})^{*}$ in $\tilde{\mathcal{N}}^{\rm{I}}_{\pm}\cup\tilde{\mathcal{N}}^{\rm{II}}_{\pm}\cup\tilde{\mathcal{N}}^{\rm{III}}_{\pm}$ with the estimate
\begin{eqnarray}\label{eq:prop-3.4-48}
\begin{split}
&\sum_{k=\pm}\|(\delta \omega^{*}_{k},\delta p^{*}_{k})\|_{1,0; \tilde{\mathcal{N}}^{\rm{I}}_{k}\cup\tilde{\mathcal{N}}^{\rm{II}}_{k}\cup\tilde{\mathcal{N}}^{\rm{III}}_{k}}\\
&\quad \leq \tilde{C}_{35}\sum_{k=\pm}\Big(\|(\delta\omega^{k}_{\rm in},\delta p^{k}_{\rm in})\|_{1,\alpha;\tilde{\Gamma}^{k}_{\rm in}}
+\|g_k-\underline{g}_{k}\|_{2,\alpha; \tilde{\Gamma}_{k}\cap\{0\leq\xi\leq\bar{\xi}_{k}^{2}\}}\\
&\qquad\qquad\qquad\quad  +\|\delta Y^{*}_{k}\|_{1,\alpha;  \tilde{\mathcal{N}}^{\rm{I}}_{k}\cup\tilde{\mathcal{N}}^{\rm{II}}_{k}\cup\tilde{\mathcal{N}}^{\rm{III}}_{k}}
+\sum_{j=\pm}\|f^{j}_{k}\|_{1,\alpha; \tilde{\mathcal{N}}^{\rm{I}}_{k}\cup\tilde{\mathcal{N}}^{\rm{II}}_{k}\cup\tilde{\mathcal{N}}^{\rm{III}}_{k}}\Big),
\end{split}
\end{eqnarray}
where the constant $\tilde{C}_{35}>0$ depends only on $\underline{\mathcal{U}}$, $\alpha$ and $L$.

However, the semi-H\"{o}lder norm estimates for $(\nabla\delta \omega^{*}_{\pm},\nabla\delta p^{*}_{\pm})$ on the whole region $\tilde{\mathcal{N}}^{\rm{I}}_{\pm}\cup\tilde{\mathcal{N}}^{\rm{II}}_{\pm}\cup\tilde{\mathcal{N}}^{\rm{III}}_{\pm}$ can not be obtained directly by adding the estimates \eqref{eq:prop-3.3-1} and \eqref{eq:prop-3.4-1}
altogether. In order to get it, we need to do some ``bonding", that is, for any $(\bar{\xi}_1,\bar{\eta}_1)\in \tilde{\mathcal{N}}^{\rm{II}}_{\pm}$ and $(\bar{\xi}_2,\bar{\eta}_2)\in\tilde{\mathcal{N}}^{\rm{III}}_{\pm}$, 
{\small
\begin{eqnarray}\label{eq:prop-3.4-49}
\begin{split}
&\sup\limits_{{(\bar{\xi}_1,\bar{\eta}_1)\in \tilde{\mathcal{N}}^{\rm{II}}_{\pm}  \atop
(\bar{\xi}_2,\bar{\eta}_2)\in\tilde{\mathcal{N}}^{\rm{III}}_{\pm}}}\frac{|\nabla\delta \omega^{*}_{\pm}(\bar{\xi}_1,\bar{\eta}_1)-\nabla\delta \omega^{*}_{\pm}(\bar{\xi}_2,\bar{\eta}_2)|}
{|(\bar{\xi}_1,\bar{\eta}_1)-(\bar{\xi}_2,\bar{\eta}_2)|^{\alpha}}\\
&\quad\leq\sup\limits_{(\bar{\xi}_1,\bar{\eta}_1)\in \tilde{\mathcal{N}}^{\rm{II}}_{\pm}}\frac{|\nabla\delta \omega^{*}_{\pm}(\xi_1,\eta_1)-\nabla\delta \omega^{*}_{\pm}(\bar{\xi}^1_{\pm},\bar{\eta}^1_{\pm})|}{|(\bar{\xi}_1,\bar{\eta}_1)-(\bar{\xi}^1_{\pm},\bar{\eta}^1_{\pm})|^{\alpha}}
+\sup\limits_{(\bar{\xi}_2,\bar{\eta}_2)\in \tilde{\mathcal{N}}^{\rm{III}}_{\pm}}\frac{|\nabla\delta \omega^{*}_{\pm}(\bar{\xi}^1_{\pm},\bar{\eta}^1_{\pm})-\nabla\delta \omega^{*}_{\pm}(\bar{\xi}_2,\bar{\eta}_2)|}{|(\bar{\xi}^1_{\pm},\bar{\eta}^1_{\pm})-(\bar{\xi}_2,\bar{\eta}_2)|^{\alpha}}\\
&\quad\leq [{\color{black}\nabla\delta \omega^{*}_{\pm}}]_{0,\alpha; \tilde{\mathcal{N}}^{\rm{I}}_{\pm}\cup\tilde{\mathcal{N}}^{\rm{II}}_{\pm}}+[{\color{black}\nabla\delta \omega^{*}_{\pm}}]_{0,\alpha; \tilde{\mathcal{N}}^{\rm{I}}_{\pm}\cup\tilde{\mathcal{N}}^{\rm{III}}_{\pm}}\\
&\quad \leq \tilde{C}_{35}\sum_{k=\pm}\Big(\|(\delta\omega^{k}_{\rm in},\delta p^{k}_{\rm in})\|_{1,\alpha;\tilde{\Gamma}^{k}_{\rm in}}
+\|g_k-\underline{g}_{k}\|_{2,\alpha; \tilde{\Gamma}_{k}\cap\{0\leq\xi\leq\bar{\xi}_{k}^{2}\}}\\
&\qquad\qquad\qquad\quad
+\|\delta Y^{*}_{k}\|_{1,\alpha;  \tilde{\mathcal{N}}^{\rm{I}}_{k}\cup\tilde{\mathcal{N}}^{\rm{II}}_{k}\cup\tilde{\mathcal{N}}^{\rm{III}}_{k}}
+\sum_{j=\pm}\|f^{j}_{k}\|_{1,\alpha; \tilde{\mathcal{N}}^{\rm{I}}_{k}\cup\tilde{\mathcal{N}}^{\rm{II}}_{k}\cup\tilde{\mathcal{N}}^{\rm{III}}_{k}}\Big),
\end{split}
\end{eqnarray}}
where $(\bar{\xi}^1_{\pm},\bar{\eta}^1_{\pm})\in\overline{\tilde{\mathcal{N}}^{\rm{II}}_{\pm}}\cap\overline{\tilde{\mathcal{N}}^{\rm{III}}_{\pm}}$.
In the same way, we can also show that $[\delta p^{*}_{\pm}]_{0,\alpha; \tilde{\mathcal{N}}^{\rm{I}}_{\pm}\cup\tilde{\mathcal{N}}^{\rm{II}}_{\pm}\cup\tilde{\mathcal{N}}^{\rm{III}}_{\pm}}$ can be controlled by the right hand side of \eqref{eq:prop-3.4-49}.
Let $\bar{\xi}_1^*\triangleq\min\{\bar{\xi}_{+}^1,\bar{\xi}_{-}^1\}$ and $\tilde{\mathcal{N}}^{1}_{\pm}\triangleq\tilde{\mathcal{N}}_{\pm}\cap\{0< \xi\leq\bar{\xi}_1^*\}$.
Then, 
\begin{align*}
\begin{split}
&\sum_{k=\pm}\|(\delta \omega^{*}_{k},\delta p^{*}_{k})\|_{1,\alpha; \tilde{\mathcal{N}}^{1}_{k}}\\
&\leq \tilde{C}_{1*}\sum_{k=\pm}\Big(\|(\delta\omega^{k}_{\rm in},\delta p^{k}_{\rm in})\|_{1,\alpha;\tilde{\Gamma}^{k}_{\rm in}}
+\|g_k-\underline{g}_{k}\|_{2,\alpha; \tilde{\Gamma}_{k}}
+\|\delta Y^{*}_{k}\|_{1,\alpha;  \tilde{\mathcal{N}}_{k}}
+\sum_{j=\pm}\|f^{j}_{k}\|_{1,\alpha; \tilde{\mathcal{N}}_{k}}\Big),
\end{split}
\end{align*}
where the constant $\tilde{C}_{1*}>0$ depends only on $\underline{\mathcal{U}}$, $\alpha$ and $L$.

Next, we consider the solution of the problem in the rectangular region $\tilde{\mathcal{N}}_{\pm}\cap\{\bar{\xi}_1^*<\xi\}$ by repeating the previous process, and determine a constant $\bar{\xi}_2^*>\bar{\xi}_1^*$
so that
\begin{align*}
\begin{split}
\sum_{k=\pm}\|(\delta \omega^{*}_{k},\delta p^{*}_{k})\|_{1,\alpha; \tilde{\mathcal{N}}^{2}_{k}}
&\leq \tilde{C}_{2*}\sum_{k=\pm}\Big(\|(\delta \omega^{*}_{k},\delta p^{*}_{k})\|_{1,\alpha; \tilde{\mathcal{N}}^{1}_{k}\cap\{\xi=\bar{\xi}_1^*\}}+\|g_k-\underline{g}_{k}\|_{2,\alpha; \tilde{\Gamma}_{k}}\\
&\qquad\qquad\ +\|\delta Y^{*}_{k}\|_{1,\alpha; \tilde{\mathcal{N}}_{k}}+\sum_{j=\pm}\|f^{j}_{k}\|_{1,\alpha; \tilde{\mathcal{N}}_{k}}\Big),
\end{split}
\end{align*}
where $\tilde{\mathcal{N}}^{2}_{\pm}\triangleq\tilde{\mathcal{N}}_{\pm}\cap\{\bar{\xi}_1^*<\xi\leq\bar{\xi}_2^*\}$, and the constant $\tilde{C}_{2*}>0$ depends only on $\underline{\mathcal{U}}$, $\alpha$ and $L$.

Then, we can use the ``bond" method as above again to deduce that
\begin{eqnarray*}
\begin{split}
&\sum_{k=\pm}\|(\delta \omega^{*}_{k},\delta p^{*}_{k})\|_{1,\alpha; \tilde{\mathcal{N}}^{1}_{k}\cup\tilde{\mathcal{N}}^{2}_{k}}\\
&\leq \tilde{C}_{12*}\sum_{k=\pm}\Big(\|(\delta\omega^{k}_{\rm in},\delta p^{k}_{\rm in})\|_{1,\alpha;\tilde{\Gamma}^{k}_{\rm in}}
+\|g_k-\underline{g}_k\|_{2,\alpha; \tilde{\Gamma}_{k}}
+\|\delta Y^{*}_{k}\|_{1,\alpha;  \tilde{\mathcal{N}}_{k}}
+\sum_{j=\pm}\|f^{j}_{k}\|_{1,\alpha; \tilde{\mathcal{N}}_{k}}\Big),
\end{split}
\end{eqnarray*}
where the constant $\tilde{C}_{1,2*}>0$ depends only on $\underline{\mathcal{U}}$, $\alpha$ and $L$.

Inductively, for sufficiently small $\sigma>0$, there exist finite constants $\bar{\xi}^{*}_{j}$ for $j=1, 2,\cdot\cdot\cdot N$ with $N$ depending only on $\underline{U}$, $\alpha$ and $L$ satisfying $0=\bar{\xi}^{*}_{0}<\bar{\xi}^{*}_{1}<\bar{\xi}^{*}_2<\cdot\cdot\cdot<\bar{\xi}^{*}_N=L$.
Set $\tilde{\mathcal{N}}_{\pm}=\bigcup^{N}_{j=1}\tilde{\mathcal{N}}^{j}_{\pm}$ with $\tilde{\mathcal{N}}^{j}_{\pm}=\tilde{\mathcal{N}}_{\pm}\cap\{\bar{\xi}^{*}_{j}< \xi\leq \bar{\xi}^{*}_{j} \}$.
Then, on each $\tilde{\mathcal{N}}^{j}_{\pm}$, we have the estimate
\begin{align}\label{eq:prop-3.4-50}
\begin{split}
&\sum_{k=\pm}\|(\delta \omega^{*}_{k},\delta p^{*}_{k})\|_{1,\alpha; \tilde{\mathcal{N}}^{j-1}_{k}\cup\tilde{\mathcal{N}}^{j}_{k}}\\
&\leq \tilde{C}_{j-1,j*}\sum_{k=\pm}\Big(\|(\delta\omega^{k}_{\rm in},\delta p^{k}_{\rm in})\|_{1,\alpha;\tilde{\Gamma}^{k}_{\rm in}}+\|g_k-\underline{g}_{k}\|_{2,\alpha; \tilde{\Gamma}_{k}}+\|\delta Y^{*}_{k}\|_{1,\alpha; \tilde{\mathcal{N}}_{k}}+\sum_{j=\pm}\|f^{j}_{k}\|_{1,\alpha; \tilde{\mathcal{N}}_{k}}\Big),
\end{split}
\end{align}
where the constant $\tilde{C}_{j-1,j*}>0$ depends only on $\underline{\mathcal{U}}$, $\alpha$ and $L$.
Therefore,
\begin{align}\label{eq:prop-3.4-51}
\begin{split}
&\sum_{k=\pm}\|(\delta \omega^{*}_{k},\delta p^{*}_{k})\|_{1,\alpha; \tilde{\mathcal{N}}_{k}}\\
&\leq \tilde{C}_{*}\sum_{k=\pm}\Big(\|(\delta\omega^{k}_{\rm in},\delta p^{k}_{\rm in})\|_{1,\alpha;\tilde{\Gamma}^{k}_{\rm in}}+\|g_k-\underline{g}_{k}\|_{2,\alpha; \tilde{\Gamma}_{k}} +\|\delta Y^{*}_{k}\|_{1,\alpha; \tilde{\mathcal{N}}_{k}}+\sum_{j=\pm}\|f^{j}_{k}\|_{1,\alpha; \tilde{\mathcal{N}}_{k}}\Big).
\end{split}
\end{align}

Since $\phi\in C^{1,1}$, for $\sigma>0$ sufficiently small,  
\begin{align}\label{eq:prop-3.4-52}
\begin{split}
\sum_{j=\pm}\|f^{j}_{k}\|_{1,\alpha; \tilde{\mathcal{N}}_{k}}\leq \mathcal{O}(1)\|\delta\mathcal{V}_{k}\|_{1,\alpha; \tilde{\mathcal{N}}_{k}}\|\delta Y_{k}\|_{1,\alpha; \tilde{\mathcal{N}}_{k}},
\end{split}
\end{align}
where the constant $\mathcal{O}(1)$ depends only on $\underline{\mathcal{U}}$, $\alpha$ and $L$.
Then
\begin{align}\label{eq:prop-3.4-54}
\begin{split}
&\sum_{k=\pm}\|(\delta \omega^{*}_{k},\delta p^{*}_{k})\|_{1,\alpha; \tilde{\mathcal{N}}_{k}}\\
&\leq \tilde{C}_{**}\sum_{k=\pm}\Big(\|(\delta\omega^{k}_{\rm in},\delta p^{k}_{\rm in})\|_{1,\alpha;\tilde{\Gamma}^{k}_{\rm in}}+\|g_k-\underline{g}_{k}\|_{2,\alpha; \tilde{\Gamma}_{k}}+\|\delta Y^{*}_{k}\|_{1,\alpha; \tilde{\mathcal{N}}_{k}}+\|\delta\mathcal{V}_{k}\|_{1,\alpha; \tilde{\mathcal{N}}_{k}}\|\delta Y_{k}\|_{1,\alpha; \tilde{\mathcal{N}}_{k}}\Big),
\end{split}
\end{align}
where the constant $\tilde{C}_{**}>0$ depends only on $\underline{\mathcal{U}}$, $\alpha$ and $L$.

Finally, combining \eqref{eq:prop-3.1-1} and \eqref{eq:prop-3.4-54}, one can choose a constant $\tilde{C}_{3}>0$ depending only on $\underline{\mathcal{U}}$, $\alpha$ and $L$,
such that the estimate \eqref{eq:3.5} holds, which completes the proof of Theorem \ref{thm:3.1}.

\section{Solutions to Problem $(\mathbf{IBVP})$ }\setcounter{equation}{0}
In this section, we will establish the existence and uniqueness of solutions of the initial-boundary value problem $(\mathbf{IBVP})$ based on Theorem \ref{thm:3.1} to show Theorem \ref{thm:2.2}. For the existence, we will employ the \emph{Schauder fixed point theorem} (see \cite[p.279, Theorem 11.1]{gilbarg-trudinger}), which is 

\emph{Let $\mathcal{X}$ be a compact convex set in a Banach space $\mathfrak{B}$ and let $\mathcal{T}$ be a continuous map of $\mathcal{X}$ into itself. Then $\mathcal{T}$ has a fixed point, that is, $\mathcal{T}z=z$ for some $z\in \mathcal{X}$.}

In order to apply the \emph{Schauder fixed point theorem}, one needs to do the following two things:

 \emph{$\bullet$ Construction of the map $\mathcal{T}$.}\ By the linearized problem $(\mathbf{IBVP})^{*}$, we can define a map $\mathcal{T}$ as
\begin{eqnarray}\label{eq:4.1}
(\delta\mathcal{U}^{*}_{+}, \delta\mathcal{U}^{*}_{-})\triangleq\mathcal{T}(\delta\mathcal{U}_{+}, \delta\mathcal{U}_{-}).
\end{eqnarray}

Set $\mathfrak{B}=C^{1,\frac{\alpha}{2}}(\tilde{\mathcal{N}}_{+})\times C^{1,\frac{\alpha}{2}}(\tilde{\mathcal{N}}_{-})$.
Clearly, $\mathcal{X}_{\sigma}$ is a compact convex set of $\mathfrak{B}$. Moreover, for any $(\delta\mathcal{U}_{+}, \delta\mathcal{U}_{-})\in \mathcal{X}_{\sigma}$, if 
$$
\sum_{k=\pm}\Big(\|\delta \mathcal{U}^{k}_{\rm in}\|_{1,\alpha; \tilde{\Gamma}^{k}_{\rm in}}+\|g_k-\underline{g}_k\|_{2,\alpha; \tilde{\Gamma}_{k}}\Big)\leq \varepsilon,
$$
it follows from Theorem \ref{thm:3.1} that
\begin{eqnarray}\label{eq:4.2}
\begin{split}
\|\delta \mathcal{U}^{*}_{+}\|_{1,\alpha; \tilde{\mathcal{N}}_{+}}+\|\delta\mathcal{U}^{*}_{-}\|_{1,\alpha; \tilde{\mathcal{N}}_{-}}\leq \tilde{C}_{4}(\varepsilon+\sigma^2),
\end{split}
\end{eqnarray}
where the constant $\tilde{C}_{4}>0$ depends only on $\underline{\mathcal{U}}$, $\alpha$ and $L$. Therefore, if we choose $\varepsilon$ and $\sigma$ sufficiently small so that $\tilde{C}_{4}\varepsilon=\frac{1}{2}\sigma$ and $\tilde{C}_{4}\sigma\leq \frac{1}{2}$, it follows from \eqref{eq:4.2} that
\begin{eqnarray*}\label{eq:3.4}
\|\delta \mathcal{U}^{*}_{+}\|_{1,\alpha; \tilde{\mathcal{N}}_{+}}+\|\delta\mathcal{U}^{*}_{-}\|_{1,\alpha; \tilde{\mathcal{N}}_{-}}\leq \sigma.
\end{eqnarray*}
So the map $\mathcal{T}$ is from $\mathcal{X}_{\sigma}$ to $\mathcal{X}_{\sigma}$.

\emph{$\bullet$ Continuity of the map $\mathcal{T}$.}\ 
 Let $\{\delta \mathcal{U}_{\rm n}\}\subset \mathcal{X}_{\sigma}$ be a given sequence with
$\delta \mathcal{U}_{\rm n}=(\delta \mathcal{U}_{+,\rm{n}}, \delta \mathcal{U}_{-,\rm{n}})$ and satisfying
\begin{eqnarray}\label{eq:n-1}
\delta \mathcal{U}_{\rm n}\rightarrow \delta \mathcal{U}_{0}, \quad  \mbox{in} \quad  \mathfrak{B}, \quad \mbox{for}\quad \delta \mathcal{U}_{0}=(\delta \mathcal{U}_{+,0}, \delta \mathcal{U}_{-,0}).
\end{eqnarray}

By Theorem \ref{thm:3.1}, let $\delta\mathcal{U}^{*}_{\rm {n}}\triangleq\mathcal{T}(\delta \mathcal{U}_{\rm n})$ and
$\delta\mathcal{U}^{*}_{0}\triangleq\mathcal{T}(\delta \mathcal{U}_{0})$. 
We claim that
\begin{eqnarray}\label{eq:n-2}
\delta\mathcal{U}^{*}_{\rm {n}}\rightarrow \delta \mathcal{U}^{*}_{0}, \qquad  \mbox{in} \quad  \mathfrak{B}.
\end{eqnarray}

Suppose that the claim \eqref{eq:n-2} were not true. Then there would exist a constant $d^{*}>0$ and a subsequence $\{\delta\mathcal{U}^{*}_{\rm {n}_{k}}\}$
such that
\begin{eqnarray}\label{eq:n-3}
\|\delta\mathcal{U}^{*}_{\rm {n}_k}- \delta \mathcal{U}^{*}_{0}\|_{\mathfrak{B}}>d^{*}.
\end{eqnarray}

On the other hand, since the sequence $\{\delta\mathcal{U}^{*}_{\rm {n}_k}\}\subset \mathcal{X}_{\sigma}$ is compact in $\mathfrak{B}$, we can find a subsequence of $\{\delta\mathcal{U}^{*}_{\rm {n}_k}\}$,  which we still denote by $\{\delta\mathcal{U}^{*}_{\rm {n}_k}\}$, such that it converges to a function $\delta\mathcal{U}^{*}_{1}=(\delta\mathcal{U}^{*}_{-,1}, \delta\mathcal{U}^{*}_{+,1})\in \mathcal{X}_{\sigma}$ in $\mathfrak{B}$. It follows from \eqref{eq:n-3} that
\begin{eqnarray}\label{eq:n-4}
\|\delta\mathcal{U}^{*}_{1}- \delta \mathcal{U}^{*}_{0}\|_{\mathfrak{B}}>d^{*}.
\end{eqnarray}

Note that both $ \delta \mathcal{U}^{*}_{0}$ and $\delta\mathcal{U}^{*}_{1}$ are solutions to the same linearized problem $(\mathbf{IBVP})^{*}$ (i.e., \eqref{eq:3.2}), \emph{i.e.}, whose coefficients are determined completely by $\delta\mathcal{U}_{0}$. Then, by the uniquenss in Theorem \ref{thm:3.1}, $\delta\mathcal{U}^{*}_{1}=\delta \mathcal{U}^{*}_{0}$, which contradicts to the \eqref{eq:n-4}. Thus, claim \eqref{eq:n-2} holds and the map $\mathcal{T}$ is continuous.

Applying the \emph{Schauder fixed point theorem}, we know that the iteration scheme admits a fixed point $\delta \mathcal{U}_{\pm}$. Let $\mathcal{U}_{\pm}\triangleq\underline{\mathcal{U}}_{\pm}+\delta \mathcal{U}_{\pm}$. Then $\mathcal{U}_{\pm}$ is a solution of $(\mathbf{IBVP})$. Moreover, by Theorem \ref{thm:3.1}, we can further show that $\mathcal{U}_{\pm}$ satisfies the estimate \eqref{eq:2.35}.

Now, we will show that the solution $\mathcal{U}_{\pm}$ to the nonlinear problem $(\mathbf{IBVP})$ is unique. Suppose $\mathcal{U}_{\pm, 1}=(\omega_{\pm,1}, p_{\pm,1}, B_{\pm,1}, S_{\pm,1}, Y_{\pm,1})^{\top}$
and $\mathcal{U}_{\pm, 2}=(\omega_{\pm,2}, p_{\pm,2}, B_{\pm,2}, S_{\pm,2}, Y_{\pm,2})^{\top}$ are two solutions to $(\mathbf{IBVP})$ with the same datum and satisfying \eqref{eq:2.35}.
Denote
\begin{eqnarray*}
\delta\mathcal{U}^{\natural}_{\pm}=(\delta\omega^{\natural}_{\pm}, \delta p^{\natural}_{\pm}, \delta B^{\natural}_{\pm}, \delta S^{\natural}_{\pm}, \delta Y^{\natural}_{\pm})^{\top}
\triangleq\mathcal{U}_{\pm,1}-\mathcal{U}_{\pm,2}.
\end{eqnarray*}

Then, $\delta\mathcal{U}^{\natural}_{\pm}$ satisfies the following initial-boundary value problem:
\begin{eqnarray*}
\begin{cases}
\partial_{\xi}\delta\omega^{\natural}_{+}+\lambda^{\pm}_{+,1}\partial_{\eta}\delta\omega^{\natural}_{+}\pm\Lambda_{+,1}\big(\partial_{\xi}\delta p^{\natural}_{+}+\lambda^{\pm}_{+,1}\partial_{\eta}\delta p^{\natural}_{+}\big)\\
\qquad\qquad\qquad\qquad\qquad=a^{\pm,\natural}_{+}(\mathcal{U}_{+,1})\delta Y^{\natural}_{+}
+f^{\pm, \natural}_{+}(\delta\mathcal{V}^{\natural}_{+},\delta Y^{\natural}_{+}, \mathcal{U}_{+,2}), &\quad  \mbox{in} \quad \tilde{\mathcal{N}}_{+}, \\
\partial_{\xi}\delta B^{\natural}_{+}=a^{\natural}_{B_{+}}(\mathcal{U}_{+,1})\delta Y^{\natural}_{+}+f^{\natural}_{B_{+}}(\delta\mathcal{V}^{\natural}_{+}, \delta Y^{\natural}_{+}, \mathcal{U}_{+,2}), &\quad \mbox{in}\quad \tilde{\mathcal{N}}_{+},\\
\partial_{\xi}\delta S^{\natural}_{+}=a^{\natural}_{S_{+}}(\mathcal{U}_{+,1})\delta Y^{\natural}_{+}+f^{\natural}_{S_{+}}(\delta\mathcal{V}^{\natural}_{+}, \delta Y^{\natural}_{+}, \mathcal{U}_{+,2}), &\quad \mbox{in}\quad \tilde{\mathcal{N}}_{+},\\
\partial_{\xi}\delta Y^{\natural}_{+}=a^{\natural}_{Y_{+}}(\mathcal{U}_{+,1})\delta Y^{\natural}_{+}+f^{\natural}_{Y_{+}}(\delta\mathcal{V}^{\natural}_{+},\delta Y^{\natural}_{+},\mathcal{U}_{+,2}), &\quad \mbox{in}\quad \tilde{\mathcal{N}}_{+},\\
\partial_{\xi}\delta\omega^{\natural}_{-}+\lambda^{\pm}_{-,1}\partial_{\eta}\delta \omega^{\natural}_{-}\pm\Lambda_{-,1}\big(\partial_{\xi}\delta p^{\natural}_{-}+\lambda^{\pm}_{-,1}\partial_{\eta}\delta p^{\natural}_{-}\big)\\
\qquad\qquad\qquad\qquad\qquad=a^{\pm,\natural}_{-}(\mathcal{U}_{-,1})\delta Y^{\natural}_{-}+f^{\pm,\natural}_{-}(\delta\mathcal{V}^{\natural}_{-},\delta Y^{\natural}_{-},\mathcal{U}_{-,2}) &\quad  \mbox{in} \quad \tilde{\mathcal{N}}_{-},\\
\partial_{\xi}\delta B^{\natural}_{-}=a^{\natural}_{B_{-}}(\mathcal{U}_{-,1})\delta Y^{\natural}_{-}+f^{\natural}_{B_{-}}(\delta\mathcal{V}^{\natural}_{-},\delta Y^{\natural}_{-}, \mathcal{U}_{-,2}), &\quad  \mbox{in} \quad \tilde{\mathcal{N}}_{-},\\
\partial_{\xi}\delta S^{\natural}_{-}=a^{\natural}_{S_{-}}(\mathcal{U}_{-,1})\delta Y^{\natural}_{-}+f^{\natural}_{S_{-}}(\delta\mathcal{V}^{\natural}_{-},\delta Y^{\natural}_{-}, \mathcal{U}_{-,2}), &\quad  \mbox{in} \quad \tilde{\mathcal{N}}_{-},\\
\partial_{\xi}\delta Y^{\natural}_{-}=a^{\natural}_{Y_{-}}(\mathcal{U}_{-,1})\delta Y^{\natural}_{-}+f^{\natural}_{Y_{-}}(\delta\mathcal{V}^{\natural}_{-},\delta Y^{\natural}_{-},\mathcal{U}_{-,2}), &\quad \mbox{in}\quad \tilde{\mathcal{N}}_{-},\\
\delta \mathcal{U}^{\natural}_{+}=0,  &\quad  \mbox{on} \quad \tilde{\Gamma}^{+}_{\rm in},\\
\delta \mathcal{U}^{\natural}_{-}=0,  &\quad  \mbox{on} \quad \tilde{\Gamma}^{-}_{\rm in},\\
\delta\omega^{\natural}_{+}=0, &\quad \mbox{on} \quad \tilde{\Gamma}_{+},\\
\delta\omega^{\natural}_{+}=\delta\omega^{\natural}_{-},\quad \delta p^{\natural}_{+}=\delta p^{\natural}_{-},&\quad \mbox{on} \quad \tilde{\Gamma}_{\rm cd},\\
\delta\omega^{\natural}_{-}=0, &\quad \mbox{on} \quad \tilde{\Gamma}_{-},
\end{cases}
\end{eqnarray*}
where $\delta \mathcal{V}^{\natural}_{\pm}=(\delta\omega^{\natural}_{\pm}, \delta p^{\natural}_{\pm}, \delta B^{\natural}_{\pm}, \delta S^{\natural}_{\pm})^{\top}$, the terms $a^{\pm, \natural}_{\pm}$, $a^{\natural}_{B_{\pm}}$, $a^{\natural}_{S_{\pm}}$, $a^{\natural}_{Y_{\pm}}$ are defined by
\begin{eqnarray*}
\begin{cases}
a^{\pm, \natural}_{+}(\mathcal{U}_{+, 1})=\frac{(\gamma-1)\mathfrak{q}_{0}\phi(T_{+,1})\lambda^{\pm}_{+,1}}{\rho_{+ ,1}c^{2}_{+,1}u^{2}_{+,1}},\quad
a^{\pm, \natural}_{-}(\mathcal{U}_{-, 1})=\frac{(\gamma-1)\mathfrak{q}_{0}\phi(T_{-,1})\lambda^{\pm}_{-,1}}{\rho_{- ,1}c^{2}_{-,1}u^{2}_{-,1}},\\
a^{\natural}_{B_{\pm}}(\mathcal{U}_{\pm, 1})=\frac{\mathfrak{q}_{0}\phi(T_{\pm,1})}{u_{\pm,1}},
\quad
a^{\natural}_{S_{\pm}}(\mathcal{U}_{\pm, 1})=-\frac{\gamma \mathcal{R}\mathfrak{q}_{0}\phi(T_{\pm,1})}{c^2_{\pm,1}u_{\pm,1}},
\quad a^{\natural}_{Y_{\pm}}(\mathcal{U}_{\pm, 1})=-\frac{\phi(T_{\pm,1})}{u_{\pm,1}},
\end{cases}
\end{eqnarray*}
and $f^{\pm, \natural}_{\pm}$, $f^{\natural}_{B_{\pm}}$, $f^{\natural}_{S_{\pm}}$, $f^{\natural}_{Y_{\pm}}$ are defined by
\begin{eqnarray*}
\begin{cases}
f^{\pm, \natural}_{+}(\delta \mathcal{V}^{\natural}_{+}, \delta Y^{\natural}_{+}, \mathcal{U}_{+, 2})&\triangleq(\gamma-1)\mathfrak{q}_{0}\cdot\bigg(\frac{\phi(T_{+,1})\lambda^{\pm}_{+,1}}{\rho_{+ ,1}c^{2}_{+,1}u^{2}_{+,1}}-\frac{\phi(T_{+,2})\lambda^{\pm}_{+,2}}{\rho_{+,2}c^{2}_{+,2}u^{2}_{+,2}}\bigg)Y_{+,2}
+\big(\lambda^{\pm}_{+,2}-\lambda^{\pm}_{+,1}\big)\partial_{\eta}\omega_{+, 2}\\
&\quad \pm\big(\lambda^{\pm}_{+,2}-\lambda^{\pm}_{+,1}\big)\Lambda_{+,1}\partial_{\eta}p_{+,2}
\pm\big(\Lambda_{+,2}-\Lambda_{+,1}\big)\big(\partial_{\xi}p_{+,2}+\lambda^{\pm}_{+,2}\partial_{\eta}p_{+, 2}\big),\\
f^{\pm, \natural}_{-}(\delta \mathcal{V}^{\natural}_{-}, \delta Y^{\natural}_{-}, \mathcal{U}_{-, 2})&\triangleq(\gamma-1)\mathfrak{q}_{0}\cdot\bigg(\frac{\phi(T_{-,1})\lambda^{\pm}_{-,1}}{\rho_{- ,1}c^{2}_{-,1}u^{2}_{-,1}}-\frac{\phi(T_{-,2})\lambda^{\pm}_{-,2}}{\rho_{-,2}c^{2}_{-,2}u^{2}_{-,2}}\bigg)Y_{-,2}
+\big(\lambda^{\pm}_{-,2}-\lambda^{\pm}_{-,1}\big)\partial_{\eta}\omega_{-,2}\\
&\quad \pm\big(\lambda^{\pm}_{-,2}-\lambda^{\pm}_{-,1}\big)\Lambda_{-,1}\partial_{\eta}p_{-,2}
\pm\big(\Lambda_{-,2}-\Lambda_{-,1}\big)\big(\partial_{\xi}p_{-,2}+\lambda^{\pm}_{-,2}\partial_{\eta}p_{-,2}\big),
\end{cases}
\end{eqnarray*}
and
\begin{eqnarray*}
\begin{cases}
f^{\natural}_{B_{\pm}}(\delta \mathcal{V}^{\natural}_{\pm},\delta Y^{\natural}_{\pm}, \mathcal{U}_{\pm, 2})\triangleq-\mathfrak{q}_{0}\cdot\bigg(\frac{\phi(T_{\pm,1})}{u_{\pm,1}}
-\frac{\phi(T_{\pm,2})}{u_{\pm,2}}\bigg)Y_{\pm,2},\\
f^{\natural}_{S_{\pm}}(\delta \mathcal{V}^{\natural}_{\pm},\delta Y^{\natural}_{\pm}, \mathcal{U}_{\pm, 2})\triangleq-\gamma\mathcal{R}\mathfrak{q}_{0}\cdot\bigg(\frac{\phi(T_{\pm,1})}{c^{2}_{\pm,1}u_{\pm,1}}
-\frac{\phi(T_{\pm,2})}{c^{2}_{\pm,2}u_{\pm,2}}\bigg)Y_{\pm,2}, \\
f^{\natural}_{Y_{\pm}}(\delta \mathcal{V}^{\natural}_{\pm}, \delta Y^{\natural}_{\pm},\mathcal{U}_{\pm, 2})\triangleq
-\bigg(\frac{\phi(T_{\pm,1})}{u_{\pm,1}}-\frac{\phi(T_{\pm,2})}{u_{\pm,2}}\bigg)Y_{\pm,2}.
\end{cases}
\end{eqnarray*}

Then we need to show that $\delta \mathcal{U}^{\natural}_{+}=\delta \mathcal{U}^{\natural}_{-}\equiv0$. 

First by the Gronwall's inequality and taking $\sigma>0$ sufficiently small, we can get that
\begin{align}\label{eq:4.15}
\begin{split}
\sum_{k=\pm}\|\delta Y^{\natural}_{k}\|_{0,0;\tilde{\mathcal{N}}_{k}}\leq \mathcal{O}(1)
\sum_{k=\pm}\|f^{\natural}_{Y_{k}}\|_{0,0;\tilde{\mathcal{N}}_{k}}
\leq \mathcal{O}(1)\sum_{k=\pm}\|\delta Y_{k,2}\|_{0,0;\tilde{\mathcal{N}}_{k}}\|\delta \mathcal{V}^{\natural}_{k}\|_{0,0;\tilde{\mathcal{N}}_{k}},
\end{split}
\end{align}
where $\mathcal{O}(1)$ depends only on $\underline{\mathcal{U}}$ and $L$.
Then
\begin{align}\label{eq:4.16}
\begin{split}
\sum_{k=\pm}\|(\delta B^{\natural}_{k}, \delta S^{\natural}_{k})\|_{0,0;\tilde{\mathcal{N}}_{k}}
&\leq \mathcal{O}(1)\sum_{k=\pm}\Big(\|\delta Y^{\natural}_{k}\|_{0,0;\tilde{\mathcal{N}}_{k}}+\|\delta f^{\natural}_{B_{k}}\|_{0,0;\tilde{\mathcal{N}}_{k}}+\|\delta f^{\natural}_{S_{k}}\|_{0,0;\tilde{\mathcal{N}}_{k}}\Big)\\
&\leq \mathcal{O}(1)\sum_{k=\pm}\|\delta Y_{k,2}\|_{0,0;\tilde{\mathcal{N}}_{k}}\|\delta \mathcal{V}^{\natural}_{k}\|_{0,0;\tilde{\mathcal{N}}_{k}},
\end{split}
\end{align}
where $\mathcal{O}(1)$ depends only on $\underline{\mathcal{U}}$ and $L$.

For $(\delta \omega^{\natural}_{k}, \delta p^{\natural}_{k})$, by employing the characteristic as done in the proof of Theorem \ref{thm:3.1}, we have
\begin{align}\label{eq:4.17}
\begin{split}
\sum_{k=\pm}\Big(\|\delta \omega^{\natural}_{k}\|_{0,0;\tilde{\mathcal{N}}_{k}}+\|\delta p^{\natural}_{k}\|_{0,0;\tilde{\mathcal{N}}_{k}}\Big)
\leq \mathcal{O}(1)\sum_{k=\pm}\Big(\|\delta Y^{\natural}_{k}\|_{0,0;\tilde{\mathcal{N}}_{k}}+\|f^{\pm, \natural}_{k}\|_{0,0;\tilde{\mathcal{N}}_{k}}\Big).
\end{split}
\end{align}

Note that
\begin{align}\label{eq:4.18}
\begin{split}
&\|f^{\pm, \natural}_{k}\|_{0,0;\tilde{\mathcal{N}}_{k}}\\
&\leq \mathcal{O}(1)\bigg\|\frac{\phi(T_{k,1})\lambda^{\pm}_{k,1}}{\rho_{k ,1}c^{2}_{k,1}u^{2}_{k,1}}-\frac{\phi(T_{k,2})\lambda^{\pm}_{k,2}}{\rho_{k,2}c^{2}_{k,2}u^{2}_{k,2}}\bigg\|_{0,\tilde{\mathcal{N}}_{k}} \big\|Y_{k,2}\big\|_{0,0;\tilde{\mathcal{N}}_{k}}
+\big\|\big(\lambda^{\pm}_{k,2}-\lambda^{\pm}_{k,1}\big)\partial_{\eta}\omega_{k, 2}\big\|_{0,0;\tilde{\mathcal{N}}_{k}}\\
&\quad +\mathcal{O}(1)\Big(\big\|\big(\lambda^{\pm}_{k,2}-\lambda^{\pm}_{k,1}\big)\Lambda_{k,1}\partial_{\eta}p_{k,2}\big\|_{0,0;\tilde{\mathcal{N}}_{k}}
+\big\|\big(\Lambda_{k,2}-\Lambda_{k,1}\big)\big(\partial_{\xi}p_{k,2}+\lambda^{\pm}_{k,2}\partial_{\eta}p_{k, 2}\big)\big\|_{0,0;\tilde{\mathcal{N}}_{k}}\Big)\\
&\leq \mathcal{O}(1)\Big(\big\|\delta Y_{k,2}\big\|_{0,0;\tilde{\mathcal{N}}_{k}}\big\|\delta\mathcal{V}^{\natural}_{k}\big\|_{0,0;\tilde{\mathcal{N}}_{k}}
+\big(\|\nabla\delta\omega_{k,2}\|_{0,0;\tilde{\mathcal{N}}_{k}}+\|\nabla\delta p_{k,2}\|_{0,0;\tilde{\mathcal{N}}_{k}}\big)\big\|\delta\mathcal{V}^{\natural}_{k}\big\|_{0,0;\tilde{\mathcal{N}}_{k}}\Big)\\
&\leq \mathcal{O}(1)\tilde{\varepsilon}\big\|\delta\mathcal{V}^{\natural}_{k}\big\|_{0,0;\tilde{\mathcal{N}}_{k}}.
\end{split}
\end{align}
%
Then, 
\begin{align}\label{eq:4.19}
\begin{split}
\sum_{k=\pm}\Big(\|\delta \omega^{\natural}_{k}\|_{0,0;\tilde{\mathcal{N}}_{k}}+\|\delta p^{\natural}_{k}\|_{0,0;\tilde{\mathcal{N}}_{k}}\Big)
\leq \mathcal{O}(1)\tilde{\varepsilon}\sum_{k=\pm}\big\|\delta\mathcal{U}^{\natural}_{k}\big\|_{0,0;\tilde{\mathcal{N}}_{k}},
\end{split}
\end{align}
where constant $\mathcal{O}(1)$ depends only on $\underline{\mathcal{U}}$ and $L$.

Combining the estimates \eqref{eq:4.15}, \eqref{eq:4.16} and \eqref{eq:4.19} altogether, we arrive at
\begin{eqnarray*}
\begin{split}
\sum_{k=\pm}\|\delta \mathcal{U}^{\natural}_{k}\|_{0,0;\tilde{\mathcal{N}}_{k}}
\leq \mathcal{O}(1)\tilde{\varepsilon}\sum_{k=\pm}\|\delta \mathcal{U}^{\natural}_{k}\|_{0,0;\tilde{\mathcal{N}}_{k}},
\end{split}
\end{eqnarray*}
which implies $\delta \mathcal{U}^{\natural}_{+}=\delta \mathcal{U}^{\natural}_{-}=0$ by choosing $\tilde{\varepsilon}>0$ sufficiently small.

\begin{proof}[\underline{Proof of Theorem \ref{thm:2.1}}]
Since $(\omega_{\pm}, p_{\pm}, B_{\pm}, S_{\pm}, Y_{\pm})$ and $(u_{\pm}, v_{\pm},\rho_{\pm})\in C^{1,\alpha}(\tilde{\mathcal{N}}_{\pm})$, by the relations \eqref{eq:1.4}, \eqref{eq:1.8}, \eqref{eq:2.23}
and \eqref{eq:2.25}, we have in $\tilde{\mathcal{N}}_{\pm}$ that
\begin{eqnarray}\label{eq:4.20}
\begin{split}
\rho_{\pm}=\big(A(S_{\pm})\big)^{-\frac{1}{\gamma}}(p_{\pm})^{\frac{1}{\gamma}},
\end{split}
\end{eqnarray}
and
\begin{align}\label{eq:4.21}
\begin{split}
&u_{\pm}=\sqrt{\frac{2\Big[(\gamma-1)B_{\pm}-\gamma A^{\frac{1}{\gamma}}(S_{\pm})p^{1-\frac{1}{\gamma}}_{\pm}\Big]}
{(\gamma-1)\big(1+\omega^{2}_{\pm}\big)}},\
v_{\pm}=\omega_{\pm}\sqrt{\frac{2\Big[(\gamma-1)B_{\pm}-\gamma A^{\frac{1}{\gamma}}(S_{\pm})p^{1-\frac{1}{\gamma}}_{\pm}\Big]}
{(\gamma-1)\big(1+\omega^{2}_{\pm}\big)}}.
\end{split}
\end{align}

Thus, we can reverse the procedures in deriving the equations \eqref{eq:2.19}-\eqref{eq:2.27} for $(\omega_{\pm}, p_{\pm}, B_{\pm}, S_{\pm}, Y_{\pm})$ to know that $(u_{\pm},v_{\pm}, p_{\pm},\rho_{\pm}, Y_{\pm})$ solve \emph{Problem 2.1}. Moreover, the estimate \eqref{eq:2.16} can be obtained by using \eqref{eq:4.20}-\eqref{eq:4.21} together with the estimate \eqref{eq:2.35}.
\end{proof}

\section{Quasi-One-Dimensional Approximation for Steady Supersonic Combustion Contact Discontinuity Flow in Nozzles}\setcounter{equation}{0}

In this section, we will consider the quasi-one-dimensional approximation problem for the two-dimensional steady supersonic combustion contact discontinuity flow in the nozzle $\mathcal{N}$
and show that the integral average of states $U_{\pm}$ in $\mathcal{N}_{\pm}$ can be approximated by the quasi-one-dimensional flow.
To achieve it, we shall first study the quasi-one-dimansional flow problem. 

\subsection{Quasi-one-dimensional flow problem}
We now consider the following problem:
\begin{eqnarray}\label{eq:5.1}
\begin{cases}
\frac{\dd }{\dd x}\big(\rho^{\textrm{A}}_{\pm} u^{\textrm{A}}_{\pm} \textrm{A}_{\pm}(x)\big)=0, &\quad  \mbox{in} \quad (0,L),\\
\frac{\dd }{\dd x}\big((\rho^{\textrm{A}}_{\pm} (u^{\textrm{A}} _{\pm})^{2}+p^{\textrm{A}}_{\pm})\textrm{A}_{\pm}(x)\big)=\textrm{A}'_{\pm}(x)p^{\textrm{A}}_{\pm}, &\quad  \mbox{in} \quad (0,L),\\
\frac{\dd }{\dd x}\Big(\big(\frac{1}{2}(u^{\textrm{A}}_{\pm})^2+\frac{\gamma p^{\textrm{A}}_{\pm}}{(\gamma-1)\rho^{\textrm{A}}_{\pm}}\big)\rho^{\textrm{A}}_{\pm} u^{\textrm{A}}_{\pm}\textrm{A}_{\pm}(x)\Big)=\mathfrak{q}_0\textrm{A}_{\pm}(x)\rho^{\textrm{A}}_{\pm}\phi(T^{\textrm{A}}_{\pm})Y^{\textrm{A}}_{\pm},&\quad  \mbox{in} \quad (0,L),\\
\frac{\dd }{\dd x}\big(\rho^{\textrm{A}}_{\pm} u^{\textrm{A}}_{\pm}Y^{\textrm{A}}_{\pm}\textrm{A}_{\pm}(x)\big)=-\textrm{A}_{\pm}(x)\rho^{\textrm{A}}_{\pm}\phi(T^{\textrm{A}}_{\pm})Y^{\textrm{A}}_{\pm},
&\quad  \mbox{in} \quad (0,L),
\end{cases}
\end{eqnarray}
with initial data
\begin{eqnarray}\label{eq:5.2}
(u^{\textrm{A}} _{\pm}, p^{\textrm{A}} _{\pm}, \rho^{\textrm{A}}_{\pm}, Y^{\textrm{A}} _{\pm})(x)=(\bar{u}^{\pm}_{\rm in}, \bar{p}^{\pm}_{\rm in}, \bar{\rho}^{\pm}_{\rm in}, \bar{Y}^{\pm}_{\rm in}),
\quad \mbox{on}\quad x=0.
\end{eqnarray}
Here, $T^{\textrm{A}}_{\pm}=\frac{p^{\textrm{A}} _{\pm}}{\mathcal{R}\rho^{\textrm{A}}_{\pm}}$ and $p^{\textrm{A}} _{\pm}=A(S^{\textrm{A}} _{\pm})(\rho^{\textrm{A}} _{\pm})^{\gamma}$.
Denote $V^{\textrm{A}}_{\pm}=(u^{\textrm{A}} _{\pm}, p^{\rm{A}} _{\pm}, \rho^{\textrm{A}}_{\pm}, Y^{\rm{A}} _{\pm})$, $V^{\rm{A}, \pm}_{\rm in}=(\bar{u}^{\pm}_{\rm in}, \bar{p}^{\pm}_{\rm in}, \bar{\rho}^{\pm}_{\rm in}, \bar{Y}^{\pm}_{\rm in})$ and $\underline{V}^{\pm}_{\rm{in}}=(\underline{u}^{\pm}_{\rm in}, \underline{p}^{\pm}_{\rm in}, \underline{\rho}^{\pm}_{\rm in},0)^{\top}$. Then, we have the following proposition. 
\begin{proposition}\label{prop:5.1}
Assume that the reaction rate function $\phi$ is $C^{1,1}$ with respect to $T^{\textrm{A}}_{\pm}>0$ and $\textrm{A}_{\pm}(x)\in C^{2,\alpha}((0,L))$.
Then there exist constants $\tilde{\epsilon}_{0}>0$ and $\tilde{C}_{4}>0$ depending only on $\underline{V}^{\pm}_{\rm{in}}$, $\alpha$ and $L$ such that for $\tilde{\epsilon}\in (0,\tilde{\epsilon}_{0})$, if
\begin{eqnarray}\label{eq:5.3}
\sum_{k=\pm}\Big(|\bar{V}^{k}_{\rm in}-\underline{V}^{k}_{\rm{in}}|+\|\textrm{A}_{k}-1\|_{2,\alpha; (0,L)}\Big)\leq \tilde{\epsilon},
\end{eqnarray}
problem \eqref{eq:5.1}-\eqref{eq:5.2} admits a unique solution $V^{\rm{A}}_{\pm}\in C^{1,\alpha}((0,L))$ satisfying
\begin{eqnarray}\label{eq:5.4}
\begin{split}
\|V^{\rm{A}}_{+}-\underline{V}^{+}_{\rm{in}}\|_{1,\alpha; (0,L)}+\|V^{\rm{A}}_{-}-\underline{V}^{-}_{\rm{in}}\|_{1,\alpha; (0,L)}\leq \tilde{C}_{4}\tilde{\epsilon}.
\end{split}
\end{eqnarray}
\end{proposition}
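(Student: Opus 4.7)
My plan is to decouple the $+$ and $-$ components of \eqref{eq:5.1} (they are uncoupled) and, for each sign, reduce the $4\times 4$ conservation system to a standard first-order ODE. First, $\eqref{eq:5.1}_1$ yields the algebraic conservation law $m_\pm := \rho^{\rm A}_\pm u^{\rm A}_\pm \textrm{A}_\pm \equiv \bar\rho^\pm_{\rm in}\bar u^\pm_{\rm in}\textrm{A}_\pm(0)$, which determines $\rho^{\rm A}_\pm$ in terms of $u^{\rm A}_\pm$ and $\textrm{A}_\pm(x)$. Substituting into $\eqref{eq:5.1}_{2\text{--}4}$ and expanding the derivatives, a direct computation using $c^2 = \gamma p/\rho$ shows that the resulting system takes the form
\[
\mathcal{M}_\pm(V^{\rm A}_\pm)\,\frac{\dd V^{\rm A}_\pm}{\dd x} = \mathcal{G}_\pm\big(V^{\rm A}_\pm, \textrm{A}_\pm(x), \textrm{A}'_\pm(x)\big),
\]
with $\det \mathcal{M}_\pm(V^{\rm A}_\pm)$ proportional (up to a nonvanishing factor) to $(u^{\rm A}_\pm)^2 - (c^{\rm A}_\pm)^2$. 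Since the background state is strictly supersonic by \eqref{eq:1.16}, $\mathcal{M}_\pm$ is invertible at $\underline{V}^\pm_{\rm in}$ and, by continuity, on a $C^0$-neighborhood of it. On that neighborhood the system becomes
\[
\frac{\dd V^{\rm A}_\pm}{\dd x} = \mathcal{F}_\pm\big(V^{\rm A}_\pm, \textrm{A}_\pm(x), \textrm{A}'_\pm(x)\big),
\]
with $\mathcal{F}_\pm$ being $C^{1,1}$ in its first argument (since $\phi\in C^{1,1}$ and $\textrm{A}_\pm$ is bounded below away from zero) and $C^{0,\alpha}$ in $x$ (through $\textrm{A}_\pm, \textrm{A}'_\pm$).

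Next, classical Picard--Lindel\"of theory produces a unique $C^{1,\alpha}$-solution on some small initial interval, the H\"older regularity being inherited from that of the right-hand side in $x$. The key step is to extend it to all of $(0,L)$ while keeping it in the supersonic neighborhood. Writing $\delta V^{\rm A}_\pm := V^{\rm A}_\pm - \underline V^\pm_{\rm in}$ and noting that $\mathcal{F}_\pm(\underline{V}^\pm_{\rm in}, 1, 0)=0$, a Taylor expansion shows that $\delta V^{\rm A}_\pm$ solves an inhomogeneous linear ODE whose source is controlled by
\[
\|\textrm{A}_\pm-1\|_{1,0;(0,L)} + \|Y^{\rm A}_\pm\|_{0,0;(0,L)} + \mathcal{O}\big(\|\delta V^{\rm A}_\pm\|_{0,0;(0,L)}^2\big),
\]
where $\|Y^{\rm A}_\pm\|_{0,0;(0,L)}$ is itself $\mathcal{O}(\tilde\epsilon)$ because the $Y$-equation is (after eliminating $\rho^{\rm A}_\pm$) a decaying scalar linear ODE with initial data $|\bar Y^\pm_{\rm in}|\le\tilde\epsilon$. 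A Gronwall estimate on the maximal existence interval then yields $\|\delta V^{\rm A}_\pm\|_{0,0;(0,L)} \le C\tilde\epsilon$; for $\tilde\epsilon$ small enough, this keeps $V^{\rm A}_\pm$ inside the supersonic neighborhood, excluding finite-$x$ blow-up and providing global existence on $(0,L)$.

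Finally, differentiating the ODE once more and using the H\"older-type linear-ODE estimates already employed in Proposition \ref{prop:3.1} (applied here to a much simpler scalar setting), one upgrades the $C^0$-bound to the full $C^{1,\alpha}$-estimate \eqref{eq:5.4}; uniqueness then follows by the standard contraction argument on the difference of two solutions, exactly in the spirit of the end of Section 4. The main technical obstacle I anticipate is the bootstrap step of keeping the supersonic factor $(u^{\rm A}_\pm)^2 - (c^{\rm A}_\pm)^2$ uniformly bounded below away from zero throughout $(0,L)$; this is precisely where the smallness of $\tilde\epsilon$ enters essentially, and it is also where the $L$-dependence of the constant $\tilde C_4$ arises, through the Gronwall exponent $e^{CL}$. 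Everything else is routine ODE theory adapted to the present H\"older framework.
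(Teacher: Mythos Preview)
Your proposal is correct and, at the level of this ODE result, entirely adequate. It differs in packaging from the paper's proof, however, and it is worth noting how.

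The paper does not eliminate $\rho^{\rm A}_\pm$ via the mass integral; instead it keeps all four unknowns, rewrites \eqref{eq:5.1} directly as the explicit nonsingular first-order system \eqref{eq:5.5} (where the supersonic denominators $(c^{\rm A}_\pm)^2-(u^{\rm A}_\pm)^2$ and $1-(M^{\rm A}_\pm)^2$ appear exactly as you anticipated), and then runs a Banach fixed-point iteration in $C^{1,\alpha}$ around the background state: one freezes $\delta V^{\rm A}_\pm$ in the nonlinear source terms \eqref{eq:5.9}, solves the resulting linear ODE \eqref{eq:5.7} using the $C^{1,\alpha}$ ODE estimates of Proposition~\ref{prop:3.1}, and verifies self-mapping and contraction on the ball $\mathcal{X}^{\rm A}_{\tilde\sigma}$. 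This is deliberately parallel to the scheme used for the full PDE problem in Sections~\ref{Sec3}--4. Your route---Picard--Lindel\"of for local existence, continuation via a Gronwall bound that traps the solution in the supersonic region, then a bootstrap to $C^{1,\alpha}$---is more elementary and avoids setting up an iteration map, at the cost of separating existence from the final estimate. Both arguments hinge on the same point you correctly isolate: the smallness of $\tilde\epsilon$ keeps $(u^{\rm A}_\pm)^2-(c^{\rm A}_\pm)^2$ uniformly bounded away from zero on $(0,L)$, and the $L$-dependence of $\tilde C_4$ enters through a Gronwall-type factor in either case.
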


\begin{remark}\label{rem:5.1}
The smallness assumption \eqref{eq:5.3} in Proposition \ref{prop:5.1} can be ensured by assumption \eqref{eq:1.22} in Theorem \ref{thm:1.1}. In fact, by \eqref{eq:1.28},
there exists a positive constant $\tilde{C}_{5}$ depending only on $\underline{U}_{\pm}$ and $\alpha$ such that
\begin{eqnarray*}
\sum_{k=\pm}\big(|\bar{V}^{k}_{\textrm {in}}-\underline{V}^{k}_{\textrm{in}}|+\|\textrm{A}_{k}-1\|_{2,\alpha; (0,L)}\big)\leq
\tilde{C}_{5}\sum_{k=\pm}\big(\|U^{k}_{\rm in}-\underline{U}^{k}_{\textrm{in}}\|_{1,\alpha; \Gamma^{k}_{\textrm{in}}}+\|g_k-\underline{g}_{k}\|_{2,\alpha; \Gamma_{k}}\big)<\tilde{C}_{5}\epsilon.
\end{eqnarray*}
So we set $\tilde{\epsilon}=\tilde{C}_{5}\epsilon$.
\end{remark}
\begin{proof}[Proof of Proposition \ref{prop:5.1}]
First, we will develop an iteration scheme to solve the problem \eqref{eq:5.1}-\eqref{eq:5.2}. Set $M^{\textrm{A}}_{\pm}\triangleq{u^{\textrm{A}}_{\pm}}/{c^{\textrm{A}}_{\pm}}$ and $c^{\textrm{A}}_{\pm}=\sqrt{\frac{\gamma p^{\textrm{A}}_{\pm}}{\rho^{\textrm{A}}_{\pm}}}$. We rewrite system \eqref{eq:5.1} as
\begin{eqnarray}\label{eq:5.5}
\begin{cases}
\frac{\dd u^{\textrm{A}}_{\pm}}{\dd x} = \frac{(\gamma-1)\mathfrak{q}_0\phi(T^{\textrm{A}}_{\pm})Y^{\textrm{A}}_{\pm}}{(c^{\textrm{A}}_{\pm})^2-(u^{\textrm{A}}_{\pm})^2}
-\frac{u^{\textrm{A}}_{\pm}\textrm{A}'_{\pm}}{\textrm{A}_{\pm}[1-(M^{\textrm{A}}_{\pm})^2]},\\
\frac{\dd p^{\textrm{A}}_{\pm}}{\dd x} =- \frac{(\gamma-1)\mathfrak{q}_0\rho^{\textrm{A}}_{\pm}u^{\textrm{A}}_{\pm}\phi(T^{\textrm{A}}_{\pm})Y^{\textrm{A}}_{\pm}}{(c^{\textrm{A}}_{\pm})^2-(u^{\textrm{A}}_{\pm})^2}
+\frac{\rho^{\textrm{A}}_{\pm}(u^{\textrm{A}}_{\pm})^2\textrm{A}'_{\pm}}{\textrm{A}_{\pm}[1-(M^{\textrm{A}}_{\pm})^2]},\\
\frac{\dd \rho^{\textrm{A}}_{\pm}}{\dd x} =- \frac{(\gamma-1)\mathfrak{q}_0\rho^{\textrm{A}}_{\pm}\phi(T^{\textrm{A}}_{\pm})Y^{\textrm{A}}_{\pm}}{u^{\textrm{A}}_{\pm}[(c^{\textrm{A}}_{\pm})^2-(u^{\textrm{A}}_{\pm})^2]}-\frac{\rho^{\textrm{A}}_{\pm}
(M^{\textrm{A}}_{\pm})^2\textrm{A}'_{\pm}}{\textrm{A}_{\pm}[1-(M^{\textrm{A}}_{\pm})^2]},\\
\frac{\dd Y^{\textrm{A}}_{\pm}}{\dd x} =- \frac{\phi(T^{\textrm{A}}_{\pm})Y^{\textrm{A}}_{\pm}}{u^{\textrm{A}}_{\pm}}.
\end{cases}
\end{eqnarray}

For $\tilde{\sigma}\in(0,1)$, we define the iteration set as
\begin{eqnarray}\label{eq:5.6}
\mathcal{X}^{\textrm{A}}_{\tilde{\sigma}}=\big\{(\delta V^{\textrm{A}}_{+}, \delta V^{\textrm{A}}_{-})\in C^{1,\alpha}((0,L))\times C^{1,\alpha}((0,L))|\ \|\delta V^{\textrm{A}}_{+}\|_{1,\alpha;(0,L)}+\|\delta V^{\textrm{A}}_{-}\|_{1,\alpha;(0,L)} \leq \tilde{\sigma} \big\},
\end{eqnarray}
where $\delta V^{\textrm{A}}_{\pm}=\big(\delta u^{\textrm{A}}_{\pm}, \delta p^{\textrm{A}}_{\pm}, \delta \rho^{\textrm{A}}_{\pm}, \delta Y^{\textrm{A}}_{\pm}\big)^{\top}$.
Then, for any given $\delta V^{\textrm{A}}_{\pm}\in \mathcal{X}^{\textrm{A}}_{\tilde{\sigma}}$, let $V^{\textrm{A}}_{\pm}\triangleq\underline{V}_{\pm}+\delta V^{\textrm{A}}_{\pm}$.
We update $\delta V^{\textrm{A}}_{\pm}$ via $\delta \tilde{V}^{\textrm{A}}_{\pm}=\big(\delta \tilde{u}^{\textrm{A}}_{\pm}, \delta \tilde{p}^{\textrm{A}}_{\pm}, \delta \tilde{\rho}^{\textrm{A}}_{\pm}, \delta \tilde{Y}^{\textrm{A}}_{\pm}\big)^{\top}$
by solving the following linearized problem:
\begin{eqnarray}\label{eq:5.7}
\begin{cases}
\frac{\dd \delta \tilde{u}^{\textrm{A}}_{\pm}}{\dd x} = a_{u^{\textrm{A}}_{\pm}}(\underline{V}_{\pm})\delta \tilde{Y}^{\textrm{A}}_{\pm}+f_{u^{\textrm{A}}_{\pm}}(\underline{V}_{\pm},\delta V^{\textrm{A}}_{\pm},\textrm{A}_{\pm}),&\quad  \mbox{in} \quad (0,L), \\
\frac{\dd \delta \tilde{p}^{\textrm{A}}_{\pm}}{\dd x} = a_{p^{\textrm{A}}_{\pm}}(\underline{V}_{\pm})\delta \tilde{Y}^{\textrm{A}}_{\pm}+f_{p^{\textrm{A}}_{\pm}}(\delta V^{\textrm{A}}_{\pm},\textrm{A}'_{\pm}),&\quad  \mbox{in} \quad (0,L), \\
\frac{\dd \delta \tilde{\rho}^{\textrm{A}}_{\pm}}{\dd x} = a_{\rho^{\textrm{A}}_{\pm}}(\underline{V}_{\pm})\delta \tilde{Y}^{\textrm{A}}_{\pm}+f_{\rho^{\textrm{A}}_{\pm}}(\delta V^{\textrm{A}}_{\pm},\textrm{A}'_{\pm}),&\quad  \mbox{in} \quad (0,L), \\
\frac{\dd \delta \tilde{Y}^{\textrm{A}}_{\pm}}{\dd x} = a_{Y^{\textrm{A}}_{\pm}}(\underline{V}_{\pm})\delta \tilde{Y}^{\textrm{A}}_{\pm}+f_{Y^{\textrm{A}}_{\pm}}(\delta V^{\textrm{A}}_{\pm}),&\quad  \mbox{in} \quad (0,L), \\
(\delta \tilde{u}^{\textrm{A}}_{\pm}, \delta \tilde{p}^{\textrm{A}}_{\pm},\delta \tilde{\rho}^{\textrm{A}}_{\pm},\delta \tilde{Y}^{\textrm{A}}_{\pm})
=(\delta\bar{u}^{\pm}_{\rm in}, \delta\bar{p}^{\pm}_{\rm in}, \delta\bar{\rho}^{\pm}_{\rm in}, \delta\bar{Y}^{\pm}_{\rm in}), &\quad \mbox{on} \quad x=0,
\end{cases}
\end{eqnarray}
where $a_{u^{\textrm{A}}_{\pm}}$, $a_{p^{\textrm{A}}_{\pm}}$, $a_{\rho^{\textrm{A}}_{\pm}}$, $a_{Y^{\textrm{A}}_{\pm}}$ and $f_{u^{\textrm{A}}_{\pm}}$, $f_{p^{\textrm{A}}_{\pm}}$, $f_{\rho^{\textrm{A}}_{\pm}}$, $f_{Y^{\textrm{A}}_{\pm}}$ are defined by
\begin{eqnarray}\label{eq:5.8a}
\begin{cases}
a_{u^{\textrm{A}}_{\pm}}(\underline{V}_{\pm})\triangleq\frac{(\gamma-1)\mathfrak{q}_0\phi(\underline{T}^{\pm})}{(\underline{c}^{\pm})^2-(\underline{u}^{\pm})^2},\quad
a_{p^{\textrm{A}}_{\pm}}(\underline{V}_{\pm})\triangleq- \frac{(\gamma-1)\mathfrak{q}_0\underline{\rho}^{\pm}\underline{u}^{\pm}\phi(\underline{T}^{\pm})}{(\underline{c}^{\pm})^2-(\underline{u}^{\pm})^2}, \\
a_{\rho^{\textrm{A}}_{\pm}}(\underline{V}_{\pm})\triangleq-\frac{(\gamma-1)\mathfrak{q}_{0}\underline{\rho}^{\pm}\phi(\underline{T}^{\pm})}{\underline{u}^{\pm}[(\underline{c}^{\pm})^2-(\underline{u}^{\pm})^2]}, \quad a_{Y^{\textrm{A}}_{\pm}}(\underline{V}_{\pm})\triangleq-\frac{\phi(\underline{T}^{\pm})}{\underline{u}^{\pm}},
\end{cases}
\end{eqnarray}
and
\begin{align}\label{eq:5.9}
\begin{cases}
f_{u^{\textrm{A}}_{\pm}}(\delta V^{\textrm{A}}_{\pm},\textrm{A}'_{\pm})\triangleq(\gamma-1)\mathfrak{q}_{0}\cdot
\Big(\frac{\phi(T^{\textrm{A}}_{\pm})}{(c^{\textrm{A}}_{\pm})^2-(u^{\textrm{A}}_{\pm})^2}
-\frac{\phi(\underline{T}^{\pm})}{(\underline{c}^{\pm})^2-(\underline{u}^{\pm})^2}\Big)\delta Y^{\textrm{A}}_{\pm}-\frac{u^{\textrm{A}}_{\pm}\textrm{A}'_{\pm}(x)}{\textrm{A}_{\pm}[1-(M^{\textrm{A}}_{\pm})^2]},\\
f_{p^{\textrm{A}}_{\pm}}(\delta V^{\textrm{A}}_{\pm},\textrm{A}'_{\pm})\triangleq-(\gamma-1)\mathfrak{q}_0 \cdot \Big(\frac{\rho^{\textrm{A}}_{\pm}u^{\textrm{A}}_{\pm}\phi(T^{\textrm{A}}_{\pm})Y^{\textrm{A}}_{\pm}}{(c^{\textrm{A}}_{\pm})^2-(u^{\textrm{A}}_{\pm})^2}- \frac{\underline{\rho}^{\pm}\underline{u}^{\pm}\phi(\underline{T}^{\pm})}{(\underline{c}^{\pm})^2-(\underline{u}^{\pm})^2}\Big) \delta Y^{\textrm{A}}_{\pm}+\frac{\rho^{\textrm{A}}_{\pm}(u^{\textrm{A}}_{\pm})^2\textrm{A}'_{\pm}}{\textrm{A}_{\pm}[1-(M^{\textrm{A}}_{\pm})^2]},\\
f_{\rho^{\textrm{A}}_{\pm}}(\delta V^{\textrm{A}}_{\pm},\textrm{A}'_{\pm})\triangleq
(\gamma-1)\mathfrak{q}_{0}\cdot\Big(\frac{\phi(T^{\textrm{A}}_{\pm})}{u^{\textrm{A}}_{\pm}[(c^{\textrm{A}}_{\pm})^2-(u^{\textrm{A}}_{\pm})^2]}
-\frac{\phi(\underline{T}^{\pm})}{\underline{u}^{\textrm{A}}_{\pm}[(\underline{c}^{\pm})^2-(\underline{u}^{\pm})^2]}\Big)\delta Y^{\textrm{A}}_{\pm}
-\frac{u^{\textrm{A}}_{\pm}(M^{\textrm{A}}_{\pm})^2\textrm{A}'_{\pm}(x)}{\textrm{A}_{\pm}[1-(M^{\textrm{A}}_{\pm})^2]},\\
f_{Y^{\textrm{A}}_{\pm}}(\delta V^{\textrm{A}}_{\pm})\triangleq-\Big(\frac{\phi(T^{\textrm{A}}_{\pm})}{u^{\textrm{A}}_{\pm}}-\frac{\phi(\underline{T}^{\pm})}{\underline{u}^{\pm}}\Big) \delta Y^{\textrm{A}}_{\pm}.
\end{cases}
\end{align}

So we can follow the procedures as done in the proof of Proposition \ref{prop:3.1} to establish the solvability and \emph{a priori} estimates for the linearized problem \eqref{eq:5.7}.
\begin{lemma}\label{lem:5.1}
Assume the assumptions in Proposition \ref{prop:5.1} hold. For any given state $(\delta V^{\rm{A}}_{+},\delta V^{\rm{A}}_{-})\in\mathcal{X}^{\rm{A}}_{\tilde{\sigma}}$ and for sufficiently small $\tilde{\sigma}$,
the linearized problem \eqref{eq:5.7} admits a unique solution $(\delta \tilde{V}^{\rm{A}}_{+},\delta\tilde{V}^{\rm{A}}_{-})\in C^{1,\alpha}((0,L))\times C^{1,\alpha}((0,L))$
satisfying
\begin{eqnarray}\label{eq:5.10}
\begin{split}
&\sum_{k=\pm}\|\delta \tilde{V}^{\rm{A}}_{k}\|_{1,\alpha; (0,L)}\leq \tilde{C}_{6}\sum_{k=\pm}\Big(|\bar{V}^{k}_{\rm in}-\underline{V}^{k}_{\rm{in}}|+\|\textrm{A}_{k}-1\|_{2,\alpha; (0,L)}+\|\delta V^{\textrm{A}}_{k}\|^2_{1,\alpha; (0,L)}\Big),
\end{split}
\end{eqnarray}
where the constant $\tilde{C}_{6}$ depends only on $\underline{V}_{\pm}$, $\alpha$ and $L$.
\end{lemma}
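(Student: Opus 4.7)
The plan is to exploit the cascade structure of the linearized system \eqref{eq:5.7}: the scalar equation for $\delta \tilde{Y}^{\rm{A}}_{\pm}$ has a constant coefficient $a_{Y^{\rm{A}}_{\pm}}(\underline{V}_{\pm})$ and an inhomogeneity $f_{Y^{\rm{A}}_{\pm}}(\delta V^{\rm{A}}_{\pm})$ that depends only on the given iteration data. Once $\delta \tilde{Y}^{\rm{A}}_{\pm}$ is determined, the remaining three equations for $\delta \tilde{u}^{\rm{A}}_{\pm}$, $\delta \tilde{p}^{\rm{A}}_{\pm}$, $\delta \tilde{\rho}^{\rm{A}}_{\pm}$ have fully known right-hand sides and can be solved by direct quadrature. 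This reduces the whole lemma to solvability and $C^{1,\alpha}$-estimates for four uncoupled first-order linear scalar ODEs on $(0,L)$.

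First I would solve the $\delta \tilde{Y}^{\rm{A}}_{\pm}$-equation via the integrating factor method and invoke the H\"older-space ODE theory already used in Proposition \ref{prop:3.1} (Lemma B.1 of \cite{gao-liu-yuan}) to obtain
\[
\|\delta \tilde{Y}^{\rm{A}}_{\pm}\|_{1,\alpha;(0,L)}
\leq \mathcal{O}(1)\Bigl(|\delta \bar{Y}^{\pm}_{\rm in}|
+ \|f_{Y^{\rm{A}}_{\pm}}\|_{0,\alpha;(0,L)}\Bigr).
\]
Then, integrating the remaining three ODEs from $0$ to $x$ and applying the same scalar theory yields analogous $C^{1,\alpha}$-bounds for $\delta \tilde{u}^{\rm{A}}_{\pm}, \delta \tilde{p}^{\rm{A}}_{\pm}, \delta \tilde{\rho}^{\rm{A}}_{\pm}$ in terms of the initial data, $\|\delta \tilde{Y}^{\rm{A}}_{\pm}\|_{1,\alpha;(0,L)}$, and the $C^{0,\alpha}$-norms of $f_{u^{\rm{A}}_{\pm}}$, $f_{p^{\rm{A}}_{\pm}}$, $f_{\rho^{\rm{A}}_{\pm}}$. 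Uniqueness will follow by subtracting two solutions and applying Gronwall's inequality to the resulting homogeneous system with zero datum.

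The second half of the argument is to bound the inhomogeneities in $C^{0,\alpha}((0,L))$ by the right-hand side of \eqref{eq:5.10}, using the hypothesis $(\delta V^{\rm{A}}_{+},\delta V^{\rm{A}}_{-})\in \mathcal{X}^{\rm{A}}_{\tilde{\sigma}}$ and the $C^{1,1}$-regularity of $\phi$. The structural observation is that every combustion-type difference in \eqref{eq:5.9}, such as $\frac{\phi(T^{\rm{A}}_{\pm})}{u^{\rm{A}}_{\pm}}-\frac{\phi(\underline{T}^{\pm})}{\underline{u}^{\pm}}$, is Lipschitz in $\delta V^{\rm{A}}_{\pm}$; multiplication by the accompanying factor $\delta Y^{\rm{A}}_{\pm}$ then produces the quadratic bound $\|\delta V^{\rm{A}}_{\pm}\|_{1,\alpha}\cdot\|\delta Y^{\rm{A}}_{\pm}\|_{1,\alpha}\leq \|\delta V^{\rm{A}}_{\pm}\|^{2}_{1,\alpha}$. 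The remaining geometric terms $\frac{u^{\rm{A}}_{\pm}\textrm{A}'_{\pm}}{\textrm{A}_{\pm}[1-(M^{\rm{A}}_{\pm})^2]}$ and their analogues carry the factor $\textrm{A}'_{\pm}$, which vanishes at the background since $\underline{\textrm{A}}_{\pm}\equiv 1$, and is therefore controlled by $\|\textrm{A}_{\pm}-1\|_{2,\alpha;(0,L)}$. Substituting these into the ODE estimates delivers \eqref{eq:5.10}.

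The main (mild) difficulty is keeping track of the H\"older calculus for the nonlinear composite quantities $\phi(T^{\rm{A}}_{\pm})$, $(c^{\rm{A}}_{\pm})^{2}-(u^{\rm{A}}_{\pm})^{2}$, and $[1-(M^{\rm{A}}_{\pm})^{2}]^{-1}$, which requires $V^{\rm{A}}_{\pm}$ to stay in a fixed compact set bounded away from the sonic locus $\{u^{\rm{A}}_{\pm}=c^{\rm{A}}_{\pm}\}$ and from $\rho^{\rm{A}}_{\pm}=0$. Both conditions are guaranteed by choosing $\tilde{\sigma}$ sufficiently small relative to the background $\underline{V}_{\pm}$, so that no genuinely new estimate beyond standard H\"older multiplication, composition, and the scalar ODE theory already deployed in Proposition \ref{prop:3.1} is needed.
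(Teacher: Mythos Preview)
Your proposal is correct and follows essentially the same approach as the paper, which simply states that one should ``follow the procedures as done in the proof of Proposition \ref{prop:3.1}'' (i.e., invoke the scalar ODE theory of Lemma B.1 in \cite{gao-liu-yuan} for the cascade $\delta\tilde{Y}^{\rm A}_{\pm}\to(\delta\tilde{u}^{\rm A}_{\pm},\delta\tilde{p}^{\rm A}_{\pm},\delta\tilde{\rho}^{\rm A}_{\pm})$ and then estimate the source terms using $\phi\in C^{1,1}$). Your identification of the quadratic structure in the combustion terms and the $\textrm{A}'_{\pm}$-factor in the geometric terms is exactly what produces the three summands on the right of \eqref{eq:5.10}.
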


Define a map $\mathcal{T}^{\rm{A}}$ as
\begin{eqnarray}\label{eq:5.11}
\delta\tilde{\mathbf{V}}^{\textrm{A}}=\mathcal{T}^{\rm{A}}(\delta \mathbf{V}^{\textrm{A}}),\quad\mbox{for}\qquad  \delta \mathbf{V}^{\textrm{A}}\triangleq(\delta V^{\textrm{A}}_{+}, \delta V^{\textrm{A}}_{-})\quad\mbox{and}\quad
\delta\tilde{\mathbf{V}}^{\textrm{A}}\triangleq(\delta\tilde{V}^{\textrm{A}}_{+}, \delta\tilde{V}^{\textrm{A}}_{-}).
\end{eqnarray}

By the assumption \eqref{eq:5.3} and the estimate \eqref{eq:5.10}, we can get that
\begin{eqnarray}\label{eq:5.12}
\begin{split}
\|\delta \tilde{V}^{\rm{A}}_{+}\|_{1,\alpha; (0,L)}+\|\delta\tilde{V}^{\rm{A}}_{-}\|_{1,\alpha; (0,L)}\leq \tilde{C}_{6}(\tilde{\epsilon}+\tilde{\sigma}^2).
\end{split}
\end{eqnarray}
So, if we choose $\tilde{C}_{6}\tilde{\epsilon}=\frac{1}{2}\tilde{\sigma}$ and $\tilde{C}_{6}\tilde{\sigma}\leq \frac{1}{2}$ in \eqref{eq:5.12}, it holds that
\begin{eqnarray*}
\|\delta \tilde{V}^{\rm{A}}_{+}\|_{1,\alpha; (0,L)}+\|\delta\tilde{V}_{-}\|_{1,\alpha; (0,L)}\leq \tilde{\sigma}.
\end{eqnarray*}
It implies that $\mathcal{T}^{\rm{A}}$ maps from $\mathcal{X}^{\rm{A}}_{\tilde{\sigma}}$ to $\mathcal{X}^{\rm{A}}_{\tilde{\sigma}}$.

Finally, we are going to show $\mathcal{T}^{\rm{A}}$ is a contraction map. Given two states $(\delta V^{\textrm{A}}_{+,1}, \delta V^{\textrm{A}}_{-,1})$ and $(\delta V^{\textrm{A}}_{+,2}, \delta V^{\textrm{A}}_{-,2})$ in $\mathcal{X}_{\tilde{\sigma}}$, denote by $\delta \tilde{\mathbf{V}}^{\textrm{A}}_{1}\triangleq(\delta \tilde{V}^{\textrm{A}}_{+,1}, \delta \tilde{V}^{\textrm{A}}_{-,1})$ and $\delta \tilde{\mathbf{V}}^{\textrm{A}}_{2}\triangleq(\delta \tilde{V}^{\textrm{A}}_{+,2}, \delta \tilde{V}^{\textrm{A}}_{-,2})$ the corresponding two solutions of problem \eqref{eq:5.7} with the same initial data. 
%
Set $\Delta\tilde{V}^{\textrm{A}}_{\pm}=(\Delta\tilde{u}^{\textrm{A}}_{\pm}, \Delta\tilde{p}^{\textrm{A}}_{\pm}, \Delta\tilde{\rho}^{\textrm{A}}_{\pm}, \Delta\tilde{Y}^{\textrm{A}}_{\pm})\triangleq\delta \tilde{V}^{\textrm{A}}_{\pm,1}-\delta \tilde{V}^{\textrm{A}}_{\pm,2}$ and $\Delta V^{\textrm{A}}_{\pm}=(\Delta u^{\textrm{A}}_{\pm}, \Delta p^{\textrm{A}}_{\pm}, \Delta\rho^{\textrm{A}}_{\pm}, \Delta Y^{\textrm{A}}_{\pm})\triangleq\delta V^{\textrm{A}}_{\pm,1}-\delta V^{\textrm{A}}_{\pm,2}$. Then, 
\begin{eqnarray}\label{eq:5.13}
\begin{cases}
\frac{\dd \Delta \tilde{u}^{\textrm{A}}_{\pm}}{\dd x} = a_{u^{\textrm{A}}_{\pm}}(\underline{V}_{\pm})\Delta \tilde{Y}^{\textrm{A}}_{\pm}+f^{\flat}_{u^{\textrm{A}}_{\pm}}(\Delta V^{\textrm{A}}_{\pm},\textrm{A}'_{\pm}),&\quad  \mbox{in} \quad (0,L), \\
\frac{\dd \Delta \tilde{p}^{\textrm{A}}_{\pm}}{\dd x} = a_{p^{\textrm{A}}_{\pm}}(\underline{V}_{\pm})\Delta \tilde{Y}^{\textrm{A}}_{\pm}+f^{\flat}_{p^{\textrm{A}}_{\pm}}(\Delta V^{\textrm{A}}_{\pm},\textrm{A}'_{\pm}),&\quad  \mbox{in} \quad (0,L), \\
\frac{\dd \Delta \tilde{\rho}^{\textrm{A}}_{\pm}}{\dd x} = a_{\rho^{\textrm{A}}_{\pm}}(\underline{V}_{\pm})\Delta \tilde{Y}^{\textrm{A}}_{\pm}+f^{\flat}_{\rho^{\textrm{A}}_{\pm}}(\Delta V^{\textrm{A}}_{\pm},\textrm{A}'_{\pm}),&\quad  \mbox{in} \quad (0,L), \\
\frac{\dd \Delta \tilde{Y}^{\textrm{A}}_{\pm}}{\dd x} = a_{Y^{\textrm{A}}_{\pm}}(\underline{V}_{\pm})\Delta \tilde{Y}^{\textrm{A}}_{\pm}+f^{\flat}_{Y^{\textrm{A}}_{\pm}}(\Delta V^{\textrm{A}}_{\pm}),&\quad  \mbox{in} \quad (0,L), \\
(\Delta \tilde{u}^{\textrm{A}}_{\pm}, \Delta \tilde{p}^{\textrm{A}}_{\pm}, \Delta \tilde{\rho}^{\textrm{A}}_{\pm}, \Delta \tilde{Y}^{\textrm{A}}_{\pm})=(0,0,0,0), &\quad \mbox{on} \quad x=0,
\end{cases}
\end{eqnarray}
where the terms $a_{u^{\textrm{A}}_{\pm}}$, $a_{p^{\textrm{A}}_{\pm}}$, $a_{\rho^{\textrm{A}}_{\pm}}$, $a_{Y^{\textrm{A}}_{\pm}}$ are given by \eqref{eq:5.8a}, and $f^{\flat}_{u^{\textrm{A}}_{\pm}}$, $f^{\flat}_{p^{\textrm{A}}_{\pm}}$, $f^{\flat}_{\rho^{\textrm{A}}_{\pm}}$, $f^{\flat}_{Y^{\textrm{A}}_{\pm}}$ are defined by
\begin{eqnarray*}
\begin{cases}
f^{\flat}_{u^{\textrm{A}}_{\pm}}(\Delta V^{\textrm{A}}_{\pm},\textrm{A}'_{\pm})\triangleq f_{u^{\textrm{A}}_{\pm}}(\delta V^{\textrm{A}}_{\pm,1},\textrm{A}'_{\pm})
-f_{u^{\textrm{A}}_{\pm}}(\delta V^{\textrm{A}}_{\pm,2},\textrm{A}'_{\pm}),\\
f^{\flat}_{p^{\textrm{A}}_{\pm}}(\Delta V^{\textrm{A}}_{\pm},\textrm{A}'_{\pm})\triangleq f_{p^{\textrm{A}}_{\pm}}(\delta V^{\textrm{A}}_{\pm,1},\textrm{A}'_{\pm})
-f_{p^{\textrm{A}}_{\pm}}(\delta V^{\textrm{A}}_{\pm,2},\textrm{A}'_{\pm}),\\
f^{\flat}_{\rho^{\textrm{A}}_{\pm}}(\Delta V^{\textrm{A}}_{\pm},\textrm{A}'_{\pm})\triangleq f_{\rho^{\textrm{A}}_{\pm}}(\delta V^{\textrm{A}}_{\pm,1},\textrm{A}'_{\pm})
-f_{\rho^{\textrm{A}}_{\pm}}(\delta V^{\textrm{A}}_{\pm,2},\textrm{A}'_{\pm}),\\
f^{\flat}_{Y^{\textrm{A}}_{\pm}}(\Delta V^{\textrm{A}}_{\pm})\triangleq f_{Y^{\textrm{A}}_{\pm}}(\delta V^{\textrm{A}}_{\pm,1})
-f_{Y^{\textrm{A}}_{\pm}}(\delta V^{\textrm{A}}_{\pm,2}).
\end{cases}
\end{eqnarray*}

Then, mimicing the proof of Proposition \ref{prop:3.1}, we have
\begin{eqnarray}\label{eq:5.14}
\sum_{k=\pm}\|\Delta \tilde{V}^{\textrm{A}}_{k}\|_{1,\alpha; (0,L)}\leq \mathcal{O}(1)\sum_{k=\pm}\big\|(f^{\flat}_{u^{\textrm{A}}_{k}},f^{\flat}_{p^{\textrm{A}}_{k}}, f^{\flat}_{\rho^{\textrm{A}}_{k}}, f^{\flat}_{Y^{\textrm{A}}_{k}})\big\|_{1,\alpha; (0,L)},
\end{eqnarray}
where the constant $\mathcal{O}(1)$ depends only on $\underline{V}$, $\alpha$ and $L$.
Notice that
{
\begin{align}\label{eq:5.15}
\begin{split}
&\sum_{k=\pm}\|f^{\flat}_{u^{\textrm{A}}_{k}}\|_{1,\alpha; (0,L)}\\
&\leq\mathcal{O}(1)\sum_{k=\pm}\Big\|\Big(\frac{\phi(T^{\textrm{A}}_{k,1})}{(c^{\textrm{A}}_{k,1})^2-(u^{\textrm{A}}_{k,1})^2}-\frac{\phi(T^{\textrm{A}}_{k,2})}{(c^{\textrm{A}}_{k,2})^2-(u^{\textrm{A}}_{k,2})^2}\Big)\delta Y^{\textrm{A}}_{k,1}\Big\|_{1,\alpha; (0,L)}\\
&\quad+\mathcal{O}(1)\sum_{k=\pm}\Big\|\Big(\frac{\phi(T^{\textrm{A}}_{k,2})}{(c^{\textrm{A}}_{k,2})^2-(u^{\textrm{A}}_{k,2})^2}-\frac{\phi(\underline{T}_{k})}{\underline{c}_{k}^2-\underline{u}_{k}^2}\Big)
\Delta Y^{\textrm{A}}_{k}\Big\|_{1,\alpha; (0,L)}\\
&\quad+\mathcal{O}(1)\sum_{k=\pm}\Big\|\Big(\frac{u^{\textrm{A}}_{k,1}}{1-(M^{\textrm{A}}_{k,1})^2}-\frac{u^{\textrm{A}}_{k,2}}{1-(M^{\textrm{A}}_{k,2})^2}\Big)\textrm{A}'_{k}\Big\|_{1,\alpha; (0,L)}\\
& \leq \mathcal{O}(1)\sum_{k=\pm}\|\delta Y^{\textrm{A}}_{k,1}\|_{1,\alpha; (0,L)}\|\Delta V^{\textrm{A}}_{k}\|_{1,\alpha; (0,L)}
+\mathcal{O}(1)\sum_{k=\pm}\|\delta V^{\textrm{A}}_{k,2}\|_{1,\alpha; (0,L)}\|\Delta Y^{\textrm{A}}_{k}\|_{1,\alpha; (0,L)}\\
&\quad
+\mathcal{O}(1)\sum_{k=\pm}\|\textrm{A}_{k}-1\|_{1,\alpha; (0,L)}\|\Delta V^{\textrm{A}}_{k}\|_{1,\alpha; (0,L)}\\
& \leq \mathcal{O}(1)(\tilde{\sigma}+\tilde{\epsilon})\sum_{k=\pm}\|\Delta V^{\textrm{A}}_{k}\|_{1,\alpha; (0,L)},
\end{split}
\end{align}}
where the constant $\mathcal{O}(1)$ depends only on $\underline{V}$, $\alpha$ and $L$.
Similarly, we can also have 
\begin{eqnarray}\label{eq:5.16}
\sum_{k=\pm}\big\|(f^{\flat}_{p^{\textrm{A}}_{k}},f^{\flat}_{\rho^{\textrm{A}}_{k}},f^{\flat}_{Y^{\textrm{A}}_{k}})\big\|_{1,\alpha; (0,L))}
\leq\mathcal{O}(1)(\tilde{\sigma}+\tilde{\epsilon})\sum_{k=\pm}\|\Delta V^{\textrm{A}}_{k}\|_{1,\alpha; (0,L)}.
\end{eqnarray}

Therefore,
$$
\sum_{k=\pm}\|\Delta \tilde{V}^{\textrm{A}}_{k}\|_{1,\alpha; (0,L)}\leq\mathcal{O}(1)(\tilde{\sigma}+\tilde{\epsilon})\sum_{k=\pm}\|\Delta V^{\textrm{A}}_{k}\|_{1,\alpha; (0,L)}.
$$
So if we choose $\tilde{\epsilon}=\tilde{\sigma}$ and $\mathcal{O}(1)\tilde{\sigma}\leq \frac{1}{4}$, then
\begin{eqnarray*}
\sum_{k=\pm}\|\Delta \tilde{V}^{\textrm{A}}_{k}\|_{1,\alpha; (0,L)}\leq\frac{1}{2}\sum_{k=\pm}\|\Delta V^{\textrm{A}}_{k}\|_{1,\alpha; (0,L)}.
\end{eqnarray*}
Hence $\mathcal{T}^{\rm{A}}$ is contract map from $\mathcal{X}^{\textrm{A}}_{\tilde{\sigma}}$ to $\mathcal{X}^{\textrm{A}}_{\tilde{\sigma}}$.
Then it follows from the \emph{Bannach fixed point theory} that there is a unique fixed point  $\delta V^{\textrm{A}, *}_{k}\in \mathcal{X}^{\textrm{A}}_{\tilde{\sigma}}$ of the map $\mathcal{T}^{\rm{A}}$.
Let $V^{\textrm{A}, *}_{\pm}\triangleq \underline{V}_{\pm} +\delta V^{\textrm{A}, *}_{\pm}$. Then, 
$(V^{\textrm{A}, *}_{+}, V^{\textrm{A}, *}_{-})$ is the unique solution of the problem \eqref{eq:5.1} and \eqref{eq:5.2} with the estimate \eqref{eq:5.4} by Lemma \ref{lem:5.1}.
\end{proof}

\subsection{Integral identities of solution $U$ in $\mathcal{N}$}
In this subsection, we will derive the equations of the integral identities of solution $U$ in nozzle $\mathcal{N}$. 
Given the states $U_{\pm}\in C^{1,\alpha}(\mathcal{N}_{\pm})$, 
we integrate the system \eqref{eq:1.1} over the intervals $(g_{\rm cd}(\tau), g_{+}(\tau))$ and $( g_{-}(\tau),g_{\rm cd}(\tau))$, respectively, with respect to $y$ for any fixed $\tau\in (0,x)$. Then using the initial conditions, 
we further integrate them with respect to $\tau$ from $0$ to $x$ to get the equations for $\bar{V}(x)$ as follows
\begin{eqnarray}\label{eq:5.17}
\begin{cases}
\bar{\rho}_{\pm}\bar{u}_{\pm}\textrm{A}_{\pm}(x)=\bar{\rho}^{\pm}_{\rm in}\bar{u}^{\pm}_{\rm in}\textrm{A}_{\pm}(0)+\mathbb{E}^{\pm}_{1}(x),\\
(\bar{\rho}_{+}\bar{u}^2_{+}+\bar{p}_{+})\textrm{A}_{+}(x)=\big[\bar{\rho}^{+}_{\rm in}(\bar{u}^{+}_{\rm in})^2+\bar{p}^{+}_{\rm in}\big]\textrm{A}_{+}(0)+\int^{x}_{0}\bar{p}_{\pm}(\tau)\textrm{A}'_{\pm}(\tau)\dd \tau+\mathbb{E}^{\pm}_{2}(x), \\
\big(\frac{1}{2}\bar{\rho}_{\pm}\bar{u}^3_{\pm}+\frac{\gamma \bar{p}_{\pm}\bar{u}_{\pm}}{\gamma-1}\big)\textrm{A}_{\pm}(x)=\big(\frac{1}{2}\bar{\rho}^{\pm}_{\rm in}(\bar{u}^{\pm}_{\rm in})^3
+\frac{\gamma \bar{p}^{\pm}_{\rm in}\bar{u}^{\pm}_{\rm in}}{\gamma-1}\big)\textrm{A}_{\pm}(0)\\
\qquad\qquad\qquad\qquad\qquad\qquad+\mathfrak{q}_0\int^{x}_{0}\bar{\rho}_{\pm}(\tau)\phi(\bar{T}_{\pm}(\tau))\bar{Y}_{\pm}(\tau)\textrm{A}_{\pm}(\tau)\dd\tau+\mathbb{E}^{\pm}_{3}(x),\\
\bar{\rho}_{\pm} \bar{u}_{\pm}\bar{Y}_{\pm}\textrm{A}_{\pm}(x)=\bar{\rho}^{\pm}_{\rm in} \bar{u}^{\pm}_{\rm in}\bar{Y}^{\pm}_{\rm in}\textrm{A}_{\pm}(0)
-\int^{x}_{0}\bar{\rho}_{\pm}(\tau)\phi(\bar{T}_{\pm}(\tau))\bar{Y}_{\pm}(\tau)\textrm{A}_{\pm}(\tau)\dd\tau+\mathbb{E}^{\pm}_{4}(x),
\end{cases}
\end{eqnarray}
where 
\begin{eqnarray*}
\begin{split}
\mathbb{E}^{\pm}_{1}(x)&\triangleq\pm\int^{g_{\pm}(0)}_{0}(\rho^{\pm}_{\rm in}-\bar{\rho}^{\pm}_{\rm in})(u^{\pm}_{\rm in}-\bar{u}^{\pm}_{\rm in})\dd y\pm\int^{g_{\pm}(x)}_{g_{\rm cd}(x)}(\bar{\rho}_{\pm}-\rho_{\pm})(u_{\pm}-\bar{u}_{\pm})\dd y\\
\mathbb{E}^{\pm}_{2}(x)&\triangleq\pm\int^{g_{\pm}(0)}_{0}\big(\rho^{\pm}_{\rm in}u^{\pm}_{\rm in}\pm\bar{\rho}^{\pm}_{\rm in}\bar{u}^{\pm}_{\rm in}\big)(u^{\pm}_{\rm in}-\bar{u}^{\pm}_{\rm in})\dd y
\pm\bar{u}^{\pm}_{\rm in}\int^{g_{\pm}(0)}_{0}(\rho^{\pm}_{\rm in}-\bar{\rho}^{\pm}_{\rm in})(u^{\pm}_{\rm in}-\bar{u}^{\pm}_{\rm in})\dd y\\
&\quad\pm\int^{g_{\pm}(x)}_{g_{\rm cd}(x)}(\bar{\rho}_{\pm}\bar{u}_{\pm}-\rho_{\pm} u_{\pm})(u_{\pm}-\bar{u}_{\pm})\dd y
\pm\bar{u}_{\pm}\int^{g_{\pm}(x)}_{g_{\rm cd}(x)}(\bar{\rho}_{\pm}-\rho_{\pm})(u_{\pm}-\bar{u}_{\pm})\dd y\\
&\quad+\int^{x}_{0}\big[p_{\pm}(\tau, g_{\rm cd}(\tau))-\bar{p}_{\pm}\big]\textrm{A}'_{\pm}(\tau)\dd \tau+\int^{x}_{0}\big[p_{\pm}(\tau, g_{\pm}(\tau))-p_{\pm}(\tau, g_{\rm cd}(\tau))\big]g'_{\pm}(\tau)d\tau,
\end{split}
\end{eqnarray*}
{
\begin{eqnarray*}
\begin{split}
\mathbb{E}^{\pm}_{3}(x)&\triangleq\pm\frac{1}{2}\int^{g_{\pm}(0)}_{0}\Big(\rho^{\pm}_{\rm in} u^{\pm}_{\rm in}-\overline{\rho^{\pm}_{\rm in} u^{\pm}_{\rm in}}\Big)\Big[(u^{\pm}_{\rm in})^2+(v^{\pm}_{\rm in})^{2}-\overline{(u^{\pm}_{\rm in})^2+(v^{\pm}_{\rm in})^2}\Big]\dd y \\
&\quad\pm\frac{\overline{\big(u^{\pm}_{\rm in}-\bar{u}^{\pm}_{\rm in}\big)^2+(v^{\pm}_{\rm in})^2}}{2}\int^{g_{\pm}(0)}_{0}\rho^{\pm}_{\rm in}u^{\pm}_{\rm in}\dd y
\pm\frac{(\bar{u}^{\pm}_{\rm in})^2}{2}\int^{g_{\pm}(0)}_{0}(\rho^{\pm}_{\rm in}-\bar{\rho}^{\pm}_{\rm in})(u^{\pm}_{\rm in}-\bar{u}^{\pm}_{\rm in})\dd y\\
&\quad\pm\frac{\overline{(u_{\pm}-\bar{u}_{\pm})^2+v^2_{\pm}}}{2}\int^{g_{\pm}(x)}_{g_{\rm cd}(x)}(-\rho_{\pm}u_{\pm})\dd y\pm\frac{\bar{u}^2_{\pm}}{2}\int^{g_{\pm}(x)}_{g_{\rm cd}(x)}(\bar{\rho}_{\pm}-\rho_{\pm})(u_{\pm}-\bar{u}_{\pm})\dd y\\
&\quad\pm\frac{1}{2}\int^{g_{\pm}(x)}_{g_{\rm cd}(x)}\big(\overline{\rho_{\pm} u_{\pm}}-\rho_{\pm} u_{\pm}\big)\big[u^2_{\pm}+v^{2}_{\pm}-\overline{u^2_{\pm}+v^{2}_{\pm}}\big]\dd y \\
&\quad\pm\mathfrak{q}_{0}\int^{x}_{0}\int^{g_{\pm}(\tau)}_{g_{\rm cd}(\tau)}\big(\bar{\rho}_{\pm}-\rho_{\pm}\big)\big(Y_{\pm}-\bar{Y}_{\pm}\big)\dd y \dd\tau\mp\mathfrak{q}_{0}\int^{x}_{0}\int^{g_{\pm}(\tau)}_{g_{\rm cd}(\tau)}\big(\phi(T_{\pm})-\phi(\bar{T}_{\pm})\big)\rho_{\pm}Y_{\pm}\dd y\dd\tau,
\end{split}
\end{eqnarray*}
and
\begin{eqnarray*}
\begin{split}
\mathbb{E}^{\pm}_{4}(x)&\triangleq\pm\frac{1}{2}\int^{g_{\pm}(0)}_{0}\big(\rho^{\pm}_{\rm in} u^{\pm}_{\rm in}-\overline{\rho^{\pm}_{\rm in} u^{\pm}_{\rm in}}\big)\big(Y^{\pm}_{\rm in}-\bar{Y}^{\pm}_{\rm in}\big)dy
\pm\bar{Y}^{\pm}_{\rm in}\int^{g_{\pm}(0)}_{0}\big(\rho^{\pm}_{\rm in}-\bar{\rho}^{\pm}_{\rm in}\big)\big(u^{\pm}_{\rm in}-\bar{u}^{\pm}_{\rm in}\big)\dd y\\
&\quad\pm\int^{g_{\rm cd}(x)}_{g_{-}(x)}\big(\overline{\rho_{\pm} u_{\pm}}-\rho_{\pm} u_{\pm}\big)\big(Y_{\pm}-\bar{Y}_{\pm}\big)\dd y-\bar{Y}_{\pm}\int^{g_{\rm cd}(x)}_{g_{\pm}(x)}\big(\rho_{\pm}-\bar{\rho}_{\pm}\big)\big(u_{\pm}-\bar{u}_{\pm}\big)\dd y\\
&\quad\pm\int^{x}_{0}\int^{g_{\pm}(\tau)}_{g_{\rm cd}(\tau)}\big(\phi(\bar{T}_{\pm})-\phi(T_{\pm})\big)\rho_{\pm}Y_{\pm}\dd y \dd\tau
\pm\int^{x}_{0}\int^{g_{\pm}(\tau)}_{g_{\rm cd}(\tau)}\phi(\bar{T}_{\pm})\big(\bar{\rho}_{\pm}-\rho_{\pm}\big)\big(Y_{\pm}-\bar{Y}_{\pm}\big)\dd y\dd\tau.
\end{split}
\end{eqnarray*}
}

\subsection{Proof of the Theorem \ref{thm:1.2}}
By Proposition \ref{prop:5.1},
we first integrate the equations \eqref{eq:5.1} with respect to $x$ to derive that
\begin{eqnarray}\label{eq:5.18}
\begin{cases}
\rho^{\textrm{A}}_{\pm} u^{\textrm{A}}_{\pm} \textrm{A}_{\pm}(x)=\bar{\rho}^{\pm}_{\rm in} u^{\pm}_{\rm in} \textrm{A}_{\pm}(0), \\
\big(\rho^{\textrm{A}}_{\pm} (u^{\textrm{A}}_{\pm})^{2}+p^{\textrm{A}}_{\pm}\big)\textrm{A}_{\pm}(x)=\big[\bar{\rho}^{\pm}_{\rm in} (\bar{u}^{\pm}_{\rm in})^{2}+\bar{p}^{\pm}_{\rm in}\big]\textrm{A}_{\pm}(0)+\int^{x}_{0}p^{\textrm{A}}_{\pm}(\tau)\textrm{A}'_{\pm}(\tau)\dd\tau, \\
\big(\frac{1}{2}\rho^{\textrm{A}}_{\pm}(u^{\textrm{A}}_{\pm})^3+\frac{\gamma p^{\textrm{A}}_{\pm}u^{\textrm{A}}_{\pm}}{\gamma-1}\big)\textrm{A}_{\pm}(x)=\big(\frac{1}{2}\bar{\rho}^{\pm}_{\rm in}(\bar{u}^{\pm}_{\rm in})^3
+\frac{\gamma \bar{p}^{\pm}_{\rm in}\bar{u}^{\pm}_{\rm in}}{\gamma-1}\big)\textrm{A}_{\pm}(0)\\
\qquad\qquad\qquad\qquad\qquad\qquad+\mathfrak{q}_0\int^{x}_{0}\rho^{\textrm{A}}_{\pm}(\tau)\phi(T^{\textrm{A}}_{\pm}(\tau))Y^{\textrm{A}}_{\pm}(\tau)\textrm{A}_{\pm}(\tau)\dd\tau,\\
\rho^{\textrm{A}}_{\pm} u^{\textrm{A}}_{\pm}Y^{\textrm{A}}_{\pm}\textrm{A}_{\pm}(x)=\bar{\rho}^{\pm}_{\rm in} \bar{u}^{\pm}_{\rm in}\bar{Y}^{\pm}_{\rm in}\textrm{A}_{\pm}(0)-\int^{x}_{0}\rho^{\textrm{A}}_{\pm}(\tau)\phi(T^{\textrm{A}}_{\pm}(\tau))Y^{\textrm{A}}_{\pm}(\tau)\textrm{A}_{\pm}(\tau)\dd\tau.
\end{cases}
\end{eqnarray}
%
Without loss of the generality, we  consider the error estimates
between $\bar{V}_{+}(x)=(\bar{u}_{+}, \bar{p}_{+}, \bar{\rho}_{+}, \bar{Y}_{+})^{\top}(x)$ and $V^{\textrm{A}}_{+}(x)=(u^{\textrm{A}}_{+}, p^{\textrm{A}}_{+}, \rho^{\textrm{A}}_{+}, Y^{\textrm{A}}_{+})^{\top}(x)$ only, since the argument for $\bar{V}_{-}(x)=(\bar{u}_{-}, \bar{p}_{-}, \bar{\rho}_{-}, \bar{Y}_{-})^{\top}(x)$ and $V^{\textrm{A}}_{-}(x)=(u^{\textrm{A}}_{-}, p^{\textrm{A}}_{-}, \rho^{\textrm{A}}_{-}, Y^{\textrm{A}}_{-})^{\top}(x)$ is similar. Set 
\begin{eqnarray*}
\delta\bar{V}^{\sharp}_{+}(x)\triangleq\bar{V}_{+}(x)-V^{\textrm{A}}_{+}(x)\triangleq\big(\delta\bar{u}^{\sharp}_{+}, \delta\bar{p}^{\sharp}_{+}, \delta\bar{\rho}^{\sharp}_{+}, \delta\bar{Y}^{\sharp}_{+}\big)^{\top}(x).
\end{eqnarray*}

It follows from \eqref{eq:5.17} and \eqref{eq:5.18} that $\delta\bar{V}^{\sharp}_{+}(x)$ satisfies
\begin{eqnarray}\label{eq:5.19}
\begin{cases}
\textrm{A}_{+}\bar{u}_{+}\delta \bar{\rho}^{\sharp}_{+}+\textrm{A}_{+}\rho^{\textrm{A}}_{+}\delta\bar{u}^{\sharp}_{+}=\mathbb{E}^{+}_{1},\\
\textrm{A}_{+}\rho^{\textrm{A}}_{+}u^{\textrm{A}}_{+}\delta\bar{u}^{\sharp}_{+}+\textrm{A}_{+}\delta\bar{p}^{\sharp}_{+}=\int^{x}_{0}\textrm{A}'_{+}(\tau)\delta\bar{p}^{\sharp}_{+}(\tau)\dd\tau
+\mathbb{E}^{+}_{2}-\bar{u}_{+}\mathbb{E}^{+}_{1},\\
\textrm{A}_{+}\big[\frac{\rho^{\textrm{A}}_{+}u^{\textrm{A}}_{+}(\bar{u}_{+}+u^{\textrm{A}}_{+})}{2}+\frac{\gamma}{\gamma-1}p^{\textrm{A}}_{+}\big]\delta\bar{u}^{\sharp}_{+}
+\frac{\gamma \textrm{A}_{+}\bar{u}_{+}}{\gamma-1}\delta\bar{p}^{\sharp}_{+}\\
\quad=\mathfrak{q}_{0}\int^{x}_{0}\textrm{A}_{+}(\tau)\rho^{\textrm{A}}_{+}(\tau)\phi(T^{\textrm{A}}_{+}(\tau))\delta\bar{Y}^{\sharp}_{+}(\tau)\dd\tau
 +\mathfrak{q}_{0}\int^{x}_{0}\textrm{A}_{+}(\tau)\phi(\bar{T}_{+})\bar{Y}_{+}(\tau)\delta\bar{\rho}^{\sharp}_{+}(\tau)\dd\tau\\
\qquad\quad +\mathfrak{q}_{0}\int^{x}_{0}\textrm{A}_{+}(\tau)\rho^{\textrm{A}}_{+}(\tau)\bar{Y}_{+}(\tau)\big[\phi(\bar{T}_{+}(\tau))-\phi(T^{\textrm{A}}_{+}(\tau))\big]\dd\tau+\mathbb{E}^{+}_{3}
-\frac{\bar{u}^2_{+}}{2}\mathbb{E}^{+}_{1},\\
\textrm{A}_{+}\rho^{\textrm{A}}_{+}u^{\textrm{A}}_{+}\delta \bar{Y}^{\sharp}_{+}=-\int^{x}_{0}\textrm{A}_{+}(\tau)\rho^{\textrm{A}}_{+}(\tau)\phi(T^{\textrm{A}}_{+}(\tau))\delta\bar{Y}^{\sharp}_{+}(\tau)\dd\tau
-\int^{x}_{0}\textrm{A}_{+}(\tau)\phi(\bar{T}_{+}(\tau))\bar{Y}_{+}(\tau)\delta \bar{\rho}^{\sharp}_{+}(\tau)\dd\tau\\
\qquad\qquad\qquad\quad-\int^{x}_{0}\textrm{A}_{+}(\tau)\rho^{\textrm{A}}_{+}(\tau)\bar{Y}_{+}(\tau)
\big[\phi(\bar{T}_{+}(\tau))-\phi(T^{\textrm{A}}_{+}(\tau))\big]\dd\tau+\mathbb{E}^{+}_{4}-\bar{Y}_{+}\mathbb{E}^{+}_{1}.
\end{cases}
\end{eqnarray}

So 
\begin{eqnarray}\label{eq:5.20}
\begin{cases}
\delta \bar{u}^{\sharp}_{+}=b^1_{u}(\bar{V}_{+},V^{\textrm{A}}_{+})\big[\int^{x}_{0}\delta\bar{p}^{\sharp}_{+}(\tau)\textrm{A}'_{+}(\tau)\dd\tau
+\mathbb{E}^{+}_{2}-\bar{u}_{+}\mathbb{E}^{+}_{1}\big]\\
\qquad\quad+b^2_{u}(\bar{V}_{+},V^{\textrm{A}}_{+})\big[\mathfrak{q}_{0}\int^{x}_{0}\textrm{A}_{+}(\tau)\rho^{\textrm{A}}_{+}(\tau)\phi(T^{\textrm{A}}_{+}(\tau))\delta\bar{Y}^{\sharp}_{+}(\tau)\dd\tau\\
\qquad\quad+\mathfrak{q}_{0}\int^{x}_{0}\textrm{A}_{+}(\tau)\phi(\bar{T}_{+})\bar{Y}_{+}(\tau)\delta\bar{\rho}^{\sharp}_{+}(\tau)\dd\tau\\
\qquad\quad+\mathfrak{q}_{0}\int^{x}_{0}\textrm{A}_{+}(\tau)\rho^{\textrm{A}}_{+}(\tau)\bar{Y}_{+}(\tau)[\phi(\bar{T}_{+}(\tau))-\phi(T^{\textrm{A}}_{+}(\tau))]\dd\tau
+\mathbb{E}^{+}_{3}-\frac{\bar{u}^2_{+}}{2}\mathbb{E}^{+}_{1}\big],\\
\delta\bar{p}^{\sharp}_{+}=b^1_{p}(\bar{V}_{+},V^{\textrm{A}}_{+})\big[\int^{x}_{0}\textrm{A}'_{+}(\tau)\delta\bar{p}^{\sharp}_{+}(\tau)\dd\tau
+\mathbb{E}^{+}_{2}-\bar{u}_{+}\mathbb{E}^{+}_{1}\big]\\
\qquad\quad+b^2_{p}(\bar{V}_{+},V^{\textrm{A}}_{+})\big[\mathfrak{q}_{0}\int^{x}_{0}\textrm{A}_{+}(\tau)\rho^{\textrm{A}}_{+}(\tau)\phi(T^{\textrm{A}}_{+}(\tau))\delta\bar{Y}^{\sharp}_{+}(\tau)\dd\tau\\
\qquad\quad+\mathfrak{q}_{0}\int^{x}_{0}\textrm{A}_{+}(\tau)\phi(\bar{T}_{+})\bar{Y}_{+}(\tau)\delta\bar{\rho}^{\sharp}_{+}(\tau)\dd\tau\\
\qquad\quad+\mathfrak{q}_{0}\int^{x}_{0}\textrm{A}_{+}(\tau)\rho^{\textrm{A}}_{+}(\tau)\bar{Y}_{+}(\tau)[\phi(\bar{T}_{+}(\tau))-\phi(T^{\textrm{A}}_{+}(\tau))]\dd\tau
+\mathbb{E}^{+}_{3}-\frac{\bar{u}^2_{+}}{2}\mathbb{E}^{+}_{1}\big]+\frac{1}{\textrm{A}_{+}\bar{u}_{+}}\mathbb{E}^{+}_{1},\\
\delta \bar{\rho}^{\sharp}_{+}=b^1_{\rho}(\bar{V}_{+},V^{\textrm{A}}_{+})\big[\int^{x}_{0}\textrm{A}'_{+}(\tau)\delta\bar{p}^{\sharp}_{+}(\tau)\dd\tau+\mathbb{E}^{+}_{2}-\bar{u}_{+}\mathbb{E}^{+}_{1}\big]\\
\qquad\quad+b^2_{\rho}(\bar{V}_{+},V^{\textrm{A}}_{+})\big[\mathfrak{q}_{0}\int^{x}_{0}\textrm{A}_{+}(\tau)\rho^{\textrm{A}}_{+}(\tau)\phi(T^{\textrm{A}}_{+}(\tau))\delta\bar{Y}^{\sharp}_{+}(\tau)\dd\tau\\
\qquad\quad+\mathfrak{q}_{0}\int^{x}_{0}\textrm{A}_{+}(\tau)\phi(\bar{T}_{+})\bar{Y}_{+}(\tau)\delta\bar{\rho}^{\sharp}_{+}(\tau)\dd\tau\\
\qquad\quad+\mathfrak{q}_{0}\int^{x}_{0}\textrm{A}_{+}(\tau)\rho^{\textrm{A}}_{+}(\tau)\bar{Y}_{+}(\tau)\big[\phi(\bar{T}_{+}(\tau))-\phi(T^{\textrm{A}}_{+}(\tau))\big]\dd\tau
+\mathbb{E}^{+}_{3}-\frac{\bar{u}^2_{+}}{2}\mathbb{E}^{+}_{1}\big]
+\frac{1}{\textrm{A}_{+}\bar{u}_{+}}\mathbb{E}^{+}_{1},\\
\delta \bar{Y}^{\sharp}_{+}=b^1_{Y}(\bar{V}_{+},V^{\textrm{A}}_{+})\big[-\int^{x}_{0}\textrm{A}_{+}(\tau)\rho^{\textrm{A}}_{+}(\tau)\phi(T^{\textrm{A}}_{+}(\tau))\delta\bar{Y}^{\sharp}_{+}(\tau)\dd\tau\\
\qquad\quad-\int^{x}_{0}\textrm{A}_{+}(\tau)\phi(\bar{T}_{+}(\tau))\bar{Y}_{+}(\tau)\delta \bar{\rho}^{\sharp}_{+}(\tau)\dd\tau\\
\qquad\quad-\int^{x}_{0}\textrm{A}_{+}(\tau)\rho^{\textrm{A}}_{+}(\tau)\bar{Y}_{+}(\tau)
[\phi(\bar{T}_{+}(\tau))-\phi(T^{\textrm{A}}_{+}(\tau))]\dd\tau+\mathbb{E}^{+}_{4}-\bar{Y}_{+}\mathbb{E}^{+}_{1}\big],
\end{cases}
\end{eqnarray}
where 
\begin{eqnarray}\label{eq:5.21}
\begin{cases}
b^1_{u}(\bar{V}_{+},V^{\textrm{A}}_{+})\triangleq\frac{\gamma \bar{u}_{+}}{\textrm{A}_{+}\big[\frac{\gamma+1}{2}\rho^{\textrm{A}}_{+}u^{\textrm{A}}_{+}\bar{u}_{+}-\frac{\gamma-1}{2}\rho^{\textrm{A}}_{+}(u^{\textrm{A}}_{+})^2-\gamma p^{\textrm{A}}_{+}\big]},\\
b^2_{u}(\bar{V}_{+},V^{\textrm{A}}_{+})\triangleq-\frac{1}{\textrm{A}_{+}\big[\frac{\gamma+1}{2(\gamma-1)}\rho^{\textrm{A}}_{+}u^{\textrm{A}}_{+}\bar{u}_{+}-\frac{1}{2}\rho^{\textrm{A}}_{+}(u^{\textrm{A}}_{+})^2-\frac{\gamma}{\gamma-1} p^{\textrm{A}}_{+}\big]}, \\
b^1_{p}(\bar{V}_{+},V^{\textrm{A}}_{+})\triangleq-\frac{\frac{\rho^{\textrm{A}}_{+}u^{\textrm{A}}_{+}(\bar{u}_{+}+u^{\textrm{A}}_{+})}{2}+\frac{\gamma}{\gamma-1}p^{\textrm{A}}_{+}}{\textrm{A}_{+}
\big[\frac{\gamma+1}{2(\gamma-1)}\rho^{\textrm{A}}_{+}u^{\textrm{A}}_{+}\bar{u}_{+}-\frac{1}{2}\rho^{\textrm{A}}_{+}(u^{\textrm{A}}_{+})^2-\frac{\gamma}{\gamma-1} p^{\textrm{A}}_{+}\big]},\\ b^2_{p}(\bar{V}_{+},V^{\textrm{A}}_{+})\triangleq\frac{\rho^{\textrm{A}}_{+}u^{\textrm{A}}_{+}}{\textrm{A}_{+}\bar{u}_{+}\big[\frac{\gamma+1}{2(\gamma-1)}\rho^{\textrm{A}}_{+}u^{\textrm{A}}_{+}\bar{u}_{+}-\frac{1}{2}\rho^{\textrm{A}}_{+}(u^{\textrm{A}}_{+})^2-\frac{\gamma}{\gamma-1} p^{\textrm{A}}_{+}\big]}, \\
b^1_{\rho}(\bar{V}_{+},V^{\textrm{A}}_{+})\triangleq-\frac{\gamma \rho^{\textrm{A}}_{+}}{\textrm{A}_{+}\big[\frac{\gamma+1}{2}\rho^{\textrm{A}}_{+}u^{\textrm{A}}_{+}\bar{u}_{+}-\frac{\gamma-1}{2}\rho^{\textrm{A}}_{+}(u^{\textrm{A}}_{+})^2-\gamma p^{\textrm{A}}_{+}\big]},\\
b^2_{\rho}(\bar{V}_{+},V^{\textrm{A}}_{+})\triangleq\frac{p^{\textrm{A}}_{+}}{\textrm{A}_{+}\bar{u}_{+}\big[\frac{\gamma+1}{2(\gamma-1)}\rho^{\textrm{A}}_{+}u^{\textrm{A}}_{+}\bar{u}_{+}-\frac{1}{2}\rho^{\textrm{A}}_{+}(u^{\textrm{A}}_{+})^2-\frac{\gamma}{\gamma-1} p^{\textrm{A}}_{+}\big]}, \\
b^1_{Y}(\bar{V}_{+},V^{\textrm{A}}_{+})\triangleq\frac{1}{\textrm{A}_{+}\rho^{\textrm{A}}_{+}u^{\textrm{A}}_{+}}.
\end{cases}
\end{eqnarray}

Now we will estimate $\|\delta\bar{V}^{\sharp}_{+}\|_{1,\alpha;(0,L)}$ in three steps.

\emph{1. Estimate on $\|\delta\bar{V}^{\sharp}_{+}\|_{0,0;(0,L)}$.} By Theorem \ref{thm:1.1} and \eqref{eq:5.20}-\eqref{eq:5.21}, we can get
\begin{eqnarray}\label{eq:5.22}
\begin{split}
|\delta\bar{V}^{\sharp}_{+}(x)|&\leq \mathcal{O}(1)\int^{x}_{0}\big[|(\delta\bar{\rho}^{\sharp}_{+}, \delta\bar{p}^{\sharp}_{+}, \delta\bar{Y}^{\sharp}_{+})(\tau)|+|\phi(\bar{T}_{+}(\tau))-\phi(T^{\textrm{A}}_{+}(\tau))|\big]\dd\tau
 +\sum^{4}_{k=1}\|\mathbb{E}^{+}_{k}\|_{0,0;(0,L)},
\end{split}
\end{eqnarray}
where the constant $\mathcal{O}(1)$ depends only on $\underline{U}_{+}$ and $L$.

Note that $\phi\in C^{1,1}$ and $\delta \bar{T}^{\sharp}_{+}=-\frac{\bar{p}_{+}}{\mathcal{R}\bar{\rho}_{+}\rho^{\textrm{A}}_{+}}\delta \bar{\rho}^{\sharp}_{+}+\frac{1}{\mathcal{R}\rho^{\textrm{A}}_{+}}\delta \bar{p}^{\sharp}_{+}$, so
\begin{eqnarray}\label{eq:5.23}
\begin{split}
\big|\phi(\bar{T}_{+}(\tau))-\phi(T^{\textrm{A}}_{+}(\tau))\big|\leq \|\phi'\|_{0,0;\mathbb{R}_{+}}|\delta \bar{T}^{\sharp}_{+}(\tau)|
\leq \mathcal{O}(1)\big(|\delta \bar{p}^{\sharp}_{+}(\tau)|+|\delta \bar{\rho}^{\sharp}_{+}(\tau)|\big),
\end{split}
\end{eqnarray}
where the constant $\mathcal{O}(1)$ depends only on $\underline{U}_{+}$ and $L$.
Then by the Gronwall's inequality, 
\begin{eqnarray}\label{eq:5.24}
\|\delta\bar{V}^{\sharp}_{+}\|_{0,0;(0,L)}\leq \mathcal{O}(1)\Big(\sum^{4}_{k=1}\|\mathbb{E}^{+}_{k}\|_{0,0;(0,L)}\Big),
\end{eqnarray}
where the constant $\mathcal{O}(1)$ depends only on $\underline{U}_{+}$ and $L$.
\emph{2. Estimate on $\big\|\frac{\dd \delta\bar{V}^{\sharp}_{+}}{\dd x}\big\|_{0,0;(0,L)}$.} Take the derivative on both sides of $\eqref{eq:5.20}_1$ with respect to $x$,
\begin{align}\label{eq:5.25}
\begin{split}
\frac{\dd\delta \bar{u}^{\sharp}_{+}}{\dd x}&=b^1_{u}(\bar{V}_{+},V^{\textrm{A}}_{+})\Big[\textrm{A}'_{+}(x)\delta\bar{p}^{\sharp}_{+}(x)
+\frac{\dd\mathbb{E}^{+}_{2}}{\dd x}-\frac{\dd(\bar{u}_{+}\mathbb{E}^{+}_{1})}{\dd x}\Big]\\
&\quad+\frac{\dd b^1_{u}(\bar{V}_{+},V^{\textrm{A}}_{+})}{\dd x}\Big[\int^{x}_{0}\textrm{A}'_{+}(\tau)\delta\bar{p}^{\sharp}_{+}(\tau)\dd\tau
+\mathbb{E}^{+}_{2}-\bar{u}_{+}\mathbb{E}^{+}_{1}\Big]\\
&\quad+b^2_{u}(\bar{V}_{+},V^{\textrm{A}}_{+})\Big[\mathfrak{q}_{0}\textrm{A}_{+}(x)\rho^{\textrm{A}}_{+}(x)\phi(T^{\textrm{A}}_{+}(x))\delta\bar{Y}^{\sharp}_{+}(x)
+\mathfrak{q}_{0}\textrm{A}_{+}(x)\phi(\bar{T}_{+}(x))\bar{Y}_{+}(x)\delta\bar{\rho}^{\sharp}_{+}(x)\\
&\quad+\mathfrak{q}_{0}\textrm{A}_{+}(x)\rho^{\textrm{A}}_{+}(x)\bar{Y}_{+}(x)\big[\phi(\bar{T}_{+}(x))-\phi(T^{\textrm{A}}_{+}(x))\big]+\frac{\dd\mathbb{E}^{+}_{3}}{\dd x}
-\frac{1}{2}\frac{\dd (\bar{u}^2_{+}\mathbb{E}^{+}_{1})}{\dd x}\Big]\\
&\quad+\frac{\dd b^2_{u}(\bar{V}_{+},V^{\textrm{A}}_{+})}{\dd x}
\Big[\mathfrak{q}_{0}\int^{x}_{0}\textrm{A}_{+}(\tau)\rho^{\textrm{A}}_{+}(\tau)\phi(T^{\textrm{A}}_{+}(\tau))\delta\bar{Y}^{\sharp}_{+}(\tau)\dd\tau\\
&\quad +\mathfrak{q}_{0}\int^{x}_{0}\textrm{A}_{+}(\tau)\phi(\bar{T}_{+})\bar{Y}_{+}(\tau)\delta\bar{\rho}^{\sharp}_{+}(\tau)\dd\tau\\
&\quad+\mathfrak{q}_{0}\int^{x}_{0}\textrm{A}_{+}(\tau)\rho^{\textrm{A}}_{+}(\tau)\bar{Y}_{+}(\tau)\big[\phi(\bar{T}_{+}(\tau))-\phi(T^{\textrm{A}}_{+}(\tau))\big]\dd\tau
+\mathbb{E}^{+}_{3}-\frac{\bar{u}^2_{+}}{2}\mathbb{E}^{+}_{1}\Big].
\end{split}
\end{align}

Then, by Theorem \ref{thm:1.1} and \eqref{eq:5.23}, we have for $x\in(0,L)$
\begin{eqnarray*}
\begin{split}
\bigg|\frac{\dd\delta \bar{u}^{\sharp}_{+}(x)}{\dd x}\bigg|&\leq \mathcal{O}(1)\Big(\|(\delta\bar{p}^{\sharp}_{+}, \delta\bar{\rho}^{\sharp}_{+}, \delta\bar{Y}^{\sharp}_{+} \|_{0,0;(0,L)}+|\phi(\bar{T}_{+}(x))-\phi(T^{\textrm{A}}_{+}(x))|+\sum^{3}_{k=1}\|\mathbb{E}^{+}_{k}\|_{1,0;(0,L)}\Big)\\
&\leq \mathcal{O}(1)\Big(\|\delta \bar{V}^{\sharp}_{+}\|_{0,0;(0,L)}+\sum^{3}_{k=1}\|\mathbb{E}^{+}_{k}\|_{1,0;(0,L)}\Big).
\end{split}
\end{eqnarray*}

So with the help of estimate \eqref{eq:5.24}, we have 
\begin{eqnarray}\label{eq:5.26}
\begin{split}
\bigg\|\frac{\dd\delta \bar{u}^{\sharp}_{+}}{\dd x}\bigg\|_{0,0;(0,L)}\leq \mathcal{O}(1)\sum^{4}_{k=1}\|\mathbb{E}^{+}_{k}\|_{1,0;(0,L)},
\end{split}
\end{eqnarray}
where the constant $\mathcal{O}(1)$ depends only on $\underline{U}_{+}$ and $L$.

In the same way, we can also obtain
\begin{eqnarray}\label{eq:5.27}
\begin{split}
\bigg\|\Big(\frac{\dd\delta \bar{p}^{\sharp}_{+}}{\dd x}, \frac{\dd\delta \bar{\rho}^{\sharp}_{+}}{\dd x}, \frac{\dd\delta \bar{Y}^{\sharp}_{+}}{\dd x}\Big)\bigg\|_{0,0;(0,L)}\leq \mathcal{O}(1)\sum^{4}_{k=1}\|\mathbb{E}^{+}_{k}\|_{1,0;(0,L)},
\end{split}
\end{eqnarray}
where the constant $\mathcal{O}(1)$ depends only on $\underline{U}_{+}$ and $L$.
Therefore, 
\begin{eqnarray}\label{eq:5.28}
\bigg\|\frac{\dd\delta \bar{V}^{\sharp}_{+}}{\dd x}\bigg\|_{0,0;(0,L)}\leq\mathcal{O}(1)\sum^{4}_{k=1}\|\mathbb{E}^{+}_{k}\|_{1,0;(0,L)},
\end{eqnarray}
where the constant $\mathcal{O}(1)$ depends only on $\underline{U}_{+}$ and $L$.

\emph{3. Estimate on $\big[\frac{\dd \delta\bar{V}^{\sharp}_{+}}{\dd x}\big]_{0,\alpha;(0,L)}$.} For any $x_1,\ x_2\in (0,L)$ with $x_1\neq x_2$, by \eqref{eq:5.25}, we have
\begin{eqnarray}\label{eq:5.29}
\begin{split}
&\bigg|\frac{\dd \delta\bar{u}^{\sharp}_{+}(x_1)}{\dd x}-\frac{\dd \delta\bar{u}^{\sharp}_{+}(x_2)}{\dd x}\bigg|\\
&\leq \mathcal{O}(1)\Big(\big\|(\delta\bar{p}^{\sharp}_{+}, \delta\bar{\rho}^{\sharp}_{+},\delta\bar{Y}^{\sharp}_{+})\big\|_{1,0;(0,L)}
+\sum^{3}_{k=1}\|\mathbb{E}^{+}_{k}\|_{1,\alpha;(0,L)}\Big) |x_1-x_2|^{\alpha}\\
&\quad+\mathcal{O}(1)\big|\phi(\bar{T}_{+}(x_1))-\phi(T^{\textrm{A}}_{+}(x_1))-[\phi(\bar{T}_{+}(x_2))-\phi(T^{\textrm{A}}_{+}(x_2))]\big|.
\end{split}
\end{eqnarray}

For the second term on the right hand side of inequality \eqref{eq:5.29}, it follows from the mean value theorem and the fact $\phi\in C^{1,1}$ that
\begin{eqnarray*}
\begin{split}
&\big|\phi(\bar{T}_{+}(x_1))-\phi(T^{\textrm{A}}_{+}(x_1))-[\phi(\bar{T}_{+}(x_2))-\phi(T^{\textrm{A}}_{+}(x_2))]\big|\\
&\quad \leq \Big|\big(\phi'(\bar{T}_{+})\frac{\dd \bar{T}_{+}}{\dd x}-\phi'(T^{\textrm{A}}_{+})\frac{\dd T^{\textrm{A}}_{+}}{\dd x}\big)(\varsigma)\Big||x_1-x_2|\\
&\quad \leq \Big[\Big|\phi'(\bar{T}_{+})\frac{\dd \delta\bar{T}^{\sharp}_{+}}{\dd x}\Big|+\Big|\frac{\dd T^{\textrm{A}}_{+}}{\dd x}\Big|
\big|\phi'(\bar{T}_{+})-\phi'(T^{\textrm{A}}_{+})\big|\Big]|x_1-x_2|\\
&\quad \leq \mathcal{O}(1)\|\phi'\|_{0,1;(0,L)}\|\delta\bar{T}^{\sharp}_{+}\|_{1,0;(0,L)}|x_1-x_2|^{\alpha}\\
&\quad \leq \mathcal{O}(1)\big\|(\delta\bar{p}^{\sharp}_{+}, \delta\bar{\rho}^{\sharp}_{+})\big\|_{1,0;(0,L)}|x_1-x_2|^{\alpha},
\end{split}
\end{eqnarray*}
where the constant $\mathcal{O}(1)$ depends only on $\underline{U}_{+}$, $\alpha$ and $L$.
So
\begin{eqnarray*}
\begin{split}
\bigg|\frac{\dd \delta\bar{u}^{\sharp}_{+}(x_1)}{\dd x}-\frac{\dd \delta\bar{u}^{\sharp}_{+}(x_2)}{\dd x}\bigg|
\leq \mathcal{O}(1)\Big(\sum^{4}_{k=1}\|\mathbb{E}^{+}_{k}\|_{1,\alpha;(0,L)}\Big) |x_1-x_2|^{\alpha},
\end{split}
\end{eqnarray*}
which implies that
\begin{eqnarray}\label{eq:5.30}
\Big[\frac{\dd \delta\bar{u}^{\sharp}_{+}}{\dd x}\Big]_{0,\alpha;(0,L)}\leq \mathcal{O}(1)\Big(\sum^{4}_{k=1}\|\mathbb{E}^{+}_{k}\|_{1,\alpha;(0,L)}\Big),
\end{eqnarray}
where the constant $\mathcal{O}(1)$ depends only on $\underline{U}_{+}$, $\alpha$ and $L$.

In the same way, one can also establish that
\begin{eqnarray}\label{eq:5.31}
\Big[\big(\frac{\dd\delta \bar{p}^{\sharp}_{+}}{\dd x}, \frac{\dd\delta \bar{\rho}^{\sharp}_{+}}{\dd x}, \frac{\dd\delta \bar{Y}^{\sharp}_{+}}{\dd x}\big)\Big]_{0,\alpha;(0,L)}\leq \mathcal{O}(1)\Big(\sum^{4}_{k=1}\|\mathbb{E}^{+}_{k}\|_{1,\alpha;(0,L)}\Big),
\end{eqnarray}
where the constant $\mathcal{O}(1)$ depends only on $\underline{U}_{+}$, $\alpha$ and $L$.

On the other hand, by  Theorem \ref{thm:1.1}, Proposition \ref{prop:5.1} and Remark \ref{rem:5.1}, we know that the terms $\mathbb{E}^{+}_{k}$,$(k=1,2,3,4)$ can be bounded by
\begin{eqnarray}\label{eq:5.32}
\sum^{4}_{k=1}\|\mathbb{E}^{+}_{k}\|_{1,\alpha; (0,L)}\leq \mathcal{O}(1)\epsilon^2,
\end{eqnarray}
where $\epsilon>0$ is sufficiently small, and the constant $\mathcal{O}(1)$ depends only on $\underline{U}_{+}$, $\alpha$ and $L$.

Finally, combining the estimates \eqref{eq:5.24}, \eqref{eq:5.28} and \eqref{eq:5.30}-\eqref{eq:5.32}, we can choose constants $C_{2}>0$ and $\epsilon^{*}_{0}>0$ depending only on $\underline{U}_{+}$, $\alpha$ and $L$
such that for $\epsilon\in(0,\epsilon^{*}_{0})$, it holds
\begin{eqnarray*}
\big\|\delta \bar{V}^{\sharp}_{+}\big\|_{1,\alpha;(0,L)}\leq C_{2}\epsilon^2,
\end{eqnarray*}
which is estimate \eqref{eq:1.29} in Theorem \ref{thm:1.2}.

\section{Proof of Theorem \ref{thm-global uniqueness}}\label{Sec6}

In this section, we are going to prove Theorem \ref{thm-global uniqueness} by establishing the global uniqueness 
of supersonic combustion contact discontinuity solution in a two-dimensional straight nozzle $\underline{\mathcal{N}}$ via the Lagrangian {\color{black}coordinate} transformation and the characteristic method.

Since $U_{\pm}\in C^{1,\alpha}(\underline{\mathcal{N}}_{\pm})$, and $u_{\pm}>0$, $u_{\pm}\neq c_{\pm}$ and $\rho_{\pm}>0$, so we can reformulate \emph{Problem 1.3} in the Lagrangian coordinates first. Define
\begin{eqnarray}\label{lagrangian-gloabl-unique-1}
\underline{\rm{m}}_{+}=\int^{1}_{0}\underline{\rho}^+ \underline{u}^{+}\dd\tau=\underline{\rho}^{+} \underline{u}^{+}>0, \quad \underline{\rm m}_{-}=\int^{0}_{-1}\underline{\rho}^{-}\underline{u}^{-}\dd\tau=\underline{\rho}^{-}\underline{u}^{-}>0.
\end{eqnarray}

Then, under the Lagrangian transformation $\mathcal{L}$ introduced by \eqref{eq:2.3} in section 2.1, 
the nozzle $\underline{\mathcal{N}}$ becomes
\begin{eqnarray*}
\underline{\tilde{\mathcal{N}}}\triangleq\{(\xi,\eta)\in \mathbb{R}^{2}| \xi\in(0,L),\ -\underline{\rm{m}}_{-}<\eta<\underline{\rm{m}}_{+}\},
\end{eqnarray*}
with its upper and lower boundaries being
\begin{eqnarray*}
\underline{\tilde{\Gamma}}_{+}\triangleq\{(\xi,\eta)\in \mathbb{R}^{2}| \xi\in(0,L),\ \eta=\underline{\rm{m}}_{+}\},\quad\
\underline{\tilde{\Gamma}}_{-}\triangleq\{(\xi,\eta)\in \mathbb{R}^{2}| \xi\in(0,L),\ \eta=-\underline{\rm{m}}_{-}\}.
\end{eqnarray*}

In the new coordinates, the contact discontinuity $\Gamma_{\rm{cd}}$ is
\begin{eqnarray*}
\tilde{\Gamma}_{\rm{cd}}\triangleq \{(\xi,\eta)\in \mathbb{R}^{2}| \xi\in(0,L),\ \eta=0\}.
\end{eqnarray*}
It divides $\underline{\tilde{\mathcal{N}}}$ into two supersonic regions 
\begin{eqnarray*}
\underline{\tilde{\mathcal{N}}}_{+}\triangleq \underline{\tilde{\mathcal{N}}}\cap\{0<\eta<\underline{\rm{m}}_{+}\},\quad\
\underline{\tilde{\mathcal{N}}}_{-}\triangleq\underline{\tilde{\mathcal{N}}}\cap\{-\underline{\rm{m}}_{-}<\eta<0\},
\end{eqnarray*}
with its corresponding entrances
\begin{eqnarray*}
\underline{\tilde{\Gamma}}^{+}_{\rm{in}}\triangleq \{(\xi,\eta)\in \mathbb{R}^{2}| \xi=0,\ \eta\in(0,\underline{\rm{m}}_{+}) \},\quad\
\underline{\tilde{\Gamma}}^{-}_{\rm{in}}\triangleq \{(\xi,\eta)\in \mathbb{R}^{2}| \xi=0,\ \eta\in(-\underline{\rm{m}}_{-},0)\}.
\end{eqnarray*}

Then, as done in section 2.2, we can reformulate \emph{Problem 1.3} into the following uniqueness problem for $(\omega_{\pm}, p_{\pm}, B_{\pm}, S_{\pm}, Y_{\pm})$ in $\underline{\tilde{\mathcal{N}}}_{\pm}$.

\smallskip
$\mathbf{Problem\ 6.1}$ 
Is the solution to the following initial-boundary value problem for $(\omega_{\pm}, p_{\pm}, B_{\pm}, S_{\pm}, Y_{\pm})$ in the Lagrangian {\color{black}coordinates} globally unique?
\begin{eqnarray}\label{gloabl-unique-problem-5.1}
\begin{cases}
\partial_{\xi}\omega_{+}+\lambda^{\pm}_{+}\partial_{\eta}\omega_{+}\pm\Lambda_{+}\big(\partial_{\xi}p_{+}+\lambda^{\pm}_{+}\partial_{\eta}p_{+}\big)
=\frac{(\gamma-1)\mathfrak{q}_{0}\phi(T_{+})\lambda^{\pm}_{+}Y_{+}}{\rho_{+}c^2_{+}u^2_{+}}, &\quad  \mbox{in} \quad \underline{\tilde{\mathcal{N}}}_{+}, \\
\partial_{\xi}B_{+}=\frac{\mathfrak{q}_0\phi(T_{+})Y_{+}}{u_{+}},\quad \partial_{\xi}S_{+}=-\frac{\gamma \mathcal{R}\mathfrak{q}_{0}\phi(T_{+})Y_{+}}{c^2_{+}u_{+}},\quad\ \partial_{\xi}Y_{+}=-\frac{\phi(T_{+})Y_{+}}{u_{+}}, &\quad \mbox{in}\quad \underline{\tilde{\mathcal{N}}}_{+},\\
\partial_{\xi}\omega_{-}+\lambda^{\pm}_{-}\partial_{\eta}p_{-}\pm\Lambda_{-}\big(\partial_{\xi}p_{-}+\lambda^{\pm}_{-}\partial_{\eta}p_{-}\big)
=\frac{(\gamma-1)\mathfrak{q}_{0}\phi(T_{-})\lambda^{\pm}_{-}Y_{-}}{\rho_{-}c^2_{-}u^2_{-}}, &\quad  \mbox{in} \quad \underline{\tilde{\mathcal{N}}}_{-},\\
\partial_{\xi}B_{-}=\frac{\mathfrak{q}_0\phi(T_{-})Y_{-}}{u_{-}},\quad\partial_{\xi}S_{-}=-\frac{\gamma \mathcal{R}\mathfrak{q}_{0}\phi(T_{-})Y_{-}}{c^2_{-}u_{-}}, \quad\ \partial_{\xi}Y_{-}=-\frac{\phi(T_{-})Y_{-}}{u_{-}}, &\quad  \mbox{in} \quad \underline{\tilde{\mathcal{N}}}_{-},\\
(\omega_{+}, p_{+}, B_{+}, S_{+}, Y_{+})=(0, \underline{p}^{+}, \underline{B}^{+}, \underline{S}^{+}, 0),  &\quad  \mbox{on} \quad \underline{\tilde{\Gamma}}^{+}_{\rm in},\\
(\omega_{-}, p_{-}, B_{-}, S_{-}, Y_{-})=(0, \underline{p}^{-}, \underline{B}^{-}, \underline{S}^{-}, 0),  &\quad  \mbox{on} \quad \underline{\tilde{\Gamma}}^{-}_{\rm in},\\
\omega_{+}=0, &\quad \mbox{on} \quad \underline{\tilde{\Gamma}}_{+},\\
\omega_{+}=\omega_{-},\ \ p_{+}=p_{-},&\quad \mbox{on} \quad \underline{\tilde{\Gamma}}_{\rm cd},\\
\omega_{-}=0, &\quad \mbox{on} \quad \underline{\tilde{\Gamma}}_{-},
\end{cases}
\end{eqnarray}
where $p$, $\rho$, $S$ and $B$ satisfy \eqref{eq:1.4} and \eqref{eq:1.8}.
$\lambda^{\pm}_{\pm}$ and $\Lambda_{\pm}$ are defined by \eqref{eq:2.18}, \eqref{eq:2.23} and \eqref{eq:2.25}.

\smallskip
Now, we will answer $\mathbf{Problem\ 6.1}$ by two main steps. First, let us consider
$(B_{\pm}, S_{\pm}, Y_{\pm})$.
\begin{proposition}\label{prop-unique-5.1}
Suppose that $\phi(T)$ is $C^{1,1}$ with respect to $T>0$ and $\alpha\in(0,1)$. Let $(U, \tilde{\Gamma}_{\rm{cd}})$ be a $C^{1,\alpha}$-smooth supersonic contact discontinuity solution of problem \eqref{gloabl-unique-problem-5.1} 
in $\underline{\tilde{\mathcal{N}}}$,
consisting of two $C^{1,\alpha}$-smooth supersonic flows $U_{+}=(u_{+}, v_{+}, p_{+},\rho_{+}, Y_{+})^{\top}$ in $\underline{\tilde{\mathcal{N}}}_{+}$
and $U_{-}=(u_{-}, v_{-}, p_{-},\rho_{-}, Y_{-})^{\top}$ in $\underline{\tilde{\mathcal{N}}}_{-}$
with the contact discontinuity $\tilde{\Gamma}_{\rm cd}$ in between and satisfying
\begin{eqnarray}\label{eq-global-unique-5.1}
\begin{split}
u_{+}>0, \ u_{+}\neq c_{+},\ \rho_{+}>0, \  \mbox{\rm{in}} \quad \underline{\tilde{\mathcal{N}}}_{+},\quad  \mbox{\rm{and}}\quad
u_{-}>0,\ u_{-}\neq c_{-},\  \rho_{-}>0, \  \mbox{\rm{in}} \quad \underline{\tilde{\mathcal{N}}}_{-}.
\end{split}
\end{eqnarray}
Then, $(B_{\pm}, S_{\pm}, Y_{\pm})$ to the following problem
\begin{eqnarray}\label{gloabl-unique-problem-5.2}
\begin{cases}
\partial_{\xi}B_{+}=\frac{\mathfrak{q}_0\phi(T_{+})Y_{+}}{u_{+}},\quad \partial_{\xi}S_{+}=-\frac{\gamma \mathcal{R}\mathfrak{q}_{0}\phi(T_{+})Y_{+}}{c^2_{+}u_{+}},\quad\ \partial_{\xi}Y_{+}=-\frac{\phi(T_{+})Y_{+}}{u_{+}}, &\quad \mbox{\rm{in}}\quad \underline{\tilde{\mathcal{N}}}_{+},\\
\partial_{\xi}B_{-}=\frac{\mathfrak{q}_0\phi(T_{-})Y_{-}}{u_{-}},\quad\partial_{\xi}S_{-}=-\frac{\gamma \mathcal{R}\mathfrak{q}_{0}\phi(T_{-})Y_{-}}{c^2_{-}u_{-}}, \quad\ \partial_{\xi}Y_{-}=-\frac{\phi(T_{-})Y_{-}}{u_{-}}, &\quad  \mbox{\rm{in}} \quad \underline{\tilde{\mathcal{N}}}_{-},\\
(B_{+}, S_{+}, Y_{+})=(\underline{B}^{+}, \underline{S}^{+}, 0),  &\quad  \mbox{\rm{on}} \quad \underline{\tilde{\Gamma}}^{+}_{\rm in},\\
(B_{-}, S_{-}, Y_{-})=(\underline{B}^{-}, \underline{S}^{-}, 0),  &\quad  \mbox{\rm{on}} \quad \underline{\tilde{\Gamma}}^{-}_{\rm in},
\end{cases}
\end{eqnarray}
satisfies
\begin{eqnarray}\label{eq-global-unique-5.2}
B_{\pm}\equiv \underline{B}^{\pm},\quad S_{\pm}\equiv \underline{S}^{\pm},\quad  Y_{\pm}\equiv 0,\quad \mbox{\rm{in}}\quad  \underline{\tilde{\mathcal{N}}}_{\pm}.
\end{eqnarray}
\end{proposition}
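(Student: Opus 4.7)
My plan is to exploit the fact that in Lagrangian coordinates the streamlines become the horizontal lines $\eta=\mathrm{const}$, so the three equations in each of the systems \eqref{gloabl-unique-problem-5.2} are pure $\xi$-derivatives. For every fixed $\eta\in(-\underline{\mathrm{m}}_-,0)\cup(0,\underline{\mathrm{m}}_+)$, the restriction of $(B_\pm, S_\pm, Y_\pm)$ to the segment $\{\eta\}\times[0,L]$ therefore solves an initial value problem for a first-order system of ordinary differential equations. The hypothesis \eqref{eq-global-unique-5.1} guarantees $u_\pm>0$ and $\rho_\pm>0$ throughout $\underline{\tilde{\mathcal{N}}}_\pm$, so the coefficients $1/u_\pm$ and $1/c_\pm^2 u_\pm$ are continuous and bounded; together with $\phi\in C^{1,1}$ evaluated at $T_\pm=p_\pm/(\mathcal{R}\rho_\pm)$, the right-hand sides are Lipschitz in $(B_\pm,S_\pm,Y_\pm)$ uniformly in $\xi$ along each streamline.

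The first step is to solve for $Y_\pm$. Rewriting its equation as the linear homogeneous ODE
\begin{equation*}
\partial_\xi Y_\pm = -\frac{\phi(T_\pm(\xi,\eta))}{u_\pm(\xi,\eta)}\, Y_\pm,\qquad Y_\pm(0,\eta)=0,
\end{equation*}
Gronwall's inequality (or directly the uniqueness part of the Picard--Lindel\"of theorem applied on $[0,L]$ for each fixed $\eta$) yields $Y_\pm(\xi,\eta)\equiv 0$ throughout $\underline{\tilde{\mathcal{N}}}_\pm$. Once $Y_\pm\equiv 0$ is established, the equations for $B_\pm$ and $S_\pm$ reduce to $\partial_\xi B_\pm=0$ and $\partial_\xi S_\pm=0$ with constant initial data $\underline{B}^\pm$, $\underline{S}^\pm$, so $B_\pm\equiv\underline{B}^\pm$ and $S_\pm\equiv\underline{S}^\pm$ follow immediately by integration along each streamline.

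I do not anticipate a serious obstacle here: the argument is essentially that the combustion source terms are slaved to $Y_\pm$, and zero influx of unburned gas at $\underline{\tilde{\Gamma}}^\pm_{\mathrm{in}}$ combined with mass conservation along streamlines in Lagrangian coordinates forces $Y_\pm$ to vanish identically. The only points requiring a brief justification are that the ODE coefficients are well-defined and Lipschitz (which follows from the $C^{1,\alpha}$ regularity of $U_\pm$ together with $u_\pm\neq 0$, $\rho_\pm>0$), and that the pointwise-in-$\eta$ application of ODE uniqueness yields a solution with the same $C^{1,\alpha}$ regularity as the data (which is automatic since a $C^{1,\alpha}$ solution was assumed to exist). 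This closes the proof of Proposition \ref{prop-unique-5.1} and reduces the remaining task in Theorem \ref{thm-global uniqueness} to analyzing the hyperbolic $(\omega_\pm,p_\pm)$ system with the now-homogeneous source terms.
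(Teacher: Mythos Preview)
Your proposal is correct and follows essentially the same approach as the paper: treat the $Y_\pm$ equation as a linear homogeneous ODE in $\xi$ along each streamline with zero initial data (the paper writes the explicit integrating-factor formula $Y_\pm=Y_\pm(0,\eta)\exp(-\int_0^\xi \phi(T_\pm)/u_\pm\,\dd\tau)=0$ where you invoke Gronwall/Picard--Lindel\"of), and then substitute $Y_\pm\equiv0$ into the $B_\pm,S_\pm$ equations to conclude they are constant.
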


\begin{proof}
Since $\phi(T)$ is $C^{1,1}$ with respect to $T>0$ and $U_{\pm}\in C^{1,\alpha}(\underline{\tilde{\mathcal{N}}}_{\pm})$ with $u_{\pm}>0$ and $\rho_{\pm}>0$, we can get from the equations for $Y_{\rm}$ that
\begin{eqnarray*}
Y_{\pm}=Y_{\pm}(0,\eta)\rm{e}^{-\int^{\xi}_{0}\frac{\phi(T_{\pm})Y_{\pm}}{u_{\pm}}\dd\tau}=0,\qquad \mbox{\rm{in}}\quad \underline{\tilde{\mathcal{N}}}_{\pm}.
\end{eqnarray*}

Substituting it into equations for $B_{\pm}$ and $S_{\pm}$, we have $\partial_{\xi}B_{\pm}=\partial_{\xi}S_{\pm}=0$ in $\underline{\tilde{\mathcal{N}}}_{\pm}$.
So 
\begin{eqnarray*}
B_{\pm}=\underline{B}^{\pm},\quad S_{\pm}=\underline{S}^{\pm},\qquad \mbox{\rm{in}}\quad \underline{\tilde{\mathcal{N}}}_{\pm}.
\end{eqnarray*}
\end{proof}

Next, let us consider $(\omega_{\pm}, p_{\pm})$. By Proposition \ref{prop-unique-5.1}, we know that $(\omega_{\pm}, p_{\pm})$ satisfies the following initial-boundary value problem:
\begin{eqnarray}\label{gloabl-unique-problem-5.3}
\begin{cases}
\partial_{\xi}\omega_{+}+\lambda^{\pm}_{+}\partial_{\eta}\omega_{+}\pm\Lambda_{+}\big(\partial_{\xi}p_{+}+\lambda^{\pm}_{+}\partial_{\eta}p_{+}\big)
=0, &\quad  \mbox{in} \quad \underline{\tilde{\mathcal{N}}}_{+}, \\
\partial_{\xi}\omega_{-}+\lambda^{\pm}_{-}\partial_{\eta}p_{-}\pm\Lambda_{-}\big(\partial_{\xi}p_{-}+\lambda^{\pm}_{-}\partial_{\eta}p_{-}\big)
=0, &\quad  \mbox{in} \quad \underline{\tilde{\mathcal{N}}}_{-},\\
(\omega_{+}, p_{+})=(0, \underline{p}^{+}),  &\quad  \mbox{on} \quad \underline{\tilde{\Gamma}}^{+}_{\rm in},\\
(\omega_{-}, p_{-})=(0, \underline{p}^{-}),  &\quad  \mbox{on} \quad \underline{\tilde{\Gamma}}^{-}_{\rm in},\\
\omega_{+}=0, &\quad \mbox{on} \quad \underline{\tilde{\Gamma}}_{+},\\
\omega_{+}=\omega_{-},\ \ p_{+}=p_{-},&\quad \mbox{on} \quad \underline{\tilde{\Gamma}}_{\rm cd},\\
\omega_{-}=0, &\quad \mbox{on} \quad \underline{\tilde{\Gamma}}_{-}.
\end{cases}
\end{eqnarray}

Then, we have the following proposition.
\begin{proposition}\label{prop-unique-5.2}
Under the same assumptions in Proposition \ref{prop-unique-5.1}, the solutions $(\omega_{\pm}, p_{\pm})$ of problem \eqref{gloabl-unique-problem-5.3} satisfy
\begin{eqnarray}\label{eq-global-unique-5.3}
\omega_{\pm}=0,\quad p_{\pm}=\underline{p}^{\pm}, \quad  \mbox{\rm{in}} \quad \underline{\tilde{\mathcal{N}}}_{\pm}.
\end{eqnarray}
\end{proposition}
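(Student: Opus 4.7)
The plan is to use Proposition~\ref{prop-unique-5.1} to reduce the problem to a closed homogeneous quasilinear $2\times 2$ hyperbolic system for $(\omega_\pm, p_\pm)$, and then to propagate the trivial solution across $\underline{\tilde{\mathcal{N}}}_\pm$ by a characteristic/Riemann-invariant argument. After substituting $B_\pm\equiv\underline{B}^\pm$, $S_\pm\equiv\underline{S}^\pm$, $Y_\pm\equiv 0$ into \eqref{gloabl-unique-problem-5.3}, the right-hand sides of the PDEs for $(\omega_\pm,p_\pm)$ vanish, and via \eqref{eq:1.4}, \eqref{eq:1.8} the coefficients $\lambda^{\pm}_\pm$ and $\Lambda_\pm$ become functions of $(\omega_\pm,p_\pm)$ alone. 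By \eqref{eq-global-unique-5.1} together with the $C^{1,\alpha}$-regularity, the eigenvalues preserve their background signs $\lambda^{+}_\pm>0$ and $\lambda^{-}_\pm<0$ throughout $\underline{\tilde{\mathcal{N}}}_\pm$, so a characteristic decomposition of the given solution is geometrically well defined.

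First I would exhibit genuine Riemann invariants. Using the Bernoulli relation with $B$, $S$ constant, the quantities $u^2+v^2$, $c^2$, $\rho c$ and $\sqrt{u^2+v^2-c^2}$ all reduce to explicit functions of $p$, and the identity $u^2(1+\omega^2)=u^2+v^2$ yields the key factorization
\begin{equation*}
\Lambda_{\pm}(\omega,p)=(1+\omega^{2})\,h_{\pm}(p),\qquad h_{\pm}(p)>0.
\end{equation*}
Along the $\lambda^{\pm}_\pm$-characteristic the equations give $\mathrm{d}\omega_{\pm}\pm\Lambda_{\pm}\,\mathrm{d} p_{\pm}=0$; dividing by $1+\omega_{\pm}^{2}$ and setting $H_{\pm}(p):=\int_{\underline{p}^{\pm}}^{p}h_{\pm}(q)\,\mathrm{d} q$, this integrates to the conservation of
\begin{equation*}
R^{+}_{\pm}:=\arctan(\omega_{\pm})+H_{\pm}(p_{\pm}),\qquad R^{-}_{\pm}:=\arctan(\omega_{\pm})-H_{\pm}(p_{\pm}),
\end{equation*}
along the $\lambda^{+}_{\pm}$- and $\lambda^{-}_{\pm}$-characteristics respectively. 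Since $\arctan$ and $H_{\pm}$ are strictly monotone, simultaneous knowledge of $R^{+}_{\pm}$ and $R^{-}_{\pm}$ at a point uniquely pins down $(\omega_{\pm},p_{\pm})$ there.

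Next I would decompose each $\underline{\tilde{\mathcal{N}}}_\pm$ into finitely many subregions via characteristics issuing from the four corners, in the spirit of Fig.~\ref{fig3.4}. In the innermost triangles $\tilde{\mathcal{N}}^{\mathrm{I}}_\pm$, both backward characteristics from any interior point reach $\underline{\tilde{\Gamma}}^{\pm}_{\mathrm{in}}$ where $R^{\pm}_\pm=0$, so monotonicity forces $(\omega_\pm,p_\pm)\equiv(0,\underline{p}^\pm)$ there. In the wall-adjacent subregions $\tilde{\mathcal{N}}^{\mathrm{II}}_\pm$ one backward characteristic still reaches the entrance (transporting the invariant $0$), while the other reflects at $\underline{\tilde{\Gamma}}_\pm$; at the wall-hit point the boundary condition $\omega_\pm=0$ combined with the already-known incoming invariant forces the reflected one to be $0$ as well, and backward transport closes the step.

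The decisive step, which I expect to be the main obstacle, is the treatment of the CD-adjacent subregions $\tilde{\mathcal{N}}^{\mathrm{III}}_\pm$ and $\tilde{\mathcal{N}}^{\mathrm{IV}}_\pm$, because the two layers become coupled through the Rankine--Hugoniot conditions $\omega_+=\omega_-$, $p_+=p_-$ on $\underline{\tilde{\Gamma}}_{\mathrm{cd}}$. My plan is to process both layers simultaneously at each CD-hit point $(\zeta,0)$: tracing the $\lambda^{-}_+$-characteristic backward in the upper layer and the $\lambda^{+}_-$-characteristic backward in the lower layer into the already-resolved subregions yields $R^{-}_+(\zeta,0)=R^{+}_-(\zeta,0)=0$, which combined with the CD continuity conditions reduces to the algebraic system $H_+(p)+H_-(p)=0$ and $\arctan(\omega)=H_+(p)$ in the unknown CD-values $(\omega,p)$. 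The background compatibility $\underline{p}^{+}=\underline{p}^{-}$ from \eqref{eq:1.16} ensures that $(\omega,p)=(0,\underline{p}^\pm)$ solves this system, and the strict monotonicity of $H_++H_-$ yields uniqueness; transporting backward along the $\lambda^{+}_+$- and $\lambda^{-}_-$-characteristics then closes these subregions. Iterating this wall/CD reflection procedure finitely many times (the number being bounded in terms of $\underline{U}$ and $L$ since each reflection advances $\xi$ by an amount bounded below) covers all of $\underline{\tilde{\mathcal{N}}}_\pm$ and yields $(\omega_\pm,p_\pm)\equiv(0,\underline{p}^\pm)$, establishing \eqref{eq-global-unique-5.3}.
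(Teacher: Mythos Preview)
Your approach is correct and genuinely different from the paper's. The key observation you exploit --- that after Proposition~\ref{prop-unique-5.1} fixes $B_\pm\equiv\underline{B}^\pm$ and $S_\pm\equiv\underline{S}^\pm$, the coefficient $\Lambda_\pm$ factorizes as $(1+\omega_\pm^2)h_\pm(p_\pm)$ --- yields exact Riemann invariants $\arctan\omega_\pm\pm H_\pm(p_\pm)$, and the rest is a clean algebraic tracing along the \emph{actual} characteristics of the solution, with the coupling across $\underline{\tilde{\Gamma}}_{\rm cd}$ resolved by strict monotonicity of $H_++H_-$ and the background compatibility $\underline{p}^+=\underline{p}^-$. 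The paper instead writes the equations for $z^{\pm,\flat}_\pm=\delta\omega^\flat_\pm\pm\Lambda_\pm\delta p^\flat_\pm$ using the \emph{constant background} eigenvalues $\underline{\lambda}^\pm_\pm$ (so the characteristics are straight lines), which produces source terms proportional to $\lambda^\pm_\pm-\underline{\lambda}^\pm_\pm$; it then invokes Lemma~\ref{lem:unique} to bound these differences by $|z^{+,\flat}_\pm|+|z^{-,\flat}_\pm|$ and closes with Gronwall on each subregion. Your argument is more elementary and avoids both Lemma~\ref{lem:unique} and Gronwall, at the price of working with the curved nonlinear characteristics (whose sign structure you correctly justify via $u_\pm\neq c_\pm$, continuity, and the supersonic entrance data). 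The paper's route is closer in spirit to the estimates of Section~\ref{Sec3} and would transplant more readily to settings without exact invariants; yours is tailored to the fact that the isentropic/iso-Bernoulli reduction makes the $2\times2$ system genuinely integrable along characteristics.
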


\begin{proof}
We divide the proof into three steps.

$\mathbf{Step\ 1}$. Set
\begin{eqnarray}\label{eq-global-unique-5.4}
z^{\pm,\flat}_{+}\triangleq\delta \omega^{\flat}_{+}\pm\Lambda_{+}\delta p^{\flat}_{+}, \quad z^{\pm,\flat}_{-}\triangleq\delta \omega^{\flat}_{-}\pm\Lambda_{-}\delta p^{\flat}_{-},
\end{eqnarray}
where $\delta\omega^{\flat}_{\pm}\triangleq \omega_{\pm}-0$ and $\delta p^{\flat}_{\pm}\triangleq p_{\pm}-\underline{p}^{\pm}$.
Then, by \eqref{gloabl-unique-problem-5.3}, $z^{\pm,\flat}_{+}$ and $z^{\pm,\flat}_{-}$ satisfy the following initial-boundary value problem:
\begin{eqnarray}\label{gloabl-unique-problem-5.4}
\begin{cases}
\partial_{\xi}z^{\pm,\flat}_{+}+\underline{\lambda}^{\pm}_{+}\partial_{\eta}z^{+,\flat}_{+}\mp\frac{\partial^{\pm,\flat}_{+}\Lambda_{+}}{2\Lambda_{+}}\big(z^{+,\flat}_{+}-z^{-,\flat}_{+}\big)\\
\qquad\qquad =-\frac{1}{2}(\lambda^{\pm}_{+}-\underline{\lambda}^{\pm}_{+})\Big[(\partial_{\eta}z^{+,\flat}_{+}+\partial_{\eta}z^{-,\flat}_{+})\pm\Lambda_{+}\partial_{\eta}\Big(\frac{z^{+,\flat}_{+}-z^{-,\flat}_{+}}{\Lambda_{+}}\Big)\Big],&\quad  \mbox{in}
\quad \underline{\tilde{\mathcal{N}}}_{+}, \\
\partial_{\xi}z^{\pm,\flat}_{-}+\underline{\lambda}^{\pm}_{-}\partial_{\eta}z^{+,\flat}_{-}\mp\frac{\partial^{\pm,\flat}_{-}\Lambda_{-}}{2\Lambda_{-}}\big(z^{+,\flat}_{-}-z^{-,\flat}_{-}\big)\\
\qquad\qquad =-\frac{1}{2}(\lambda^{\pm}_{-}-\underline{\lambda}^{\pm}_{-})\Big[(\partial_{\eta}z^{+,\flat}_{-}+\partial_{\eta}z^{-,\flat}_{-})\pm\Lambda_{-}\partial_{\eta}\Big(\frac{z^{+,\flat}_{-}-z^{-,\flat}_{-}}{\Lambda_{-}}\Big)\Big],&\quad  \mbox{in}
\quad \underline{\tilde{\mathcal{N}}}_{-}, \\
(z^{+,\flat}_{+},z^{-,\flat}_{+})=(0,0),  &\quad  \mbox{on} \quad \underline{\tilde{\Gamma}}^{+}_{\rm in},\\
(z^{+,\flat}_{-},z^{-,\flat}_{-})=(0,0),  &\quad  \mbox{on} \quad \underline{\tilde{\Gamma}}^{-}_{\rm in},\\
z^{+,\flat}_{+}+z^{-,\flat}_{+}=0, &\quad \mbox{on} \quad \underline{\tilde{\Gamma}}_{+},\\
z^{+,\flat}_{+}+z^{-,\flat}_{+}=z^{+,\flat}_{-}+z^{-,\flat}_{-},\quad \frac{z^{+,\flat}_{+}+z^{-,\flat}_{+}}{2\Lambda_{+}}=\frac{z^{+,\flat}_{-}+z^{-,\flat}_{-}}{2\Lambda_{-}},&\quad \mbox{on}
\quad \underline{\tilde{\Gamma}}_{\rm cd},\\
z^{+,\flat}_{-}+z^{-,\flat}_{-}=0, &\quad \mbox{on} \quad \underline{\tilde{\Gamma}}_{-},
\end{cases}
\end{eqnarray}
where $\p^{\pm,\flat}_{+}\triangleq \p_{\xi}+\underline{\lambda}^{\pm}_{+}\p_{\eta}$, $\p^{\pm,\flat}_{-}\triangleq \p_{\xi}+\underline{\lambda}^{\pm}_{-}\p_{\eta}$, and 
\begin{eqnarray}\label{eq-global-unique-5.5}
\underline{\lambda}^{\pm}_{+}\triangleq\lambda^{\pm}_{+}\big|_{U=U_{+}}=\pm\frac{\underline{\rho}^{+}\underline{u}^{+}\underline{c}^{+}}
{\sqrt{(\underline{u}^{+})^{2}-(\underline{c}^{+})^{2}}},\quad
 \underline{\lambda}^{\pm}_{-}\triangleq\lambda^{\pm}_{-}\big|_{U=U_{-}}=\pm\frac{\underline{\rho}^{-}\underline{u}^{-}\underline{c}^{-}}
{\sqrt{(\underline{u}^{-})^{2}-(\underline{c}^{-})^{2}}}.
\end{eqnarray}

$\mathbf{Step\ 2}$. We are going to derive $C^{0}$-estimates of $z^{\pm,\flat}_{+}$ and $z^{\pm,\flat}_{-}$ in $\underline{\tilde{\mathcal{N}}}_{+}\cup\underline{\tilde{\mathcal{N}}}_{-}$.
In order to make it, we once again employ the characteristic method. As the argument done before introducing Proposition \ref{eq:prop-3.2-1} in section 3.2 , we divide the upper and lower nozzles $\underline{\tilde{\mathcal{N}}}_{\pm}$ into several sub-domains (see Fig.\ref{fig5.1}). But the difference here is that all the characteristic curves are determined by $\underline{\lambda}^{\pm}_{+}$ and $\underline{\lambda}^{\pm}_{-}$ so that they are all straight lines. To avoid the repetition in the expression, we do the same argument as done in the section 3.2 by replacing the notations
$\Upsilon^{+,0}_{+}(\xi; \bar{\xi}^{2}_{+}, \rm{m}_{+})$ (or $\eta=\Upsilon^{-,0}_{-}(\xi; \bar{\xi}^{2}_{-}, \rm{m}_{-})$), $\ell^{+,0}_{+}$ (or $\ell^{-,0}_{-}$) and $\tilde{\mathcal{N}}^{\rm{I}}_{\pm}$, by
$\underline{\Upsilon}^{+,0}_{+}(\xi; \bar{\xi}^{2}_{+}, \underline{\rm{m}}_{+})$ (or $\eta=\underline{\Upsilon}^{-,0}_{-}(\xi; \bar{\xi}^{2}_{-}, \underline{\rm{m}}_{-})$), $\underline{\ell}^{+,0}_{+}$ (or $\underline{\ell}^{-,0}_{-}$) and $\underline{\tilde{\mathcal{N}}}^{\rm{I}}_{\pm}$.

\begin{figure}[ht]
\begin{center}
\begin{tikzpicture}[scale=1.2]

\draw [line width=0.04cm](-2.5,-2.0) --(3.5,-2.0);
\draw [line width=0.04cm](-2.5,0.8)--(3.5,0.8);
\draw [line width=0.04cm][red][dashed](-2.5,-0.5)--(3.5,-0.5);

\draw [thin](3.5,-2.0) --(3.5,0.8);
\draw [line width=0.02cm](-2.5,-2.0)--(-2.5,0.8);

\draw [line width=0.02cm](-2.5,-0.5)to(-0.5,0.8);
\draw [line width=0.02cm][blue](-2.5,0.8)to(1.75,-2.0);
\draw [line width=0.02cm](-2.5,-0.5)to(-0.5,-2.0);
\draw [line width=0.02cm][blue](-2.5,-2.0)to(1.3,0.8);

\draw [line width=0.02cm](-0.5,0.8)to(3.4,-2.0);

\draw [line width=0.02cm](-0.5,-2.0)to(2.9,0.8);

\node at (-2.1, 0.1) {$\underline{\tilde{\mathcal{N}}}^{\rm{I}}_{+}$};
\node at (-1.5, 0.5) {$\underline{\tilde{\mathcal{N}}}^{\rm{II}}_{+}$};
\node at (-1.5, -0.25) {$\underline{\tilde{\mathcal{N}}}^{\rm{III}}_{+}$};
%

\node at (-2.1, -1.3) {$\underline{\tilde{\mathcal{N}}}^{\rm{I}}_{-}$};
\node at (-1.5, -1.7) {$\underline{\tilde{\mathcal{N}}}^{\rm{II}}_{-}$};
\node at (-1.4, -0.85) {$\underline{\tilde{\mathcal{N}}}^{\rm{III}}_{-}$};


\node at (3.9,0.8) {$\underline{\tilde{\Gamma}}_{+}$};
\node at (3.9,-2.0) {$\underline{\tilde{\Gamma}}_{-}$};
\node at (3.9,-0.5) {$\underline{\tilde{\Gamma}}_{\textrm{cd}}$};
\node at (-2.8, 0.2) {$\underline{\tilde{\Gamma}}^{+}_{\textrm{in}}$};
\node at (-2.8, -1.3) {$\underline{\tilde{\Gamma}}^{-}_{\textrm{in}}$};
\node at (-2.8,-0.5) {$O$};
\end{tikzpicture}
\end{center}
\caption{$C^{0}$-priori estimates for $z^{\pm,\flat}_{+}$ and $z^{\pm,\flat}_{-}$}\label{fig5.1}
\end{figure}

Then, we divide the proof of Step 2 into three substeps.

$\mathbf{Step\ 2.1}$. \underline{\emph{$C^0$-estimates for $z^{\pm,\flat}_{+}$ and $z^{\pm,\flat}_{-}$ in $\underline{\tilde{\mathcal{N}}}^{\rm{I}}_{+}\cup \underline{\tilde{\mathcal{N}}}^{\rm{I}}_{-}$.}} Without loss of the generality, we only consider the estimate of $z^{\pm,\flat}_{+}$ in $\underline{\tilde{\mathcal{N}}}^{\rm{I}}_{+}$ since the argument for $z^{\pm,\flat}_{-}$ in $\underline{\tilde{\mathcal{N}}}^{\rm{I}}_{-}$ can be done in the same way. For any $(\bar{\xi},\bar{\eta})\in \underline{\tilde{\mathcal{N}}}^{\rm{I}}_{+}$, let $\eta=\underline{\Upsilon}_{+}^{\pm}(\tau;\bar{\xi},\bar{\eta})$ be a characteristic line corresponding to $\underline{\lambda}^{\pm}_{+}$ issuing from the point $(\bar{\xi},\bar{\eta})$. Then along it, we have
\begin{eqnarray}\label{eq-global-unique-5.6}
\begin{cases}
z^{+,\flat}_{+}(\bar{\xi},\bar{\eta})
=\int_0^{\bar{\xi}}\frac{\partial^{+,\flat}_{+}\Lambda_{+}}{2\Lambda_{+}}\big(z^{+,\flat}_{+}-z^{-,\flat}_{+}\big)(\tau,\underline{\Upsilon}^{+}_{+}(\tau;\bar{\xi},\bar{\eta}))\mathrm{d} \tau\\
\qquad\qquad-\frac{1}{2}\int_0^{\bar{\xi}}(\lambda^{+}_{+}-\underline{\lambda}^{+}_{+})\Big[(\partial_{\eta}z^{+,\flat}_{+}+\partial_{\eta}z^{-,\flat}_{+})+\Lambda_{+}\partial_{\eta}\Big(\frac{z^{+,\flat}_{+}-z^{-,\flat}_{+}}{\Lambda_{+}}\Big)\Big](\tau,\underline{\Upsilon}^{+}_{+}(\tau;\bar{\xi},\bar{\eta}))\mathrm{d} \tau,\\
z^{-,\flat}_{+}(\bar{\xi},\bar{\eta})
=-\int_0^{\bar{\xi}}\frac{\partial^{-,\flat}_{+}\Lambda_{+}}{2\Lambda_{+}}\big(z^{+,\flat}_{+}-z^{-,\flat}_{+}\big)(\tau,\underline{\Upsilon}^{+}_{+}(\tau;\bar{\xi},\bar{\eta}))\mathrm{d} \tau\\
\qquad\qquad-\frac{1}{2}\int^{\bar{\xi}}_{0}(\lambda^{-}_{+}-\underline{\lambda}^{-}_{+})\Big[(\partial_{\eta}z^{+,\flat}_{+}+\partial_{\eta}z^{-,\flat}_{+})-\Lambda_{+}\partial_{\eta}\Big(\frac{z^{+,\flat}_{+}-z^{-,\flat}_{+}}{\Lambda_{+}}\Big)\Big](\tau,\underline{\Upsilon}^{-}_{+}(\tau;\bar{\xi},\bar{\eta}))\mathrm{d} \tau.
\end{cases}
\end{eqnarray}

Let us introduce
\begin{eqnarray*}
\underline{R}_{+}^{\rm{I}}(\kappa)=\big\{(\xi,\eta)| 0\leq\xi\leq\kappa,\ \underline{\Upsilon}^{+,0}_{+}(\xi;\bar{\xi}^{2}_{+},\textrm{\underline{m}}_{+})\leq\eta\leq\underline{\Upsilon}_{+}^{-,1}(\xi;\bar{\xi}^{2}_{\rm{cd}},0)\big\},
\ (0\leq\kappa\leq \bar{\xi}_{+}^{1}),
\end{eqnarray*}
and define the norm
\begin{eqnarray*}
z_{\rm{I}^{+}}^{\pm,\flat}(\kappa)\triangleq\sup\limits_{(\bar{\xi},\bar{\eta})\in \underline{R}_{+}^{\rm{I}}(\kappa)}|z_{+}^{\pm,\flat}(\bar{\xi},\bar{\eta})|.
\end{eqnarray*}

Obviously, we have $\|z_{+}^{\pm,\flat}\|_{0,0; \underline{\tilde{\mathcal{N}}}^{\rm{I}}_{+}}=z_{\rm{I}^{+}}^{\pm,\flat}(\bar{\xi}^{1}_{+}).$
So, by \eqref{eq-global-unique-5.6} and Lemma \ref{lem:unique}, we can get that
\begin{align}\label{eq-global-unique-5.7}
\begin{split}
&|z_{+}^{+,\flat}(\bar{\xi},\bar{\eta})|\\
&\leq \Big\|\frac{\partial^{+,\flat}_{+}\Lambda_{+}}{2\Lambda_{+}}\Big\|_{0,0; \underline{\tilde{\mathcal{N}}}_{+}}\int_0^{\bar{\xi}}\Big(|z^{+,\flat}_{+}|+|z^{-,\flat}_{+}|\Big)(\tau,\underline{\Upsilon}^{+}_{+}(\tau;\bar{\xi},\bar{\eta}))\mathrm{d} \tau\\
& +\frac{1}{2}\Big\|(\partial_{\eta}z^{+,\flat}_{+}+\partial_{\eta}z^{-,\flat}_{+})+\Lambda_{+}\partial_{\eta}\Big(\frac{z^{+,\flat}_{+}-z^{-,\flat}_{+}}{\Lambda_{+}}\Big)\Big\|_{0,0; \underline{\tilde{\mathcal{N}}}_{+}}\int_0^{\bar{\xi}}|\lambda^{+}_{+}-\underline{\lambda}^{+}_{+}|(\tau,\underline{\Upsilon}^{+}_{+}(\tau;\bar{\xi},\bar{\eta}))\mathrm{d} \tau\\
&\leq \mathcal{O}\big(\|U_{+}\|_{1,0; \underline{\tilde{\mathcal{N}}}_{+}}\big)\cdot
\int_0^{\bar{\xi}}(|z^{+,\flat}_{+}|+|z^{-,\flat}_{+}|)(\tau,\underline{\Upsilon}^{+}_{+}(\tau;\bar{\xi},\bar{\eta}))\mathrm{d} \tau.
\end{split}
\end{align}
where $\mathcal{O}\big(\|U_{+}\|_{1,0; \underline{\tilde{\mathcal{N}}}_{+}}\big)$ represents its bounds depending only on $\|U_{+}\|_{1,0; \underline{\tilde{\mathcal{N}}}_{+}}$.

Similarly, we can also get
\begin{eqnarray}\label{eq-global-unique-5.8}
|z_{+}^{-,\flat}(\bar{\xi},\bar{\eta})|\leq \mathcal{O}\big(\|U_{+}\|_{1,0; \underline{\tilde{\mathcal{N}}}_{+}}\big)\cdot \int_0^{\bar{\xi}}\big(|z^{+,\flat}_{+}|+|z^{-,\flat}_{+}|\big)(\tau,\underline{\Upsilon}^{+}_{+}(\tau;\bar{\xi},\bar{\eta}))\dd \tau.
\end{eqnarray}

Combining the \eqref{eq-global-unique-5.7}-\eqref{eq-global-unique-5.8}, for any $\kappa\in [0,\bar{\xi}^{1}_{+}]$, we arrive at
\begin{align*}
\begin{split}
&z_{\rm{I}^{+}}^{+,\flat}(\kappa)+z_{\rm{I}^{+}}^{-,\flat}(\kappa)\leq \mathcal{O}\big(\|U_{+}\|_{1,0; \underline{\tilde{\mathcal{N}}}_{+}}\big)\cdot \int_0^{\kappa}\big(z_{\rm{I}^{+}}^{+,\flat}+z_{\rm{I}^{+}}^{-,\flat}\big)(\varsigma)\dd \varsigma.
\end{split}
\end{align*}

Then, by the Gronwall inequality and taking $\kappa=\bar{\xi}^{1}_{+}$, it follows that
\begin{align}\label{eq-global-unique-5.9}
\begin{split}
\|z_{+}^{+,\flat}\|_{0,0; \underline{\tilde{\mathcal{N}}}^{\rm{I}}_{+}}=\|z_{+}^{-,\flat}\|_{0,0; \underline{\tilde{\mathcal{N}}}^{\rm{I}}_{+}}=0.
\end{split}
\end{align}

In the same way, we also have
\begin{align}\label{eq-global-unique-5.10}
\begin{split}
\|z_{-}^{+,\flat}\|_{0,0; \underline{\tilde{\mathcal{N}}}^{\rm{I}}_{-}}=\|z_{-}^{-,\flat}\|_{0,0; \underline{\tilde{\mathcal{N}}}^{\rm{I}}_{-}}=0.
\end{split}
\end{align}

$\mathbf{Step\ 2.2}$. \underline{\emph{$C^0$-estimates for $z^{\pm,\flat}_{+}$ and $z^{\pm,\flat}_{-}$ in $\underline{\tilde{\mathcal{N}}}^{\rm{II}}_{+}\cup\underline{\tilde{\mathcal{N}}}^{\rm{II}}_{-}$.}}
For any $(\bar{\xi},\bar{\eta})\in \underline{\tilde{\mathcal{N}}}_{+}\cap\underline{\tilde{\mathcal{N}}}^{\rm{II}}_{+}$, we can define a backward characteristic line $\eta=\underline{\Upsilon}^{-, \rm{b}}_{+}(\xi;\bar{\xi},\bar{\eta})$ corresponding to $\underline{\lambda}^{-}_{+}$, issuing from the point $(\bar{\xi},\bar{\eta})$ and hitting
the upper nozzle wall $\underline{\tilde{\Gamma}}_{+}\cap \overline{\underline{\tilde{\mathcal{N}}^{\rm{II}}_{+}}}$ at the point $(\underline{\zeta}^{-,\rm{b}}_{+}(\bar{\xi},\bar{\eta}),\underline{\textrm{m}}_{+})$.
Then, along it, we have by equation $\eqref{gloabl-unique-problem-5.4}_1$ and the conditions $\eqref{gloabl-unique-problem-5.4}_3$ and $\eqref{gloabl-unique-problem-5.4}_5$ that
\begin{eqnarray*}
\begin{split}
&z^{-,\flat}_{+}(\bar{\xi},\bar{\eta})\\
&=-z^{+,\flat}_{+}(\underline{\zeta}^{-,\rm{b}}_{+}(\bar{\xi},\bar{\eta}),\underline{\textrm{m}}_{+})
-\int_{\underline{\zeta}^{-,\rm{b}}_{+}(\bar{\xi},\bar{\eta})}^{\bar{\xi}}\frac{\partial^{-,\flat}_{+}\Lambda_{+}}{2\Lambda_{+}}\big(z^{+,\flat}_{+}-z^{-,\flat}_{+}\big)
(\tau,\underline{\Upsilon}^{-,\rm{b}}_{+}(\tau;\bar{\xi},\bar{\eta}))\mathrm{d} \tau\\
&\quad -\frac{1}{2}\int_{\underline{\zeta}^{-,\rm{b}}_{+}(\bar{\xi},\bar{\eta})}^{\bar{\xi}}(\lambda^{-}_{+}-\underline{\lambda}^{-}_{+})\Big[(\partial_{\eta}z^{+,\flat}_{+}+\partial_{\eta}z^{-,\flat}_{+})-\Lambda_{+}\partial_{\eta}\Big(\frac{z^{+,\flat}_{+}-z^{-,\flat}_{+}}{\Lambda_{+}}\Big)\Big](\tau,\underline{\Upsilon}^{-, \rm{b}}_{+}(\tau;\bar{\xi},\bar{\eta}))\mathrm{d} \tau.
\end{split}
\end{eqnarray*}

Along the characteristic line $\eta=\underline{\Upsilon}^{+}_{+}(\xi;\underline{\zeta}^{-,\rm{b}}_{+}(\bar{\xi},\bar{\eta}),\underline{\textrm{m}}_{+})$, we have 
\begin{align*}
\begin{split}
&z_{+}^{+,\flat}(\underline{\zeta}^{-,\rm{b}}_{+}(\bar{\xi},\bar{\eta}),\underline{\textrm{m}}_{+})\\
& =\int_0^{\underline{\zeta}^{-,\rm{b}}_{+}(\bar{\xi},\bar{\eta})}\frac{\partial^{+,\flat}_{+}\Lambda_{+}}{2\Lambda_{+}}\big(z^{+,\flat}_{+}-z^{-,\flat}_{+}\big)(\tau,\underline{\Upsilon}^{+}_{+}(\tau;\underline{\zeta}^{-,\rm{b}}_{+}(\bar{\xi},\bar{\eta}),\underline{\textrm{m}}_{+}))\dd \tau\\
&\quad-\frac{1}{2}\int^{\underline{\zeta}^{-,\rm{b}}_{+}(\bar{\xi},\bar{\eta})}_{0}(\lambda^{+}_{+}-\underline{\lambda}^{+}_{+})\Big[(\partial_{\eta}z^{+,\flat}_{+}+\partial_{\eta}z^{-,\flat}_{+})+\Lambda_{+}\partial_{\eta}\Big(\frac{z^{+,\flat}_{+}-z^{-,\flat}_{+}}{\Lambda_{+}}\Big)\Big](\tau,\underline{\Upsilon}^{+}_{+}(\tau;\underline{\zeta}^{-,\rm{b}}_{+}(\bar{\xi},\bar{\eta}),\underline{\textrm{m}}_{+}))\dd \tau.
\end{split}
\end{align*}

Then, by Lemma \ref{lem:unique}, we obtain
\begin{align}\label{eq-global-unique-5.11}
\begin{split}
|z^{-,\flat}_{+}(\bar{\xi},\bar{\eta})|&\leq |z^{+,\flat}_{+}(\underline{\zeta}^{-,\rm{b}}_{+}(\bar{\xi},\bar{\eta}),\underline{\textrm{m}}_{+})|
\\
&\quad +\Big\|\frac{\partial^{-,\flat}_{+}\Lambda_{+}}{2\Lambda_{+}}\Big\|_{0,0; \underline{\tilde{\mathcal{N}}}_{+}}\cdot\int_{\underline{\zeta}^{-,\rm{b}}_{+}(\bar{\xi},\bar{\eta})}^{\bar{\xi}}\big(|z^{+,\flat}_{+}|+|z^{-,\flat}_{+}|\big)
(\tau,\underline{\Upsilon}^{-,\rm{b}}_{+}(\tau;\bar{\xi},\bar{\eta}))\mathrm{d} \tau\\
&\quad\
+\mathcal{O}\big(\|U_{+}\|_{1,0; \underline{\tilde{\mathcal{N}}}_{+}}\big)\cdot\int_{\underline{\zeta}^{-,\rm{b}}_{+}(\bar{\xi},\bar{\eta})}^{\bar{\xi}}|\lambda^{-}_{+}-\underline{\lambda}^{-}_{+}|(\tau,\underline{\Upsilon}^{-, \rm{b}}_{+}(\tau;\bar{\xi},\bar{\eta}))\mathrm{d} \tau\\
&\leq |z^{+,\flat}_{+}(\underline{\zeta}^{-,\rm{b}}_{+}(\bar{\xi},\bar{\eta}),\underline{\textrm{m}}_{+})|\\
&\quad +\mathcal{O}\big(\|U_{+}\|_{1,0; \underline{\tilde{\mathcal{N}}}_{+}}\big)
\cdot \int_{\underline{\zeta}^{-,\rm{b}}_{+}(\bar{\xi},\bar{\eta})}^{\bar{\xi}}\big(|z^{+,\flat}_{+}|+|z^{-,\flat}_{+}|\big)
(\tau,\underline{\Upsilon}^{-,\rm{b}}_{+}(\tau;\bar{\xi},\bar{\eta}))\mathrm{d} \tau.
\end{split}
\end{align}

For the term $|z^{+,\flat}_{+}(\underline{\zeta}^{-,\rm{b}}_{+}(\bar{\xi},\bar{\eta}),\underline{\textrm{m}}_{+})|$, we have
\begin{align}\label{eq-global-unique-5.12}
\begin{split}
&|z_{+}^{+,\flat}(\underline{\zeta}^{-,\rm{b}}_{+}(\bar{\xi},\bar{\eta}),\underline{\textrm{m}}_{+})|\\
& \leq\Big\|\frac{\partial^{+,\flat}_{+}\Lambda_{+}}{2\Lambda_{+}}\Big\|_{0,0; \underline{\tilde{\mathcal{N}}}_{+}}\int_0^{\underline{\zeta}^{-,\rm{b}}_{+}(\bar{\xi},\bar{\eta})}\big(|z^{+,\flat}_{+}|+|z^{-,\flat}_{+}|\big)(\tau,\underline{\Upsilon}^{+}_{+}(\tau;\zeta^{-,\rm{b}}_{+}(\bar{\xi},\bar{\eta}),\underline{\textrm{m}}_{+}))\dd \tau\\
&\quad+\frac{1}{2}\Big\|(\partial_{\eta}z^{+,\flat}_{+}+\partial_{\eta}z^{-,\flat}_{+})+\Lambda_{+}\partial_{\eta}\Big(\frac{z^{+,\flat}_{+}-z^{-,\flat}_{+}}{\Lambda_{+}}\Big)\Big\|_{0,0; \underline{\tilde{\mathcal{N}}}_{+}}\\
&\qquad\quad \times\int^{\underline{\zeta}^{-,\rm{b}}_{+}(\bar{\xi},\bar{\eta})}_{0}|\lambda^{+}_{+}-\underline{\lambda}^{+}_{+}|(\tau,\underline{\Upsilon}^{+}_{+}(\tau;\underline{\zeta}^{-,\rm{b}}_{+}(\bar{\xi},\bar{\eta}),\underline{\textrm{m}}_{+}))\dd \tau\\
& \leq\mathcal{O}\big(\|U_{+}\|_{1,0; \underline{\tilde{\mathcal{N}}}_{+}}\big)\cdot\int_0^{\underline{\zeta}^{-,\rm{b}}_{+}(\bar{\xi},\bar{\eta})}\big(|z^{+,\flat}_{+}|+|z^{-,\flat}_{+}|\big)(\tau,\underline{\Upsilon}^{+}_{+}(\tau;\underline{\zeta}^{-,\rm{b}}_{+}(\bar{\xi},\bar{\eta}),\underline{\textrm{m}}_{+}))\dd \tau.
\end{split}
\end{align}
Define
\begin{eqnarray*}
\underline{R}^{\rm{II}}_{+}(\kappa)=\{(\xi,\eta)| 0\leq\xi\leq\kappa,\ \underline{\Upsilon}^{+,0}_{+}(\xi;\bar{\xi}_{+}^{2},\underline{\textrm{m}}_{+})\leq\eta\leq\underline{\textrm{m}}_{+}\},\  \kappa\in [0,\bar{\xi}^{2}_{+}],
\end{eqnarray*}
and
\begin{eqnarray*}
z^{\pm,\flat}_{\rm{II}^{+}}(\kappa)\triangleq\sup\limits_{(\bar{\xi},\bar{\eta})\in \underline{R}^{\rm{II}}_{+}(\kappa)}|z^{\pm,\flat}_{+}(\bar{\xi},\bar{\eta})|.
\end{eqnarray*}
Obviously, we have
\begin{eqnarray*}
\|z_{+}^{\pm,\flat}\|_{0,0; \underline{\tilde{\mathcal{N}}}^{\rm{I}}_{+}\cup\underline{\tilde{\mathcal{N}}}^{\rm{II}}_{+}}=z_{\rm{II}^{+}}^{\pm,\flat}(\bar{\xi}^{2}_{+})\quad\mbox{and}\quad z^{\pm,\flat}_{\rm{II}^{+}}(\kappa)\geq z^{\pm,\flat}_{\rm{I}^{+}}(\kappa),\quad \kappa\in [0,\bar{\xi}^{1}_{+}].
\end{eqnarray*}

So, for any $\kappa\in [0,\bar{\xi}^{2}_{+}]$, one can get
\begin{align}\label{eq-global-unique-5.13}
\begin{split}
&z_{\rm{II}^{+}}^{-,\flat}(\kappa)\leq \mathcal{O}\big(\|U_{+}\|_{1,0; \underline{\tilde{\mathcal{N}}}_{+}}\big)\cdot
\int_0^{\kappa}\big(z_{\rm{II}^{+}}^{+,\flat}+z_{\rm{II}^{+}}^{-,\flat}\big)(\varsigma)\mathrm{d} \varsigma.
\end{split}
\end{align}
Following the argument in Step 2.1, we can also deduce that
\begin{align}\label{eq-global-unique-5.14}
\begin{split}
&z_{\rm{II}^{+}}^{+,\flat}(\kappa)\leq \mathcal{O}\big(\|U_{+}\|_{1,0; \underline{\tilde{\mathcal{N}}}_{+}}\big)\cdot
\int_0^{\kappa}\big(z_{\rm{II}^{+}}^{+,\flat}+z_{\rm{II}^{+}}^{-,\flat}\big)(\varsigma)\mathrm{d} \varsigma.
\end{split}
\end{align}
Adding \eqref{eq-global-unique-5.13} into \eqref{eq-global-unique-5.14}, and applying the Gronwall's inequality, we have
\begin{eqnarray*}
z_{\rm{II}^{+}}^{+,\flat}(\kappa)=z_{\rm{II}^{+}}^{+,\flat}(\kappa)\equiv 0,
\end{eqnarray*}
which implies by taking $\kappa=\bar{\xi}^{2}_{+}$ that
\begin{align}\label{eq-global-unique-5.15}
\begin{split}
\|z_{+}^{+,\flat}\|_{0,0; \underline{\tilde{\mathcal{N}}}^{\rm{I}}_{+}\cup\underline{\tilde{\mathcal{N}}}^{\rm{II}}_{+}}=\|z_{+}^{-,\flat}\|_{0,0; \underline{\tilde{\mathcal{N}}}^{\rm{I}}_{+}\cup\underline{\tilde{\mathcal{N}}}^{\rm{II}}_{+}}=0.
\end{split}
\end{align}

Similarly, we can also get
\begin{align}\label{eq-global-unique-5.16}
\begin{split}
\|z_{+}^{-,\flat}\|_{0,0; \underline{\tilde{\mathcal{N}}}^{\rm{I}}_{-}\cup\underline{\tilde{\mathcal{N}}}^{\rm{II}}_{-}}=\|z_{-}^{-,\flat}\|_{0,0; \underline{\tilde{\mathcal{N}}}^{\rm{I}}_{-}\cup\underline{\tilde{\mathcal{N}}}^{\rm{II}}_{-}}=0.
\end{split}
\end{align}

\emph{$\mathbf{Step\ 2.3}$. \underline{$C^0$-estimates for $z^{\pm,\flat}_{+}$ and $z^{\pm,\flat}_{-}$ in $\underline{\tilde{\mathcal{N}}}^{\rm{III}}_{+}\cup\underline{\tilde{\mathcal{N}}}^{\rm{III}}_{-}$.}}
For any $(\bar{\xi}, \bar{\eta})\in \underline{\tilde{\mathcal{N}}}^{\rm{III}}_{+}$, we still denote by $\eta=\underline{\Upsilon}^{+}_{+}(\xi;\bar{\xi},\bar{\eta})$ the backward characteristic line corresponding to $\underline{\lambda}^{+}_{+}$ 
and passing through the point $(\bar{\xi},\bar{\eta})$, and
denote by $\eta=\underline{\Upsilon}^{+,\rm{cd}}_{+}(\xi;\bar{\xi}, \bar{\eta})$ the backward characteristic curve corresponding to $\underline{\lambda}^{+}_{+}$,
passing through $(\bar{\xi}, \bar{\eta})$ and intersecting with $\underline{\tilde{\Gamma}}_{\rm{cd}}\cap \overline{\underline{\tilde{\mathcal{N}}}^{\rm{III}}_{+}}$ at $(\underline{\zeta}^{+,\rm{cd}}_{+}(\bar{\xi},\bar{\eta}),0)$.
Similarly, for any $(\bar{\xi}, \bar{\eta})\in \underline{\tilde{\mathcal{N}}}^{\rm{III}}_{-}$, let $\eta=\underline{\Upsilon}^{+}_{-}(\xi;\bar{\xi},\bar{\eta})$ be the backward characteristic curve corresponding to $\underline{\lambda}^{+}_{-}$
and passing through the point $(\bar{\xi},\bar{\eta})$, and let $\eta=\underline{\Upsilon}^{-,\rm{cd}}_{-}(\xi;\bar{\xi}, \bar{\eta})$ be the backward characteristic curve corresponding to $\underline{\lambda}^{-}_{-}$,
passing through $(\bar{\xi}, \bar{\eta})$ and intersecting with $\underline{\tilde{\Gamma}}_{\rm{cd}}\cap \overline{\underline{\tilde{\mathcal{N}}}^{\rm{III}}_{+}}$ at $(\underline{\zeta}^{-,\rm{cd}}_{-}(\bar{\xi},\bar{\eta}),0)$.
Then, by $\eqref{gloabl-unique-problem-5.4}_1-\eqref{gloabl-unique-problem-5.4}_2$ and $\eqref{gloabl-unique-problem-5.4}_3-\eqref{gloabl-unique-problem-5.4}_4$, we have
\begin{eqnarray}\label{eq-global-unique-5.17}
\begin{cases}
z^{+,\flat}_{+}(\bar{\xi},\bar{\eta})=z^{+,\flat}_{+}(\underline{\zeta}^{+,\rm{cd}}_{+}(\bar{\xi},\bar{\eta}),0)+\int_{\underline{\zeta}^{+,\rm{cd}}_{+}(\bar{\xi},\bar{\eta})}^{\bar{\xi}}
\frac{\partial^{+,\flat}_{+}\Lambda_{+}}{2\Lambda_{+}}\big(z^{+,\flat}_{+}-z^{-,\flat}_{+}\big)(\tau,\underline{\Upsilon}^{+}_{+}(\tau;\bar{\xi},\bar{\eta}))\mathrm{d} \tau\\
\qquad\qquad-\frac{1}{2}\int_{\underline{\zeta}^{+,\rm{cd}}_{+}(\bar{\xi},\bar{\eta})}^{\bar{\xi}}(\lambda^{+}_{+}-\underline{\lambda}^{+}_{+})\Big[(\partial_{\eta}z^{+,\flat}_{+}+\partial_{\eta}z^{-,\flat}_{+})
+\Lambda_{+}\partial_{\eta}\Big(\frac{z^{+,\flat}_{+}-z^{-,\flat}_{+}}{\Lambda_{+}}\Big)\Big](\tau,\underline{\Upsilon}^{+}_{+}(\tau;\bar{\xi},\bar{\eta}))\mathrm{d} \tau,\\
z^{-,\flat}_{+}(\bar{\xi},\bar{\eta})
=-\int_0^{\bar{\xi}}\frac{\partial^{-,\flat}_{+}\Lambda_{+}}{2\Lambda_{+}}\big(z^{+,\flat}_{+}-z^{-,\flat}_{+}\big)(\tau,\underline{\Upsilon}^{-}_{+}(\tau;\bar{\xi},\bar{\eta}))\mathrm{d} \tau\\
\qquad\qquad-\frac{1}{2}\int^{\bar{\xi}}_{0}(\lambda^{-}_{+}-\underline{\lambda}^{-}_{+})\Big[(\partial_{\eta}z^{+,\flat}_{+}+\partial_{\eta}z^{-,\flat}_{+})-\Lambda_{+}\partial_{\eta}\Big(\frac{z^{+,\flat}_{+}-z^{-,\flat}_{+}}{\Lambda_{+}}\Big)\Big](\tau,\underline{\Upsilon}^{-}_{+}(\tau;\bar{\xi},\bar{\eta}))\mathrm{d} \tau,
\end{cases}
\end{eqnarray}
and
\begin{eqnarray}\label{eq-global-unique-5.18}
\begin{cases}
z^{+,\flat}_{-}(\bar{\xi},\bar{\eta})=\int_0^{\bar{\xi}}\frac{\partial^{+,\flat}_{-}\Lambda_{-}}{2\Lambda_{-}}\big(z^{+,\flat}_{-}-z^{-,\flat}_{-}\big)(\tau,\underline{\Upsilon}^{+}_{-}(\tau;\bar{\xi},\bar{\eta}))\mathrm{d} \tau\\
\qquad\qquad-\frac{1}{2}\int_0^{\bar{\xi}}(\lambda^{+}_{-}-\underline{\lambda}^{+}_{-})\Big[(\partial_{\eta}z^{+,\flat}_{-}+\partial_{\eta}z^{-,\flat}_{-})+\Lambda_{-}\partial_{\eta}
\Big(\frac{z^{+,\flat}_{-}-z^{-,\flat}_{-}}{\Lambda_{-}}\Big)\Big](\tau,\underline{\Upsilon}^{+}_{-}(\tau;\bar{\xi},\bar{\eta}))\mathrm{d} \tau,\\
z^{-,\flat}_{-}(\bar{\xi},\bar{\eta})
=z^{-,\flat}_{-}(\underline{\zeta}^{-,\rm{cd}}_{-}(\bar{\xi},\bar{\eta}),0)-\int_{\underline{\zeta}^{-,\rm{cd}}_{-}(\bar{\xi},\bar{\eta})}^{\bar{\xi}}\frac{\partial^{-,\flat}_{-}\Lambda_{-}}{2\Lambda_{-}}
\big(z^{+,\flat}_{-}-z^{-,\flat}_{-}\big)(\tau,\underline{\Upsilon}^{-}_{-}(\tau;\bar{\xi},\bar{\eta}))\mathrm{d} \tau\\
\qquad\qquad-\frac{1}{2}\int^{\bar{\xi}}_{\underline{\zeta}^{-,\rm{cd}}_{-}(\bar{\xi},\bar{\eta})}(\lambda^{-}_{-}-\underline{\lambda}^{-}_{-})
\Big[(\partial_{\eta}z^{+,\flat}_{-}+\partial_{\eta}z^{-,\flat}_{-})-\Lambda_{-}\partial_{\eta}\Big(\frac{z^{+,\flat}_{-}-z^{-,\flat}_{-}}{\Lambda_{-}}\Big)\Big]
(\tau,\underline{\Upsilon}^{-}_{-}(\tau;\bar{\xi},\bar{\eta}))\mathrm{d} \tau.
\end{cases}
\end{eqnarray}

To obtain the estimates of $z^{\pm,\flat}_{+}$ and $z^{\pm,\flat}_{-}$, we introduce
\begin{eqnarray*}
\underline{R}^{\rm{III}}_{+}(\kappa)=\{(\xi,\eta)| 0\leq\xi\leq\kappa,\ 0\leq\eta\leq\underline{\Upsilon}^{-,1}_{+}(\xi; \bar{\xi}^{2}_{\rm{cd}},0)\}, \ \kappa\in[0,\bar{\xi}^{2}_{\rm{cd}}],
\end{eqnarray*}
and
\begin{eqnarray*}
\underline{R}^{\rm{III}}_{-}(\kappa)=\{(\xi,\eta)|0\leq\xi\leq\kappa,\ \underline{\Upsilon}^{+,1}_{-}(\xi;\underline{\xi}^{2}_{\rm{cd}},0)\leq\eta\leq0\},\ \kappa\in[0, \xi^{2}_{\rm{cd}}].
\end{eqnarray*}

Denote
\begin{eqnarray*}
z^{\pm,\flat}_{\rm{III}^{+}}(\kappa)\triangleq\sup\limits_{(\xi,\eta)\in \underline{R}^{\rm{III}}_{+}(\kappa)}|z^{\pm,\flat}_{+}(\xi,\eta)|,
\quad z^{\pm,\flat}_{\rm{III}^{-}}(\kappa)\triangleq\sup\limits_{(\xi,\eta)\in \underline{R}^{\rm{III}}_{-}(\kappa)}|z^{\pm,\flat}_{-}(\xi,\eta)|.
\end{eqnarray*}

Obviously, we have
\begin{eqnarray*}
\|z_{+}^{\pm,\flat}\|_{0,0; \underline{\tilde{\mathcal{N}}}^{\rm{I}}_{+}\cup\underline{\tilde{\mathcal{N}}}^{\rm{III}}_{+}}=z_{\rm{III}^{+}}^{\pm,\flat}(\bar{\xi}^{2}_{\rm{cd}}),\quad
\|z_{-}^{\pm,\flat}\|_{0,0; \underline{\tilde{\mathcal{N}}}^{\rm{I}}_{-}\cup\underline{\tilde{\mathcal{N}}}^{\rm{III}}_{-}}=z_{\rm{III}^{-}}^{\pm,\flat}(\bar{\xi}^{2}_{\rm{cd}}).
\end{eqnarray*}

Then
\begin{eqnarray}\label{eq-global-unique-5.19}
\begin{cases}
|z^{+,\flat}_{+}(\bar{\xi},\bar{\eta})|\leq |z^{+,\flat}_{+}(\underline{\zeta}^{+,\rm{cd}}_{+}(\bar{\xi},\bar{\eta}),0)|+\mathcal{O}\big(\|U_{+}\|_{1,0; \underline{\tilde{\mathcal{N}}}_{+}}\big)\cdot\int_{\underline{\zeta}^{+,\rm{cd}}_{+}(\bar{\xi},\bar{\eta})}^{\bar{\xi}}\big(z^{+,\flat}_{\rm{III}^{+}}+z^{-,\flat}_{\rm{III}^{+}}\big)(\varsigma)\dd\varsigma,\\
|z^{-,\flat}_{+}(\bar{\xi},\bar{\eta})|
\leq\mathcal{O}\big(\|U_{+}\|_{1,0; \underline{\tilde{\mathcal{N}}}_{+}}\big)\cdot\int_0^{\bar{\xi}}\big(z^{+,\flat}_{\rm{III}^{+}}+z^{-,\flat}_{\rm{III}^{+}}\big)(\varsigma)\dd\varsigma.
\end{cases}
\end{eqnarray}

Thus, in order to get the estimate on $|z^{+,\flat}_{+}(\bar{\xi},\bar{\eta})|$, one needs to consider the term $z^{+,\flat}_{+}(\underline{\zeta}^{+,\rm{cd}}_{+}(\bar{\xi},\bar{\eta}),0)$ in \eqref{eq-global-unique-5.17}.
To this end, by the boundary condition $\eqref{gloabl-unique-problem-5.4}_{6}$, we know that
\begin{eqnarray*}
\begin{split}
z^{+,\flat}_{+}(\underline{\zeta}^{+,\rm{cd}}_{+}(\bar{\xi},\bar{\eta}),0)=\Big(\frac{\Lambda_{-}-\Lambda_{+}}{\Lambda_{+}+\Lambda_{-}}z^{-,\flat}_{+}\Big)(\underline{\zeta}^{+,\rm{cd}}_{+}(\bar{\xi},\bar{\eta}),0)
+\Big(\frac{2\Lambda_{+}}{\Lambda_{+}+\Lambda_{-}}z^{+,\flat}_{-}\Big)(\underline{\zeta}^{+,\rm{cd}}_{+}(\bar{\xi},\bar{\eta}),0).
\end{split}
\end{eqnarray*}

For the terms $z^{-,\flat}_{+}(\underline{\zeta}^{+,\rm{cd}}_{+}(\bar{\xi},\bar{\eta}),0)$ and $z^{+,\flat}_{-}(\underline{\zeta}^{+,\rm{cd}}_{+}(\bar{\xi},\bar{\eta}),0)$,
we can get by \eqref{eq-global-unique-5.17} and \eqref{eq-global-unique-5.18} that
\begin{eqnarray*}
\begin{split}
&z^{-,\flat}_{+}(\underline{\zeta}^{+,\rm{cd}}_{+}(\bar{\xi},\bar{\eta}),0)\\
&=-\int_{0}^{\underline{\zeta}^{+,\rm{cd}}_{+}(\bar{\xi},\bar{\eta})}\frac{\partial^{-,\flat}_{+}\Lambda_{+}}{2\Lambda_{+}}\big(z^{+,\flat}_{+}-z^{-,\flat}_{+}\big)
(\tau,\underline{\Upsilon}^{-}_{+}(\tau;\underline{\zeta}^{+,\rm{cd}}_{+}(\bar{\xi},\bar{\eta}),0))\mathrm{d} \tau\\
&\quad\ -\frac{1}{2}\int_{0}^{\underline{\zeta}^{+,\rm{cd}}_{+}(\bar{\xi},\bar{\eta})}(\lambda^{-}_{+}-\underline{\lambda}^{-}_{+})\Big[(\partial_{\eta}z^{+,\flat}_{+}
+\partial_{\eta}z^{-,\flat}_{+})-\Lambda_{+}\partial_{\eta}\Big(\frac{z^{+,\flat}_{+}-z^{-,\flat}_{+}}{\Lambda_{+}}\Big)\Big](\tau,\underline{\Upsilon}^{-}_{+}(\tau;\underline{\zeta}^{+,\rm{cd}}_{+}(\bar{\xi},\bar{\eta}),0))\mathrm{d} \tau,
\end{split}
\end{eqnarray*}
and
\begin{eqnarray*}
\begin{split}
&z^{+,\flat}_{-}(\underline{\zeta}^{+,\rm{cd}}_{+}(\bar{\xi},\bar{\eta}),0)\\
&=\int_0^{\underline{\zeta}^{+,\rm{cd}}_{+}(\bar{\xi},\bar{\eta})}\frac{\partial^{+,\flat}_{-}\Lambda_{-}}{2\Lambda_{-}}
\big(z^{+,\flat}_{-}-z^{-,\flat}_{-}\big)(\tau,\underline{\Upsilon}^{+}_{-}(\tau;\underline{\zeta}^{+,\rm{cd}}_{+}(\bar{\xi},\bar{\eta}),0))\mathrm{d} \tau\\
&\quad\ -\frac{1}{2}\int_0^{\underline{\zeta}^{+,\rm{cd}}_{+}(\bar{\xi},\bar{\eta})}(\lambda^{+}_{-}-\underline{\lambda}^{+}_{-})
\Big[(\partial_{\eta}z^{+,\flat}_{-}+\partial_{\eta}z^{-,\flat}_{-})+\Lambda_{-}\partial_{\eta}\Big(\frac{z^{+,\flat}_{-}-z^{-,\flat}_{-}}{\Lambda_{-}}\Big)\Big]
(\tau,\underline{\Upsilon}^{+}_{-}(\tau;\underline{\zeta}^{+,\rm{cd}}_{+}(\bar{\xi},\bar{\eta}),0))\mathrm{d} \tau.
\end{split}
\end{eqnarray*}

Therefore, we have
\begin{align}\label{eq-global-unique-5.20}
\begin{split}
&|z^{+,\flat}_{+}(\underline{\zeta}^{+,\rm{cd}}_{+}(\bar{\xi},\bar{\eta}),0)|\\
&\leq\Big\|\frac{\Lambda_{-}-\Lambda_{+}}{\Lambda_{+}+\Lambda_{-}}\Big\|_{0,0; \underline{\tilde{\Gamma}}_{\rm{cd}}}|z^{-,\flat}_{+}(\underline{\zeta}^{+,\rm{cd}}_{+}(\bar{\xi},\bar{\eta}),0)|
+\Big\|\frac{2\Lambda_{+}}{\Lambda_{+}+\Lambda_{-}}\Big\|_{0,0; \underline{\tilde{\Gamma}}_{\rm{cd}}}|z^{+,\flat}_{-}(\underline{\zeta}^{+,\rm{cd}}_{+}(\bar{\xi},\bar{\eta}),0)|\\
&\leq \mathcal{O}\big(\|U_{\pm}\|_{1,0; \underline{\tilde{\mathcal{N}}}_{\pm}}\big)\cdot\int_{0}^{\underline{\zeta}^{+,\rm{cd}}_{+}(\bar{\xi},\bar{\eta})}
\big(z^{+,\flat}_{\rm{III}^{+}}+z^{-,\flat}_{\rm{III}^{+}}+z^{+,\flat}_{\rm{III}^{-}}+z^{-,\flat}_{\rm{III}^{-}}\big)(\varsigma)\dd\varsigma
\end{split}
\end{align}

Substituting \eqref{eq-global-unique-5.20} into \eqref{eq-global-unique-5.19}, one can derive
\begin{eqnarray}\label{eq-global-unique-5.21}
\begin{split}
z^{+,\flat}_{\rm{III}^{+}}(\kappa)+z^{-,\flat}_{\rm{III}^{+}}(\kappa)
\leq \mathcal{O}\big(\|U_{\pm}\|_{1,0; \underline{\tilde{\mathcal{N}}}_{\pm}}\big)\cdot\int_{0}^{\kappa}\big(z^{+,\flat}_{\rm{III}^{+}}+z^{-,\flat}_{\rm{III}^{+}}
+z^{+,\flat}_{\rm{III}^{-}}+z^{-,\flat}_{\rm{III}^{-}}\big)(\varsigma)\dd\varsigma.
\end{split}
\end{eqnarray}

In the same way, we can also obtain
\begin{eqnarray}\label{eq-global-unique-5.22}
\begin{split}
z^{+,\flat}_{\rm{III}^{-}}(\kappa)+z^{-,\flat}_{\rm{III}^{-}}(\kappa)
\leq \mathcal{O}\big(\|U_{\pm}\|_{1,0; \underline{\tilde{\mathcal{N}}}_{\pm}}\big)\cdot\int_{0}^{\kappa}\big(z^{+,\flat}_{\rm{III}^{+}}+z^{-,\flat}_{\rm{III}^{+}}+z^{+,\flat}_{\rm{III}^{-}}+z^{-,\flat}_{\rm{III}^{-}}\big)(\varsigma)\dd\varsigma.
\end{split}
\end{eqnarray}

Combining \eqref{eq-global-unique-5.21} with \eqref{eq-global-unique-5.22}, and applying the Gronwall's inequality for $\kappa=\bar{\xi}^{2}_{\rm{cd}}$, we have
\begin{eqnarray}\label{eq-global-unique-5.23}
\begin{split}
\|z_{+}^{\pm,\flat}\|_{0,0; \underline{\tilde{\mathcal{N}}}^{\rm{I}}_{+}\cup\underline{\tilde{\mathcal{N}}}^{\rm{III}}_{+}}=\|z_{-}^{\pm,\flat}\|_{0,0; \underline{\tilde{\mathcal{N}}}^{\rm{I}}_{-}\cup\underline{\tilde{\mathcal{N}}}^{\rm{III}}_{-}}= 0.
\end{split}
\end{eqnarray}

Finally, let $\underline{\xi}^{*}_1=\min\{\bar{\xi}^{1}_{+}, \bar{\xi}^{1}_{-}\}$. Then by \eqref{eq-global-unique-5.9}-\eqref{eq-global-unique-5.10} in Step 2.1,
\eqref{eq-global-unique-5.15}-\eqref{eq-global-unique-5.16} in Step 2.2 and \eqref{eq-global-unique-5.23} in Step 2.3, we have $z_{+}^{\pm,\flat}=z_{+}^{\pm,\flat}\equiv0$ in $\underline{\tilde{\mathcal{N}}}\cap\{0<\xi\leq\underline{\xi}^{*}_1\}$. Repeating this process sequentially in $\underline{\tilde{\mathcal{N}}}^{j}\triangleq\underline{\tilde{\mathcal{N}}}\cap\{(j-1)\underline{\xi}^{*}_1<\xi\leq j\underline{\xi}^{*}_1\} (j=1,2,3,\cdots, \underline{N})$ for $\underline{N}>0$ determined by
$L$ and $\underline{U}$, we can get $\|z_{+}^{\pm,\flat}\|_{0,0; \underline{\tilde{\mathcal{N}}}_{+}}=\|z_{-}^{\pm,\flat}\|_{0,0; \underline{\tilde{\mathcal{N}}}_{-}}= 0$.

\emph{$\mathbf{Step\ 3}$.\ \underline{Completion of the proof of the proposition.}}
By Step 2, we know that $z_{+}^{\pm,\flat}=z_{-}^{\pm,\flat}\equiv0$ in $\underline{\tilde{\mathcal{N}}}_{\pm}$.
Then, by \eqref{eq-global-unique-5.4}, we have
\begin{eqnarray*}
\delta \omega^{\flat}_{\pm}=\frac{z^{+,\flat}_{\pm}+z^{-,\flat}_{\pm}}{2}=0,\quad \delta p^{\flat}_{\pm}=\frac{z^{+,\flat}_{\pm}-z^{-,\flat}_{\pm}}{2\Lambda_{\pm}}=0, \quad \mbox{in}\quad \underline{\tilde{\mathcal{N}}}_{\pm},
\end{eqnarray*}
which implies that
\begin{eqnarray*}
\omega_{\pm}\equiv0,\quad  p_{\pm}\equiv \underline{p}^{\pm}, \quad \mbox{in}\quad \underline{\tilde{\mathcal{N}}}_{\pm}.
\end{eqnarray*}
\end{proof}

\begin{proof}[Proof of Theorem \ref{thm-global uniqueness}]
Since $(u_{\pm}, v_{\pm},\rho_{\pm})\in C^{1,\alpha}(\underline{\tilde{\mathcal{N}}}_{\pm})$ and $(\omega_{\pm}, p_{\pm}, B_{\pm}, S_{\pm}, Y_{\pm})\in C^{1,\alpha}(\underline{\tilde{\mathcal{N}}}_{\pm})$,
by Propositions \ref{prop-unique-5.1}-\ref{prop-unique-5.2} and the relation \eqref{eq:1.4}, we first have
\begin{eqnarray}\label{eq-global-unique-5.24}
\begin{split}
\rho_{\pm}=\big(A(S_{\pm})\big)^{-\frac{1}{\gamma}}(p_{\pm})^{\frac{1}{\gamma}}=\big(A(\underline{S}^{\pm})\big)^{-\frac{1}{\gamma}}(\underline{p}^{\pm})^{\frac{1}{\gamma}}=\underline{\rho}^{\pm},
\qquad \mbox{\rm{in}}\quad \underline{\tilde{\mathcal{N}}}_{\pm}.
\end{split}
\end{eqnarray}

Then, by \eqref{eq:2.23}, \eqref{eq:2.25} and the assumptions \eqref{eq-global-unique-5.1}, we have
\begin{eqnarray}\label{eq-global-unique-5.25}
\begin{split}
v_{\pm}=u_{\pm}\cdot\omega_{\pm}\equiv 0, \qquad \mbox{\rm{in}}\quad \underline{\tilde{\mathcal{N}}}_{\pm}.
\end{split}
\end{eqnarray}

Moreover, by Propositions \ref{prop-unique-5.1}-\ref{prop-unique-5.2} and the relation \eqref{eq:2.36}, we can further derive in $\tilde{\mathcal{N}}_{\pm}$ that
\begin{align}\label{eq-global-unique-5.26}
\begin{split}
u_{\pm}=\sqrt{\frac{2\Big[(\gamma-1)B_{\pm}-\gamma A^{\frac{1}{\gamma}}(S_{\pm})p^{1-\frac{1}{\gamma}}_{\pm}\Big]}
{(\gamma-1)\big(1+\omega^{2}_{\pm}\big)}}
=\sqrt{\frac{2\Big[(\gamma-1)\underline{B}^{\pm}-\gamma A^{\frac{1}{\gamma}}(\underline{S}^{\pm})(\underline{p}^{\pm})^{1-\frac{1}{\gamma}}\Big]}
{\gamma-1}}
=\underline{u}^{\pm}.
\end{split}
\end{align}

Therefore, under the assumptions in Theorem \ref{thm-global uniqueness} and by the inverse Lagrangian transformation $\mathcal{L}^{-1}$ and Proposition \ref{prop-unique-5.1},
we have $U_{\pm}=\underline{U}_{\pm}$ in $\tilde{\mathcal{N}}_{\pm}$. Finally, by the conditions $g'_{\rm cd}(x)=\frac{v_{\pm}}{u_{\pm}}$ and $g_{\rm cd}(0)=0$, we obtain $g_{\rm cd}(x)\equiv 0$ on $\underline{\Gamma}_{\rm{cd}}$ and thus complete the proof of Theorem \ref{thm-global uniqueness}.
\end{proof}

\bigskip

\appendix

\section{Properties of Characteristic Curves and Eigenvalues in Sections \ref{Sec3} and \ref{Sec6}.}\label{A}
In this Appendix, we give some properties on characteristics curves which are used in section 3 and some properties on eigenvalues which will are used in section 6. We establish the following lemmas.
\begin{lemma}\label{lem:A1}
For any point $(\bar{\xi},\bar{\eta})\in \tilde{\mathcal{N}}^{\rm{I}}_{k}$ for $k=\pm$, let $\eta=\Upsilon^{\pm}_{k}(\xi;\bar{\xi},\bar{\eta})$ be the backward characteristic curves that defined by
\begin{eqnarray}\label{eq:lem-A1-1}
\begin{cases}
\frac{\dd \Upsilon^{\pm}_{k}(\xi;\bar{\xi},\bar{\eta})}{\dd \xi}=\tilde{\lambda}^{\pm}_{k}(\xi,\Upsilon^{\pm}_{k}(\xi;\bar{\xi},\bar{\eta})),\quad \xi \in[0,\bar{\xi}],\\
\Upsilon^{\pm}_{k}(\bar{\xi};\bar{\xi},\bar{\eta})=\bar{\eta},
\end{cases}
\end{eqnarray}
where $\tilde{\lambda}^{\pm}_{k}(\xi,\Upsilon^{\pm}_{k}(\xi;\bar{\xi},\bar{\eta}))=\lambda^{\pm}(\mathcal{U}_{k}(\xi,\Upsilon^{\pm}_{k}(\xi;\bar{\xi},\bar{\eta})))$
and $\mathcal{U}_{k}=\underline{\mathcal{U}}_{k}+\delta\mathcal{U}_{k}$ with $\delta\mathcal{U}_{k}\in \mathcal{X}_{\sigma}$ for $k=\pm$.
Then,  if $\sigma>0$ is sufficiently small, there holds
\begin{eqnarray}\label{eq:lem-A1-2}
\sup_{\xi\in[0,\bar{\xi}^{1}_k]}\|\Upsilon^{\pm}_{k}(\xi)\|_{1,\alpha; \tilde{\mathcal{N}}^{\rm{I}}_{k}}<\tilde{C}_{\rm{in}},
\end{eqnarray}
for $k=\pm$, where the constant $\tilde{C}_{\rm{in}}>0$ depends only $\underline{\mathcal{U}}$, $\alpha$ and $L$.
\end{lemma}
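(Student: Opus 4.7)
The plan is to treat $\Upsilon^{\pm}_{k}(\xi;\cdot,\cdot)$, for each fixed $\xi\in[0,\bar{\xi}_{k}^{1}]$, as a map from $\tilde{\mathcal{N}}^{\rm{I}}_{k}$ into $\mathbb{R}$ in the initial-point variables $(\bar{\xi},\bar{\eta})$, and to bound its $C^{1,\alpha}$-norm uniformly in $\xi$. The starting point is that $\delta\mathcal{U}_{k}\in\mathcal{X}_{\sigma}$ gives $\mathcal{U}_{k}\in C^{1,\alpha}(\tilde{\mathcal{N}}_{k})$ with $\|\mathcal{U}_{k}-\underline{\mathcal{U}}_{k}\|_{1,\alpha;\tilde{\mathcal{N}}_{k}}\leq\sigma$. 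Since $\lambda^{\pm}$ is a smooth function of the state wherever the flow remains uniformly supersonic (which holds for $\sigma$ sufficiently small by continuity of the Mach number), one has $\tilde{\lambda}^{\pm}_{k}\in C^{1,\alpha}(\tilde{\mathcal{N}}_{k})$ together with the uniform bound $\|\tilde{\lambda}^{\pm}_{k}-\underline{\lambda}^{\pm}_{k}\|_{1,\alpha;\tilde{\mathcal{N}}_{k}}=\mathcal{O}(\sigma)$, the implicit constant depending only on $\underline{\mathcal{U}}$ and $\alpha$. In particular $\tilde{\lambda}^{\pm}_{k}$ is globally Lipschitz in $\eta$, so the Picard--Lindel\"{o}f theorem yields a unique global solution $\Upsilon^{\pm}_{k}$ of \eqref{eq:lem-A1-1} that stays inside $\tilde{\mathcal{N}}^{\rm{I}}_{k}$ for $\xi\in[0,\bar{\xi}_{k}^{1}]$.

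Next I would establish the $C^{0}$ and pointwise $C^{1}$ bounds. The $C^{0}$ bound follows immediately from the integral form $\Upsilon^{\pm}_{k}(\xi;\bar{\xi},\bar{\eta})=\bar{\eta}+\int_{\bar{\xi}}^{\xi}\tilde{\lambda}^{\pm}_{k}(\tau,\Upsilon^{\pm}_{k}(\tau;\bar{\xi},\bar{\eta}))\dd\tau$, since the integrand is uniformly bounded. Differentiating this identity in $\bar{\eta}$ and $\bar{\xi}$ gives the linear variational ODEs
\begin{align*}
\partial_{\bar{\eta}}\Upsilon^{\pm}_{k}(\xi)&=1+\int_{\bar{\xi}}^{\xi}\big(\partial_{\eta}\tilde{\lambda}^{\pm}_{k}\big)(\tau,\Upsilon^{\pm}_{k}(\tau))\cdot\partial_{\bar{\eta}}\Upsilon^{\pm}_{k}(\tau)\,\dd\tau,\\
\partial_{\bar{\xi}}\Upsilon^{\pm}_{k}(\xi)&=-\tilde{\lambda}^{\pm}_{k}(\bar{\xi},\bar{\eta})+\int_{\bar{\xi}}^{\xi}\big(\partial_{\eta}\tilde{\lambda}^{\pm}_{k}\big)(\tau,\Upsilon^{\pm}_{k}(\tau))\cdot\partial_{\bar{\xi}}\Upsilon^{\pm}_{k}(\tau)\,\dd\tau.
\end{align*}
Gronwall's inequality, together with the uniform bound on $\partial_{\eta}\tilde{\lambda}^{\pm}_{k}$, gives $|\partial_{\bar{\eta}}\Upsilon^{\pm}_{k}|+|\partial_{\bar{\xi}}\Upsilon^{\pm}_{k}|\leq C$ uniformly in $\xi\in[0,\bar{\xi}_{k}^{1}]$ and $(\bar{\xi},\bar{\eta})\in\tilde{\mathcal{N}}^{\rm{I}}_{k}$, with $C$ depending only on $\underline{\mathcal{U}}$ and $L$.

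Finally I would upgrade to $C^{0,\alpha}$ control on the two first derivatives. For any pair of initial data $(\bar{\xi}_{1},\bar{\eta}_{1})$, $(\bar{\xi}_{2},\bar{\eta}_{2})$, subtract the two integral identities for $\Upsilon^{\pm}_{k}$; the $C^{0,1}$ regularity of $\tilde{\lambda}^{\pm}_{k}$ in $\eta$ and standard Gronwall arguments give Lipschitz continuity of the flow map in the initial data. Plugging this back into the variational ODEs above and using $\partial_{\eta}\tilde{\lambda}^{\pm}_{k}\in C^{0,\alpha}$ (which is where the $C^{1,\alpha}$-regularity of $\mathcal{U}_{k}$ is essential), one controls, by another Gronwall application, the $C^{0,\alpha}$ seminorms of $\partial_{\bar{\eta}}\Upsilon^{\pm}_{k}$ and $\partial_{\bar{\xi}}\Upsilon^{\pm}_{k}$ uniformly in $\xi$. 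Summing the three contributions yields \eqref{eq:lem-A1-2}.

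The main technical point is the last step: to get the $C^{\alpha}$ bound on the derivatives one must control, for $(\bar{\xi}_{1},\bar{\eta}_{1})$ close to $(\bar{\xi}_{2},\bar{\eta}_{2})$, the difference $|(\partial_{\eta}\tilde{\lambda}^{\pm}_{k})(\tau,\Upsilon^{\pm}_{k}(\tau;\bar{\xi}_{1},\bar{\eta}_{1}))-(\partial_{\eta}\tilde{\lambda}^{\pm}_{k})(\tau,\Upsilon^{\pm}_{k}(\tau;\bar{\xi}_{2},\bar{\eta}_{2}))|$ by $|(\bar{\xi}_{1},\bar{\eta}_{1})-(\bar{\xi}_{2},\bar{\eta}_{2})|^{\alpha}$, which forces one to first verify Lipschitz dependence of $\Upsilon^{\pm}_{k}$ on the initial point and to chain together Gronwall estimates carefully; this is precisely where the smallness of $\sigma$ is used to close the estimates into the constant $\tilde{C}_{\rm{in}}$ depending only on $\underline{\mathcal{U}},\alpha,L$.
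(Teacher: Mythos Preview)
Your proposal is correct and follows essentially the same route as the paper: integral form for the $C^{0}$ bound, the linear variational equations in $(\bar{\xi},\bar{\eta})$ for the $C^{1}$ bound, and subtraction plus the $C^{0,\alpha}$-regularity of $\partial_{\eta}\tilde{\lambda}^{\pm}_{k}$ for the H\"{o}lder seminorm of the derivatives. The only cosmetic difference is that the paper writes the solutions of the variational ODEs explicitly as exponentials, $\partial_{\bar{\eta}}\Upsilon^{\pm}_{k}=\exp\big(\int_{\bar{\xi}}^{\xi}\partial_{\eta}\tilde{\lambda}^{\pm}_{k}\,\dd\tau\big)$ and $\partial_{\bar{\xi}}\Upsilon^{\pm}_{k}=-\tilde{\lambda}^{\pm}_{k}(\bar{\xi},\bar{\eta})\exp\big(\int_{\bar{\xi}}^{\xi}\partial_{\eta}\tilde{\lambda}^{\pm}_{k}\,\dd\tau\big)$, and then estimates differences of these exponentials directly, whereas you appeal to Gronwall on the integral form; the two presentations are equivalent.
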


\begin{proof}
Without loss of generality, we only consider the case $k=+$ that is $(\bar{\xi},\bar{\eta})\in\tilde{\mathcal{N}}^{\rm{I}}_{+}$ since the case $k=-$ can be done in the same method.
First, by \eqref{eq:lem-A1-1}, we have
\begin{eqnarray*}
\Upsilon^{\pm}_{k}(\xi;\bar{\xi},\bar{\eta})=\bar{\eta}+\int^{\xi}_{\bar{\xi}}\tilde{\lambda}^{\pm}_{k}(\tau,\Upsilon^{\pm}_{+}(\tau;\bar{\xi},\bar{\eta}))\dd\tau,
\end{eqnarray*}
which implies that
\begin{eqnarray*}
\sup_{\xi\in[0,\bar{\xi}^{1}_{+}]}\|\Upsilon^{\pm}_{+}(\xi)\|_{0,0;\tilde{\mathcal{N}}^{\rm{I}}_{+}}\leq |\textrm{m}_{+}|+L\cdot\sup_{\tau\in[0,L)}\|\tilde{\lambda}^{\pm}_{+}(\tau,\Upsilon^{\pm}_{+}(\tau;\bar{\xi},\bar{\eta}))\|_{0,0;\tilde{\mathcal{N}}^{\rm{I}}_{+}}<+\infty.
\end{eqnarray*}

Next, we turn to $C^{0}$-estimates for the gradient of $\Upsilon^{\pm}_{+}(\xi;\bar{\xi},\bar{\eta})$ with respect to $\bar{\xi}$ and $\bar{\eta}$. To this end,
by \eqref{eq:lem-A1-1} and straightforward calculations, we obtain
\begin{eqnarray}\label{eq:prop-A1-3}
\p_{\bar{\xi}}\Upsilon^{\pm}_{+}(\xi;\bar{\xi},\bar{\eta})=-\tilde{\lambda}^{\pm}_{+}(\bar{\xi},\bar{\eta})\rm{e}^{\int^{\xi}_{\bar{\xi}}\p_{\Upsilon^{\pm}_{+}}\tilde{\lambda}^{\pm}_{+}(\tau, \Upsilon^{\pm}_{+}(\tau;\bar{\xi},\bar{\eta}))\dd \tau},\
\p_{\bar{\eta}}\Upsilon^{\pm}_{+}(\xi;\bar{\xi},\bar{\eta})=\rm{e}^{\int^{\xi}_{\bar{\xi}}\p_{\Upsilon^{\pm}_{+}}\tilde{\lambda}^{\pm}_{+}(\tau, \Upsilon^{\pm}_{+}(\tau;\bar{\xi},\bar{\eta}))\dd \tau}.
\end{eqnarray}

Since, for $\sigma>0$ sufficiently small, we get
\begin{eqnarray}\label{eq:prop-A1-4}
\begin{split}
\sup_{\tau\in[0,L)}\|\p_{\Upsilon^{\pm}_{+}}\tilde{\lambda}^{\pm}_{+}(\tau)\|_{0,0;\tilde{\mathcal{N}}^{\rm{I}}_{+}}
&\leq \big\|\nabla_{\mathcal{U}_{+}}\lambda^{\pm}_{+}\big\|_{0,0;\tilde{\mathcal{N}}^{\rm{I}}_{+}}
\sup_{\tau\in[0,L)}\big\|\p_{\Upsilon^{\pm}_{+}}\delta\mathcal{U}_{+}(\tau)\big\|_{0,0;\tilde{\mathcal{N}}^{\rm{I}}_{+}}\\
&\leq\big\|\nabla_{\mathcal{U}_{+}}\lambda^{\pm}_{+}\big\|_{0,0;\tilde{\mathcal{N}}^{\rm{I}}_{+}}\|\delta\mathcal{U}_{+}\|_{1,0;\tilde{\mathcal{N}}^{\rm{I}}_{+}}<+\infty.
\end{split}
\end{eqnarray}

Then, it follows from \eqref{eq:prop-A1-4} that
\begin{eqnarray}\label{eq:prop-A1-5}
\begin{split}
\sup_{\xi\in[0,\bar{\xi}^{1}_{+}]}\|\nabla_{(\bar{\xi},\bar{\eta})}\Upsilon^{\pm}_{+}(\xi)\|_{0,0;\tilde{\mathcal{N}}^{\rm{I}}_{+}}
\leq \big(1+\|\tilde{\lambda}^{\pm}_{+}\|_{0,0;\tilde{\mathcal{N}}^{\rm{I}}_{+}}\big)
\textrm{e}^{L\cdot\sup_{\tau\in[0,L)}\|\p_{\Upsilon^{\pm}_{+}}\tilde{\lambda}^{\pm}_{+}(\tau)\|_{0,0;\tilde{\mathcal{N}}^{\rm{I}}_{+}}}
<+\infty.
\end{split}
\end{eqnarray}

Finally, we will consider the H\"{o}lder estimates for $\nabla_{(\bar{\xi},\bar{\eta})}\Upsilon^{\pm}_{+}$. To complete it, giving two points $(\bar{\xi}_1, \bar{\eta}_{1}), (\bar{\xi}_2, \bar{\eta}_{2})\in\tilde{\mathcal{N}}^{\textrm{I}}_{+}$ with $\bar{\xi}_{1}\neq \bar{\xi}_{2}$ or $\bar{\eta}_{1}\neq \bar{\eta}_{2}$. Without loss of generality, we also assume $\bar{\xi}_{1}<\bar{\xi}_{2}$.
Then, by using \eqref{eq:prop-A1-3}, we consider
\begin{eqnarray}\label{eq:prop-A1-6}
\begin{split}
&|\p_{\bar{\xi}}\Upsilon^{\pm}_{+}(\xi;\bar{\xi}_1,\bar{\eta}_1)-\p_{\bar{\xi}}\Upsilon^{\pm}_{+}(\xi;\bar{\xi}_2,\bar{\eta}_2)|\\
&\quad\ \leq \big|\tilde{\lambda}^{\pm}_{+}(\bar{\xi}_{1},\bar{\eta}_{1})-\tilde{\lambda}^{\pm}_{+}(\bar{\xi}_{2},\bar{\eta}_{2})\big|
\textrm{e}^{\int^{\bar{\xi}_{2}}_{\xi}|\p_{\Upsilon^{\pm}_{+}}\tilde{\lambda}^{\pm}_{+}(\tau, \Upsilon^{\pm}_{+}(\tau;\bar{\xi}_2,\bar{\eta}_2))|\dd \tau}\\
&\qquad\ +|\tilde{\lambda}^{\pm}_{+}(\bar{\xi}_{1},\bar{\eta}_{1})|
\Big|\textrm{e}^{\int^{\xi}_{\bar{\xi}_{2}}\p_{\Upsilon^{\pm}_{+}}\tilde{\lambda}^{\pm}_{+}(\tau, \Upsilon^{\pm}_{+}(\tau;\bar{\xi}_2,\bar{\eta}_2))\dd \tau}-\textrm{e}^{\int^{\xi}_{\bar{\xi}_{1}}\p_{\Upsilon^{\pm}_{+}}\tilde{\lambda}^{\pm}_{+}(\tau, \Upsilon^{\pm}_{+}(\tau;\bar{\xi}_1,\bar{\eta}_1))\dd \tau}\Big|.
\end{split}
\end{eqnarray}

Note that
\begin{eqnarray}\label{eq:prop-A1-7}
\begin{split}
&\big|\tilde{\lambda}^{\pm}_{+}(\bar{\xi}_{1},\bar{\eta}_{1})-\tilde{\lambda}^{\pm}_{+}(\bar{\xi}_{2},\bar{\eta}_{2})\big|\\
&\leq |\nabla_{\mathcal{U}_{+}}\lambda^{\pm}_{+}||\nabla \delta\mathcal{U}_{+}|\big(|\bar{\xi}_{1}-\bar{\xi}_{2}|+|\bar{\eta}_{1}-\bar{\eta}_{2}|\big)\\
&\leq \big(L+|\textrm{m}_{+}|\big)^{1-\alpha}\|\nabla_{\mathcal{U}_{+}}\lambda^{\pm}_{+}\|_{0,0;\tilde{\mathcal{N}}^{\rm{I}}_{+}}
\|\delta\mathcal{U}_{+}\|_{1,0;\tilde{\mathcal{N}}^{\rm{I}}_{+}}|(\bar{\xi}_{1},\bar{\eta}_{1})-(\bar{\xi}_{2},\bar{\eta}_{2})|^{\alpha}\\
&\leq \mathcal{O}(1)|(\bar{\xi}_{1},\bar{\eta}_{1})-(\bar{\xi}_{2},\bar{\eta}_{2})|^{\alpha},
\end{split}
\end{eqnarray}
and
\begin{align}\label{eq:prop-A1-8}
\begin{split}
&\Big|\textrm{e}^{\int^{\xi}_{\bar{\xi}_{2}}\p_{\Upsilon^{\pm}_{+}}\tilde{\lambda}^{\pm}_{+}(\tau, \Upsilon^{\pm}_{+}(\tau;\bar{\xi}_2,\bar{\eta}_2))\dd \tau}-\textrm{e}^{\int^{\xi}_{\bar{\xi}_{1}}\p_{\Upsilon^{\pm}_{+}}\tilde{\lambda}^{\pm}_{+}(\tau, \Upsilon^{\pm}_{+}(\tau;\bar{\xi}_1,\bar{\eta}_1))\dd \tau}\Big|\\
&\leq \bigg|\int^{1}_{0}\textrm{e}^{\theta\int^{\xi}_{\bar{\xi}_{2}}\p_{\Upsilon^{\pm}_{+}}\tilde{\lambda}^{\pm}_{+}(\tau, \Upsilon^{\pm}_{+}(\tau;\bar{\xi}_2,\bar{\eta}_2))\dd \tau+(1-\theta)\int^{\xi}_{\bar{\xi}_{1}}\p_{\Upsilon^{\pm}_{+}}\tilde{\lambda}^{\pm}_{+}(\tau, \Upsilon^{\pm}_{+}(\tau;\bar{\xi}_1,\bar{\eta}_1))\dd \tau}\bigg|\\
&\quad\ \times \bigg|\int^{\xi}_{\bar{\xi}_{2}}\p_{\Upsilon^{\pm}_{+}}\tilde{\lambda}^{\pm}_{+}(\tau, \Upsilon^{\pm}_{+}(\tau;\bar{\xi}_2,\bar{\eta}_2))\dd \tau-
\int^{\xi}_{\bar{\xi}_{1}}\p_{\Upsilon^{\pm}_{+}}\tilde{\lambda}^{\pm}_{+}(\tau, \Upsilon^{\pm}_{+}(\tau;\bar{\xi}_1,\bar{\eta}_1))\dd \tau\bigg|\\
&\leq \mathcal{O}(1)\bigg[\int^{\bar{\xi}_{2}}_{\bar{\xi}_{1}}|\p_{\Upsilon^{\pm}_{+}}\tilde{\lambda}^{\pm}_{+}(\tau, \Upsilon^{\pm}_{+}(\tau;\bar{\xi}_2,\bar{\eta}_2))|\dd \tau\\
&\qquad\qquad +\int^{\bar{\xi}_{1}}_{\xi}\big|\p_{\Upsilon^{\pm}_{+}}\tilde{\lambda}^{\pm}_{+}(\tau, \Upsilon^{\pm}_{+}(\tau;\bar{\xi}_1,\bar{\eta}_1))-\p_{\Upsilon^{\pm}_{+}}\tilde{\lambda}^{\pm}_{+}(\tau, \Upsilon^{\pm}_{+}(\tau;\bar{\xi}_2,\bar{\eta}_2))\big|\dd \tau\bigg]\\
&\leq \mathcal{O}(1)\Big[\|\nabla_{\mathcal{U}_{+}}\lambda^{\pm}_{+}\|_{0,0;\tilde{\mathcal{N}_{+}}}
\|\nabla\delta\mathcal{U}_{+}\|_{0,\alpha;\tilde{\mathcal{N}}^{\rm{I}}_{+}}\cdot|\bar{\xi}_1-\bar{\xi}_{2}|\\
&\qquad\qquad +\|\nabla_{\mathcal{U}_{+}}\lambda^{\pm}_{+}\|_{0,\alpha;\tilde{\mathcal{N}_{+}}}
\|\nabla\delta\mathcal{U}_{+}\|_{0,\alpha;\tilde{\mathcal{N}}^{\rm{I}}_{+}}|(\bar{\xi}_{1},\bar{\eta}_{1})-(\bar{\xi}_{2},\bar{\eta}_{2})|^{\alpha}\Big]\\
&\leq \mathcal{O}(1)|(\bar{\xi}_{1},\bar{\eta}_{1})-(\bar{\xi}_{2},\bar{\eta}_{2})|^{\alpha},
\end{split}
\end{align}
for $\sigma>0$ sufficiently small, where the constant $\mathcal{O}(1)$ depends only on $\underline{\mathcal{U}}_{+}$, $\alpha$ and $L$.

Then, substituting the estimates \eqref{eq:prop-A1-7}-\eqref{eq:prop-A1-8} into \eqref{eq:prop-A1-6}, we obtain
\begin{eqnarray}\label{eq:prop-A1-9}
\begin{split}
|\p_{\bar{\xi}}\Upsilon^{\pm}_{+}(\xi;\bar{\xi}_1,\bar{\eta}_1)-\p_{\bar{\xi}}\Upsilon^{\pm}_{+}(\xi;\bar{\xi}_2,\bar{\eta}_2)|\leq \mathcal{O}(1)|(\bar{\xi}_{1},\bar{\eta}_{1})-(\bar{\xi}_{2},\bar{\eta}_{2})|^{\alpha}.
\end{split}
\end{eqnarray}

In the same way, we can also get
\begin{eqnarray}\label{eq:prop-A1-10}
\begin{split}
|\p_{\bar{\eta}}\Upsilon^{\pm}_{+}(\xi;\bar{\xi}_1,\bar{\eta}_1)-\p_{\bar{\eta}}\Upsilon^{\pm}_{+}(\xi;\bar{\xi}_2,\bar{\eta}_2)|\leq \mathcal{O}(1)|(\bar{\xi}_{1},\bar{\eta}_{1})-(\bar{\xi}_{2},\bar{\eta}_{2})|^{\alpha}.
\end{split}
\end{eqnarray}

Combining \eqref{eq:prop-A1-9}-\eqref{eq:prop-A1-10} altogether, we can choose a constant $C_{\rm{in}}>0$ depending only on $\underline{\mathcal{U}}_{+}$, $\alpha$ and $L$ such that
\begin{eqnarray*}
\sup_{\xi\in[0,\bar{\xi}^{1}_+]}[\nabla_{(\bar{\xi},\bar{\eta})}\Upsilon^{\pm}_{+}(\xi)]_{0,\alpha;\tilde{\mathcal{N}}^{\rm{I}}_{+}}<C_{\rm{in}},
\end{eqnarray*}
which gives the estimate \eqref{eq:lem-A1-2} for $k=+$.

\end{proof}

\begin{lemma}\label{lem:A2}
For any $(\bar{\xi}, \bar{\eta})\in \tilde{\mathcal{N}}^{\rm{II}}_{k}$, let $\eta=\Upsilon^{\mp,\rm{b}}_{k}(\xi; \bar{\xi}, \bar{\eta})$ be
characteristic curves that defined by
\begin{eqnarray}\label{eq:lem-A2-1}
\begin{cases}
\frac{\dd \Upsilon^{\mp, \rm{b}}_{k}(\xi;\bar{\xi},\bar{\eta})}{\dd \xi}=\tilde{\lambda}^{\pm}_{k}(\xi,\Upsilon^{\pm}_{k}(\xi;\bar{\xi},\bar{\eta})),\quad \xi \in[0,\bar{\xi}],\\
\Upsilon^{\mp,\rm{b}}_{k}(\bar{\xi};\bar{\xi},\bar{\eta})=\bar{\eta},
\end{cases}
\end{eqnarray}
where $\tilde{\lambda}^{\pm}_{k}(\xi,\Upsilon^{\pm}_{k}(\xi;\bar{\xi},\bar{\eta}))
=\lambda^{\pm}(\mathcal{U}_{k}(\xi,\Upsilon^{\pm}_{k}(\xi;\bar{\xi},\bar{\eta})))$
and $\mathcal{U}_{k}=\underline{\mathcal{U}}_{k}+\delta\mathcal{U}_{k}$ with $\delta\mathcal{U}_{k}\in \mathcal{X}_{\sigma}$ for $k=\pm$.
Then,  if $\sigma>0$ is sufficiently small, there holds
\begin{eqnarray}\label{eq:lem-A2-2}
\sup_{\xi\in[0,\bar{\xi}^{2}_+]}\|\Upsilon^{-, \rm{b}}_{+}(\xi)\|_{1,\alpha; \tilde{\mathcal{N}}^{\rm{II}}_{+}}+\sup_{\xi\in[0,\bar{\xi}^{2}_-]}\|\Upsilon^{+, \rm{b}}_{-}(\xi)\|_{1,\alpha; \tilde{\mathcal{N}}^{\rm{II}}_{-}}<\tilde{C}_{\rm{b}},
\end{eqnarray}
where the constant $\tilde{C}_{\rm{b}}>0$ depends only $\underline{\mathcal{U}}$, $\alpha$ and $L$.
Specially, if $\zeta^{-,\rm{b}}_{+}(\bar{\xi}, \bar{\eta})$ and $\zeta^{+,\rm{b}}_{-}(\bar{\xi}, \bar{\eta})$ be the intersection points of the characteristics $\eta=\Upsilon^{-,\rm{b}}_{+}(\xi; \bar{\xi}, \bar{\eta})$ with the upper wall $\tilde{\Gamma}_{+}$ and $\eta=\Upsilon^{+,\rm{b}}_{-}(\xi; \bar{\xi}, \bar{\eta})$ with the lower wall $\tilde{\Gamma}_{-}$, respectively. Then, there holds
\begin{eqnarray}\label{eq:lem-A2-3}
\|\zeta^{-,\rm{b}}_{+}\|_{1,\alpha; \tilde{\mathcal{N}}^{\rm{II}}_{+}}+\|\zeta^{+,\rm{b}}_{-}\|_{1,\alpha; \tilde{\mathcal{N}}^{\rm{II}}_{-}}<\tilde{C}'_{\rm{b}},
\end{eqnarray}
where the constant $\tilde{C}'_{\rm{b}}>0$ depends only on $\underline{\mathcal{U}}$, $\alpha$ and $L$.
\end{lemma}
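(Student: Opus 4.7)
\medskip
\noindent\textbf{Proof Proposal for Lemma \ref{lem:A2}.}
The plan is to treat \eqref{eq:lem-A2-2} as a direct analogue of Lemma \ref{lem:A1} and then deduce \eqref{eq:lem-A2-3} from \eqref{eq:lem-A2-2} by an implicit function argument. Without loss of generality I will focus on $\Upsilon^{-,\rm b}_{+}$ and $\zeta^{-,\rm b}_{+}$ in $\tilde{\mathcal{N}}^{\rm II}_{+}$, since $\Upsilon^{+,\rm b}_{-}$ and $\zeta^{+,\rm b}_{-}$ are handled identically on the lower side. Note that the defining ODE \eqref{eq:lem-A2-1} is structurally the same as \eqref{eq:lem-A1-1}; only the admissible range of $\xi$ and the final datum differ. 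In particular, because $\delta\mathcal{U}_{+}\in\mathcal{X}_\sigma$, the coefficient $\tilde{\lambda}^{-}_{+}(\xi,\eta)=\lambda^{-}_{+}(\mathcal{U}_{+}(\xi,\eta))$ inherits uniform $C^{0,\alpha}$ bounds with Lipschitz dependence in the second variable, and $\partial_\eta \tilde{\lambda}^{-}_{+}$ is bounded in $C^{0,\alpha}$ provided $\sigma$ is small.

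First, I would integrate \eqref{eq:lem-A2-1} backward from $\bar{\xi}$ to obtain $\Upsilon^{-,\rm b}_{+}(\xi;\bar{\xi},\bar{\eta})=\bar{\eta}+\int^{\xi}_{\bar{\xi}}\tilde{\lambda}^{-}_{+}(\tau,\Upsilon^{-,\rm b}_{+}(\tau;\bar{\xi},\bar{\eta}))\,d\tau$, which instantly yields the $C^{0}$ bound. Differentiating formally in $\bar{\xi}$ and $\bar{\eta}$ and solving the resulting linear ODEs along $\Upsilon^{-,\rm b}_{+}$ (in complete parallel with \eqref{eq:prop-A1-3}) gives
\[
\partial_{\bar{\xi}}\Upsilon^{-,\rm b}_{+}=-\tilde{\lambda}^{-}_{+}(\bar{\xi},\bar{\eta})\,e^{\int^{\xi}_{\bar{\xi}}\partial_{\Upsilon}\tilde{\lambda}^{-}_{+}\,d\tau},\qquad
\partial_{\bar{\eta}}\Upsilon^{-,\rm b}_{+}=e^{\int^{\xi}_{\bar{\xi}}\partial_{\Upsilon}\tilde{\lambda}^{-}_{+}\,d\tau},
\]
whose $C^{0}$-norms are bounded by $\|\tilde\lambda^{-}_{+}\|_{0,0}$ and $e^{L\|\nabla_{\mathcal{U}_{+}}\lambda^{-}_{+}\|_{0,0}\,\|\delta\mathcal{U}_{+}\|_{1,0}}$, uniformly in $\sigma$ small. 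The $C^{0,\alpha}$ bounds on $\nabla_{(\bar{\xi},\bar{\eta})}\Upsilon^{-,\rm b}_{+}$ then follow by the same two-point estimate used in \eqref{eq:prop-A1-6}--\eqref{eq:prop-A1-10}, with Lemma \ref{lem:A1}'s computation simply repeated verbatim on the domain $\tilde{\mathcal{N}}^{\rm II}_{+}$; this establishes \eqref{eq:lem-A2-2}.

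For \eqref{eq:lem-A2-3}, the intersection point $\zeta^{-,\rm b}_{+}(\bar{\xi},\bar{\eta})$ is characterized implicitly by
\[
\Upsilon^{-,\rm b}_{+}\bigl(\zeta^{-,\rm b}_{+}(\bar{\xi},\bar{\eta});\bar{\xi},\bar{\eta}\bigr)=\mathrm{m}_{+}.
\]
Because $\tilde{\lambda}^{-}_{+}=\underline{\lambda}^{-}_{+}+O(\sigma)$ with $\underline{\lambda}^{-}_{+}<0$ bounded away from zero at the background state, the $\xi$-derivative of the left-hand side equals $\tilde{\lambda}^{-}_{+}\bigl(\zeta^{-,\rm b}_{+},\mathrm{m}_{+}\bigr)$, which stays bounded away from zero for $\sigma$ small. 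The implicit function theorem therefore gives $\zeta^{-,\rm b}_{+}$ as a $C^{1,\alpha}$ function of $(\bar{\xi},\bar{\eta})$, with
\[
\partial_{\bar{\xi}}\zeta^{-,\rm b}_{+}=-\frac{\partial_{\bar{\xi}}\Upsilon^{-,\rm b}_{+}}{\tilde{\lambda}^{-}_{+}}\bigg|_{\xi=\zeta^{-,\rm b}_{+}},\qquad
\partial_{\bar{\eta}}\zeta^{-,\rm b}_{+}=-\frac{\partial_{\bar{\eta}}\Upsilon^{-,\rm b}_{+}}{\tilde{\lambda}^{-}_{+}}\bigg|_{\xi=\zeta^{-,\rm b}_{+}},
\]
and the desired $C^{1,\alpha}$-norm bound follows by composing \eqref{eq:lem-A2-2} with the Lipschitz and H\"{o}lder composition rules and using the uniform lower bound on $|\tilde\lambda^{-}_{+}|$.

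The main technical obstacle I anticipate is the H\"{o}lder composition step for $\zeta^{-,\rm b}_{+}$: because $\partial_{(\bar{\xi},\bar{\eta})}\Upsilon^{-,\rm b}_{+}$ is evaluated at $\xi=\zeta^{-,\rm b}_{+}(\bar{\xi},\bar{\eta})$, one must bootstrap the $C^{0,\alpha}$ estimate of $\zeta^{-,\rm b}_{+}$ before obtaining the $C^{0,\alpha}$ estimate of its gradient. This is resolved by first establishing a uniform Lipschitz bound for $\zeta^{-,\rm b}_{+}$ (which follows from the $C^{0}$ gradient bound on $\Upsilon^{-,\rm b}_{+}$ and the lower bound on $|\tilde\lambda^{-}_{+}|$), and only then inserting it into the $C^{0,\alpha}$ estimate from \eqref{eq:lem-A2-2}. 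Once this ordering is fixed the argument is routine, and the constants $\tilde C_{\rm b}$ and $\tilde C'_{\rm b}$ can be chosen to depend only on $\underline{\mathcal{U}}$, $\alpha$ and $L$.
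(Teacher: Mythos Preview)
Your proposal is correct and follows essentially the same approach as the paper: the estimate \eqref{eq:lem-A2-2} is obtained by repeating the proof of Lemma~\ref{lem:A1} verbatim, and \eqref{eq:lem-A2-3} is derived via the implicit function theorem applied to $\Upsilon^{-,\rm b}_{+}(\zeta;\bar\xi,\bar\eta)=\mathrm{m}_{+}$, using the nondegeneracy $\tilde\lambda^{-}_{+}\neq 0$ and then bootstrapping the Lipschitz bound on $\zeta^{-,\rm b}_{+}$ into the $C^{0,\alpha}$ estimate for its gradient. The paper carries out the latter H\"older computation in somewhat more explicit detail (splitting the difference quotient into pieces involving $|\zeta^{-,\rm b}_{+}(\bar\xi_1,\bar\eta_1)-\zeta^{-,\rm b}_{+}(\bar\xi_2,\bar\eta_2)|^\alpha$ and then invoking $\|\zeta^{-,\rm b}_{+}\|_{1,0}^\alpha$), but this is exactly the bootstrap you describe.
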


\begin{proof}
Since the proof of the estimates \eqref{eq:lem-A2-2} are the same as the proof of \eqref{eq:lem-A1-2} in Lemma \ref{lem:A1}.
So, we need to consider the estimate \eqref{eq:lem-A2-3}. To this end, let
\begin{eqnarray*}
\mathscr{F}(\xi;\bar{\xi},\bar{\eta})=\rm{m}_{+}-\Upsilon^{-, \rm{b}}_{+}(\xi;\bar{\xi},\bar{\eta}).
\end{eqnarray*}
Then, $\p_{\bar{\xi}}\mathscr{F}, \p_{\bar{\eta}}\mathscr{F}$ are continuous, and
\begin{eqnarray*}
\mathscr{F}(\zeta^{-,\rm{b}}_{+},\bar{\xi},\bar{\eta})=0,\quad \p_{\bar{\xi}}\mathscr{F}(\zeta^{-,\rm{b}}_{+},\bar{\xi},\bar{\eta})=-\tilde{\lambda}^{-}_{+}(\zeta^{-,\rm{b}}_{+}, \rm{m}_{+})\neq0,
\end{eqnarray*}
provided that $\sigma>0$ is sufficiently small. Therefore, by implicit function theorem, we know that $\zeta^{-,\rm{b}}_{+}$ can be solved as a $C^1$-function of $(\bar{\xi},\bar{\eta})$ satisfying
\begin{eqnarray}\label{eq:lem-A2-4}
\p_{\bar{\xi}}\zeta^{-,\rm{b}}_{+}=-\frac{\partial_{\bar{\xi}}\mathscr{F}(\zeta^{-,\rm{b}}_{+}(\bar{\xi},\bar{\eta});\bar{\xi},\bar{\eta})}
{\p_{\xi}\mathscr{F}(\zeta^{-,\rm{b}}_{+}(\bar{\xi},\bar{\eta});\bar{\xi},\bar{\eta})}
=\frac{\p_{\bar{\xi}}\Upsilon^{-,\rm{b}}_{+}(\zeta^{-,\rm{b}}_{+}(\bar{\xi},\bar{\eta});\bar{\xi},\bar{\eta})}
{\tilde{\lambda}^{-}_{+}(\zeta^{-,\rm{b}}_{+}(\bar{\xi},\bar{\eta}),\rm{m}_{+})},
\end{eqnarray}
and
\begin{eqnarray}\label{eq:lem-A2-5}
\p_{\bar{\eta}}\zeta^{-,\rm{b}}_{+}=-\frac{\p_{\bar{\eta}}\mathscr{F}(\zeta^{-,\rm{b}}_{+}(\bar{\xi},\bar{\eta});\bar{\xi},\bar{\eta})}
{\p_{\xi}\mathscr{F}(\zeta^{-,\rm{b}}_{+}(\bar{\xi},\bar{\eta});\bar{\xi},\bar{\eta})}
=\frac{\p_{\bar{\eta}}\Upsilon^{-,\rm{b}}_{+}(\zeta^{-,\rm{b}}_{+}(\bar{\xi},\bar{\eta});\bar{\xi},\bar{\eta})}
{\tilde{\lambda}^{\pm}_{k}(\zeta^{-,\rm{b}}_{+}(\bar{\xi},\bar{\eta}),\rm{m}_{+})}.
\end{eqnarray}

Next, we turn to H\"{o}lder estimates for $\nabla_{(\bar{\xi},\bar{\eta})}\zeta^{-,\rm{b}}_{+}$. Taking two points $(\bar{\xi}_1,\bar{\eta}_1), (\bar{\xi}_2,\bar{\eta}_2)\in \tilde{\mathcal{N}}^{\rm{II}}_{+}$ with $(\bar{\xi}_1,\bar{\eta}_1)\neq(\bar{\xi}_2,\bar{\eta}_2)$.
and substituting $(\bar{\xi}_1,\bar{\eta}_1)$ and $(\bar{\xi}_2,\bar{\eta}_2)$ into \eqref{eq:lem-A2-4}-\eqref{eq:lem-A2-4}, and taking difference between them, we have
\begin{eqnarray}\label{eq:lem-A2-6}
\begin{split}
&\frac{|\p_{\bar{\xi}}\zeta^{-,\rm{b}}_{+}(\bar{\xi}_1,\bar{\eta}_1)-\p_{\bar{\xi}}\zeta^{-,\rm{b}}_{+}(\bar{\xi}_2,\bar{\eta}_2)|}
{|(\bar{\xi}_{1},\bar{\eta}_{1})-(\bar{\xi}_{2},\bar{\eta}_{2})|^{\alpha}}\\
&=\frac{\Big|\frac{\p_{\bar{\xi}}\Upsilon^{-,\rm{b}}_{+}(\zeta^{-,\rm{b}}_{+}(\bar{\xi}_1,\bar{\eta}_1);\bar{\xi}_1,\bar{\eta}_1)}
{\tilde{\lambda}^{-}_{+}(\zeta^{-,\rm{b}}_{+}(\bar{\xi}_1,\bar{\eta}_1),\rm{m}_{+})}
-\frac{\p_{\bar{\xi}}\Upsilon^{-,\rm{b}}_{+}(\zeta^{-,\rm{b}}_{+}(\bar{\xi}_2,\bar{\eta}_2);\bar{\xi}_2,\bar{\eta}_2)}
{\tilde{\lambda}^{-}_{+}(\zeta^{-,\rm{b}}_{+}(\bar{\xi}_2,\bar{\eta}_2),\rm{m}_{+})}\Big|}{|(\bar{\xi}_{1},\bar{\eta}_{1})-(\bar{\xi}_{2},\bar{\eta}_{2})|^{\alpha}}\\
&\leq\frac{\Big|\frac{\p_{\bar{\xi}}\Upsilon^{-,\rm{b}}_{+}(\zeta^{-,\rm{b}}_{+}(\bar{\xi}_1,\bar{\eta}_1);\bar{\xi}_1,\bar{\eta}_1)}
{\tilde{\lambda}^{-}_{+}(\zeta^{-,\rm{b}}_{+}(\bar{\xi}_1,\bar{\eta}_1),\rm{m}_{+})}
-\frac{\p_{\bar{\xi}}\Upsilon^{-,\rm{b}}_{+}(\zeta^{-,\rm{b}}_{+}(\bar{\xi}_2,\bar{\eta}_2);\bar{\xi}_1,\bar{\eta}_1)}
{\tilde{\lambda}^{-}_{+}(\zeta^{-,\rm{b}}_{+}(\bar{\xi}_2,\bar{\eta}_2),\rm{m}_{+})}\Big|}
{|\zeta^{-,\rm{b}}_{+}(\bar{\xi}_{1},\bar{\eta}_{1})-\zeta^{-,\rm{b}}_{+}(\bar{\xi}_{2},\bar{\eta}_{2})|^{\alpha}}
\cdot\frac{|\zeta^{-,\rm{b}}_{+}(\bar{\xi}_{1},\bar{\eta}_{1})-\zeta^{-,\rm{b}}_{+}(\bar{\xi}_{2},\bar{\eta}_{2})|^{\alpha}}
{|(\bar{\xi}_{1},\bar{\eta}_{1})-(\bar{\xi}_{2},\bar{\eta}_{2})|^{\alpha}}\\
&\quad+\frac{1}{|\tilde{\lambda}^{-}_{+}(\zeta^{-,\rm{b}}_{+}(\bar{\xi}_2,\bar{\eta}_2),\rm{m}_+)|}
\frac{\Big|\p_{\bar{\xi}}\Upsilon^{-,\rm{b}}_{+}(\zeta^{-,\rm{b}}_{+}(\bar{\xi}_2,\bar{\eta}_2);\bar{\xi}_1,\bar{\eta}_1)
-\p_{\bar{\xi}}\Upsilon^{-,\rm{b}}_{+}(\zeta^{-,\rm{b}}_{+}(\bar{\xi}_2,\bar{\eta}_2);\bar{\xi}_2,\bar{\eta}_2)\Big|}
{|(\bar{\xi}_{1},\bar{\eta}_{1})-(\bar{\xi}_{2},\bar{\eta}_{2})|^{\alpha}}\\
&\leq\frac{\|\zeta^{-,\rm{b}}_{+}\|^{\alpha}_{1,0;\mathcal{N}^{\rm{II}}_{+}}}
{|\tilde{\lambda}^{-}_{+}(\zeta^{-,\rm{b}}_{+}(\bar{\xi}_1,\bar{\eta}_1),\rm{m}_{+})|}
\frac{\Big|\p_{\bar{\xi}}\Upsilon^{-,\rm{b}}_{+}(\zeta^{-,\rm{b}}_{+}(\bar{\xi}_1,\bar{\eta}_1);\bar{\xi}_1,\bar{\eta}_1)
-\p_{\bar{\xi}}\Upsilon^{-,\rm{b}}_{+}(\zeta^{-,\rm{b}}_{+}(\bar{\xi}_2,\bar{\eta}_2);\bar{\xi}_1,\bar{\eta}_1)\Big|}
{|\zeta^{-,\rm{b}}_{+}(\bar{\xi}_{1},\bar{\eta}_{1})-\zeta^{-,\rm{b}}_{+}(\bar{\xi}_{2},\bar{\eta}_{2})|^{\alpha}}\\
&\quad+\frac{\|\zeta^{-,\rm{b}}_{+}\|^{\alpha}_{1,0;\mathcal{N}^{\rm{II}}_{+}}
\cdot|\p_{\bar{\xi}}\Upsilon^{-,\rm{b}}_{+}(\zeta^{-,\rm{b}}_{+}(\bar{\xi}_2,\bar{\eta}_2);\bar{\xi}_1,\bar{\eta}_1)|}
{|\tilde{\lambda}^{-}_{+}(\zeta^{-,\rm{b}}_{+}(\bar{\xi}_1,\bar{\eta}_1),\rm{m}_{+})
\tilde{\lambda}^{-}_{+}(\zeta^{-,\rm{b}}_{+}(\bar{\xi}_2,\bar{\eta}_2),\rm{m}_{+})|}
\frac{\Big|\tilde{\lambda}^{-}_{+}(\zeta^{-,\rm{b}}_{+}(\bar{\xi}_1,\bar{\eta}_1),\rm{m}_{+})
-\tilde{\lambda}^{-}_{+}(\zeta^{-,\rm{b}}_{+}(\bar{\xi}_2,\bar{\eta}_2),\rm{m}_{+})\Big|}
{|\zeta^{-,\rm{b}}_{+}(\bar{\xi}_{1},\bar{\eta}_{1})
-\zeta^{-,\rm{b}}_{+}(\bar{\xi}_{2},\bar{\eta}_{2})|^{\alpha}}\\
&\quad+\frac{1}{\tilde{\lambda}^{-}_{+}(\zeta^{-,\rm{b}}_{+}(\bar{\xi}_2,\bar{\eta}_2),\rm{m}_{+})}
\frac{\Big|\p_{\bar{\xi}}\Upsilon^{-,\rm{b}}_{+}(\zeta^{-,\rm{b}}_{+}(\bar{\xi}_2,\bar{\eta}_2);\bar{\xi}_1,\bar{\eta}_1)
-\p_{\bar{\xi}}\Upsilon^{-,\rm{b}}_{+}(\zeta^{-,\rm{b}}_{+}(\bar{\xi}_2,\bar{\eta}_2);\bar{\xi}_2,\bar{\eta}_2)\Big|}
{|(\bar{\xi}_{1},\bar{\eta}_{1})-(\bar{\xi}_{2},\bar{\eta}_{2})|^{\alpha}}.
\end{split}
\end{eqnarray}

By $\eqref{eq:prop-A1-3}$, we note that
\begin{eqnarray}\label{eq:lem-A2-7}
\begin{split}
&\big|\p_{\bar{\xi}}\Upsilon^{-,\rm{b}}_{+}(\zeta^{-,\rm{b}}_{+}(\bar{\xi}_1,\bar{\eta}_1);\bar{\xi}_1,\bar{\eta}_1)
-\p_{\bar{\xi}}\Upsilon^{-,\rm{b}}_{+}(\zeta^{-,\rm{b}}_{+}(\bar{\xi}_2,\bar{\eta}_2);\bar{\xi}_1,\bar{\eta}_1)\big|\\
&=\big|\tilde{\lambda}^{-}_{+}(\bar{\xi}_1,\bar{\eta}_1)\big|\cdot
\bigg|\textrm{e}^{\int^{\zeta^{-,\rm{b}}_{+}(\bar{\xi}_2,\bar{\eta}_2)}_{\bar{\xi}_1}\p_{\Upsilon^{-,\rm{b}}_{+}}\tilde{\lambda}^{-}_{+}(\tau, \Upsilon^{-,\rm{b}}_{+}(\tau;\bar{\xi}_1,\bar{\eta}_1))\dd \tau}-\textrm{e}^{\int^{\zeta^{-,\rm{b}}_{+}(\bar{\xi}_1,\bar{\eta}_1)}_{\bar{\xi}_1}\p_{\Upsilon^{-,\rm{b}}_{+}}\tilde{\lambda}^{-}_{+}(\tau, \Upsilon^{-,\rm{b}}_{+}(\tau;\bar{\xi}_1,\bar{\eta}_1))\dd \tau}\bigg|\\
&\leq \bigg|\int^{1}_{0}\textrm{e}^{\theta\int^{\zeta^{-,\rm{b}}_{+}(\bar{\xi}_2,\bar{\eta}_2)}_{\bar{\xi}_1}\p_{\Upsilon^{-,\rm{b}}_{+}}
\tilde{\lambda}^{-}_{+}(\tau, \Upsilon^{-,\rm{b}}_{+}(\tau;\bar{\xi}_1,\bar{\eta}_1))\dd \tau+(1-\theta)\int^{\zeta^{-,\rm{b}}_{+}(\bar{\xi}_1,\bar{\eta}_1)}_{\bar{\xi}_1}\p_{\Upsilon^{-,\rm{b}}_{+}}\tilde{\lambda}^{-}_{+}(\tau, \Upsilon^{-,\rm{b}}_{+}(\tau;\bar{\xi}_1,\bar{\eta}_1))\dd \tau}\bigg|\\
&\quad \cdot\bigg|\int^{\zeta^{-,\rm{b}}_{+}(\bar{\xi}_2,\bar{\eta}_2)}_{\zeta^{-,\rm{b}}_{+}(\bar{\xi}_1,\bar{\eta}_1)}
\p_{\Upsilon^{-,\rm{b}}_{+}}\tilde{\lambda}^{-}_{+}(\tau, \Upsilon^{-,\rm{b}}_{+}(\tau;\bar{\xi}_1,\bar{\eta}_1))\dd \tau\bigg|\\
&\leq \mathcal{O}(1)\big|\zeta^{-,\rm{b}}_{+}(\bar{\xi}_1,\bar{\eta}_1)-\zeta^{-,\rm{b}}_{+}(\bar{\xi}_2,\bar{\eta}_2)\big|,
\end{split}
\end{eqnarray}
and by the estimate \eqref{eq:lem-A2-2},
\begin{eqnarray}\label{eq:lem-A2-8}
\begin{split}
\Big|\p_{\bar{\xi}}\Upsilon^{-,\rm{b}}_{+}(\zeta^{-,\rm{b}}_{+}(\bar{\xi}_2,\bar{\eta}_2);\bar{\xi}_1,\bar{\eta}_1)
-\p_{\bar{\xi}}\Upsilon^{-,\rm{b}}_{+}(\zeta^{-,\rm{b}}_{+}(\bar{\xi}_2,\bar{\eta}_2);\bar{\xi}_2,\bar{\eta}_2)\Big|
\leq \mathcal{O}(1)\big|(\bar{\xi}_1,\bar{\eta}_1)-(\bar{\xi}_2,\bar{\eta}_2)\big|^{\alpha},
\end{split}
\end{eqnarray}
where the constants $\mathcal{O}(1)$ depending only on $\underline{U}$, $\alpha$ and $L$.

Then, substituting the estimates \eqref{eq:lem-A2-7} and \eqref{eq:lem-A2-8} into \eqref{eq:lem-A2-6}, we obtain
\begin{eqnarray}\label{eq:lem-A2-9}
\begin{split}
|\p_{\bar{\xi}}\zeta^{-,\rm{b}}_{+}(\bar{\xi}_1,\bar{\eta}_1)-\p_{\bar{\xi}}\zeta^{-,\rm{b}}_{+}(\bar{\xi}_2,\bar{\eta}_2)|\leq \mathcal{O}(1)\big|(\bar{\xi}_{1},\bar{\eta}_{1})-(\bar{\xi}_{2},\bar{\eta}_{2})\big|^{\alpha},
\end{split}
\end{eqnarray}
where the constants $\mathcal{O}(1)$ depending only on $\underline{U}$, $\alpha$ and $L$.

Similarly, we can also get
\begin{eqnarray}\label{eq:lem-A2-10}
\begin{split}
|\p_{\bar{\eta}}\zeta^{-,\rm{b}}_{+}(\bar{\xi}_1,\bar{\eta}_1)-\p_{\bar{\eta}}\zeta^{-,\rm{b}}_{+}(\bar{\xi}_2,\bar{\eta}_2)|\leq \mathcal{O}(1)|(\bar{\xi}_{1},\bar{\eta}_{1})-(\bar{\xi}_{2},\bar{\eta}_{2})|^{\alpha},
\end{split}
\end{eqnarray}
where the constants $\mathcal{O}(1)$ depending only on $\underline{U}$, $\alpha$ and $L$.

Combining \eqref{eq:lem-A2-9}-\eqref{eq:lem-A2-10} altogether, we can choose a constant $\tilde{C}'_{\rm{b}}>0$ depending only on $\underline{\mathcal{U}}_{+}$, $\alpha$ and $L$ such that the estimate \eqref{eq:lem-A2-3} holds.
Finally, in the same way, we can also get the estimate \eqref{eq:lem-A2-3} for $k=-$.

\end{proof}

Similarly, we also have the estimates for $\eta=\Upsilon^{\pm,\rm{cd}}_{k}(\xi; \bar{\xi},\bar{\eta})$ on $\tilde{\mathcal{N}}^{\rm{III}}_{k}$ for $k=\pm$.
\begin{lemma}\label{lem:A3}
For any $(\bar{\xi}, \bar{\eta})\in \tilde{\mathcal{N}}^{\rm{III}}_{k}$, let $\eta=\Upsilon^{\pm,\rm{cd}}_{k}(\xi; \bar{\xi}, \bar{\eta})$ be
characteristic curves that defined by
\begin{eqnarray}\label{eq:lem-A3-1}
\begin{cases}
\frac{\dd \Upsilon^{\pm, \rm{cd}}_{k}(\xi;\bar{\xi},\bar{\eta})}{\dd \xi}=\tilde{\lambda}^{\pm}_{k}(\xi,\Upsilon^{\pm}_{k}(\xi;\bar{\xi},\bar{\eta})),\quad \xi \in[0,\bar{\xi}],\\
\Upsilon^{\pm,\rm{cd}}_{k}(\bar{\xi};\bar{\xi},\bar{\eta})=\bar{\eta},
\end{cases}
\end{eqnarray}
where $\tilde{\lambda}^{\pm}_{k}(\xi,\Upsilon^{\pm}_{k}(\xi;\bar{\xi},\bar{\eta}))
=\lambda^{\pm}(\mathcal{U}_{k}(\xi,\Upsilon^{\pm}_{k}(\xi;\bar{\xi},\bar{\eta})))$
and $\mathcal{U}_{k}=\underline{\mathcal{U}}_{k}+\delta\mathcal{U}_{k}$ with $\delta\mathcal{U}_{k}\in \mathcal{X}_{\sigma}$ for $k=\pm$.
Then,  if $\sigma>0$ is sufficiently small, there holds
\begin{eqnarray}\label{eq:lem-A3-2}
\sup_{\xi\in[0,\bar{\xi}^{2}_{\rm{cd}}]}\|\Upsilon^{-, \rm{cd}}_{+}(\xi)\|_{1,\alpha; \tilde{\mathcal{N}}^{\rm{III}}_{+}}+\sup_{\xi\in[0,\bar{\xi}^{2}_{\rm{cd}}]}\|\Upsilon^{+, \rm{cd}}_{-}(\xi)\|_{1,\alpha; \tilde{\mathcal{N}}^{\rm{III}}_{-}}<\tilde{C}_{\rm{cd}},
\end{eqnarray}
where the constant $\tilde{C}_{\rm{cd}}>0$ depends only $\underline{\mathcal{U}}$, $\alpha$ and $L$.

Specially, if $\zeta^{+,\rm{cd}}_{+}(\bar{\xi}, \bar{\eta})$ and $\zeta^{-,\rm{cd}}_{-}(\bar{\xi}, \bar{\eta})$ be the intersection points of the characteristics $\eta=\Upsilon^{+,\rm{cd}}_{+}(\xi; \bar{\xi}, \bar{\eta})$ with the contact discontinuity $\tilde{\Gamma}_{\rm{cd}}$ from above and $\eta=\Upsilon^{-,\rm{cd}}_{-}(\xi; \bar{\xi}, \bar{\eta})$ with the contact discontinuity $\tilde{\Gamma}_{\rm{cd}}$ from below, respectively. Then, there holds
\begin{eqnarray}\label{eq:lem-A3-3}
\|\zeta^{+,\rm{cd}}_{+}\|_{1,\alpha; \tilde{\mathcal{N}}^{\rm{III}}_{+}}+\|\zeta^{-,\rm{cd}}_{-}\|_{1,\alpha; \tilde{\mathcal{N}}^{\rm{III}}_{-}}<\tilde{C}'_{\rm{cd}},
\end{eqnarray}
where the constant $\tilde{C}'_{\rm{cd}}>0$ depends only on $\underline{\mathcal{U}}$, $\alpha$ and $L$.
\end{lemma}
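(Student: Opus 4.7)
The plan is to prove Lemma \ref{lem:A3} by mirroring, almost verbatim, the argument developed in Lemmas \ref{lem:A1} and \ref{lem:A2}; the only genuine novelty lies in locating the intersection points on the contact discontinuity $\tilde{\Gamma}_{\rm cd}=\{\eta=0\}$ rather than on a fixed wall $\{\eta=\pm \mathrm{m}_{\pm}\}$, but since $\tilde{\Gamma}_{\rm cd}$ is straight in Lagrangian coordinates, this difference is cosmetic.

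First, to obtain \eqref{eq:lem-A3-2}, I would fix $k=+$ (the case $k=-$ being symmetric) and any $(\bar\xi,\bar\eta)\in\tilde{\mathcal N}_+^{\rm III}$. Writing the ODE \eqref{eq:lem-A3-1} as the integral equation $\Upsilon^{-,\rm cd}_+(\xi;\bar\xi,\bar\eta)=\bar\eta+\int_{\bar\xi}^{\xi}\tilde\lambda^{-}_+(\tau,\Upsilon^{-,\rm cd}_+)\,\mathrm d\tau$, the $C^0$-bound is immediate from boundedness of $\tilde\lambda^{-}_+$. Next, differentiating with respect to $(\bar\xi,\bar\eta)$ yields the explicit representations
\begin{equation*}
\partial_{\bar\xi}\Upsilon^{-,\rm cd}_+(\xi;\bar\xi,\bar\eta)=-\tilde\lambda^{-}_+(\bar\xi,\bar\eta)\,\mathrm e^{\int_{\bar\xi}^{\xi}\partial_{\Upsilon^{-,\rm cd}_+}\tilde\lambda^{-}_+\,\mathrm d\tau},\qquad \partial_{\bar\eta}\Upsilon^{-,\rm cd}_+(\xi;\bar\xi,\bar\eta)=\mathrm e^{\int_{\bar\xi}^{\xi}\partial_{\Upsilon^{-,\rm cd}_+}\tilde\lambda^{-}_+\,\mathrm d\tau},
\end{equation*}
and, exactly as in \eqref{eq:prop-A1-4}--\eqref{eq:prop-A1-5}, the smallness of $\sigma$ controls the exponent via $\|\nabla_{\mathcal U_+}\lambda^{-}_+\|_{0,0}\|\delta\mathcal U_+\|_{1,0}$, giving a uniform $C^0$-bound on $\nabla_{(\bar\xi,\bar\eta)}\Upsilon^{-,\rm cd}_+$. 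Finally the $C^{0,\alpha}$-estimate of the gradient is established by the two-point comparison \eqref{eq:prop-A1-6}, using the Lipschitz--Hölder decomposition of $\tilde\lambda^{-}_+$ together with the mean value theorem applied to the exponential; the computations are identical to \eqref{eq:prop-A1-7}--\eqref{eq:prop-A1-10}, producing \eqref{eq:lem-A3-2}.

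Second, to derive \eqref{eq:lem-A3-3}, I would set $\mathscr G(\xi;\bar\xi,\bar\eta)\triangleq\Upsilon^{+,\rm cd}_+(\xi;\bar\xi,\bar\eta)$ so that $\zeta^{+,\rm cd}_+$ is characterized implicitly by $\mathscr G(\zeta^{+,\rm cd}_+;\bar\xi,\bar\eta)=0$. Since $\partial_\xi\mathscr G(\zeta^{+,\rm cd}_+;\bar\xi,\bar\eta)=\tilde\lambda^{+}_+(\zeta^{+,\rm cd}_+,0)$, which is bounded away from zero for $\sigma$ small (recall $\underline\lambda^{+}_+\neq 0$), the implicit function theorem yields
\begin{equation*}
\partial_{\bar\xi}\zeta^{+,\rm cd}_+=-\frac{\partial_{\bar\xi}\Upsilon^{+,\rm cd}_+(\zeta^{+,\rm cd}_+;\bar\xi,\bar\eta)}{\tilde\lambda^{+}_+(\zeta^{+,\rm cd}_+,0)},\qquad \partial_{\bar\eta}\zeta^{+,\rm cd}_+=-\frac{\partial_{\bar\eta}\Upsilon^{+,\rm cd}_+(\zeta^{+,\rm cd}_+;\bar\xi,\bar\eta)}{\tilde\lambda^{+}_+(\zeta^{+,\rm cd}_+,0)}.
\end{equation*}
The $C^0$-bounds for $\zeta^{+,\rm cd}_+$ and $\nabla_{(\bar\xi,\bar\eta)}\zeta^{+,\rm cd}_+$ follow at once from part one and the positive lower bound of $|\tilde\lambda^{+}_+|$. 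For the Hölder seminorm I would perform the same triangle-inequality splitting as in \eqref{eq:lem-A2-6}, reducing everything to (i) the Hölder continuity of $\tilde\lambda^{+}_+$ restricted to $\{\eta=0\}$, (ii) the self-Hölder estimate \eqref{eq:lem-A3-2} just obtained, and (iii) the bound $|\partial_{\bar\xi}\Upsilon^{+,\rm cd}_+(\zeta^{+,\rm cd}_+;\bar\xi_1,\bar\eta_1)-\partial_{\bar\xi}\Upsilon^{+,\rm cd}_+(\zeta^{+,\rm cd}_+;\bar\xi_2,\bar\eta_2)|$, which is controlled through the exponential representation exactly as in \eqref{eq:lem-A2-7}--\eqref{eq:lem-A2-8}. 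Symmetric arguments handle $\zeta^{-,\rm cd}_-$.

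The main obstacle---though still manageable---is the last step: showing that the two-point difference in \eqref{eq:lem-A2-6}--type splitting can be bounded by $|(\bar\xi_1,\bar\eta_1)-(\bar\xi_2,\bar\eta_2)|^\alpha$ uniformly. The subtlety is that the first ratio on the right of \eqref{eq:lem-A2-6} is of the form $|F(\zeta_1)-F(\zeta_2)|/|\zeta_1-\zeta_2|^\alpha$ multiplied by $|\zeta_1-\zeta_2|^\alpha/|(\bar\xi_1,\bar\eta_1)-(\bar\xi_2,\bar\eta_2)|^\alpha$, so one must verify that $\zeta^{+,\rm cd}_+\in C^{0,1}$ (not merely $C^{0,\alpha}$) as an intermediate step, which is what the $C^0$-bound on its gradient supplies. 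Provided $\sigma$ is chosen so small that $|\tilde\lambda^{+}_+(\cdot,0)|\ge \tfrac12|\underline\lambda^{+}_+|$ throughout $\tilde{\mathcal N}_+^{\rm III}$, all constants are bounded solely in terms of $\underline{\mathcal U}$, $\alpha$ and $L$, and the desired $\tilde C_{\rm cd}$ and $\tilde C'_{\rm cd}$ can be extracted.
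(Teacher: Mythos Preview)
Your proposal is correct and matches the paper's approach: the paper itself omits a detailed proof of Lemma~\ref{lem:A3}, prefacing it only with ``Similarly, we also have the estimates for $\eta=\Upsilon^{\pm,\rm{cd}}_{k}(\xi;\bar{\xi},\bar{\eta})$ on $\tilde{\mathcal{N}}^{\rm{III}}_{k}$,'' and your sketch is precisely the argument of Lemmas~\ref{lem:A1}--\ref{lem:A2} transported to the contact discontinuity $\{\eta=0\}$. The only cosmetic difference is that you set $\mathscr{G}=\Upsilon^{+,\rm cd}_+$ directly (yielding a minus sign in the implicit-function formula), whereas the paper's convention in Lemma~\ref{lem:A2} was $\mathscr{F}=\mathrm{m}_{+}-\Upsilon^{-,\rm b}_{+}$; this has no effect on the estimates.
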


Finally, we give some properties on eigenvalues by the following lemma.

\begin{lemma}\label{lem:unique}
Let $\lambda^{\pm}_{\pm}$ and $\underline{\lambda}^{\pm}_{\pm}$ be defined by \eqref{eq:2.18} and \eqref{eq-global-unique-5.5}, respectively. Then, under the assumptions of Proposition \ref{prop-unique-5.1}, there hold the estimates
\begin{eqnarray}\label{eq-unique-A1}
\begin{split}
|\lambda^{\pm}_{+}-\underline{\lambda}^{\pm}_{+}|\leq C_{\lambda^{\pm}_{+}}\big(|z^{+,\flat}_{+}|+|z^{-,\flat}_{+}|\big),\quad
|\lambda^{\pm}_{-}-\underline{\lambda}^{\pm}_{-}|\leq C_{\lambda^{\pm}_{-}}\big(|z^{+,\flat}_{-}|+|z^{-,\flat}_{-}|\big),
\end{split}
\end{eqnarray}
where $C_{\lambda_{\pm}^{\pm}}$ are positive constants depending only on $\|U_{\pm}\|_{1,0;\underline{\tilde{\mathcal {N}}}_{\pm}}$.
\end{lemma}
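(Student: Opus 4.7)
The plan is to reduce the claim to a first-order Taylor expansion, using Proposition~\ref{prop-unique-5.1} to kill most of the dependence of $\lambda^\pm_\pm$ on the state. Indeed, the hypothesis of Proposition~\ref{prop-unique-5.1} is exactly what is assumed here, so I may invoke its conclusion: $B_\pm \equiv \underline{B}^{\pm}$, $S_\pm \equiv \underline{S}^{\pm}$, $Y_\pm \equiv 0$ throughout $\underline{\tilde{\mathcal{N}}}_\pm$. Substituting these identities into the algebraic formulas \eqref{eq:4.20}--\eqref{eq:4.21} exhibits $(u_\pm, v_\pm, \rho_\pm)$, and hence $c_\pm = \sqrt{\gamma p_\pm / \rho_\pm}$, as explicit smooth functions of $(\omega_\pm, p_\pm)$ alone, defined on a neighbourhood of $(0, \underline{p}^{\pm})$.

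With this reduction in hand, the eigenvalue $\lambda^\pm_\pm = \lambda^\pm_\pm(U_\pm)$ defined by \eqref{eq:2.18} becomes a smooth function $\Psi^\pm_\pm(\omega_\pm, p_\pm)$ of two scalar variables, with $\Psi^\pm_\pm(0,\underline{p}^{\pm}) = \underline{\lambda}^\pm_\pm$. The strict inequalities in \eqref{eq-global-unique-5.1}, namely $u_\pm > 0$, $u_\pm \neq c_\pm$ and $\rho_\pm > 0$, together with the $C^1$ regularity of $U_\pm$, keep the denominators $u_\pm^2 - c_\pm^2$ in \eqref{eq:2.18} uniformly bounded away from zero in terms of $\|U_\pm\|_{1,0;\underline{\tilde{\mathcal{N}}}_\pm}$. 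Consequently, $\nabla_{(\omega,p)} \Psi^\pm_\pm$ is bounded on the line segment joining $(0,\underline{p}^{\pm})$ to $(\omega_\pm(\xi,\eta), p_\pm(\xi,\eta))$ by some constant $M_\pm$ depending only on $\|U_\pm\|_{1,0;\underline{\tilde{\mathcal{N}}}_\pm}$, and the mean value theorem gives
\begin{equation*}
|\lambda^\pm_\pm - \underline{\lambda}^\pm_\pm| \le M_\pm \bigl(|\delta\omega^\flat_\pm| + |\delta p^\flat_\pm|\bigr).
\end{equation*}

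To pass from $(\delta\omega^\flat_\pm, \delta p^\flat_\pm)$ to $(z^{+,\flat}_\pm, z^{-,\flat}_\pm)$, I will invert the definition \eqref{eq-global-unique-5.4} to obtain $\delta\omega^\flat_\pm = \tfrac12(z^{+,\flat}_\pm + z^{-,\flat}_\pm)$ and $\delta p^\flat_\pm = \tfrac{1}{2\Lambda_\pm}(z^{+,\flat}_\pm - z^{-,\flat}_\pm)$. Since the supersonic assumption guarantees $\Lambda_\pm$ is uniformly bounded below by a positive quantity determined by $\|U_\pm\|_{1,0;\underline{\tilde{\mathcal{N}}}_\pm}$, the factor $1/\Lambda_\pm$ is uniformly bounded above, so substituting into the previous inequality yields \eqref{eq-unique-A1} with constants $C_{\lambda^\pm_\pm}$ depending only on $\|U_\pm\|_{1,0;\underline{\tilde{\mathcal{N}}}_\pm}$, as claimed.

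The step I anticipate requiring the most care is not the analysis but the bookkeeping: I must verify that every constant produced by the chain rule, the mean value theorem, and the inversion of \eqref{eq-global-unique-5.4} genuinely depends only on $\|U_\pm\|_{1,0;\underline{\tilde{\mathcal{N}}}_\pm}$ and not, through hidden derivatives of $\Psi^\pm_\pm$ or through the sonic denominators, on additional data. This amounts to quantifying the lower bounds on $|u_\pm^2 - c_\pm^2|$ and $\Lambda_\pm$ in terms of $\|U_\pm\|_{0,0}$, which is routine given the strict inequalities in \eqref{eq-global-unique-5.1}.
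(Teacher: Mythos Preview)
Your proposal is correct and follows essentially the same route as the paper: invoke Proposition~\ref{prop-unique-5.1} to freeze $B_\pm,S_\pm,Y_\pm$ at their background values, thereby reducing $\lambda^\pm_\pm$ to a smooth function of $(\omega_\pm,p_\pm)$ alone, apply the mean value theorem at $(0,\underline{p}^\pm)$, and convert $(\delta\omega^\flat_\pm,\delta p^\flat_\pm)$ to $(z^{+,\flat}_\pm,z^{-,\flat}_\pm)$ via \eqref{eq-global-unique-5.4}. The paper carries out the substitution into $z^{\pm,\flat}$ directly inside the Taylor expansion and records explicit constants, but the logic is identical to yours.
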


\begin{proof}
We first consider the $\lambda_{+}^{+}-\underline{\lambda}^{+}_{+}$. By \eqref{eq:1.4}, \eqref{eq:1.8} and Proposition \ref{prop-unique-5.1}, we have
\begin{eqnarray*}
\begin{split}
\lambda_{+}^{+}&=\frac{\rho_{+} c_{+}^{2} u_{+}}
{u_{+}^{2}-c_{+}^{2}}\bigg(\frac{v_{+}}{u_{+}}
+ \sqrt{\frac{u_{+}^{2}+v_{+}^{2}}{c_{+}^2}-1}\bigg)\\
&=\frac{\gamma p_{+} \sqrt{2(\gamma-1)(1+\omega_{+}^2)\big[(\gamma-1)\underline{B}^{+}-\gamma A(\underline{S}^{+})^{\frac{1}{\gamma}} p_{+}^{1-\frac{1}{\gamma}}\big]}}
{2\big[(\gamma-1)\underline{B}^{+}-\gamma A(\underline{S}^{+})^{\frac{1}{\gamma}}p_{+}^{1-\frac{1}{\gamma}}\big]-\gamma (\gamma-1)A(\underline{S}^{+})^{\frac{1}{\gamma}}(1+\omega_{+}^2)p_{+}^{1-\frac{1}{\gamma}}}\\
&\quad\ \times \bigg(\omega_{+}
+ \sqrt{2\gamma^{-1}A(\underline{S}^{+})^{-\frac{1}{\gamma}} p_{+}^{\frac{1}{\gamma}-1}-(\gamma-1)^{-1}(\gamma+1)}\bigg)\\
&\triangleq \hat{\lambda}^{+}(\omega_{+}, p_{+}; \underline{B}^{+}, \underline{S}^{+}).
\end{split}
\end{eqnarray*}

Note that $\hat{\lambda}^{+}(0, \underline{p}^{+}; \underline{B}^{+}, \underline{S}^{+})=\underline{\lambda}^{+}_{+}$.
Then, for any $U_{+}\in C^{1,\alpha}(\underline{\tilde{\mathcal {N}}}_{+})$, there exists a $\theta\in (0,1)$ such that
\begin{align}\label{eq-unique-A2}
\begin{split}
&\hat{\lambda}^{+}(\omega_{+}, p_{+}; \underline{B}^{+}, \underline{S}^{+})-\hat{\lambda}^{+}(0, \underline{p}^{+}; \underline{B}^{+}, \underline{S}^{+})\\
&\quad =\nabla_{(\omega_{+},p_{+})}\hat{\lambda}^{+}_{+}(\theta\delta\omega^{\flat}_{+}, \underline{p}^{+}+\theta\delta p^{\flat}_{+}; \underline{B}^{+}, \underline{S}^{+})\cdot\big(\delta\omega^{\flat}_{+},\delta p^{\flat}_{+}\big).
\end{split}
\end{align}

Moreover, by \eqref{eq-global-unique-5.4}, the equality \eqref{eq-unique-A2} can be further written as
\begin{eqnarray*}
\begin{split}
&\hat{\lambda}^{+}(\omega_{+}, p_{+}; \underline{B}^{+}, \underline{S}^{+})-\hat{\lambda}^{+}(0, \underline{p}^{+}; \underline{B}^{+}, \underline{S}^{+})\\
&=\frac{\Lambda_{+}\partial_{\omega_{+}}\hat{\lambda}^{+}_{+}(\theta\delta\omega^{\flat}_{+}, \underline{p}^{+}+\theta\delta p^{\flat}_{+}; \underline{B}^{+}, \underline{S}^{+})+\partial_{p_{+}}\hat{\lambda}^{+}_{+}(\theta\delta\omega^{\flat}_{+}, \underline{p}^{+}+\theta\delta p^{\flat}_{+}; \underline{B}^{+}, \underline{S}^{+})}{2\Lambda_{+}}\cdot z^{+,\flat}_{+}\\
&\quad+\frac{\Lambda_{+}\partial_{\omega_{+}}\hat{\lambda}^{+}_{+}(\theta\delta\omega^{\flat}_{+}, \underline{p}^{+}+\theta\delta p^{\flat}_{+}; \underline{B}^{+}, \underline{S}^{+})-\partial_{p_{+}}\hat{\lambda}^{+}_{+}(\theta\delta\omega^{\flat}_{+}, \underline{p}^{+}+\theta\delta p^{\flat}_{+}; \underline{B}^{+}, \underline{S}^{+})}{2\Lambda_{+}}\cdot z^{-,\flat}_{+},
\end{split}
\end{eqnarray*}
which leads to the first estimate in \eqref{eq-unique-A1}
by taking $C_{\lambda^{+}_{+}}=\frac{\big(\|\Lambda_{+}\|_{0,0;\underline{\tilde{\mathcal {N}}}_{+}}+1\big)\cdot\Big\|\frac{\nabla_{(\omega_{+},p_{+})}\lambda^{+}_{+}}{\Lambda_{+}}\Big\|_{0,0;\underline{\tilde{\mathcal {N}}}_{+}}}{2}$.

Similarly,  we also have
\begin{eqnarray*}
|\lambda^{-}_{+}-\underline{\lambda}^{-}_{+}|\leq\frac{\big(\|\Lambda_{+}\|_{0,0;\underline{\tilde{\mathcal {N}}}_{+}}+1\big)\cdot\Big\|\frac{\nabla_{(\omega_{+},p_{+})}\lambda^{-}_{+}}{\Lambda_{+}}\Big\|_{0,0;\underline{\tilde{\mathcal {N}}}_{+}}}{2}\cdot \big(|z^{+,\flat}_{+}|+|z^{-,\flat}_{+}|\big)=C_{\lambda^{-}_{+}}\big(|z^{+,\flat}_{+}|+|z^{-,\flat}_{+}|\big),
\end{eqnarray*}
and
\begin{eqnarray*}
|\lambda^{\pm}_{-}-\underline{\lambda}^{\pm}_{-}|\leq\frac{\big(\|\Lambda_{-}\|_{0,0;\tilde{\underline{\mathcal {N}}}_{-}}+1\big)\cdot\Big\|\frac{D\lambda^{\pm}_{-}}{\Lambda_{-}}\Big\|_{0,0;\underline{\tilde{\mathcal {N}}}_{-}}}{2}\cdot \big(|z^{+,\flat}_{-}|+|z^{-,\flat}_{-}|\big)=C_{\lambda^{\pm}_{-}}\big(|z^{+,\flat}_{-}|+|z^{-,\flat}_{-}|\big),
\end{eqnarray*}
where $C_{\lambda^{-}_{+}}=\frac{\big(\|\Lambda_{+}\|_{0,0;\underline{\tilde{\mathcal {N}}}_{+}}+1\big)\cdot\Big\|\frac{\nabla_{(\omega_{+},p_{+})}\lambda^{-}_{+}}{\Lambda_{+}}\Big\|_{0,0;\underline{\tilde{\mathcal {N}}}_{+}}}{2}$ and $C_{\lambda^{\pm}_{-}}=\frac{\big(\|\Lambda_{-}\|_{0,\alpha;\underline{\tilde{\mathcal {N}}}_{-}}+1\big)\cdot\Big\|\frac{\nabla_{(\omega_{-},p_{-})}\lambda^{\pm}_{-}}{\Lambda_{-}}\Big\|_{0,0;\underline{\tilde{\mathcal {N}}}_{-}}}{2}$.
\end{proof}

\bigskip
\section*{Acknowledgements}
Junlei Gao {\color{black}{was}} supported in part by the NSFC Project under Grant No.12201568 and Key Laboratory of Mathematics and Engineering Applications, Ministry of Education.
Feimin Huang was partially supported by National Center for Mathematics and Interdisciplinary Sciences, AMSS, CAS and NSFC Grant No.11371349 and No.11688101.
Jie Kuang was supported in part by the NSFC Project under Grant No.11801549, No.11971024, No.12271507 and the Multidisciplinary Interdisciplinary Cultivation Project No.S21S6401 from Innovation Academy for Precision Measurement Science and Technology.
Dehua Wang was supported in part by NSF under grants DMS-1907519 and DMS-2219384.
Wei Xiang was supported in part by the Research Grants Council of the HKSAR, China (Project No. CityU
11304820, CityU 11300021, CityU 11311722, and CityU 11305523).

\bigskip

\end{document}